\newtheorem{theorem}{Theorem}[chapter]
\newtheorem{proposition}[theorem]{ Proposition} 
\newtheorem{corollary}[theorem]{Corollary}
\newtheorem{definition}[theorem]{Definition}
\theoremstyle{remark}
\newtheorem{example}[theorem]{\it Example}
 \def \1{\mathbb {1}}
\def \R{\mathbb {R}}
\def \RM{\mathbb {R}}
\def \NM{\mathbb{N}}
\def \CM{\mathbb{C}}
\def \Hom {{\rm Hom}}
\def \Map {{\rm Map}}
\def \Der {{\rm Der\,}}
\def \p {{\rm exp\,}}
\def \Id {{\rm Id\,}}
\def \d{\partial}
\def\dt{\delta} 
\def\a{\alpha}
\def\b{\beta}
\def\e{\varepsilon}  
\def\g{\gamma}
\def\m{\mu}
\def\l{\lambda}
\def\L{\Lambda}
\def\p{\varphi}
\def\G{\Gamma}   
\def\D{\Delta}
\def \s{\sigma}
\def \to{\longrightarrow} 
\def \alg{\mathfrak{g}}
\def\del{\nabla}
\def \< {{\langle }}
\def \> {{\rangle }}
\def \( {\left( }
\def \) {\right) }
\newcommand{\Bt}{{\mathcal B}}
\newcommand{\Dt}{{\mathcal D}}
\newcommand{\Et}{{\mathcal E}}
\newcommand{\Ft}{{\mathcal F}}
\newcommand{\Gt}{{\mathcal G}}
\newcommand{\Ht}{{\mathcal H}}
\newcommand{\It}{{\mathcal I}}
\newcommand{\Lt}{{\mathcal L}}
\newcommand{\Mt}{{\mathcal M}}
\newcommand{\Ot}{{\mathcal O}}
\newcommand{\Pt}{{\mathcal P}}
\newcommand{\Tt}{{\mathcal T}}
\newcommand{\Xt}{{\mathcal X}}
\newcommand{\lra}{\longrightarrow}
\title[Th\'eorie KAM]{{\sc  Th\'eorie KAM}}
\author{ Mauricio  Garay }
\begin{document}
 \thispagestyle{empty}
 \begin{center}{\LARGE  \bf KAM THEORY\\} 
  \vskip0.8cm { \large \sc M. Garay and D. van Straten}\\
  \vskip0.5cm
  \rule{0.3\textwidth}{0.5pt}\\
    \vskip1cm
    
 { \large \sc  \textbf{II \\ \vskip0.5cm KOLMOGOROV SPACES}}
 
 \vskip1cm
  \begin{figure}[ht]
  \begin{minipage}[b]{0.6\linewidth}   
  \includegraphics[height=0.6\linewidth,width=0.75\linewidth]{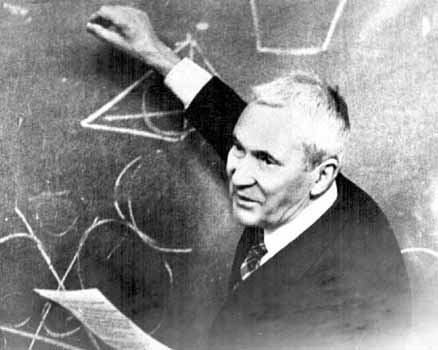}
 
  \end{minipage}
   \begin{minipage}[b]{0.60\linewidth}
     \includegraphics[height=0.42\linewidth,width=0.75\linewidth]{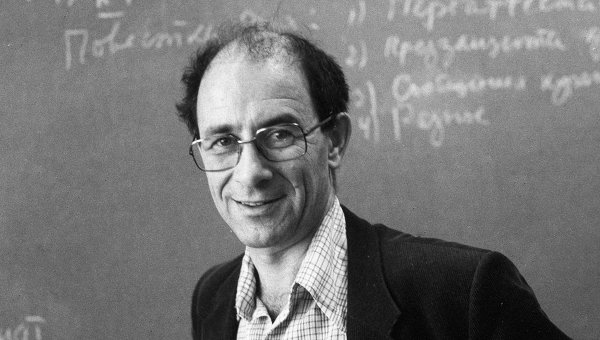}

             \end{minipage}\hfill
         \begin{minipage}[b]{0.40\linewidth}
           \includegraphics[height=0.9\linewidth,width=0.6\linewidth]{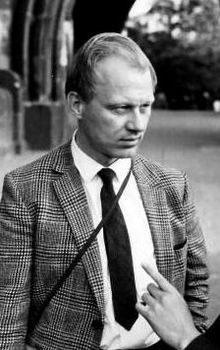}
     \end{minipage}
  \end{figure}

  \end{center}

\newpage
\ \\
\thispagestyle{empty}

\clearpage
%
%
%
\tableofcontents
\setcounter{part}{1}
\part{Kolmogorov spaces}
\chapter{Functorial analysis}
%
In the analysis of KAM iteration schemes, one often deals with spaces of functions defined 
over smaller and smaller subsets. In order to keep track of this 
{\em functional analysis over shrinking sets}, we need a convenient framework.
We set up a particular formalism of {\em relative Banach spaces} that is inspired 
by the theory of vector bundles over a base $B$ and their associated sheaves of sections 
over subsets of $B$. It turns out that the estimates we need in KAM-theory aquire a
 meaning in terms of the geometry of the base $B$.
The main notion is that of a {\em Kolmogorov space}, which is a certain system  
of Banach spaces, parametrized by an ordered set. At a superficial level the 
formalism  bears some resemblance to the classical notion of a Banach scale, \index{Banach scale} 
but on closer look it has quite a different flavour. 
Many notions valid for Banach spaces, such as operator calculus or fixed point theory, 
extend, properly understood, to  the category of Kolmogorov spaces, 
whereas within a na\"ive formalism based on Banach scales or Fr\'echet spaces they do not. 
We try a systematic treatment and pay particular attention to the various types of morphisms 
that occur naturally and which appear to be a key feature of the theory. 
\section{Relative sets}
Let $B$ be an arbitrary set. 
If for each $b \in B$ a set $X_b$ is given, we speak of a
{\em set over $B$} or simply a $B$-set. One may form the 
disjoint union
\[ X:=\bigsqcup_{b \in B} X_b \]
of the set of sets. There is a canonical projection
$$p:X \to B ,$$
which send each element $x \in X_b$ to its base-point $b \in B$ and
we recover the set $X_b$ as the {\em fibre} $p^{-1}(b)$ over $b \in B$.
Relative sets form a category whose morphisms are {\em maps over $B$}, i.e.
maps $f: X \to Y$ that sit in a commutative diagram
$$\xymatrix{ X \ar[rr]^f \ar[rd]& & Y \ar[ld] \\
 & B }$$
We denote the set of such maps over $B$ by
\[ \Map_B(X,Y) .\]
 
If $p:X \lra B$ is a set over $B$ and $A \subset B$ a subset, then
{\em a section over $A$} is a map 
$$s:A \lra X$$ such that 
\[p \circ s=Id_A .\]
It consists of the choice of an element  $x_a \in X_a$ for each 
$a \in A$. We denote the set of all sections over $A$ by
\[ \Gamma(A,X) .\]

If $X$ and $Y$ are $B$-sets, one can form a new $B$-set $\mathcal{M}ap(X,Y)$,
whose fibre over $b$ is the set $Map(X_b,Y_b)$ of all maps $f_b:X_b \to Y_b$.
Note the relation
\[ \Map_B(X,Y)=\Gamma(B,\mathcal{M}ap(X,Y)) .\]

If $p:X \to B$ is a set over $B$ and $\phi:A \to B$ another map, we can form a set over $A$
\[ q: \phi^*(X) \to A\]  
called the  {\em pull-back}\index{pull back} over $\phi$. The
fibre over $a \in A$ is $X_{\phi(a)}$, so we have a {\em pull-back diagram}
$$\xymatrix{\phi^*(X) \ar[r] \ar[d]_q& X \ar[d]^p \\
 A \ar[r]^\phi& B }
$$ 
If $i: A \to B$ is the inclusion of a subset $A \subset B$, then one clearly 
has the equality
\[ \Gamma(A,X)=\Gamma(A,i^*X) .\]

\section{Relative vector spaces}
Let us now fix a field $K$, which in our applications always will be 
$\RM$ or $\CM$ and a vector space will always be a $K$-vector space. 
If $B$ is a set, and for each $b \in B$ we are given a vector space 
$E_b$, we can form the $B$-set $p:E \lra B$ where
\[ E:=\bigsqcup_{b \in B} E_b.\]
All fibres $E_b=p^{-1}(b)$ of this $B$-set now have the structure of 
a $K$-vector space, hence we call it a {\em vector space over $B$}. 
The elements of $E$ are called {\em vectors}, but of course, one can add such 
vectors only if they belong to the same vector space $E_b$. 
It is useful to have the geometrical picture of a fibre bundle in mind, but 
clearly we are dealing here with an almost trivial caricature of it.\\

\begin{center}
\begin{figure}[htb]
\includegraphics[width=11cm]{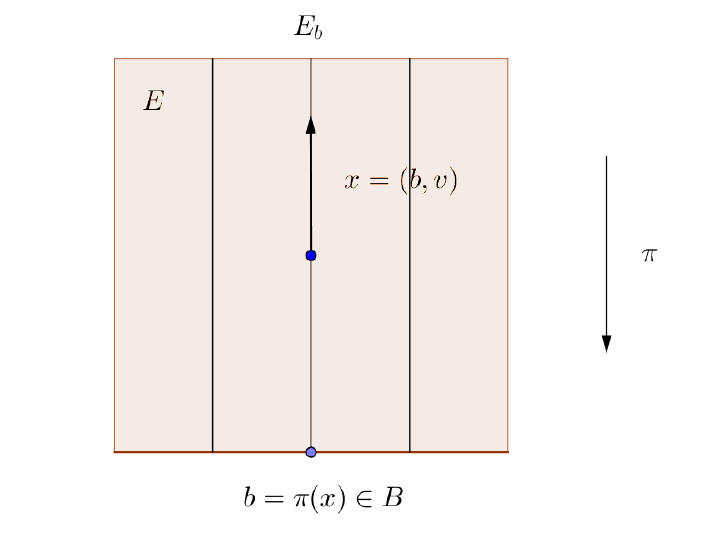}
\end{figure}
\end{center}
We will denote elements of $E$ often by {\em pairs} $(b, v)$, where
$b \in B$ and $v \in E_b$. This is in fact redundant, as a vector $v \in E_b$
'knows' its $b \in B$ over which it lives. But this notation adds clarity 
and avoids confusion.\\

The vector space operations may be used pointwise to define
the structure of a vector space on the set 
\[\Gamma(A,E)\]
of all sections of $E$ over $A$. Of course,
\[\Gamma(A,E) = \prod_{a \in A} E_a\]
is just the {\em direct product} of all vector spaces $E_a, a \in A$.
If we define the {\em support} of section $s:A \lra E$ as the 
set of $a \in A$ for which $s(a) \neq 0$, then one can identify the
direct sum
\[ \bigoplus_{a \in A} E_a \subset \prod_{a \in A} E_a\]
as the subspace of sections with finite support.\\

\begin{example} The projection on the first factor
$$p:\RM^2 \to \RM $$
defines a vector space over $B=\RM$.\footnote{Note by our terminology a slight
ambiguity arises : the statement {\em '$E$ is vector space over $\RM$'} means here that we are dealing with a relative Banach space over the base $\RM$ and not that $E$ is a vector space over the field $\RM$! In practice it will not cause any problems.}  The fibre $E_b$ over $b \in \RM$ is a copy of $\RM$. A section over a subset $A \subset B$ is a map of the form
$$A \to \RM^2, b\mapsto  (b,f(b))$$
where $f$ is an arbitrary function on $A$.
More generally any vector bundle in the sense of topology $E \to B$ is of 
course a vector space over $B$.
\end{example}
\begin{example} Consider the vector space $E$ over $B=\NM$ with fibre $E_n$ at $n \in \NM$ 
equal to the space $C^n([0,1],\RM)$ of $C^n$ functions on the unit interval. 
A section over $B$ consists of a sequence of functions $(f_n)$ with $f_n \in C^n([0,1],\RM)$.
\end{example}

Vector spaces over a basis $B$ form a category $\text{\bf \em Vect}_B$ in an obvious way. The morphisms 
are {\em linear maps over $B$}, so give rise to a 
commutative diagram 
$$\xymatrix{ E \ar[rr]^u \ar[rd]& & F \ar[ld] \\
 & B }$$
meaning that for each $b \in B$ the restriction of $u$ to $E_b$ is a linear 
map $E_b \to F_b$. We denote the set of morphism from $E$ to $F$ by
\[ \Hom_B(E,F).\]

The usual operations on vector spaces, like direct sum $\oplus$, $\otimes$, $Hom(-,-)$ extend to vector spaces over $B$. 
As the {\em internal Hom} is of importance, let us spell its definition. For two vector spaces $E,F$ over $B$ 
we let $\mathcal{H}om(E,F) \lra B$ be the vector space over $B$ with fibre at $b \in B$ equal to $\Hom(E_b,F_b)$. We have the relation
\[ \Hom_B(X,Y)=\Gamma(B,\mathcal{H}om(E,F)).\]

We note furthermore that the formation of section spaces is 'functorial'. 
If $f:E \to F$ is a  morphism of linear space over $B$ and
$A \subset B$, there is an induced linear map of section spaces:
\[  \Gamma(A,E) \to \Gamma(A,F), (s:A \to E) \mapsto (f \circ s: A \to F)\]

Like for vector bundles, we can pull-back a vector space over $B$ 
$$p:E \lra B$$
by a mapping of sets:
$$\phi:A \lra B$$
to get a vector space $\phi^*(E)$ over $A$, with $E_{\phi(a)}$ as fibre over $a \in A$.


\section{Relative Banach spaces}
Similarly, if the fibres of a map 
$$p:E \lra B$$ have the structure
of a Banach space, we speak of a {\em Banach space over $B$}.  
These form the objects of an obvious category $\text{\bf \em Ban}_B$. 
The set $\Hom_B(E,F)$ of morphisms in the category are given by 
{\em continuous linear maps} over $B$: any $u \in \Hom_B(E,F)$ sits in a 
commutative diagram 
$$\xymatrix{ E \ar[rr]^u \ar[rd]& & F \ar[ld] \\
 & B }$$
and for each $b \in B$ the restriction of $u$ to $E_b$ is a continuous 
linear map $u_b:E_b \lra F_b$. The set $\Hom_B(E,F)$ has a natural structure of 
a vector space, where the operations are defined pointwise.
We also introduce for two Banach spaces $E$ and $F$ over $B$ an
{\em internal Hom}, which defined to be the Banach space $\mathcal{H}om$ over $B$ whose fibre over $b$ is the Banach space $Hom(E_b,F_b)$ of continuous (=bounded) linear
maps $E_b \to F_b$, with the operator norm as norm. Again one has:
\[ \Hom_B(E,F)=\Gamma(B,{\Ht}om(E,F)) .\]  

When $p:E \lra B$ is a Banach space over $B$, each Banach space $E_b$ is 
equipped with a specific norm $|-|_b$ and the precise choice of the norm 
is an important part of the data.
Each vector  $x=(b,v) \in E$ has a norm
\[ |x|=|v|_b \in \RM_{\ge 0} \]
and so there is a map
\[ |-|: E \to \RM_{\ge 0},\;\;x \mapsto |x|.\]
Together with the projection $p: E \to B$ to the base we obtain the {\em norm map}\index{norm map}
$$\nu:E \to B \times \RM_{\ge 0},\ x=(b,v) \mapsto (b,|x|)=(b, |v|_b) . $$
It plays a role analogous to that of the norm for ordinary Banach spaces, 
which is the special case where $B$ is a point.
 
The {\em unit ball} \index{unit ball of a relative Banach space}  $B_E$ is the $B$-set whose fibre at $b$ is the unit 
ball of $E_b$. Note that this is just the preimage of $B \times [0,1]$ under the norm map $\nu$:
$$B_E:=\nu^{-1}( B \times [0,1]).$$ 
(The ball of radius $r$ will be denoted by $rB_E=\nu^{-1}(B \times [0,r])$.)
If $s \in \Gamma(A,E)$ is a section, then for each $a \in A$ we can
consider the norm $|s(a)|$, hence we obtain a {\em norm function}\index{norm function}
\[ |s|:=|-|\circ s: A \lra \RM_{\ge 0}, \;\;a \mapsto |s(a)|\]
of the section.\\
Using the norm of sections, there are different ways to define a topology on
the section spaces $\Gamma(A,E)$. One of the simplest ones is that of
{pointwise convergence in norm}, but as we only want to use Banach spaces, we 
in fact will not make much use of this topology.


\section{Bounded sections and bounded homomorphisms}
If $E \lra B$ is a Banach space over $B$, we can use the norm to define a 
subset of $\Gamma(A,E)$, consisting of those sections, for which the norm function is 
{\em bounded}.

\begin{definition}
\[ \Gamma^{b}(A,E):=\{ s \in \Gamma(A,E)\;\;|\;\;\sup_{a \in A} |s(a)| < \infty\} .\]
\end{definition}

For $s \in \Gamma^{b}(A,E)$ we put
\[ |s|_A:=\sup_{a \in A} |s(a)| .\]

\begin{proposition}
If $p:E \lra B$ is a Banach space over $B$ then $\Gamma^b(A,E)$ with $|-|_A$ is a Banach space.
\end{proposition}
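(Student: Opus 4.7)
The plan is to follow the standard $\ell^\infty$-type argument adapted to this fibered setting. There are three things to check: that $\Gamma^b(A,E)$ is a linear subspace of the product $\Gamma(A,E)=\prod_{a\in A}E_a$, that $|-|_A$ is a norm on it, and finally that this normed space is complete.

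First I would verify the vector space and norm properties. Given $s,t\in \Gamma^b(A,E)$ and $\lambda\in K$, the fiberwise triangle inequality gives $|s(a)+t(a)|_a\le |s(a)|_a+|t(a)|_a\le |s|_A+|t|_A$ for every $a\in A$, hence $s+t\in \Gamma^b(A,E)$ with $|s+t|_A\le |s|_A+|t|_A$; homogeneity is immediate from $|\lambda s(a)|_a=|\lambda||s(a)|_a$. Positive definiteness follows because $|s|_A=0$ forces $|s(a)|_a=0$ for every $a$, and each fiber $E_a$ is a Banach space, so $s(a)=0$ in $E_a$, i.e.\ $s$ is the zero section.

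The main point is completeness. Let $(s_n)_{n\ge 1}$ be a Cauchy sequence in $\Gamma^b(A,E)$. For each fixed $a\in A$ the inequality $|s_n(a)-s_m(a)|_a\le |s_n-s_m|_A$ shows that $(s_n(a))_{n\ge 1}$ is Cauchy in the Banach space $E_a$; I would therefore define $s(a):=\lim_{n\to\infty}s_n(a)\in E_a$ and let $s:A\to E$ be the resulting section. To see that $s\in \Gamma^b(A,E)$, note that $(s_n)$ is bounded in norm by some constant $M$ (Cauchy sequences are bounded), and continuity of $|-|_a$ on $E_a$ gives $|s(a)|_a=\lim_n |s_n(a)|_a\le M$, uniformly in $a$. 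Finally, to show $|s_n-s|_A\to 0$, fix $\varepsilon>0$ and choose $N$ with $|s_n-s_m|_A<\varepsilon$ for all $n,m\ge N$; then for each $a\in A$ and $n\ge N$, letting $m\to\infty$ inside the fiber gives $|s_n(a)-s(a)|_a\le \varepsilon$, so taking the supremum over $a$ yields $|s_n-s|_A\le\varepsilon$ for $n\ge N$.

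The one subtlety worth noting (the only point where the fibered nature enters nontrivially) is the passage to the pointwise limit: because addition and the norm on $E_a$ involve only the single fiber, there is no issue comparing vectors living over different base points, and the proof reduces, fiber by fiber, to the completeness of $E_a$ together with the familiar uniformity argument for sup-norms. I do not anticipate any genuine obstacle here; the statement is essentially a relative reformulation of the fact that bounded functions into a Banach space form a Banach space under the sup-norm.
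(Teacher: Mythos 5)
Your proof is correct and follows essentially the same route as the paper: fiberwise limits via completeness of each $E_a$, then boundedness of the limit section, then convergence in the sup-norm. In fact you are slightly more careful than the paper's own proof, which stops after showing the limit section is bounded and omits the final $\varepsilon$-argument verifying $|s_n-s|_A\to 0$.
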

\begin{proof} We write  $|-|$ for $|-|_A$. Clearly, one has:
\begin{align*}
|s_1+s_2| &\le |s_1|+|s_2|,\\
|\lambda\cdot s|&=|\lambda||s| \end{align*}
and so $\Gamma^b(A,E)$ is a normed vector space.
Now consider a Cauchy sequence of sections $s_n$ in $\Gamma^b(A,E)$, so for any $\epsilon >0$
we can find $N \in \NM$ such that
\[ |s_n-s_m| \le \epsilon \;\;\;\textup{for}\;\;n,m \ge N .\]
As for all $a \in A$ one has
\[ |s_n(a)-s_m(a)| \le |s_n-s_m|,\]
it follows immediately that for any $a \in A$ the sequences $s_n(a)$ are Cauchy sequences 
in $E_a$ and thus converge in $E_a$ to an element $s(a)$ which defines a section
$$s \in \G(A,E) .$$ 
As 
\[||s_n|-|s_m|| \le |s_n-s_m| \]
the sequence of norms $(| s_n |)$ is  Cauchy sequence in $\RM$ and is therefore bounded. As a consequence, 
the norm function $|s|$ is also bounded and thus the limit $s$ is again an element in $\Gamma^b(A,E)$.
 \end{proof}

As mentioned above, if $ p:E \to B$ is a Banach space over $B$, then the 
precise choice of the norms $|-|_b, b\in B$ is an important part of the 
data. If
\[ \lambda: B \to \RM_{>0}\]
is a strictly positive function, we can {\em rescale} the norm on $E_b$ by
 a factor $\lambda(b)$. Such a rescaling does not change the Banach spaces 
$E_b$, but defines a {\em different} Banach space over $B$.  We will need 
a special notation for this rescaled space.

\begin{definition} If $E \to B$ is a Banach space over $B$ and 
$\lambda: B \to \RM_{>0}$ a rescaling function, we define the {\em rescaled 
Banach space}\index{rescaled Banach space} over $B$ to be
\[ E(\lambda) \to B \]
with 
\[ E(\lambda)_b =E_b, \textup{with norm}\;\;\; |-|^{(\lambda)}:=\lambda(b) |-|_b .\]
\end{definition}

If the scaling function $\lambda$ is not bounded (from above and below)
the spaces of bounded sections of $E$ and $E(\lambda)$ will be different.
This phenomenon is a bit similar to the existence of different vector bundles 
over the same base and with isomorphic fibres.\\
\begin{example}
Consider again the vector space $E=\RM^2$ over $B=\RM$ defined by the projection on the first factor
$$p: \RM^2 \to \RM,\;\; (b,x) \mapsto b .$$
We consider the norm given by the absolute value of the second coordinate
$$\RM^2 \to \RM,\;\; (b,x) \mapsto |x|. $$
This defines a Banach space structure on $E$ over $B$.
A section
$$\RM \to \RM^2,\;\; b \mapsto (b,f(b))  $$
is bounded if and only if $f$ is bounded. Now rescale the norm by the function
$$\l(b)=b^k. $$
Then the section of $E(\l)$ is bounded if $b \mapsto b^k f(b)$ is bounded. For instance if $k>0$, the function $f$ must go to zero as $b$ goes to infinity.
\end{example}

For morphisms in $\text{\bf \em Ban}_B$ one can also define a notion of boundedness

\begin{definition} A morphism $u \in \Hom_B(E,F)$ \index{bounded morphism}
$$\xymatrix{ E \ar[rr]^u \ar[rd]& & F \ar[ld] \\
 & B }$$
 is {\em bounded}  if 
$$\sup_{x \in E}\frac{| u(x) |}{|x |} <+\infty. $$
\end{definition}

Clearly, this is equivalent to saying the that the operator norm $|u|_b:=|u_b|$,
$u_b: E_b \to F_b$, is bounded as a function on $B$. 

We denote the set of bounded morphisms by
\[ \Hom^b_B(E,F) \subset \Hom_B(E,F) .\]
Using Banach spaces over $B$ as objects and $\Hom^b_B(E,F)$ as morphisms, we 
obtain another category  $\text{\bf \em Ban}^b_B$.

As an example, consider the map
\[i: E \to E(\lambda),\;\;(b,x) \to (b,x) .\]
The map $i$ is bounded precisely when the rescaling function $\lambda$ is 
bounded from above. If $\lambda$  and $\lambda^{-1}$ are {\em both} bounded 
from above, the the map $i$  is an isomorphism in the category
 $\text{\bf \em Ban}^b_B$.

\begin{proposition} If $f: E \to F$ is an element of $\Hom^b_B(E,F)$ and $A \subset B$, then $f$
maps bounded sections to bounded sections, i.e. the induced inear map
\[\Gamma(A,E) \to \Gamma(A,F)\]
maps $\Gamma^{b}(A,E)$ to  $\Gamma^{b}(A,F)$.
\end{proposition}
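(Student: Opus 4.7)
The plan is straightforward: unravel the definitions and apply the fibrewise operator-norm inequality pointwise, then take the supremum over $A$.

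First I would fix $s \in \Gamma^{b}(A,E)$ and set $M := |s|_A = \sup_{a \in A} |s(a)| < \infty$. Since $f \in \Hom^b_B(E,F)$, by the preceding remark the function $b \mapsto |f_b|$ is bounded on $B$, so I set $C := \sup_{b \in B} |f_b| < \infty$. The candidate section is $t := f \circ s : A \to F$, which is a section since for each $a \in A$ one has $f_a : E_a \to F_a$, so $t(a) = f_a(s(a)) \in F_a$ and $p_F \circ t = \id_A$.

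Next I would estimate the norm of $t$ fibrewise. For every $a \in A$,
\[ |t(a)| = |f_a(s(a))|_a \le |f_a|\cdot |s(a)|_a \le C \cdot M, \]
where the first inequality is the definition of the operator norm on $\Hom(E_a,F_a)$. Taking the supremum over $a \in A$ gives $|t|_A \le C \cdot M < \infty$, so $t \in \Gamma^{b}(A,F)$, which is exactly the claim.

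There is essentially no obstacle: the whole statement is just a bookkeeping consequence of the definitions, since boundedness of a morphism over $B$ was defined precisely so that the fibrewise operator-norm bound is uniform on $B$, and the norm on $\Gamma^{b}(A,-)$ is defined as the supremum over $A$. The only thing to be mildly careful about is that the two uses of $|-|$ on $E_a$ and $F_a$ refer to different norms; but since we work fibre by fibre this causes no issue. As a byproduct the argument gives the operator bound $|f\circ s|_A \le C\,|s|_A$, which is the natural quantitative refinement of the proposition.
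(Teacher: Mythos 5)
Your proof is correct and is exactly the argument the paper intends (the paper in fact states this proposition without proof, treating it as an immediate consequence of the definitions): the uniform bound $C=\sup_{b\in B}|f_b|$ combined with the fibrewise operator-norm inequality gives $|f\circ s|_A\le C\,|s|_A$. Nothing is missing.
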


As a result, Banach spaces over $B$ that are isomorphic as objects in $\text{\bf \em Ban}^b_B$
have isomorphic spaces of bounded sections. Note however, that the map 
\[j: E \to E(\lambda), \;\;(b,x) \to (b,x/\l(b))\]
has norm equal to $1$, so $E$ and $E(\lambda)$ {\em are} isomorphic in the category 
$\text{\bf \em Ban}^b_B$ via this map!

\section{Vector spaces and Banach spaces over categories}
In the above constructions the vector spaces $E_b$ for different $b$'s were
completely independent of each other and the above constructions are 
essentially empty.

In interesting situations the fibres $E_b$ are connected 
to each other and we have to take this into account. In the theory of vector 
bundles one can introduce a {\em connection} on $E \to B$ to compare fibres 
over  different points, but here we shall proceed in a different way. 
We will assume that the base $B$ is a {\em small category}. 
This means we are given a set $B$ and furthermore, for any two $a,b \in B$ 
there is a set $Mor(a,b)$ of {\em morphisms} from $a$ to $b$, for which the 
usual axioms are fulfilled.  So we can compose a morphism from $a$ to $b$
with one from $b$ to $c$ to obtain a morphism from $a$ to $c$ and this composition of morphisms is associative.

\begin{figure}[htb!]
\includegraphics[width=8cm]{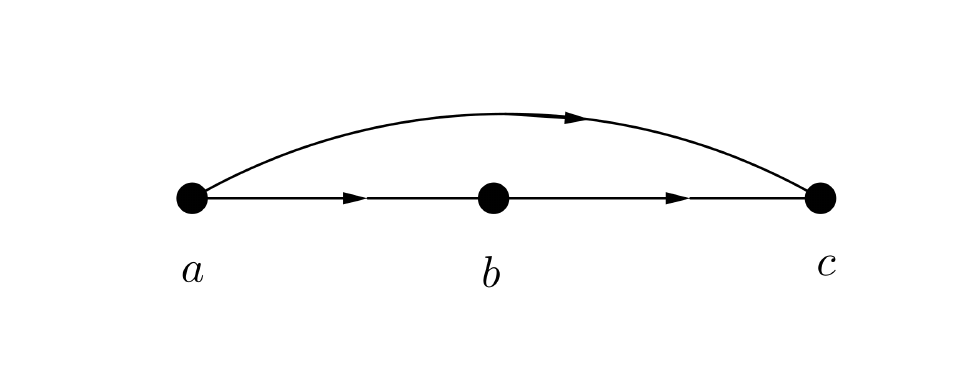}
\end{figure}
 
\begin{definition}A {\em vector space over a small category $B$} is a
functor 

\[e:B \lra \text{\bf  \em Vect}\] 
from the category $B$ to the category of $\text{\bf  \em Vect}$ vector spaces.\\
 
A {\em Banach space over a small category $B$} is a
functor 
\[ e:B \lra \text{\bf \em Ban}\]
from the category $B$ to the category $\text{\bf \em Ban}$ of Banach spaces.

\end{definition} 

So, as before, for each $b \in B$, we are given a vector space $e(b)=:E_b$, but
for each morphism $\alpha \in Mor(a,b)$ we are also given linear maps
\[ e(\alpha): E_a \lra E_b\]
with the condition
\[ e(\beta \circ \alpha) =e(\beta) \circ e(\alpha)\]
whenever it makes sense. These maps between fibres over different points
serve to 'connect' these different fibers.

\begin{figure}[htb!]
\includegraphics[width=8cm]{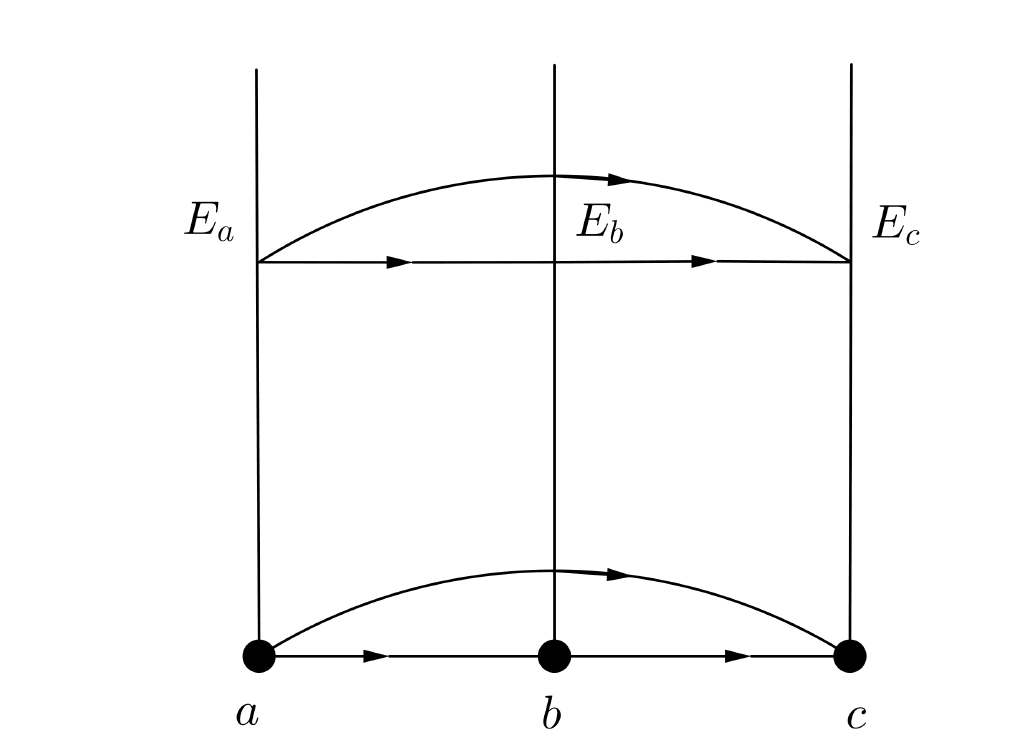}
\end{figure}
 
If we have just a set $B$, we can turn it into a trivial category by setting
$Mor(a,b)=\emptyset$ for $a \neq b$ and $Mor(b,b)=\{Id_b\}$, and then
we recover the notions of vector spaces and Banach spaces over the set $B$
as defined before.

\begin{example}\label{E::example}
Consider the small category $B=(\NM,\geq)$ where objects are integers and
the morphism space $Mor(i,j)$ consists of one element if $i \geq j$. The axioms for morphisms of a category encode the order relation in $\NM$.
We put
$$E_k=C^k([0,1],\RM)$$
and obtain a Banach space $E= C^\bullet([0,1],\RM)$ over the set $\NM$. As a $C^n$-function can be 
considered as a $C^k$-function  if $n \ge k$, we have natural inclusion maps $E_n \to E_k $. Thus we obtain a Banach-space over the ordered set $(\NM, \geq)$.
\end{example}

If $E$ and $F$ are Banach spaces over a small category $B$, a morphism
$u: E \to F$ is defined by continuous linear maps $u_b:E_b \to F_b$ that
are 'compatible' with the morphisms of the category $B$. This corresponds 
precisely to the notion of a {\em natural transformation} of the corresponding 
functors 
$B \to \textup{\bf \em Ban}$. Spelled out this means that for each 
$\alpha \in Mor(a,b)$ one has a  commutative diagram
$$\xymatrix{E_a \ar[r]^{u_a} \ar[d]_{e(\alpha)}& F_a \ar[d]^{f(\alpha)} \\
 E_b \ar[r]^{u_b}& F_b}
$$
We write again $\Hom_B(E,F)$ for this set, which has an obvious structure of
a linear space. It reduces to our earlier definition in case $B$ is
a set condidered as trivial small category. In this way we can speak of the 
category $\text{\bf \em Ban}_B$ of Banach spaces over the small category $B$.

In the language of categories and functors, the notion of pull-back is just the notion of {\em composition of functors}: if $C \to B$ is a functor between small 
categories and $e: B \to \text{\bf \em Ban }$ a Banach space over $B$, we obtain by
composition a functor $C \to \text{\bf \em Ban}$, i.e. a Banach space over $C$.

\section{Banach spaces over ordered sets}

\begin{definition}
By an {\em ordered set} we mean\footnote{We use the french convention. In most english texts, these are called {\em partially} ordered set,
sometimes abbreviated to {\em poset}.} a set $B$, together with a transitive, anti-symmetric and reflexive relation $\ge$.
\begin{align*}
 a \ge b,\;b \ge c &\Longrightarrow a \ge c,\\
 a \ge b, b \ge a &\Longrightarrow a=b,\\
 a \ge a .
\end{align*}
\end{definition}

The following example will be relevant later. We consider the set $\RM^n$, with the following order relation: 
for $x=(x_1,x_2,\ldots,x_n),y=(y_1,y_2,\ldots,y_n) \in \RM^n$ we define 
$$x \ge y$$ 
to mean
\[ x_i \ge y_i,\;\;\;\textup{for } i=1,2,\ldots,n . \]
One also can take an arbitrary subset $B \subset \RM^n$, with the order relation induced from the above one on $\RM^n$.

\begin{figure}[htb!]
\includegraphics[width=10cm]{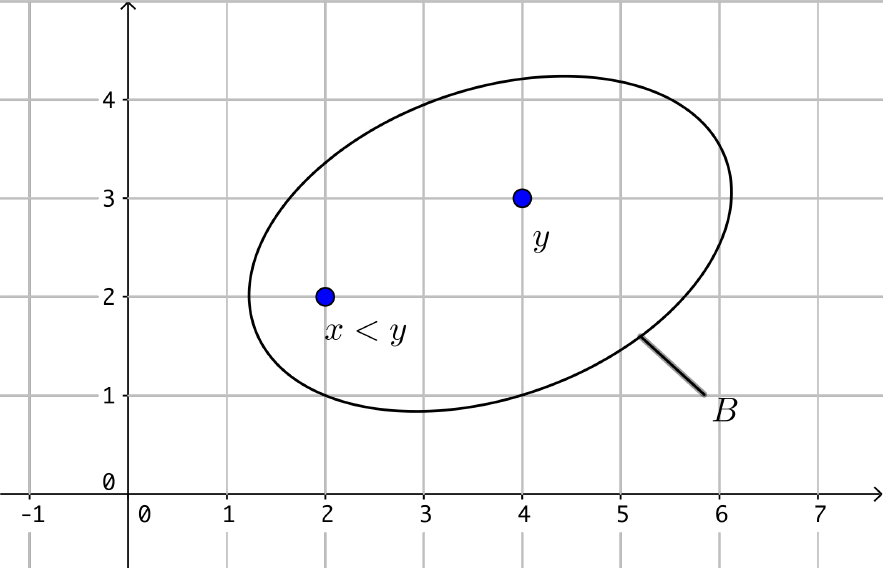}
\end{figure}

Any such an ordered set $(B,\ge)$ can be considered as
a small category with $B$ as set of objects and the set of 
morphisms from $a$ to $b$ consisting of a single element if $a \ge b$, and 
is empty otherwise.

A {\em vector space over an ordered set $B$}\index{vector space over an ordered set} is a covariant functor from $B$, considered as category,  to the category {\bf \em Vect} of  vector spaces. In concrete terms this means the following: 
we are given vector spaces
$E_b$ for each $b \in B$ and for each $a \ge b$ linear maps
\[e_{ba} :E_a \to E_b,\]
such that for all $c \le b \le a$ the maps $e_{ba}$, $e_{bc}$ and $e_{ca}$ satisfy
\[e_{cb}e_{ba}=e_{ca},\;\;\;\;e_{aa}=Id_{E_a} .\]
The mappings $e_{ba}$ will  be called the {\em restriction mappings}\index{restriction mapping} of the vector space over $B$.

\begin{example}
Consider the map $\RM^2\to \RM=B,\;(b,x) \mapsto b$. If we consider $B$ as an ordered set and $\phi:B \to \RM^*$ 
is a nowhere vanishing function, then the maps
\[e_{st}:=\phi(s) \phi(t)^{-1} : E_t \to E_s\]
trivially satisfy $e_{st}e_{tu}=e_{su}$. In this way we obtain a vector space over the ordered set $\RM$.  
\end{example}

\begin{example}
Let $X$ be a topological space with topology $\mathcal{T}$, consisting of the open subsets of $X$ ordered by inclusion. If $U \in \mathcal{T}$ is an open set, we let 
$$E_U:=C(U,\RM)$$
be the vector space of continuous functions on $U$. If $U \supseteq V$ there is a restriction mapping 
$$f_{VU}:C(U,\RM) \to C(V,\RM)$$ so we obtain a natural vector space of continuous functions over $\mathcal{T}$:
$$E \stackrel{\pi}{\to} \mathcal{T} $$
with fibre $\pi^{-1}(U)=E_U$.

More generally, a vector space over $\mathcal{T}$ is just the same as a presheaf (of vector spaces) on $X$.\\
\end{example}

Similarly,  a {\em Banach space over an ordered set $B$} is a covariant functor from $B$, considered as category, 
to the category $\text{\bf \em Ban}$ of Banach spaces:
$$e:B \to \text{\bf \em Ban}.$$
As the morphisms in the category $\text{\bf \em Ban}$ are continuous linear mappings, this implies that the restriction 
mappings are continuous.\\
 
There are several simple special notions related to ordered sets that we will need.\\

{\bf \em An ordered map}\index{ordered map} $f:A \to B$ between ordered sets is a map respecting 
the order relations on $A$ and $B$: 
if $a \ge a'$ holds in $A$, then $f(a) \ge f(a')$ holds in $B$.
In other words, we are dealing with monotonous maps and these are exactly 
the functors from $A$ to $B$, if we consider the ordered sets as small 
categories. Hence we can pull-back vector spaces and Banach spaces over 
ordered sets using ordered maps.\\

If $B$ is an ordered set, and $A \subset B$ a {\em subset} of $B$, then 
we can restrict the order relation of $B$ to $A$ and hence make $A$ into an
ordered set as well. The resulting inclusion map is an ordered map
and $A$ is, in the sense of category theory, a full sub-category.\\

{\bf \em Downsets and Upsets:} In an ordered set $B$ there are special subsets that are 'saturated' with respect to the order relation.
A {\em downset}\index{downset} $A \subset B$ is a set with the property
\[ a \in A,\;\; a \ge b \Longrightarrow b \in A\] 
and a {\em upset}\index{upset} $A \subset B$ is a set with the property that
\[ a \in A,\;\; b \ge a  \Longrightarrow b \in A\] 

If $A$ and $B$ are intervals in $\R$, the downsets of a point $(a,b)$ 
in $A \times B$ looks as follows:\\
\vskip0.3cm
\begin{figure}[htb!]
      \includegraphics[width=8cm]{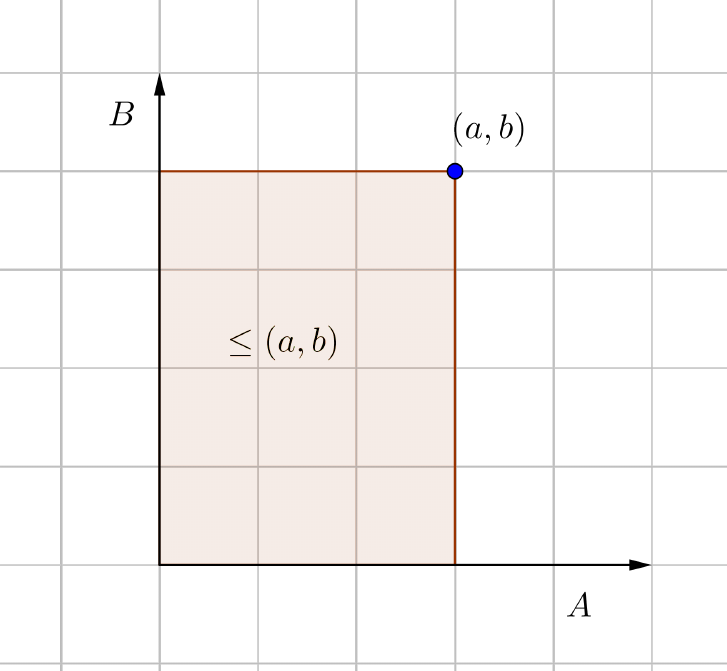}
\end{figure}
\vskip0.3cm
Arbitrary intersections and arbitrary unions of downsets are downsets. 
As a consequence, the intersection of all downsets containing any arbitrary set $A \subset B$ is the smallest downset containing $A$, the {\em downset hull} $A^d$ of $A$. Of particular importance are the {\em principal downsets}, determined by a
single element:
\[ B[\tau]:=\{ b \in B: b \le \tau\} =\{\tau\}^d .\]

Of course, mutatis mutandis, all the same for upsets. If $f:A \to B$ is 
an ordered map and $U \subset B$ a downset, then 
the preimage $f^{-1}(U)$ is a downset in $A$. So ordered maps are continuous
for the topology which has the downsets as open sets.\\

{\bf \em Directed sets and limit:} In many of our examples the ordered set $B$ is also directed. Recall that
a {\em directed set}\index{directed set} is a set $B$ 
together with a partial order relation $\ge$, with the additional property 
that for any two elements
$a, b \in B$ there exists $c \in B$ with $a \ge c$ and $b \ge c$. 
 
For a linear space $E \to B$ over a directed sets, we have a {\em direct limit functor}\index{direct limit} 
\[ E \mapsto \lim_{\longrightarrow} E\]
This vector space is functorially defined as the quotient of vector space
\[ \bigoplus_{b \in B} E_b\]
obtained by identifying vectors from $E_a$ and $E_b$ if 
they map to the same vector in $E_b$, if $a \ge c$, $b \ge c$. 

\begin{example}
Let $X$ be any topological space and consider as before the vector space $E$ over the
ordered set $\mathcal{T}$ of all open subset of $X$ whose fibre over $U$ is
$E_U=C(U,\RM)$. We can pull-back $E$ to the directed set $\mathcal{T}_p$ consisting 
of open sets in $X$ that contain $p$ by the inclusion
 $$\mathcal{T}_p \to \mathcal{T} $$
The direct limit can be identified with the vector space of 
{\em germs} of continuous functions at $p$.
\end{example}

\section{Horizontal sections}
If $ E\to B$ is a vector space over a small category $B$, then over each
morphism $\alpha \in Mor(a,b)$ in $B$ there is an associated linear
map
\[ e(\alpha): E_a \to E_b\]
between the fibres. As these maps allow us to relate fibres over 
different points, it can be seen as a set-theoretic type of connection.

\begin{definition}\index{horizontal section}
A section $s \in \Gamma(B,E)$ is called a {\em horizontal section} if it
is compatible with all these linear maps: 
\[e(\alpha) ( s(a) ) = s(b),\;\;\;\forall a, b \in B, \alpha \in Mor(a,b) .\]
We denote by 
\[ \Gamma^h(B,E) \subset \Gamma(B,E)\]
the linear subspace of horizontal sections.
\end{definition}
\begin{example}
Consider the $\RM$-vector space defined by the projection
$$\RM^2 \to \RM,\ (b,x) \mapsto b $$
with restriction mappings
$$(b,x) \mapsto (a,x). $$
Horizontal sections
$$ \RM \to \RM^2,\ b \mapsto (b,f(b))$$
 are defined  by constant functions: $f(b)=c$ for some $ c \in \RM $.
\end{example}
\begin{example} Consider again the vector space
 $C^\bullet([0,1],\RM),$  over 
the ordered set $(\NM, \geq)$. A section is a sequence of 
functions $(f_n)$, but horizontality now tells that $f_{n+1}$ should 
map to $f_n$ via the inclusion $$C^{n+1}([0,1],\RM) \to C^n([0,1],\RM) .$$
This means that choosing a global horizontal section is the same as the choice  of a
$C^\infty$-function $f$. The function 
$f_n$ is the same as $f$ but considered as an element of $ C^n([0,1],\RM)$.
\end{example}

In the case of vector spaces $E$ over an ordered set $B$, one   
has the following simple construction:  for any element $x=(b,v)$ we 
can construct a horizontal section over the downset $B[b]$ of $b$ 
by putting for $a \le b$
$$s_x(a)=e_{ab}(v), $$
Thus to an element $x \in E$, there is an associated horizontal section 
\[ s_x \in \Gamma(B[b],E) .\]

\begin{proposition}
\label{P::horizontal}
If $E$ is a Banach space over a base $B$, then the space  of horizontal bounded 
sections
\[ \G^h(B,E) \cap \Gamma^{b}(B,E) \]
is a Banach space.
\end{proposition}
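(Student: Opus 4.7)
The plan is to show that $\Gamma^h(B,E)\cap \Gamma^{b}(B,E)$ is a \emph{closed linear subspace} of the Banach space $\Gamma^{b}(B,E)$ constructed in the previous proposition. Since a closed subspace of a Banach space is itself a Banach space (under the restricted norm $|-|_B$), this reduces the statement to a closedness argument.

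First I would verify that the horizontality condition
\[ e(\alpha)(s(a)) = s(b), \qquad \forall\, \alpha \in \mathrm{Mor}(a,b) \]
is linear in $s$, so $\Gamma^h(B,E)$ is a linear subspace of $\Gamma(B,E)$; intersecting with the Banach space $\Gamma^{b}(B,E)$ gives a normed subspace. The real work is closedness: let $(s_n)$ be a sequence in $\Gamma^h(B,E)\cap \Gamma^{b}(B,E)$ converging in the sup-norm $|-|_B$ to some $s \in \Gamma^{b}(B,E)$. Because
\[ |s_n(b) - s(b)|_b \le |s_n - s|_B \]
for every $b \in B$, one has pointwise convergence $s_n(b) \to s(b)$ in the individual Banach space $E_b$.

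Now fix an arbitrary morphism $\alpha \in \mathrm{Mor}(a,b)$. Since $e:B \to \textbf{\em Ban}$ is a functor into the category of Banach spaces, the linear map $e(\alpha): E_a \to E_b$ is continuous (bounded). From horizontality of $s_n$ we have $e(\alpha)(s_n(a)) = s_n(b)$ for every $n$; passing to the limit and using continuity of $e(\alpha)$ on the left and pointwise convergence on the right yields
\[ e(\alpha)(s(a)) = \lim_{n\to\infty} e(\alpha)(s_n(a)) = \lim_{n\to\infty} s_n(b) = s(b). \]
As $\alpha$ was arbitrary, $s$ is horizontal, proving that $\Gamma^h(B,E)\cap \Gamma^{b}(B,E)$ is closed in $\Gamma^{b}(B,E)$.

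The only subtle point is recognizing that one needs the restriction maps $e(\alpha)$ to be \emph{continuous}, not merely linear, in order to commute the limit with $e(\alpha)$; this is automatic here because the codomain of the defining functor $e$ is $\textbf{\em Ban}$ rather than $\textbf{\em Vect}$. Everything else is a routine verification that closed subspaces of Banach spaces inherit the Banach space structure.
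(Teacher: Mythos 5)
Your proof is correct and takes exactly the same route as the paper: the paper's own proof simply asserts that the horizontality equations cut out a closed linear subspace of the Banach space $\Gamma^{b}(B,E)$, and you have supplied the details of that closedness argument (sup-norm convergence implies pointwise convergence, and continuity of the $e(\alpha)$ lets you pass to the limit). Nothing is missing.
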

\begin{proof}
Clearly, the equations
\[e(\alpha) ( s(a) ) = s(b),\;\;\;\forall a, b \in B, \alpha \in Mor(a,b)\]
defining horizontality determine a closed linear subspace of $\Gamma^{b}(B,E)$.
\end{proof}

\begin{definition} The Banach space of horizontal bounded sections is 
\[ \G^{\infty}(A,E):=\G^h(A,E) \cap \G^{b}(A,E) .\]
\end{definition}
\begin{example}
Consider again the vector space over the ordered set $B=\RM$ defined by the 
projection
$$\RM^2 \to \RM,\ (b,x) \mapsto b, $$
with restriction maps $e_{ab} =\phi(a)\phi(b)^{-1}: E_b \to E_a$ determined by a nowhere
vanishing function $\phi:B \to \RM^*$. We make it into a Banach space over $B$ by putting
$|1|_b=1$.
If $\phi(b)=1$ for all $b$, then the restriction maps are the identity and all horizontal sections 
are constant and therefore bounded, so
we have
$$\G^\infty(\RM,\RM^2) \approx \RM $$
For $\phi(b):=\exp(b)$ alll non-zero horizontal sections are unbounded, so
$$\G^\infty(\RM,\RM^2) = 0 $$
\end{example}

\begin{example}\label{E::example2} For the Banach space $E=C^\bullet([0,1],\RM)$ over $(\NM,\ge)$ of \ref{E::example}, the space $\G^b(\NM,E) $  is isomorphic to the space of $C^\infty$-functions  with uniformly bounded derivatives:
 $$\sup_{n \in \NM,x \in [0,1]} | f^{(n)}(x)|<+\infty. $$   
 So in this case there is a strict inclusion $\G^\infty(\NM,E) \subset \G^h(\NM,E)$. However over any finite set $A \subset \NM$,
the spaces $\G^\infty(A,E)$ and  $\G^h(A,E)$ are identical.
\end{example}

\section{The concept of Kolmogorov space}
\label{SS::definition}
We now arrive at our first important concept:

\begin{definition} A Kolmogorov space\footnote{In set-theoretical topology the term Kolmogorov space is used for topological
spaces for which the $T_0$-separation axiom holds. We will never use the notion in this sense, although the topology defined by downsets in an ordered set
provide natural examples such $T_0$ spaces.}\index{Kolmogorov space} is a Banach space
$$E \to B$$ 
over an ordered set $B$, such that the restriction morphisms
$$e_{st} : E_t \to E_s,\ s<t  $$
have norm at most one.
\end{definition}

An $N$-Kolmogorov space\index{N-Kolmogorov space}, abbreviated a $KN$-space, is a Banach space
$$E \to B$$ 
over an ordered subset $B \subset \RM^N$.

In practice, we will be mostly working with K1 and K2 spaces and in this
text K3-spaces will not play a role.\\
\begin{figure}[htb!]
\label{F::K2}
\includegraphics[height=5cm]{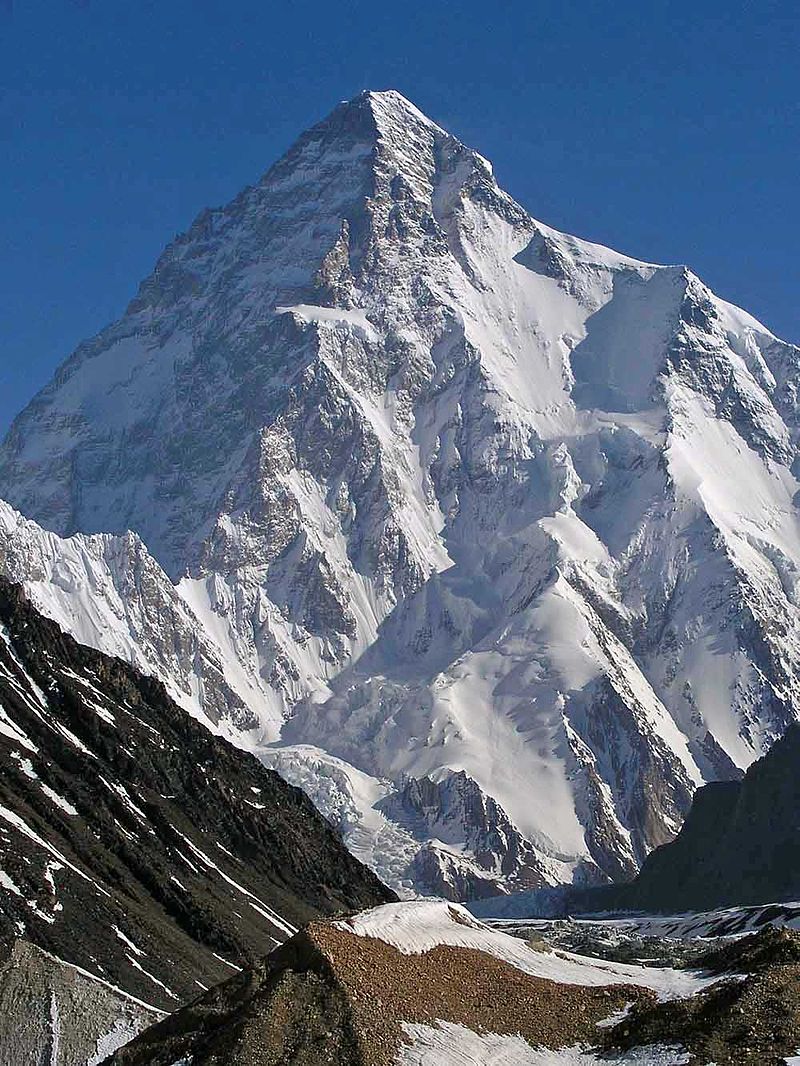}
\end{figure}
\vskip0.3cm
Kolmogorov spaces over the interval $]0,S] \subset \RM_{>0}$ are of particular importance and will be called {\em $S$-Kolmogorov spaces}\index{S-Kolmogorov space}.

So, in a Kolmogorov space $E$, the restriction maps  
 $$e_{st}:E_t \to E_s,\ t \ge s, $$
are continuous mappings of Banach spaces with operator norm $| e_{st} | \leq 1$, i.e. one 
has an estimate
\[ |e_{st}(x)|_s \le |x|_t . \]

$S$-Kolmogorov spaces have the following characteristic property:

\begin{proposition} \label{P::tauto} For an $S$-Komogorov space $E$ and $\tau \in ]0,S]$, there is a natural isometry of Banach spaces
\[ E_{\tau} = \Gamma^{\infty}(]0,\tau],E) .\]
\end{proposition}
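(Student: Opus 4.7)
The plan is to exhibit an explicit isometric isomorphism by sending $v \in E_\tau$ to its induced horizontal section on the principal downset, and to show every horizontal bounded section arises this way via evaluation at $\tau$.

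First I would define the map $\Phi: E_\tau \to \Gamma^{\infty}(]0,\tau],E)$ by $v \mapsto s_v$, where $s_v(t) := e_{t\tau}(v)$ for $t \in\ ]0,\tau]$. This is well-defined as a horizontal section: horizontality $e_{st}(s_v(t)) = s_v(s)$ for $s \le t \le \tau$ reduces to the functoriality identity $e_{s\tau} = e_{st} \circ e_{t\tau}$. Boundedness follows from the defining Kolmogorov property that every restriction morphism has operator norm at most one, giving $|s_v(t)|_t = |e_{t\tau}(v)|_t \le |v|_\tau$ for all $t \le \tau$; hence $|s_v|_{]0,\tau]} \le |v|_\tau$.

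The key observation that makes everything work is that $\tau$ itself belongs to the interval $]0,\tau]$, so $s_v$ can be evaluated there. Since $e_{\tau\tau} = \Id_{E_\tau}$, we have $s_v(\tau) = v$, and therefore
\[ |s_v|_{]0,\tau]} \ge |s_v(\tau)|_\tau = |v|_\tau .\]
Combined with the previous inequality this yields $|s_v|_{]0,\tau]} = |v|_\tau$, so $\Phi$ is an isometric embedding (and in particular injective).

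For surjectivity, given any horizontal bounded section $s \in \Gamma^{\infty}(]0,\tau],E)$, I would set $v := s(\tau) \in E_\tau$ and check $s = s_v$. Indeed, for every $t \le \tau$ horizontality of $s$ applied to the morphism $t \le \tau$ gives $s(t) = e_{t\tau}(s(\tau)) = e_{t\tau}(v) = s_v(t)$. Hence $\Phi$ is surjective, and we obtain the claimed natural isometry $E_\tau = \Gamma^{\infty}(]0,\tau],E)$. There is no serious obstacle: the whole statement is essentially a tautology reflecting that in an ordered-set base, the downset of $\tau$ contains its top element, so that horizontal sections over $]0,\tau]$ are rigidly determined by their value at $\tau$, and the Kolmogorov norm-one condition is exactly what turns this bijection into an isometry.
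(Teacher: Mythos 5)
Your proof is correct and follows essentially the same route as the paper's (much terser) argument: send $v$ to the horizontal section $t \mapsto e_{t\tau}(v)$, use the norm-one Kolmogorov condition to see the norm function attains its supremum at $t=\tau$, and recover $v$ by evaluating at $\tau$. You have simply spelled out the injectivity, surjectivity, and isometry checks that the paper leaves implicit.
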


\begin{proof}
A vector $v \in E_{\tau}$ determines a unique horizontal section $s: t \mapsto e_{t\tau}(v) \in E_t$.
As $|e_{t\tau}(v)| \le |v|_{\tau}$, the norm function $|s(t)|$ attains its maximum at $t=\tau$.  
\end{proof}

So in an $S$-Kolmogorov space, there is is no difference between vectors and
horizontal sections defined over sub-intervals $]0,t]$: each vector $(t,v)$
determines a unique horizontal section over $]0,t]$. This is no longer
true for $K2$-spaces. The downsets $B[b]$ determined a single $b \in B$ are 
very special subsets and usually we will have to consider horizontal sections 
over more general subsets.\\

A Kolmogorov space 
$$p:E \to B$$
carries a {\em canonical topology}\index{canonical topology}.
As mentioned before, the upsets define a topology on $B$ with open sets:
\[ U[b]:=\{ b' \in B\;|\;\;b' \ge b\}\]
with $b \in B$.
Now let $E_{U[b]}=\bigsqcup_{b' \geq b}E_{b'} \to U[b]$ be the pull-back via the inclusion map 
$$U[b] \hookrightarrow B.$$
The restriction maps of $E$ determine a canonical map
\[\pi_b: E_{U[b]} \to E_b,\;\;(b',v) \mapsto (b,e_{bb'}(v)).\] 
We obtain a topology on $E$ by declaring the sets
\[ \pi_b^{-1}(V),\;\; V \subset E_b\;\;\textup{open}\]
to form the basis. This is the {\em coarsest} topology for which the maps $\pi_b$ are continuous.

With this canonical topology the structure map
$p:E \to B$ and all horizontal sections $s \in \G^h(B,E)$, seen as
maps $s: B \to E$, are continuous. 

It is readily seen that a sequence $(b_n,x_n) \subset E$ converges to $(b,x)$ in the canonical topology
iff $(b_n)$ converges to $b$ for the topology of upsets and
$$|e_{bc_n}(x_n)-x| \xrightarrow[n \to +\infty ]{} 0   .$$
This is the way in which we use the topological structure.

If $E$ is a Kolmogorov space over $B$ and $\tau \in B$, then one can obtain a Kolmogorov space by {\em truncation}\index{truncation of Kolmogorov spaces}: if 
$$i_{\tau}:B[\tau] \to B,\ B[\tau]=\{b \in B\;|\;\; b \leq \tau \}$$ is the inclusion, then $E[\tau]$ is just the pull-back
of $E$ via this map: 
\[ E[\tau]:=i_{\tau}^* E .\] 
For many questions only the properties of the Kolmogorov space $E[\tau]$ for arbitrary small $\tau$ are important, so one is willing to replace $E$ by $E[\tau]$ if needed.\\

Kolmogorov spaces over a fixed ordered base $B$ form a 
full sub-category ${\text{\bf \em Kol}_B}$ of the category $\text{\bf \em Ban}_B$ of Banach spaces over $B$: a morphism $f: E \to F$ in ${\text{\bf \em Kol}_B}$ 
over $B$ that is just a  fibrewise continuous linear map.

If we want to compare Kolmogorov spaces over different bases, one has to
use pull-back by ordered maps: if $p:A \to B$ is an ordered map, we obtain
a functor
\[p^*: {\text{\bf \em Kol}_B} \to {\text{\bf \em Kol}_A}, 
(E \to B) \mapsto (p^*E \to A) .\]

In fact we are in the context of fibred categories and can consider
a $2$-category $\text{\bf \em Kol}$ fibres over the category of ordered sets,
with fibre over $B$ the category $\text{\bf \em Kol}_B$, etc.
Here we will not develop this point of view further.

\section{Kolmogorofication}

There are many situations where we obtain Banach spaces over an ordered set $B$ where the Kolmogorov space property
does not hold. For instance, if $E \to B$ is a Kolmogorov space over $B$ and 
\[ \lambda: B \to \RM_{>0}\]
a rescaling function, then the rescaled Banach space  $E(\l)$ over $B$ is 
only guaranteed to be a Kolmogorov space if the rescaling is monotonously
increasing:
\[ t \ge s  \Leftrightarrow \l(t) \ge \l(s) . \]

If $E$ is a Banach space over an ordered set $B$ and $U \subset B$ a subset, we can form the  Banach space
\[ \G^\infty(U,E)\]
of bounded horizontal sections over $U$. If $U \supset V$, then there is a natural restriction map
\[ \G^\infty(U, E) \to \G^\infty(V, E)\]
and as the norms on these spaces are defined as supremum over the corresponding sets,  these restriction maps trivially satisfy
\[ |s|_{U} \ge |s|_{V} .\]

Moreover for any $U$ and $V$ we have the following general sheaf-like gluing property:
\begin{proposition}
There is an exact sequence 
\[ 0 \lra \G^\infty(U \cup V),E \lra \G^\infty(U,E)\bigoplus\G^\infty(V,E) \lra \G^\infty(U \cap V,E)\]
\end{proposition}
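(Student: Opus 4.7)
The sequence is built from two natural maps. The leftmost is $\alpha : \Gamma^\infty(U \cup V, E) \to \Gamma^\infty(U,E) \oplus \Gamma^\infty(V,E)$, $s \mapsto (s|_U, s|_V)$, and the rightmost is $\beta : \Gamma^\infty(U,E) \oplus \Gamma^\infty(V,E) \to \Gamma^\infty(U \cap V, E)$, $(s_1, s_2) \mapsto s_1|_{U \cap V} - s_2|_{U \cap V}$. The plan is to verify, by standard sheaf-theoretic reasoning, exactness at each of the two interior nodes, treating boundedness and horizontality separately.

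Injectivity of $\alpha$ is immediate: since $U \cup V$ is just the set-theoretic union, any section whose restrictions to $U$ and $V$ both vanish is pointwise zero. The inclusion $\operatorname{Im}\alpha \subseteq \ker \beta$ is a tautology, since restricting $s|_U$ or $s|_V$ further to $U \cap V$ produces the same section.

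The content of the proposition lies in the reverse inclusion $\ker \beta \subseteq \operatorname{Im}\alpha$, the gluing step. Given $(s_1, s_2)$ with $s_1|_{U \cap V} = s_2|_{U \cap V}$, I would define the candidate glued section by the piecewise formula
\[
s(x) := \begin{cases} s_1(x) & x \in U, \\ s_2(x) & x \in V. \end{cases}
\]
The hypothesis makes this unambiguous, and boundedness is cheap: $|s|_{U \cup V} = \max(|s_1|_U, |s_2|_V) < \infty$, so $s \in \Gamma^b(U \cup V, E)$.

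The main obstacle is the horizontality of $s$, i.e. verifying $e_{ba}(s(a)) = s(b)$ for every comparable pair $a \geq b$ in $U \cup V$. The pure cases $a, b \in U$ and $a, b \in V$ reduce directly to the horizontality of $s_1$ and $s_2$ respectively. The mixed case, say $a \in U$ and $b \in V$, is the one requiring care: one exploits the functoriality $e_{ca} = e_{cb} \circ e_{ba}$ of the restriction maps, together with the agreement $s_1 = s_2$ on $U \cap V$, to transport the identity between the two pieces. Once this check is carried out, $s$ lies in $\Gamma^h(U \cup V, E) \cap \Gamma^b(U \cup V, E) = \Gamma^\infty(U \cup V, E)$, and by construction $\alpha(s) = (s_1, s_2)$, closing the exact sequence.
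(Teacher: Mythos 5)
Your overall route is exactly the paper's: the paper's proof is a one--sentence gluing argument, and you have expanded it into the standard check of left--exactness plus gluing. You also put your finger on precisely the right spot: everything except the horizontality of the glued section in the ``mixed case'' is routine. But that is the one step you do not actually carry out, and as sketched it does not go through. To ``transport the identity between the two pieces'' via $e_{ba}=e_{bc}\circ e_{ca}$ you need some $c\in U\cap V$ with $a\ge c\ge b$, and for arbitrary subsets $U,V$ of an ordered set no such $c$ need exist. Concretely, take $B=\{a,b\}$ with $a>b$, $E_a=E_b=\RM$, $e_{ba}=\mathrm{id}$, and $U=\{a\}$, $V=\{b\}$. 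Then $U\cap V=\emptyset$, so every pair $(s_1,s_2)\in\RM\oplus\RM$ lies in $\ker\beta$, while $\Gamma^\infty(U\cup V,E)\cong\RM$ consists only of the pairs with $s_2=e_{ba}(s_1)$; the sequence is not exact in the middle. So the statement, read for completely arbitrary $U$ and $V$, is false, and your proof has a genuine gap exactly where you said ``once this check is carried out.''

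To be fair, the paper's own proof (``two bounded horizontal sections that coincide on the intersection define a unique section over the union, bounded by the maximum of the norms'') silently skips the same point: it never verifies that the glued section is horizontal. The gap closes under a mild extra hypothesis, e.g.\ that $U$ and $V$ are downsets (the case relevant to the secondary Kolmogorov space and Kolmogorification): if $a\in U$, $b\in V$ and $a\ge b$, then $b\in U$ because $U$ is a downset, so $b\in U\cap V$ and the mixed case reduces to horizontality of $s_1$ on $U$ together with $s_1(b)=s_2(b)$. More generally it suffices that every comparable pair $a\in U$, $b\in V$ with $a\ge b$ admits an intermediate element of $U\cap V$. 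If you add such a hypothesis and then write out the factorisation explicitly, your argument is complete; without it, neither your proof nor the paper's establishes the proposition as stated.
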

\begin{proof}
If we have two bounded horizontal sections over two sets $U$ and $V$ that coincide on the intersection 
$U \cap V$, then they define a unique section over the union $U \cup V$, bounded by the maximum of the 
norms of the two sections.
\end{proof}

The subsets of $B$ are ordered by the inclusion and therefore $\G^\infty(-,E)$
defines a Kolmogorov space $\mathcal{E}$ over the power set $\Pt(B)$ of all subsets of $B$.

\begin{definition} If $E \to B$ is a Banach space over an ordered set $B$ 
the {\em secondary Kolmogorov space}\index{secondary Kolmogorov space} is the Kolmogorov space
$\mathcal{E} \to \mathcal{P}(B)$ with fibre over $U \subset B$ equal to  $\G^\infty(U,E)$.\\
\end{definition}

Recall that 
\[ B[b]:=\{b' \in N\;|\;b' \le b\}\]
is the principal downset determined by $b \in B$. Kolmogorov spaces have the
folowing simple key property:\\

\begin{proposition}\label{P::Kproperty} Let $E \to B$ be a Kolmogorov space and $b \in B$
then the restriction to the fibre over $b$ 
\[ \G^\infty(B[b],E) \to E_b, \;\;s \to s(b) \]
is an isometry.
\end{proposition}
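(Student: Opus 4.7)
The plan is to exhibit an explicit inverse for the restriction map $s \mapsto s(b)$ and then verify that both directions are norm-preserving.

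First I would check surjectivity by using the construction already discussed in the excerpt: given any $v \in E_b$, define a candidate section on the principal downset $B[b]$ by
\[ s_v(a) := e_{ab}(v), \quad a \le b. \]
Functoriality of the restriction maps, $e_{ac}\circ e_{cb}=e_{ab}$ whenever $a \le c \le b$, shows that $s_v$ is a horizontal section of $E$ over $B[b]$. The Kolmogorov property $|e_{ab}|\le 1$ implies $|s_v(a)|_a \le |v|_b$ for every $a \le b$, so $s_v$ is bounded, hence $s_v \in \Gamma^{\infty}(B[b],E)$, and clearly $s_v(b)=v$.

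Next I would check injectivity and simultaneously the explicit inverse formula. If $s \in \Gamma^{\infty}(B[b],E)$ is arbitrary, horizontality applied to the pair $a \le b$ (both lying in $B[b]$, since $b$ is the maximum) forces
\[ s(a) = e_{ab}(s(b)), \quad \forall a \in B[b]. \]
Thus $s = s_{s(b)}$, which shows the restriction map is bijective with inverse $v \mapsto s_v$.

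Finally, the isometry assertion follows from the Kolmogorov inequality together with the fact that $b\in B[b]$ is maximal. For any $s \in \Gamma^{\infty}(B[b],E)$ and $a \le b$ one has
\[ |s(a)|_a = |e_{ab}(s(b))|_a \le |e_{ab}|\cdot |s(b)|_b \le |s(b)|_b, \]
while the opposite inequality is trivial since $b \in B[b]$. Taking the supremum over $a \in B[b]$ gives $|s|_{B[b]} = |s(b)|_b$. There is no real obstacle here; the only point requiring attention is to use that $b$ itself belongs to $B[b]$ as its maximum so that horizontality can actually be applied to link $s(a)$ to $s(b)$, and that the supremum is attained at the top of the downset.
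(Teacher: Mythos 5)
Your proof is correct and follows essentially the same route as the paper: build the horizontal section $a \mapsto e_{ab}(v)$ from a vector $v \in E_b$ and use the Kolmogorov property $|e_{ab}| \le 1$ to see that the supremum norm over $B[b]$ is attained at $a=b$. You merely spell out the bijectivity and the two inequalities in more detail than the paper's terser argument, which is fine.
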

\begin{proof}
If $v \in E_b$ is any vector, we can form the horizontal section over $B[b]$ by setting
\[ s(a):=e_{ab}(v)\]
As for the restriction maps one has $|e_{st}| \le 1$, it follows that the supremum of the
norm is attained at $a=b$, so
\[ |s|_{B[b]}=|s|_b=|s(b)| .\]
\end{proof}
 
This is the basis for the following  'miraculous' formal procedure that changes any Banach space over $B$ into a Kolmogorov space.

\begin{definition} Let $E \to B$ be a Banach space over an ordered set $B$. The 
{\em Kolmogorification} $EK \to B$ of a $E$ is the pull-back via 
\[\eta: B \to \Pt(B), b \mapsto B[b]:=\{b' \in N\;|\;b' \le b\}\]
of the secondary Kolmogorov space $\mathcal{E} \to \Pt(B)$:
\[ EK:=\eta^*(\mathcal{E}),\]
i.e. the fibre of EK at $b \in B$ is defined as
\[ EK_b:=\G^\infty(B[b],E) .\] 
\end{definition}

Proposition \ref{P::Kproperty} states that if $E$ itself is a Kolmogorov space, then $EK=E$. Let us now consider a few examples.\\
\begin{example}
Let us consider the vector space $E=\RM^2$ over the base $B=\RM_{>0}$:
$$ \RM^2 \to \RM_{>0}, (b,x) \mapsto b  $$ with the norm
$$|x|_b=b^{-k}|x|. $$
For $k>0$, this is not a Kolmogorov space. There are no non-zero bounded 
sections for this norm and the Kolmogorofication is the trivial vector space $EK=\{ 0 \} .$
\end{example}
\begin{example} \label{E::example}
A slightly more interesting example is given by the vector space $E=C^\bullet([0,1],\RM)$ over the ordered set $B=(\NM,\ge)$ with the norm
$$|f|_n=n^{-k}\sup_{x \in [0,1],j \leq n}|f^{(j)}(x)|. $$
For $k>0$, this is again not a Kolmogorov space. The Kolmogorofication has fibres
$$EK_n=C^n([0,1],\RM) $$
with norms
$$\sup_{x \in [0,1],j\leq n }|j^{-k}f^{(j)}(x)| . $$
This example shows that starting from a norm which behaves badly,
we get a systematic way to construct the adapted norms.
\end{example}

The category ${\text{\bf \em Kol}_B}$ of Kolmogorov spaces over $B$ 
is a full subcategory of the category ${\text{\bf \em Ban}_B}$ of Banach spaces
over $B$. The process of Kolmogorification provides us with a functor
\[ {\text{\bf \em Ban}_B} \to {\text{\bf \em Kol}_B},\]
restricting to the identity on the ${\text{\bf \em Kol}_B}$. Note the similarity to the process of sheafification of a presheaf.

For this reason we will sometimes use an alternative terminology:

\begin{definition} {\em A pre-Kolmogorov space over $B$}\index{pre-Kolmogorov space} is a 
{\em Banach space over an ordered set $B$}.
\end{definition}

Another important construction is that of the {\em direct image}\index{direct image} in the category of Kolmogorov spaces. If $p:A \to B$ is an ordered map, and $U \subset B$ is a downset, 
then the preimage $p^{-1}(U) \subset A$ is also a downset. 
If $E$ is a Banach space over $A$, then the direct image $p_*(E)$ is
a Kolmogorov space over $B$, defined by setting  
\[  p_*(E)_b:=\Gamma^\infty(p^{-1}(B[b]),E) .\]





\section{The order filtration\index{order filtration} }

If $\s$ is a horizontal section of an $S$-Kolmogorov space $E$, then the associated
norm-function $t \mapsto |\s(t)|$ is a non-decreasing function. It makes sense to
look at the speed of vanishing of $|\s(t)|$ as $t$ drops to $0$. 
We can do this as follows: by rescaling $E$ by the function
$$\l_k:]0,S] \to \RM, s \mapsto s^{-k},\ k \in \RM. $$
we get a Banach space $E(\l_k)$ over $]0,S]$. 

For $k < 0$ the function $\l_k$ is increasing and therefore, the spaces $E(\l_k)$ are naturally Kolmogorov spaces.
\begin{definition}\label{D::order k-part}\index{order $k$-part}
The Kolmogorofication  of $E(\l_k)$ is called the {\em order $k$-part of $E$}\index{order $k$-part $E^(k)$}. 
We denote it by $E^{(k)}$.
\end{definition}
Let us spell out what this space is. We start with a  vector $v_t \in E_t$ and construct the associated horizontal
section $v$  which consists of the vectors 
$$v_s:=e_{st}(v_t) \in E_s.$$ 

The section $v$ is bounded if and only if
$$\sup_{0 <s\le t} s^{-k}|v|_s<+\infty$$
So Kolmogorofication of the Banach space $E(\l_k)$ over $]0,S]$ leads to the norm
\[ |v|_t^{(k)}:=\sup_{0 <s\le t} s^{-k}|v|_s\]
As expected, the restriction mappings 
have norm $\le 1$, since the $\sup$ over a smaller set becomes smaller.

In this way we obtain a decreasing filtration 
\[  E^{(k)} \supset E^{(l)}\;\;  \textup{for}\;\; k \le l \]
by Kolmogorov spaces, although one should always keep in mind that for
each $E^{(k)}$ we have to use a different norm. We will call this
filtration the {\em order filtration} of $E$.
  
 \begin{proposition}
\label{P::inclusion}
Let $E$ be an $S$-Kolmogorov space.
Let $B_k \subset E^{(k)}$ be the unit ball. For any $\e>0$, and any $R>0$ we have
$$RB_{k+\e}[\tau] \subset  B_k$$
for $\tau=R^{-1/\e}$.
\end{proposition}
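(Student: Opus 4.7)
The plan is to unwind the definitions of the two norms $|\cdot|^{(k)}$ and $|\cdot|^{(k+\varepsilon)}$ and to show directly that an element $v$ in the fibre of $RB_{k+\varepsilon}[\tau]$ at some $t\le\tau$ satisfies $|v|_t^{(k)}\le 1$. The choice $\tau=R^{-1/\varepsilon}$ is precisely what makes the exponent bookkeeping work out.

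First I would fix $t\in\,]0,\tau]$ and a vector $v\in E_t$ belonging to the fibre $(RB_{k+\varepsilon}[\tau])_t$. By definition of the order $(k+\varepsilon)$-part, this means
\[ |v|_t^{(k+\varepsilon)} \;=\; \sup_{0<s\le t} s^{-(k+\varepsilon)}|v|_s \;\le\; R, \]
where $|v|_s := |e_{st}(v)|_s$ denotes the norm at $s\le t$ of the horizontal section generated by $v$. Equivalently, for every $s\in\,]0,t]$ one has
\[ |v|_s \;\le\; R\, s^{k+\varepsilon} \;=\; \bigl(R s^{\varepsilon}\bigr)\, s^{k}. \]

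Next I would estimate $|v|_t^{(k)} = \sup_{0<s\le t} s^{-k}|v|_s$. Combining the previous inequality with the fact that $s\le t\le\tau=R^{-1/\varepsilon}$, one obtains $R s^{\varepsilon}\le R\tau^{\varepsilon}=R\cdot R^{-1}=1$, so that for every $s\in\,]0,t]$
\[ s^{-k}|v|_s \;\le\; R s^{\varepsilon} \;\le\; 1. \]
Taking the supremum over $s\in\,]0,t]$ yields $|v|_t^{(k)}\le 1$, which says exactly that $v\in (B_k)_t$. Since $t\le\tau$ was arbitrary, this gives the desired inclusion $RB_{k+\varepsilon}[\tau]\subset B_k$.

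The argument is essentially routine once the definition of the rescaled norm has been unpacked; there is no real obstacle. The only point worth emphasizing is that the calibration $\tau=R^{-1/\varepsilon}$ is determined by requiring $R\tau^{\varepsilon}=1$, which is the exact threshold beyond which the extra factor $s^{\varepsilon}$ no longer compensates for the radius $R$. This is why the inclusion only holds after truncation to the downset $]0,\tau]$, in accordance with the philosophy that a small loss on the base is the price to pay for tightening the estimate on the fibre.
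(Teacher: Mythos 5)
Your proof is correct and is essentially the same as the paper's: both unwind the definition $|v|_t^{(k+\varepsilon)}\le R$ into the pointwise bound $|v|_s\le R s^{k+\varepsilon}$ and then observe that $s^{-k}|v|_s\le Rs^{\varepsilon}\le R\tau^{\varepsilon}=1$ on the truncated downset. Your version is, if anything, slightly more careful about the endpoint $t=\tau$ (where one gets $\le 1$ rather than the strict inequality the paper writes), which is the right reading given that the unit ball is closed.
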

\begin{proof}
 Take a bounded horizontal section
 $$\s \in \G^\infty(]0,t],RB_{k+\e}[\tau])$$
 We have
 $$|\s(s)| \leq |\s|^{(k+\e)} s^{k+\e} \leq R s^{k+\e}$$ 
 for $s \leq t$.
 Thus
 $$|\s|^{(k)} \leq Rt^{\e}<1.$$

\end{proof}

\section{External Hom}
If $E$ and $F$ are pre-Kolmogorov spaces over an ordered set $B$, then
the vector space of homomorphisms $u:E \to F$ in the category $\textup{\bf \em Ban}_B$
was denoted by
\[\Hom_B(E,F)\]
and consisted of continuous linear maps $u_b:E_b \to F_b$, with the additional property that
for all $b \le a $ one has a commutative diagram
$$\xymatrix{E_a \ar[r]^{u_a} \ar[d]_{e_{ba}}& F_a \ar[d]^{f_{ba}} \\
 E_b \ar[r]^{u_b}& F_b}
$$
We can also form the Banach-space $\mathcal{H}om_B(E,F)$ over the {\em set} $B$ with fibre
over $b \in B$ the vector space $Hom(E_b,F_b)$ of continuous linear operators, and clearly one has 
\[ Hom_B(E,F) \subset \G(B, \mathcal{H}om_B(E,F)).\]
But note that  $\mathcal{H}om_B(E,F)$ is not a Banach space over the {\em ordered set $B$}, as {\em there are no natural restriction maps} 
\[ Hom(E_a,F_a) \to Hom(E_b,F_b)\]
for $b \le a$. Consequently, one can not speak about horizontal sections of  
$\mathcal{H}om_B(E,F)$, which we would like to give back the space $\Hom_B(E,F)$. This fundamental problem can be circumvented by an important construction 
that we will describe now.\\

Consider more generally Banach spaces $E \to A$ and $F \to B$ over sets 
$A$ and $B$.  We can form for each $a \in A$ and $b \in B$ the Banach space
\[ Hom(E_a,F_b)\]
of bounded linear transformations with the operator norm. We denote by
\[ \mathcal{H}om(E,F) \to A \times B\]
the resulting Banach space over $A \times B$. 
The fibre above $(a,b) \in A \times B$ is the vector space $Hom(E_a,F_b)$.
If $E$ and $F$ are Banach spaces over {\em ordered} sets  $A$ and $B$, then 
for $a' \geq a$, $b \geq b'$ we have a restriction mappings in 
$E_{a'} \to E_a$ and $F_{b} \to F_{b'}$ making a diagram
$$\xymatrix{ E_{a'} \ar[r] & E_a \ar[ld] \\
 F_{b} \ar[r] & F_{b'}}$$
and thus we obtain a natural map
$$Hom(E_{a},F_{b}) \to Hom(E_{a'},F_{b'}) ,$$
hence a Banach space over the ordered set $A^{op} \times B$:
$$\mathcal{\Ht}om(E,F) \to A^{op} \times B. $$

We used the notation let $A^\text{op}$ to denote the same set,
but with {\em opposite} order relation. So in $A^{op}\times B$ we have:
\[ (a,b) \geq (a',b') \textup{ if and only if }\;\;a'\leq a\;\; \textup{in $A$ and }  b \geq b'\;\;\textup{in $B$} \]

In $A^{op}\times B$ the downset of a point now looks as follows:\\
\vskip0.3cm
\begin{figure}[htb!]
      \includegraphics[width=10cm]{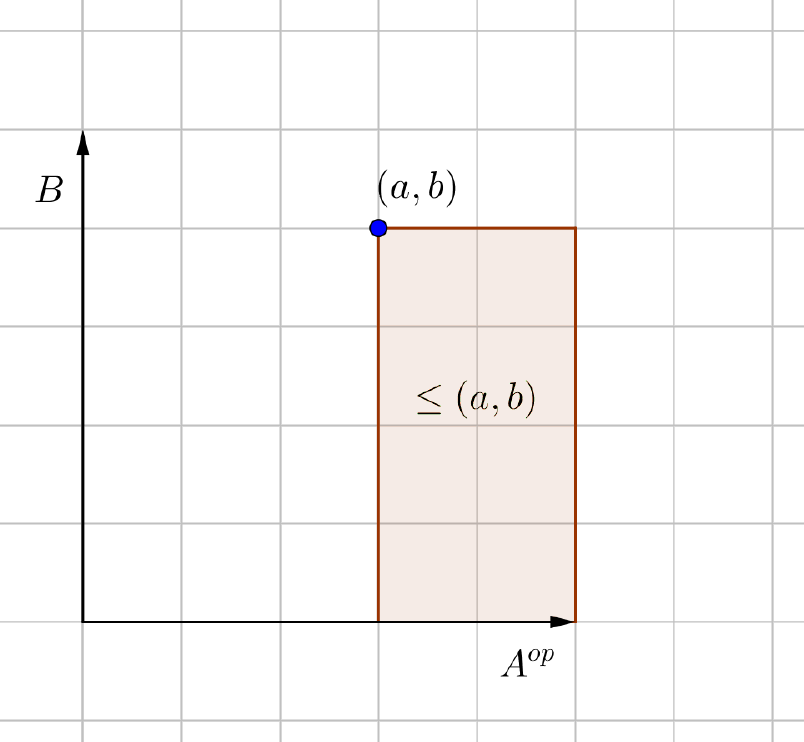}
\end{figure}
\vskip0.3cm

In case $E \to A$ and $F \to B$ are Kolmogorov spaces, we see from the above 
composition diagram that the restriction mappings in
$\mathcal{H}om(E,F)$ also have norm $\le 1$, so $\mathcal{H}om(E,F)$ is a 
Kolmogorov space over $A^\text{op} \times B$.

\begin{definition} The {\em external Hom-space}\index{external Hom-space}
\label{D::external Hom-space} of 
Kolmogorov spaces $E \to A$ and $F \to B$ is the above described 
Kolmogorov space
\[\mathcal{H}om(E,F) \to A^{op} \times B\]
with the Banach space 
\[ Hom(E_a, F_b) \]
as fibre over $(a,b) \in A^{op} \times B$.
\end{definition}

The fact that we have to reverse the order relation in $A$ is of course
due to the contravariant nature of the $Hom$-functor in the first variable. 
If $E$ and $F$ are Kolmogorov spaces over $]0,S]$, then the external
Hom-space $\mathcal{H}om(E,F)$ is a Kolmogorov space over $]0,S]^{op} \times ]0,S]$. 
This is a natural source of $K2$-spaces.\\

The external Hom can be used to solve the above mentioned problem concerning
the categorical $\Hom_B(E,F)$.

\begin{definition} For a partial ordered set $B$, we denote by
\[ \overline{\D}:=\{ (s,t) \in B^{op} \times B\;|\;\;s \le t\}\]
the closed subdiagonal\index{closed subdiagonal}.
\end{definition}

\begin{proposition}
\label{P::complete_morphisms}
Restriction to the diagonal defines an isomorphism
\[  \G^h(\overline{\D},\mathcal{H}om(E,F)) \to \Hom_{B}(E,F)\] 
of vector spaces and an isometry of Banach spaces
\[  \G^\infty(\overline{\D},\mathcal{H}om(E,F)) \to \Hom_{B}^b(E,F)\] 
\end{proposition}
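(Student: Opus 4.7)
The plan is to construct an explicit inverse to the restriction-to-diagonal map. The key structural observation is that in $\overline{\D}$, every pair $(s,t)$ is dominated, in the order inherited from $B^{op}\times B$, by \emph{both} diagonal points $(s,s)$ and $(t,t)$. Reading off the restriction maps in $\mathcal{H}om(E,F)$, the arrow from $(s,s)$ down to $(s,t)$ acts as $u\mapsto f_{t,s}\circ u$, while the arrow from $(t,t)$ down to $(s,t)$ acts as $u\mapsto u\circ e_{t,s}$. Horizontality of a section $\sigma$ over $\overline{\D}$ therefore forces
\[ \sigma(s,t) = f_{t,s}\circ \sigma(s,s) \qquad\textup{and}\qquad \sigma(s,t) = \sigma(t,t)\circ e_{t,s}.\]

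Setting $u_b := \sigma(b,b)$ produces the forward map: the equality of the two expressions above is precisely the naturality relation $f_{t,s}\circ u_s = u_t\circ e_{t,s}$ defining a morphism in $\Hom_B(E,F)$. For the inverse I would start from $(u_b)\in\Hom_B(E,F)$ and define
\[ \sigma(s,t):= u_t\circ e_{t,s} = f_{t,s}\circ u_s,\]
with the equality of the two right-hand sides guaranteed by the commutativity built into $\Hom_B$. The one non-trivial check is horizontality of $\sigma$ on an arbitrary comparable pair $(s,t)\ge (s',t')$, which reduces, by restriction functoriality $e_{a,b}\circ e_{b,c}=e_{a,c}$ and one application of the naturality of $u$ at the pair $t\ge t'$, to the chain
\[ f_{t',t}\circ\sigma(s,t)\circ e_{s,s'} = f_{t',t}\circ u_t\circ e_{t,s'} = u_{t'}\circ e_{t',s'} = \sigma(s',t').\]
That the two assignments are mutual inverses is then clear from the definitions.

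For the isometry, the inclusion of the diagonal into $\overline{\D}$ yields at once $\sup_b |u_b|\le |\sigma|_{\G^\infty(\overline{\D})}$. The reverse inequality invokes the defining Kolmogorov property $|e_{t,s}|\le 1$: from $\sigma(s,t)=u_t\circ e_{t,s}$ one reads $|\sigma(s,t)|\le |u_t|$, so that $|\sigma|_{\G^\infty(\overline{\D})}\le \sup_b |u_b|$. The two bounds combine to the claimed isometry.

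The conceptual point where I expect the most care is orienting the induced order on $\overline{\D}$ correctly: it is essential that the diagonal sit as the \emph{top} of $\overline{\D}$, so that the two restriction paths from $(s,s)$ and $(t,t)$ down to $(s,t)$ both exist and their agreement becomes a meaningful constraint. Once this orientation is fixed, the coexistence of these two distinct restriction paths is precisely the mechanism that converts the naturality of a family $(u_b)$ into the horizontality of its extension $\sigma$, and the rest of the argument is mechanical.
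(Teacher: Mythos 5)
Your proof is correct and follows essentially the same route as the paper's: the inverse is built by extending a family $(u_b)$ to $u_{st}=f_{st}\circ u_t=u_s\circ e_{st}$ over the closed subdiagonal, horizontality being exactly the naturality relation and the isometry following from the fact that restriction maps have norm at most one, so the norm function of a horizontal section attains its supremum on the diagonal. The only differences are cosmetic: your index convention for $\overline{\D}$ is transposed relative to the paper's, and you spell out the horizontality check on arbitrary comparable pairs that the paper leaves implicit.
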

\begin{proof}
We have to construct an inverse for the restriction map.
If $u:E \to F$ is a Kolmogorov space morphism over $B$, then for each $t \in B$ 
we have a map $u_t:E_t \to F_t$, so we obtain a section of $\mathcal{H}om(E,F)$
over the diagonal $\{(t,t)\}$. This section extends naturally to a horizontal
section over $\overline{\D}$. Indeed, the compatibility with the restriction 
maps in $E$ und $F$ tells us
\[ f_{st} u_t = u_s e_{st},\;\;\; \textup{for all}\;\;\;s \le t .\]
This means that by setting
\[ u_{st}:=f_{st} u_t = u_s e_{st}:E_t \to F_s\]
we obtain an horizontal extension of this section over $\overline{\D}$.
If we start with an element of $\Hom^b_B(E,F)$, it represents  
a bounded section of $\mathcal{H}om(E,F)$ over the diagonal $\{(t,t)\}$,
which extends in exactly the same way to a bounded horizontal 
section of $\mathcal{H}om$ over $\overline{\Delta}$. The fact that we
obtain an isometry follows from the Kolmogorov property: for horzontal
sections over $\overline{\D}$ the norm function takes its supremum on 
the diagonal.
\end{proof}

We should always make a careful distinction between the following
objects (here $A,B$ are ordered sets):
\begin{enumerate}
\item $Hom(E,F)$ is the ordinary Banach space of continuous linear maps between Banach spaces $E$ and  $F$ provided with the operator norm.\\

\item $\Hom_B(E,F)$ is the categorical Hom between two Banach spaces $E,F \to B$ over an ordered set $B$.  It is a vector space.\\ 

\item  $\Hom_B^b(E,F)$ consists of those elements of  $\Hom_B(E,F)$ whose operator norm function is bounded on $B$. It is a Banach space.\\

\item $\mathcal{H}om_B(E,F)$ is a Banach space over the {\em set} $B$, with fibre $Hom(E_b,F_b)$; we will not make much use of this object.\\ 

\item  $\mathcal{H}om(E,F)$ is the most important  and more general object. It is a Banach space over $A^{op} \times B$. In the case that $A=B$ the (resp. bounded) horizontal sections of  $\mathcal{H}om(E,F)$ over the closed subdiagonal gives back  $\Hom_B(E,F)$ (resp. $\Hom_B^b(E,F))$.\\

\end{enumerate}


\chapter{Kolmogorov spaces and sheaves}
In the previous chapter we introduced the formal framework of Kolmogorov spaces
that will be used to handle the functional analytic parts of KAM-theory.
We have seen that it is easy to construct examples and in this and later 
chapters we will be mainly interested in specific examples related to spaces 
of {holomorphic objects}. Due to the identity theorem for holomorphic 
functions, in these examples the restriction mappings 
are {injective} on domains and over a directed set the Kolmogorov space 
structure leads to a {descending filtration} on the limit vector space 
${\displaystyle \lim_{\to} E}$ by Banach spaces $E_s$. However, it is always 
important to keep also track of the norms involved.   

\section{The sheaf of holomorphic functions}
For an open set  $U$  of $ \CM^n$ or more generally of a fixed analytic space $X$, we denote by $ \Ot_X(U)$ or simply $\Ot(U)$ the set of all holomorphic functions on $U$. As obviously the pointwise sum and product of holomorphic functions is again holomorphic,
the set $\Ot(U)$ has the natural structure of a $\CM$-algebra.
For an inclusion $V \subset U$ there are natural {\em restriction maps}\index{restriction map}
$\Ot(U) \to \Ot(V), f\mapsto f_{|V}$, making the assignment $U \mapsto \Ot(U)$ into a presheaf on $\CM^n$ or equivalently we have a vector space $\Ot$ over 
the ordered set $\Tt$ of open subsets. In fact, this presheaf is a sheaf $\Ot$ on $\CM^n$: if any $U$ is covered by open sets $U_i, i \in I$ and $f^{(i)} \in \Ot(U_i)$ have the property that
\[f^{(i)}_{|U_i \cap U_j} =f^{(j)}_{|U_i \cap U_j}\]
then there exist a unique $f \in \Ot(U)$ with 
\[f_{|U_i}=f^{(i)} .\]

%
%
The space of sections $\Ot(K)$ of $\Ot$ over a compact set $K$ can be identified with the space of holomorphic functions that are defined on an open subset 
containing $K$, where two such functions are identified when they coincide on a 
common open subset containing $K$. That is, we are dealing with {\em germs} 
\index{germ of a function} of holomorphic functions along the set $K$. 
In the special case where $K$ consists of a single point, we have the stalk 
$\Ot_p$ of $\Ot$ at the point in question, which can, after choice of coordinates $x_1,x_2,\ldots,x_n$, be identified with the algebra of convergent 
power series
\[ \Ot_0 =\CM\{x_1,x_2,\ldots,x_n\} .\]

For an open subset $U \subset X$, the vector spaces $\Ot(U)$ has 
a natural structure of a {\em Fr\'echet-space} defined by the family of 
semi-norms:
\[ |f|_K =\sup_{ z \in K} |f(z)|,\;\; K \subset U\;\;\textup{compact}\]
 But we will not really use this topology, as in the formalism
of Kolmogorov spaces we developed, we work only with Banach spaces.

\section{Kolmogorov spaces $\Ot^b$, $\Ot^c$, $\Ot^h$}
So the question arises which Banach spaces of holomorphic functions one can 
naturally associate to (certain) open subsets $U$ of an analytic space $X$.
We will briefly discuss three different natural choices.\\
First, one can consider the vector space 
\[ \Ot^b(U):=\{f \in \Ot(U)\;|\;f\;\;\textup{is bounded}\},\]   
consisting of the {\em bounded holomorphic functions} on $U$.
We can make $\Ot^b(U)$ into a normed vector space using the supremum norm: 
\[ |f|_U:=\sup_{p \in {U}} |f(p)| .\]
and in fact with this norm $\Ot^b(U)$ is a Banach space: a Cauchy sequence
$(f_n)$ for $|-|_U$ converges pointwise to a bounded continuous function and
by the Cauchy integral theorem that function is also holomorphic.

There are obvious restriction mappings, making the $\Ot^b(U)$ into a Banach 
space
$\Ot^b$ over $\mathcal{T}$. If $U \supseteq V$, the restriction mapping
\[ f_{VU}: \Ot^b(U) \to  \Ot^b(V)\]
has operator norm at most $1$, as the supremum $|f|_V$ over the smaller set
$V$ clearly is $\le |f|_U$. Hence 
$$\Ot^b \to \Tt$$
is in fact a Kolmogorov space.
Note however that $U \mapsto \Ot^b(U)$ is no longer a sheaf. For two
open subsets $U,V$ the kernel of the obvious map
\[ \Ot^b(U) \oplus \Ot^b(V) \to \Ot^b(U \cap V),\;\;\;(f,g) \mapsto f_{| U \cap V}-g_{| U \cap V}\]
is precisely $\Ot^b(U \cup V)$, so bounded holomorphic functions can be
glued, but the sheaf axiom for infinite unions does not hold in general.\\
If we restrict to the sub-directed system $\mathcal{T}_p$ of open sets 
containing a point $p$,  we obtain as limit the vector space of germs of 
holomorphic functions at $p$,  which can in turn be identified with the 
space of convergent power series.\\

Second, for any open  $U \subset X$ we can consider the vector space 
\[ \Ot^c(U):=C(\overline{U}) \cap \mathcal{O}(U) .\]   
consisting of holomorphic functions on $U$ that have a {\em continuous} 
extension to the closure $\overline{U}$.
It is again a vector space over $\mathcal{T}$:
$$\Ot^c \to \mathcal{T} $$
and in fact $\Ot^c$ is a sheaf on $X$. But $\Ot^c(U)$ is in general
not a Banach space, but if $U$ is relatively compact (i.e. if $\overline{U}$
is compact), then $\Ot^c(U)$ equipped with  the $\sup$-norm {\em is} a Banach 
space.  So if we restrict $\Ot^c$ to the system $\Tt_c$ of relatively compact 
open sets we obtain a Kolomogorov space. We can also glue $\Ot^c$-functions: 
the kernel of the map
\[ \Ot^c(U) \oplus \Ot^c(V) \to \Ot^c(U \cap V),\]
is precisely $\Ot^c(U \cup V)$. As the arbitrary union of relative compact 
open sets  need not be relative compact, we again do not quite obtain a sheaf 
in the ordinary sense of the word.\\

Clearly, for $U \in \mathcal{T}_c$ there is a natural inclusion
\[ \Ot^c(U) \to \Ot^b(U)\]
compatible with restrictions. So these combine to a natural morphism $I$ of 
Kolmogorov spaces over $\mathcal{T}_c$: 
 $$\xymatrix{\Ot^c \ar[rd]  \ar@{^{(}->}[rr]^I & & \Ot^b \ar[ld] \\
  & \mathcal{T}_c &}
 $$
In other words: $I \in \Hom_{\mathcal{T}_c}(\Ot^c,\Ot^b)$.\\

Third, if on the analytic manifold $X$ a measure $\mu$ is given, e.g. $X=\CM^n$ 
with the Lebesgue measure, one also can consider the Banach space\index{structure sheaves $\Ot^{(p)}$, $\Ot^h$} 
$$\Ot_X^{(p)} \to \mathcal{T} $$
with fibre
\[ \Ot^{(p)}(U):=L^p(U,\mu) \cap \Ot_X(U)\]
of $p$-integrable holomorphic functions on $U$, with norm
\[  |f|_U =(\int_U |f(z)|^p d\mu)^{1/p} .\]
If $V \subset U$, then there are natural restriction mappings 
\[  \Ot^{(p)}(U) \to \Ot^{(p)}(V)\]
of norm $\le 1$, so we obtain Kolmogorov spaces. 

One can consider $\Ot^{(p)}$ as an $L^p$-variant of the structure sheaf.
Again, they are not sheaves in the usual sense, but again the kernel of 
the map
\[ \Ot^{(p)}(U)\oplus \Ot^{(p)}(V) \to  \Ot^{(p)}(U \cap V)  \]
is  $\Ot^{(p)}(U \cup V)$. 

In the special case $p=2$ we obtain a Hilbert space, for which we will
choose the particular notation
\[  \Ot^h(U):=\Ot^{(2)}(U) .\]
This norm comes from a Hermitean scalar product on $\Ot^h(U)$
given by
\[ \langle f, g \rangle_U:=\int _{U} f(z) \overline{g(z)} d\mu .\]

Such a space might, for obvious reasons,  be called a {\em Kolmogorov-Hilbert}
space\index{Kolmogorov-Hilbert space}.

\section{Restriction to polydiscs}

In practice it is convenient to work with systems of vector spaces over 
smaller ordered sets, like intervals in $\RM$, with their natural 
ordering. We will often use the function spaces introduced above
over systems of (open or closed) {\em polydiscs}.\index{polydisc}
For a {\em polyradius}\index{polyradius}
\[ \rho:=(\rho_1,\rho_2,\ldots,\rho_n) \in \R_{\ge 0}^n\]
we write 
\[D_{\rho}:=D_{\rho_1} \times D_{\rho_2} \times \ldots \times D_{\rho_n}:=\{(x_1,x_2,\ldots,x_n)\;|\;\;|x_i| < \rho_i \}\]
for the {\em open polydisc} of polyradius $\rho$. The open
polydisc is an open set over the orderered set $\RM_{\ge 0}^n$ of radii.

We will consider frequently the family $D=(D_s)$ of polydiscs
$$D_s:=\{ z \in \CM^n:|z_i|< s \}=D_{s,s,\ldots,s} .$$
We will call it the {\em unit polydisc}\index{unit polydisc}; it is a set 
over $\RM_{>0}$: $$D \to \RM_{>0} .$$
Using the map of ordered sets
\[ i: \RM_{>0} \to \mathcal{T}_c,\;\;s \mapsto D_s ,\]
we can pullback the Kolmogorov spaces $\Ot^b, \Ot^c$ and $\Ot^h$ to
obtain Kolmogorov spaces 
\[ \Ot^b(D):=i^*\Ot^b,\;\;\Ot^c(D):=i^*\Ot^c,\;\;\Ot^h(D) :=i^*\Ot^h\]
over $\RM_{>0}$, with fibres 
\[ \Ot^b(D)_t:=\Ot^b(D_t) .\]
\[ \Ot^c(D)_t:=\Ot^c(D_t) = C(\overline{D_t}) \cap \mathcal{O}(D_t) .\]
\[ \Ot^h(D)_t:=\Ot^h(D_t) = L^2(\overline{D_t}) \cap \mathcal{O}(D_t) .\]


\section{The order filtration on $\Ot^c$}
Let us take a closer look to the Kolmogorov space $E:=\Ot^c(D)$ over
$\RM_{>0}$ of holomorphic functions on the unit polydics, with continuous 
extension to the boundary.   The order $k$-part $E^{(k)}$ was defined in 
\ref{D::order k-part} as the Kolmogorification of $E(\lambda_k)$, the Banach 
space over $\RM$ obtained  by rescaling the norm with the function
\[\lambda_k: s \mapsto s^{-k} .\]
It consists of those $f \in E_t=\Ot^c(D_t)$ for which 
$$| f |_t^{(k)}:=\sup_{s \leq t,|z| \leq s} s^{-k}|f(z) |_s. $$
is finite. Clearly, if $k\le 0$, then $E^{(k)}=E$ and furthermore for
$k \ge 0$ and $f \in E^{(k)}_t$, we have the estimate  
\[ |f|_s \le R s^k,\;\;\textup{for all}\;s \le t,\;\;\;\textup{with}\;\;R =|f|_t.\]
That is, for any $z \in D_s$, with $s:=|z|$:
\[ |f(z)| \le R |z|^k.\]
So we see that for $k \in \NM$ the space $E^{(k)}_t$ consist of 
those functions whose derivatives at the origin vanish up to order 
$k-1$. When we write
$$f(z)=z^kf_k(z) $$
then by the maximum principle
$$s^{-k}|f(z) |_s=|f_k|_s $$
and therefore
$$| f |_t^{(k)}=|f_k|_t. $$

The direct limit functor sends  $\Ot^c(D)$ to the space of germs of holomorphic functions at $0$, which can be identified with convergent analytic series  $\CM\{ z \}$. 
The direct limit of $E^{(k)}$  is equal to to the $k$-th power $\Mt^k$ of the maximal ideal  
$$\Mt:=\{ f \in \CM\{ z \}: f(0)=0 \}. $$ 
So the filtration by Kolmogorov-spaces
$$ E=E^{(0)} \supset E^{(1)} \supset E^{(2)} \supset E^{(3)} \supset \ldots$$
corresponds {\em exactly} to the $\Mt$-adic filtration via the direct limit functor:
$$ \CM\{ z \} \supset \Mt \supset \Mt^2 \supset \Mt^3 \supset  \ldots$$
This generalises, of course, to an arbitrary number of variables.

The Kolmogorov space norms $|-|^{(k)}$ are 'adapted' to the filtration: for instance 
according to Proposition~\ref{P::inclusion},  the unit ball $B \subset \Ot^c(D)$ contains any ball of $(\Ot^c(D))^{(1)}[\tau]$ for $\tau$ small enough.  

%

\section{The Kolmogorov spaces  $\ell^2(\lambda)$}
We will see that the  Kolmogorov-Hilbert space  $\Ot^h(D)$ can be seen  a 
special case of the {\em sequence Kolmogorov spaces} $\ell^2(\lambda)$,  
which we now define. These simple spaces depend on an additional datum, 
that we will call a {\em weight sequence}\index{weight sequence}.

\begin{definition} 
A {\em weight sequence} $\l=(\l_n),\ n \in \NM$ over $]0,S]$ is a sequence 
of positive increasing  functions indexed by $\NM$: 
$$\l_n: I \to \RM_{>0},\;\;\; \l_n(t) \geq \l_n(s) \text{ for } t \ge s.$$
\end{definition}
In applications we will also consider weight sequences that are parametrised by
an arbitrary countable index set rather than $\NM$.
 
To describe the space $\ell^p(\l)$, we first consider the sequence space
\[ \CM^{\infty}\]
consisting of arbitrary sequences
\[ x=(x_0,x_1,x_2,\ldots,x_n,\ldots), \; x_i \in \CM .\]
The elements
\[ e_n:=(0,0,\ldots,0,1,0,\ldots) \in \CM^{\infty}\]
with $1$ at spot $n$ form the {\em standard basis elements} of $\CM^{\infty}$
and we can write
\[x =\sum_{n \geq 0} x_n e_n .\]
If $\l=(\l_n)$ is a weight sequence and  $x \in \CM^{\infty}$, we set
\[ |x|_s:=\left(  \sum_{n \geq 0 } |x_n|^p \l_n^{p}(s) \right)^{1/p} \in \RM_{\ge 0} \cup \{\infty\},\]
where we suppressed the dependence of the norm on $p$ from the notation.

\begin{definition} The Banach space $\ell^p(\l)$ over the interval $]0,S]$
is defined by the Banach spaces
\[ \ell^p(\l)_s:=\{ x \in \CM^{\infty} : \; |x|_s <+\infty \}, \;\;\; s \in \RM, \]
with $|-|_s$ as norm.
\end{definition}

\begin{proposition}
$\ell^p(\l)$ is an $S$-Kolmogorov space.
\end{proposition}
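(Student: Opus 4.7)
The plan is to verify each of the three conditions in the definition of an $S$-Kolmogorov space: (i) the fibres are Banach spaces, (ii) there are well-defined restriction maps forming a functor, and (iii) these restriction maps have operator norm at most one.

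First, I would observe that each fibre $\ell^p(\lambda)_s$ is a weighted $\ell^p$-space. Since $\lambda_n(s) > 0$, the map
\[ \Phi_s : \ell^p(\lambda)_s \to \ell^p(\NM,\CM),\;\; (x_n) \mapsto (\lambda_n(s) x_n) \]
is an isometric bijection onto the classical sequence space $\ell^p$. Since $\ell^p$ is complete, $\ell^p(\lambda)_s$ inherits the structure of a Banach space from the norm $|-|_s$. (Triangle inequality comes from Minkowski, homogeneity is immediate.)

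Next, I would construct the restriction maps. For $t \geq s$ in $]0,S]$, the hypothesis that each $\lambda_n$ is increasing gives $\lambda_n(t) \geq \lambda_n(s) > 0$ for every $n \in \NM$. Consequently, for any $x = (x_n) \in \CM^\infty$,
\[ |x|_s^p = \sum_{n \geq 0} |x_n|^p \lambda_n(s)^p \leq \sum_{n \geq 0} |x_n|^p \lambda_n(t)^p = |x|_t^p. \]
Hence $\ell^p(\lambda)_t \subset \ell^p(\lambda)_s$ as subsets of $\CM^\infty$, and the set-theoretic inclusion
\[ e_{st} : \ell^p(\lambda)_t \to \ell^p(\lambda)_s,\;\; x \mapsto x \]
is a well-defined continuous linear map with $|e_{st}(x)|_s \leq |x|_t$. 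This shows $|e_{st}| \leq 1$, which is the Kolmogorov condition.

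Finally, functoriality is immediate: since the $e_{st}$ are literal inclusions of subsets of $\CM^\infty$, we trivially have $e_{su} = e_{st} \circ e_{tu}$ for $u \geq t \geq s$ and $e_{ss} = \Id$. So $\ell^p(\lambda) \to \,]0,S]$ is a Banach space over the ordered set $]0,S]$ with restriction norms at most one, i.e. an $S$-Kolmogorov space. There is no real obstacle here; the only point to notice is that the definition of a weight sequence has been set up precisely so that the monotonicity of each $\lambda_n$ forces the restriction maps to be contractions.
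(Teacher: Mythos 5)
Your proof is correct and follows essentially the same route as the paper: monotonicity of the weights $\l_n$ gives $|x|_t \ge |x|_s$ for $t \ge s$, hence the inclusions $\ell^p(\l)_t \hookrightarrow \ell^p(\l)_s$ are the restriction maps and have norm at most one. The only difference is that you additionally spell out completeness of each fibre via the isometry with the classical $\ell^p$ space, a point the paper leaves implicit.
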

\begin{proof}
As $\l_n(t) \ge \l_n(s)$ for all $t \ge s$, one clearly has
\[ t \ge s \Longrightarrow |x|_t \ge |x_s| \]
and therefore we have inclusions
\[ \ell^p(\l)_t \hookrightarrow \ell^p(\l)_s .\]
These inclusions have clearly norm $\le 1$, so the Banach spaces $\ell^p(\l)_t$,
with these inclusions as restriction maps, form a Kolmogorov space on the interval $]0,S]$. 
\end{proof}

As an example, for the constant weight system  $\l_n=1$ for all $n \in \NM$, we get a ``constant'' Kolmogorov 
space: $\ell^p(\l)_t=\ell^p(\NM)$ for all $t$'s and the restriction maps are all equal to the identity. 
More interesting are the spaces defined by the weight sequence $$ \l_n(s)=s^n.$$ 
In this case the restriction maps 
\[ \ell(\l)^p_t \to \ell(\l)^p_s\]
are limits of finite rank mappings
$$\rho_k:\ell^p(\l)_t \to \ell^p(\l)_s,\ (x_n) \mapsto (x_0,x_1,\dots,x_k,0,0\dots) $$ and are therefore compact operators.\\

For the particular weight sequence $\l_n=s^n$ it is very simple to determine 
the order filtration of the Kolmogorov spaces $\ell^p(\l)$. 

\begin{proposition}
$\ell^p(\l)^{(k)}$ consists of sequences with vanishing first $k$-terms. 
\end{proposition}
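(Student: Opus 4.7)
The plan is to unwind the definition of the order $k$-part applied to the specific weight sequence $\lambda_n(s)=s^n$ and then separately handle the two inclusions.

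First I would compute the norm $|-|^{(k)}_t$ explicitly. By definition,
\[ |x|_t^{(k)} = \sup_{0 < s \le t} s^{-k} |x|_s = \sup_{0 < s \le t} \left( \sum_{n \ge 0} |x_n|^p s^{(n-k)p} \right)^{1/p}. \]
The whole proof then reduces to analysing this supremum as a function of $s \in \,]0,t]$.

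For the ``vanishing'' direction, I would argue by contraposition: if $x_m \neq 0$ for some $m < k$, then the single term $|x_m|^p s^{(m-k)p}$ has a negative exponent in $s$ and hence tends to $+\infty$ as $s \to 0^+$; this forces $|x|_t^{(k)} = +\infty$, so $x$ is not in the fibre $\ell^p(\lambda)^{(k)}_t$. For the reverse direction, suppose $x_n=0$ for $n < k$. Then every remaining exponent $(n-k)p$ is non-negative, so the sum $\sum_{n \ge k} |x_n|^p s^{(n-k)p}$ is a non-decreasing function of $s$, and the supremum over $]0,t]$ is attained at $s=t$. Thus
\[ |x|_t^{(k)} = \left( \sum_{n \ge k} |x_n|^p t^{(n-k)p} \right)^{1/p}, \]
which is finite precisely when $x \in \ell^p(\lambda)_t$.

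I do not anticipate a real obstacle here: the whole statement hinges on the fact that the weights $s^n$ are monomials, so $s^{-k}\lambda_n(s)$ is again a single monomial whose behaviour as $s\to 0$ is dictated entirely by the sign of $n-k$. The one small point to state carefully is that the supremum over $]0,t]$ rather than over $[0,t]$ is what forces the vanishing of the first $k$ coordinates (a supremum over a closed interval containing $0$ would give the same conclusion more brutally). Once the computation above is in place, the identification of $\ell^p(\lambda)^{(k)}_t$ with the subspace of sequences $(x_n)_{n \ge k}$ equipped with the weight sequence $\mu_n(t) = t^{n-k}$ is immediate, and in fact shows that $\ell^p(\lambda)^{(k)}$ is itself isometrically a space of the same type with a shifted weight sequence.
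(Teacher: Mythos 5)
Your proposal is correct and follows essentially the same route as the paper: both arguments exploit the fact that $s^{-k}\l_n(s)=s^{n-k}$ is a monomial, so that when the first $k$ coordinates vanish the rescaled norm is non-decreasing in $s$ (hence bounded by its value at $s=t$), while a non-zero coordinate $x_m$ with $m<k$ contributes a term $|x_m|s^{m-k}$ that blows up as $s\to 0^+$. Your write-up is slightly more explicit about the necessity direction (treating arbitrary $m<k$ rather than the paper's statement for $a_k\neq 0$ and $E^{(k+\e)}$), but there is no substantive difference.
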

\begin{proof}
Indeed if
 $$|x|_s=\left( \sum_{n \geq k} |a_n|^p s^{pn} \right)^{1/p}=s^k\left( \sum_{n \geq 0} |a_{n+k}|^p s^{pn} \right)^{1/p} $$
 thus $x \in E^{(k)}$. Morever if $a_k \neq 0$ then
 $$| x|_s \geq |a_k|s^k $$
 and $x \notin E^{(k+\e)}$ for any $\e >0$. So the condition is necessary and sufficient.  
\end{proof}

\begin{proposition}
Let $D$ be the open unit disc over $\R_{>0}$.
Then the map
$$\Ot^h(D)_s \to \ell^2(\l)_s, f \mapsto (\< f, z^I \rangle_s ) $$
defines an isomorphism between the Kolmogorov spaces 
$\Ot^h(D)$ and $\ell^2(\l)$, where the weight sequence $\l$
is given by
$$\l_I(s)=\sqrt{C(I)} s^{n+|I|}. $$
and 
\[ C(I):=\prod_{k=1}^n \frac{\pi}{i_k+1} .\]
\end{proposition}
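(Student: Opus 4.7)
The plan is to exploit the standard orthogonality of monomials in $L^2(D_s)$ and to identify the isomorphism with the Taylor expansion (so one should read $(\langle f,z^I\rangle_s)$ modulo normalization as the assignment of Taylor coefficients, which is what makes the weights $\lambda_I(s)=\|z^I\|_s$ come out right).

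First, I would verify fibrewise that the monomials $z^I=z_1^{i_1}\cdots z_n^{i_n}$ form an orthogonal family in $\Ot^h(D_s)$. In polar coordinates $z_k=r_ke^{i\theta_k}$ the Lebesgue integral over the polydisc factors as a product of disc integrals, and the angular integration
$$\int_0^{2\pi} e^{i(i_k-j_k)\theta_k}\,d\theta_k$$
vanishes unless $i_k=j_k$. For $I=J$ the radial part gives
$$\int_{|z_k|<s}|z_k|^{2i_k}\,dA(z_k)=2\pi\int_0^s r^{2i_k+1}\,dr=\frac{\pi s^{2(i_k+1)}}{i_k+1}.$$
Multiplying over $k=1,\dots,n$ yields $\|z^I\|_s^{2}=C(I)\,s^{2(n+|I|)}=\lambda_I(s)^2$, which is exactly the prescribed weight.

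Second, I would show that the normalized monomials $\{z^I/\|z^I\|_s\}_{I\in\NM^n}$ form a Hilbert basis of $\Ot^h(D_s)$. The key step is density: for $f\in\Ot^h(D_s)$ the Taylor series $f(z)=\sum_I a_Iz^I$ converges uniformly on every compact $\overline{D_{s'}}$ with $s'<s$. Truncating to $S_Nf=\sum_{|I|\le N}a_Iz^I$, the orthogonal projection has operator norm $\le 1$ so $\|S_Nf\|_{s'}\le\|f\|_{s'}\le\|f\|_s$; letting first $N\to\infty$ and then $s'\nearrow s$ via monotone convergence produces the Bessel/Parseval identity
$$\|f\|_s^{2}=\sum_I|a_I|^2\,\|z^I\|_s^{2}=\sum_I|a_I|^2\lambda_I(s)^2.$$
This Parseval identity is the heart of the matter: the linear map $\Phi_s:f\mapsto(a_I)$ is then an isometric isomorphism $\Ot^h(D)_s\to\ell^2(\lambda)_s$, with the Taylor coefficient being $a_I=\langle f,z^I\rangle_s/\|z^I\|_s^{2}$.

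Finally, I would check compatibility with the restriction maps defining the Kolmogorov structures. For $t\ge s$, restricting $f\in\Ot^h(D_t)$ to $D_s$ keeps the same Taylor coefficients $(a_I)$, and this is precisely the inclusion $\ell^2(\lambda)_t\hookrightarrow\ell^2(\lambda)_s$ coming from $\lambda_I(s)\le\lambda_I(t)$. Thus $\Phi=(\Phi_s)_{s>0}$ is a morphism of Kolmogorov spaces which is fibrewise an isometric isomorphism, hence an isomorphism in $\textup{\bf \em Kol}_{\RM_{>0}}$. The main obstacle is the density argument in step two: one must carefully justify that the $L^2$-limit of the Taylor partial sums is $f$ itself and not merely some $L^2$-function agreeing with $f$ almost everywhere, which is where the uniform-on-compacta convergence of Taylor series combined with the $L^2$-bound on partial sums is crucial.
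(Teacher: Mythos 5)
Your proof is correct and follows the same route as the paper: the orthogonality computation $\langle z^I,z^J\rangle_s=C(I)\,s^{2n+|I|+|J|}\,\delta_{IJ}$ giving $|z^I|_s=\lambda_I(s)$, together with the fact that the monomials form an orthogonal Hilbert basis of $\Ot^h(D_s)$. The paper simply asserts the Hilbert-basis property and stops there, whereas you supply the completeness argument (Parseval via uniform-on-compacta convergence of the Taylor partial sums and monotone convergence as $s'\nearrow s$) and the compatibility with the restriction maps, both of which the paper leaves implicit.
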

\begin{proof} 
The monomials 
\[ z^I:=z_1^{i_1}z_2^{i_2}\ldots z_n^{i_n}\] 
form an orthogonal Hilbert basis of $\Ot^h(D)$. One computes
$$\< z^I,z^J \rangle_s =\int_{D_s}z^I \bar z^J d \mu = C(I)s^{2n+|I|+|J|} \dt_{IJ} $$
where
\[ C(I) =\prod_{k=1}^n \frac{\pi}{i_k+1} . \]
In particular:
$$|z^I|_s=C(I)^{1/2}s^{n+|I|} .$$
\end{proof}

\section{Kolmogorov spaces and coherent sheaves}
\label{S::privilege}
In the previous sections, we introduced the Kolmogorov space versions $\Ot^b$, $\Ot^c$ and
 $\Ot^h$ of the sheaf of functions $\Ot$. In applications one not only encounters
functions, but also vector fields, differential forms and further tensor fields.
These objects can be seen as sections of vector bundles and form locally free
sheaves. Given a free sheaf $\Ft$ there is no difficulty in defining its Kolmogorov
versions $\Ft^a$, $a \in \{b,c,h\}$, as for sufficiently small $U$ one has
\[\Ft(U) = \bigoplus_{i=1}^p \Ot(U) e_i\]
for some choice of generators $e_i$, and one can simply put
\[\Ft^a(U):=\bigoplus_{i=1}^p \Ot^a(U) e_i,\;\;\textup{for}\;\;\;a \in \{b,c,h\} .\]
In this way one can speak for instance about Kolmogorov spaces of vector fields $\Theta_X^c$ over a smooth complex manifold and more generally of arbitrary tensors. Of course, this depends on the choice of generators, but this dependence
is usually not important.\\

However, often one has to deal with other types of sheaves, for example ideal sheaves 
$\It \subset \Ot$, or functions defined on a submanifold like $\Ot/\It$ and such sheaves 
are usually not locally free. The question arises if it is possible to define Kolmogorov  versions $\Ft^a$, $a \in \{b,c,h\}$ for a general coherent sheaf.
Recall the sections of a coherent sheaf $\Ft$ over sufficiently small open subsets $U$
can be be described by a {\em presentation}, i.e. an exact sequence of the form
\[ \Ot(U)^q \stackrel{A}{\to}  \Ot(U)^p \to  \Ft(U) \to 0\]
where 
\[ A \in Mat(p \times q, \Ot(U))\]
is a matrix of functions, holomorphic on $U$, so $\Ft(U)=Coker(A)$. 
Given such a presentation, one is tempted to use the same matrix $A$ 
(maybe on a slightly smaller open set) to define $\Ft^a$-versions by setting
\[ \Ft^a(U):=Coker(\Ot^a(U)^q \stackrel{A}{\to}  \Ot^a(U)^p)\]
for $a \in \{c,b,h\}$.

A complication arises from the fact that the map $A$ may not have a closed
image, and as a result the cokernel will not have the structure of a Banach
space. It was {\sc H. Cartan} who realised that in  most situations one may 
circumvent this difficulty. We will use the following terminology:

\begin{definition} Let $\Ft$ be a coherent sheaf on $\CM^n$. An open subset $U \subset \CM^n$ is
called  $\Ft$-open,~\index{open set for a coherent sheaf} if $\Ft^a(U)$ is a
 Banach space for $a \in \{b,c,h\}$.
(These are more commonly called {\em privileged neighbourhood} for the sheaf $\Ft$.)
\end{definition}

\begin{theorem}
For any coherent sheaf $\Ft$ there exists open subsets $\Omega \subset GL(n,\CM)$ and
$\D \subset \RM^n_+ $ containing $0$ in its closure, such that $AD_r$ is $\Ft$-open for any $A \in \Omega$, $r \in \D$.
\end{theorem}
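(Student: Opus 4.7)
The plan is to reduce the claim to the assertion that, after a generic linear change of coordinates and on a sufficiently small polydisc, the presentation matrix of $\Ft$ has closed image on each of the spaces $\Ot^a(U)^p$, since the cokernel of a bounded linear map between Banach spaces inherits a Banach space structure precisely when the image is closed. The statement is local, so working near the origin I fix a presentation $\Ot(V)^q \stackrel{A}{\to} \Ot(V)^p \to \Ft(V) \to 0$ on some neighborhood $V$ of $0$. The task becomes: find an open $\Omega \subset \GL(n,\CM)$ containing $\Id$ and a set $\Delta \subset \RM_+^n$ with $0$ in its closure, such that for every $T \in \Omega$, every $r \in \Delta$, and $U := T D_r \subset V$, the induced map $A: \Ot^a(U)^q \to \Ot^a(U)^p$ has closed image for $a \in \{b,c,h\}$.

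To produce such a closed image I would exhibit a bounded linear projection onto a topological complement, following the classical Cartan--Douady strategy based on a parametric version of Weierstrass division. The argument proceeds by induction on $n$: the case $n=0$ is trivial, and for $n \geq 1$ a generic linear change of coordinates $T$ makes every nonzero entry of $A$ regular in $z_n$, meaning that its restriction to the $z_n$-axis vanishes only to finite order. Weierstrass preparation then writes each such entry as a unit times a distinguished polynomial $W$ in $z_n$ with coefficients in $\Ot$ on a neighborhood of $0$ in $\CM^{n-1}$, and Weierstrass division by $W$ provides, on a sufficiently small polydisc, a continuous splitting of the multiplication-by-$W$ operator on each of $\Ot^b$, $\Ot^c$, and $\Ot^h$. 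This reduces the closed-image question on $D_r \subset \CM^n$ to one over $D_{r'} \subset \CM^{n-1}$, to which the inductive hypothesis applies. The three variants are treated in parallel because Weierstrass division preserves supremum, continuous, and $L^2$ estimates simultaneously, as long as the roots of $W$ stay at a definite distance from the boundary of $D_{r_n}$.

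The main obstacle, and the genuine substance of the theorem, is to pin down the parameter set on which the Weierstrass division estimates are uniform. This forces $\Omega$ to be taken as a sufficiently small neighborhood of a base change of coordinates putting the entries of $A$ in general position with respect to $z_n$, and forces $\Delta$ to consist of polyradii $r = (r_1, \ldots, r_n)$ whose last component is small relative to the others, so that the roots of the Weierstrass polynomials $W(z',\cdot)$ remain uniformly bounded away from $\d D_{r_n}$ as $z'$ ranges over the smaller polydisc. Once such uniform bounds for the division operator are in place, both the closedness of $\Im A$ and the Banach-space structure of $\Ft^a(U)$ follow formally, completing the proof. All remaining steps (independence from the chosen presentation, compatibility of the three norms, and the gluing of local data) are routine verifications that do not affect the choice of $\Omega$ and $\Delta$.
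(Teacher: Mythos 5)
Your overall strategy --- reduce $\Ft$-openness to closedness of the image of a presentation matrix, then run the classical Cartan--Douady induction on the number of variables using Weierstrass division on adapted polydiscs --- is the route the paper's machinery is built for: the Cartan--Weierstrass theorem proved later in the text (boundedness of the division and remainder operators $\dt_g$, $\rho_g$ on polydiscs adapted to $g$) is exactly the uniform splitting you invoke, and your identification of $\Omega$ and $\D$ as the locus where these estimates are uniform is the right picture. Two points, however, need repair.

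First, the smallness condition defining $\D$ is inverted. For a Weierstrass polynomial $W(z',z_n)=z_n^d+a_1(z')z_n^{d-1}+\cdots+a_d(z')$ with $a_i(0)=0$ one has $W(0,z_n)=z_n^d$, so the polydisc $D_{r'}\times D_{r_n}$ is adapted to $W$ when $r'$ is taken small \emph{after} fixing $r_n$: shrinking $r'$ pushes the roots of $W(z',\cdot)$ toward $0$ and keeps them away from $|z_n|=r_n$ (this is precisely how the paper constructs adapted polydiscs in the Cartan--Weierstrass section). Your prescription --- last component small relative to the others --- does the opposite: for fixed $r'$ the roots have a fixed size as $z'$ ranges over $D_{r'}$, and shrinking $r_n$ eventually drives them across $\partial D_{r_n}$, destroying adaptedness and the lower bound $\min|W|\ge \e>0$ on $D_{r'}\times\partial D_{r_n}$ on which the division estimate rests.

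Second, the induction step is a genuine gap as written. A continuous splitting $\Ot^a(D_r)=W\,\Ot^a(D_r)\oplus\bigoplus_{j<d}\Ot^a(D_{r'})z_n^j$ of multiplication by a single entry does not by itself convert $\Coker\bigl(A:\Ot^a(D_r)^q\to\Ot^a(D_r)^p\bigr)$ into the cokernel of a matrix over $\Ot^a(D_{r'})$: the remaining entries of $A$ are not multiples of $W$, so $W$ cannot simply be divided out by row and column operations. One needs in addition either a d\'evissage of $\Ft$ through short exact sequences (with privileged-ness stable under extension) together with the Weierstrass finiteness statement that a coherent sheaf annihilated by a $z_n$-regular element becomes a coherent $\Ot_{\CM^{n-1}}$-module whose $\Ot^a$-sections over $D_r$ agree with sections of the direct image over $D_{r'}$, or an equivalent finiteness argument. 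Without this, the claimed reduction to $n-1$ variables does not go through for sheaves of full support such as an ideal sheaf $\It\subset\Ot$, which is the typical case of interest.
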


Note that $\Ft$-open sets can be glued: the union of two $\Ft$-open sets is again an $\Ft$-open set. Therefore the theorem implies the existence of privileged neighbourhoods over any compact sets. A proof of the theorem is given in the appendix. 

The category of coherent sheaves admits version of kernels and cokernels in the 
Kolmogorov space setting:

\begin{corollary}
 Consider a morphism of coherent sheaves on $\CM^n$.
 $$\Ft \stackrel{u}{\to} \Gt. $$
 For each point $ p \in \CM^n$ there exists a fundamental system of $S$-open sets $U_s$, $s \in ]0,S]$,
such that the kernels, images  and Cokernels of the maps
\[ \Ft^a(U) \stackrel{u}{\to} \Gt^a(U),\;\;\;a \in \{b,c,h\}\]
are $S$-Kolmogorov spaces.
\end{corollary}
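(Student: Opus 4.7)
The plan is to combine the preceding theorem with the fact that kernel, image, and cokernel of a morphism of coherent sheaves are themselves coherent. Set
\[ \Kt := \Ker u,\quad \It := \Im u,\quad \Ct := \Coker u, \]
which are coherent sheaves on $\CM^n$ fitting into short exact sequences
\[ 0 \to \Kt \to \Ft \to \It \to 0, \qquad 0 \to \It \to \Gt \to \Ct \to 0. \]
Applying the previous theorem to each of the five sheaves $\Ft,\Gt,\Kt,\It,\Ct$ produces for each a pair $(\Omega_i,\D_i)$ of open sets with $0 \in \overline{\D_i}$. Intersecting gives an open $\Omega := \bigcap_i \Omega_i \subset \GL(n,\CM)$ and an open $\D := \bigcap_i \D_i \subset \RM^n_+$, still with $0$ in its closure. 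I fix $A \in \Omega$ and extract from $\D$ a decreasing family of polyradii $r(s)$, $s \in ]0,S]$, tending to $0$. Setting $U_s := p + A \cdot D_{r(s)}$, one obtains a fundamental system of neighbourhoods of $p$, each simultaneously privileged for all five sheaves.

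The next step is to identify the section-level kernel, image, and cokernel of $u_{U_s}^a$ with $\Kt^a(U_s)$, $\It^a(U_s)$, $\Ct^a(U_s)$. The equality $\Ker u_{U_s}^a = \Kt^a(U_s)$ is purely algebraic and holds on any open set. For the image and cokernel one invokes the Cartan-type content of the privileged-neighbourhood theorem: on such $U_s$, the two short exact sequences of sheaves globalise to topologically exact sequences of Banach spaces
\[ 0 \to \Kt^a(U_s) \to \Ft^a(U_s) \to \It^a(U_s) \to 0, \qquad 0 \to \It^a(U_s) \to \Gt^a(U_s) \to \Ct^a(U_s) \to 0. \]
In particular, the image of $u_{U_s}^a$ is closed in $\Gt^a(U_s)$ and equals $\It^a(U_s)$, and its cokernel is $\Ct^a(U_s)$, so all three spaces are Banach for every $s \in ]0,S]$ and every $a \in \{b,c,h\}$.

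For the Kolmogorov structure it suffices to check that, for $s \leq t$, the induced restrictions on kernels, images, and cokernels have operator norm at most one. The restrictions $\Ft^a(U_t) \to \Ft^a(U_s)$ and $\Gt^a(U_t) \to \Gt^a(U_s)$ have norm $\le 1$ directly from the defining norms (supremum for $a \in \{b,c\}$, $L^2$-integral for $a=h$). The induced maps on the closed subspaces $\Kt^a$ and $\It^a$ inherit norm $\le 1$ from the subspace norm, and the induced map on the quotient $\Ct^a = \Gt^a/\It^a$ does so from the quotient norm, by the general principle that a linear map of operator norm $\le 1$ sending one closed subspace into another descends to a map of quotients of norm $\le 1$. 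This produces the desired $S$-Kolmogorov structures.

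The main obstacle is the second step: showing that on a privileged $U_s$ the section-level image of $u$ is closed and coincides with $\It^a(U_s)$ in each of the three flavours $a \in \{b,c,h\}$. This is the genuinely non-trivial input, part of the Cartan-Douady privileged-neighbourhood machinery deferred to the appendix; granting it, the remainder of the argument is soft functional analysis.
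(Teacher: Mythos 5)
The paper states this corollary without giving any proof at all: it is presented as an immediate consequence of the privileged-neighbourhood theorem, whose own proof is deferred to the appendix. Your proposal is therefore not competing with a written argument; it is the natural reconstruction of what the authors intend, and its overall architecture (coherence of $\Ker u$, $\Im u$, $\Coker u$; simultaneous privilege for the five sheaves; exactness of $(-)^a$ on privileged neighbourhoods; norm $\le 1$ on subspaces and quotients) is the standard Douady--Cartan route and is sound.

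Two points deserve more care than you give them. First, the intersection step: the theorem as stated only provides, for each coherent sheaf, an open set $\D_i \subset \RM^n_+$ with $0$ in its closure, and the intersection of finitely many such sets can perfectly well fail to have $0$ in its closure (or even be empty near $0$), so simultaneous privilege for the five sheaves does not follow formally from the literal statement; one needs the stronger form of the privileged-polyradius theorem (the set of non-privileged polyradii is negligible in a sense stable under finite unions), which is what the appendix actually supplies. The same caveat applies to extracting a \emph{monotone path} $s \mapsto r(s)$ in $\D$ tending to $0$, which you need to index the family by the interval $]0,S]$ rather than by a sequence. Second, the identification $\Ker(u^a_{U}) = \Kt^a(U)$ is not ``purely algebraic on any open set'': $\Kt^a(U)$ is defined as a cokernel built from a presentation of $\Kt$, so even the injectivity of $\Kt^a(U) \to \Ft^a(U)$, let alone the surjection onto the section-level kernel, is part of the exactness package valid only on privileged neighbourhoods. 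Both issues are resolved by the appendix machinery you already invoke for the image and cokernel, so the proof is correct once that machinery is quoted in its full strength rather than in the weakened form of the displayed theorem.
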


\chapter{Partial morphisms of Kolmogorov spaces}
\section{An example}
A homomorphism  $u:E \to F$ between Kolmogorov-spaces over $B$ defines in
particular for each $b \in B$ a linear continuous map
\[ u_b: E_b \to F_b .\]
In many geometrical situations one has to deal with mappings of a different 
kind. One does not have linear maps $E_b \to F_b$ for $b \in B$, 
but rather on is working with maps $E_a \to F_b$ for certain {\em pairs} 
$a$ and $b$.\\

Let us give a nice, crazy example. Consider the set $D=(D_t) $ over $B=\RM_{>0}$ 
defined by
$$D_t:=\{ x+iy \in \CM: x^2+ty^2 < t^2 .\} $$
For $t=1$, any rotation 
$$R(\theta):z \mapsto e^{i\theta}z$$ induces a morphism
$$R(\theta)^*:\Ot^c(D)_1 \to \Ot^c(D)_1,\ f(z) \mapsto f(e^{i\theta}z).  $$
Due to the fact that the set $D_t$ is a round disc only for $t=1$, 
there are no obvious maps
\[ \Ot^c(D)_t \to \Ot^c(D)_t\]
that extend the map $R(\theta)^*$ one has for $t=1$.\\
\vskip0.3cm
\begin{figure}[htb!]
\includegraphics[width=7cm]{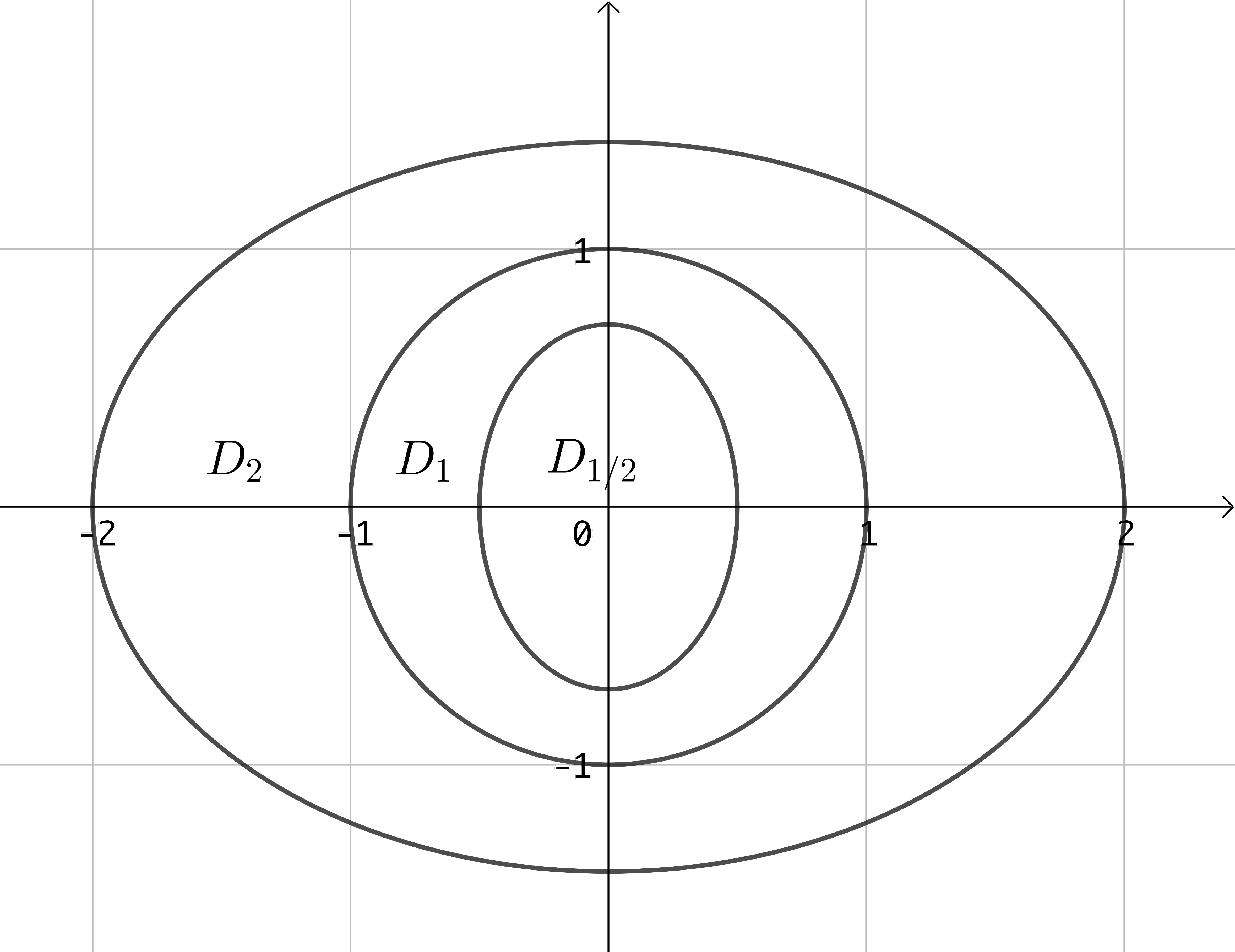}
\end{figure}
\vskip0.3cm
To be specific, let us look at the rotation $r:=R(\pi/2)$ over $\theta=\pi/2$.
The ellipse $D_t$ contains the rotated ellipse $r(D_s)$ if
and only if $s \le \sqrt{t}$ and $\sqrt{s} \le t$, i.e. iff
$$s<\min(\sqrt{t},t^2) $$
If these inequalities holds, the map $r$ maps $D_s$ into $D_t$:
\[ r: D_s \to D_t, (x,y) \to (-y,x)\]
and we obtain induced maps
\[ r_{st}:=r^*: \Ot^c(D)_t \to \Ot^c(D)_s\]
for all pairs $(t,s)$ in the following region.\\

\begin{center}
\includegraphics[width=12cm]{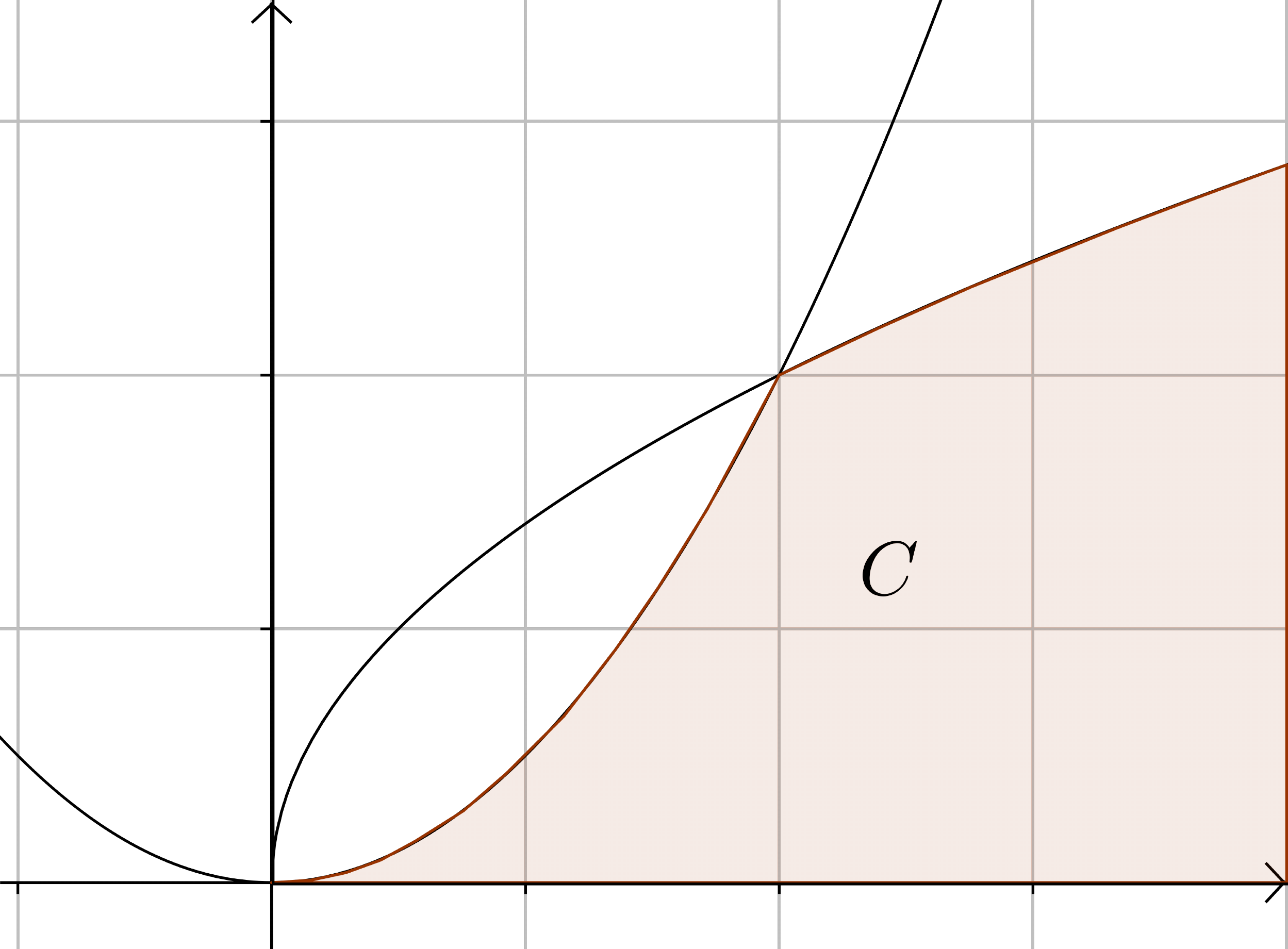}
\end{center}

Note that these mappings $r_{st}$ are not arbitrary, but rather are
compatible with restrictions: if $s' \le s$ and $t \le t'$, then one
has the relation
\[  r_{s't'} =e_{s's} \circ r_{st} \circ e_{tt'},\]
where the $e$'s denote the restriction mappings in the Kolmogorov 
space $\Ot^c(D)$.\\

We will call such thing a {\em partial morphism}\index{partial morphism} 
of $\Ot^c(D)$ to $\Ot^c(D)$. 
From this example it should already be clear that the appearance of such
partial morphisms is the rule rather than the exception.
 
 \section{Partial morphisms}
 \begin{definition}\index{partial morphism} 
Let $E,F$ be Banach spaces over ordered sets $A,B$.
A {\em partial morphism} from $E$ to $F$ over $C \subset A^{op} \times B$ is 
a horizontal section of $\mathcal{H}om(E,F)$ over $C$. We use the notation
\[ \Hom_C(E,F):=\G^h(C,\mathcal{H}om(E,F))\]
to denote the vector space of all partial morphisms over $C$.
We will give it the topology of pointwise convergence, but this topology will play no big role here.
\end{definition}

We will often write partial morphisms just as ordinary morphisms
\[  u: E \to F\]
meaning that for $(t,s) \in C \subset A \times B$ there is a corresponding map
\[ u_{st}: E_t \to F_s . \]
We call $C$ the {\em definition set}\index{definition set} of the partial morphism and write
$$C=\Dt(u) . $$

{\bf \em Restriction and Extension}\\
Obviously, a partial morphism can be restricted to a smaller set: 
if $C \subset D$, there is a natural restriction 
map
\[ \Hom_D(E,F) \to \Hom_C(E,F),\;\;u \mapsto u_{|C} .\]

Conversely, if $u \in \Hom_C(E,F)$ is a partial morphism with definition 
set $C$, one may try to extend $u$ to a larger set $D \supset C$.

For example, if we consider a {\em single map} $u_{ba}:E_a \to E_b$ as a partial 
morphism defined at the point $C=\{(a,b)\}$, we may extend it to the downset
\[ C^d:=\{(t,s) \in A^{op} \times B\;|\; t \le a, \,s \le b\}\]
using the restriction maps $e$ resp $f$ of $E$ resp. $F$, by defining
\[ u_{st} = f_{sb} u_{ba} e_{at}:E_t \to F_s .\]
Recall that $t \le a$ in $A^{op}$ means concretely that $t \ge a$ in $A$.

Nevertheless given $u \in \Hom_C(E,F)$ and two points $p=(a,b), q=(c,d) \in C$, 
the extension of $u$ on the downset of $p=(a,b)$ and the extension on the
downset of $q=(c,d)$ need not to coincide on their intersection.
But this is guaranteed if 
$$p,q \in C \implies \inf (p,q)=(c,b) \in C.$$

\begin{center}
\includegraphics[width=11cm]{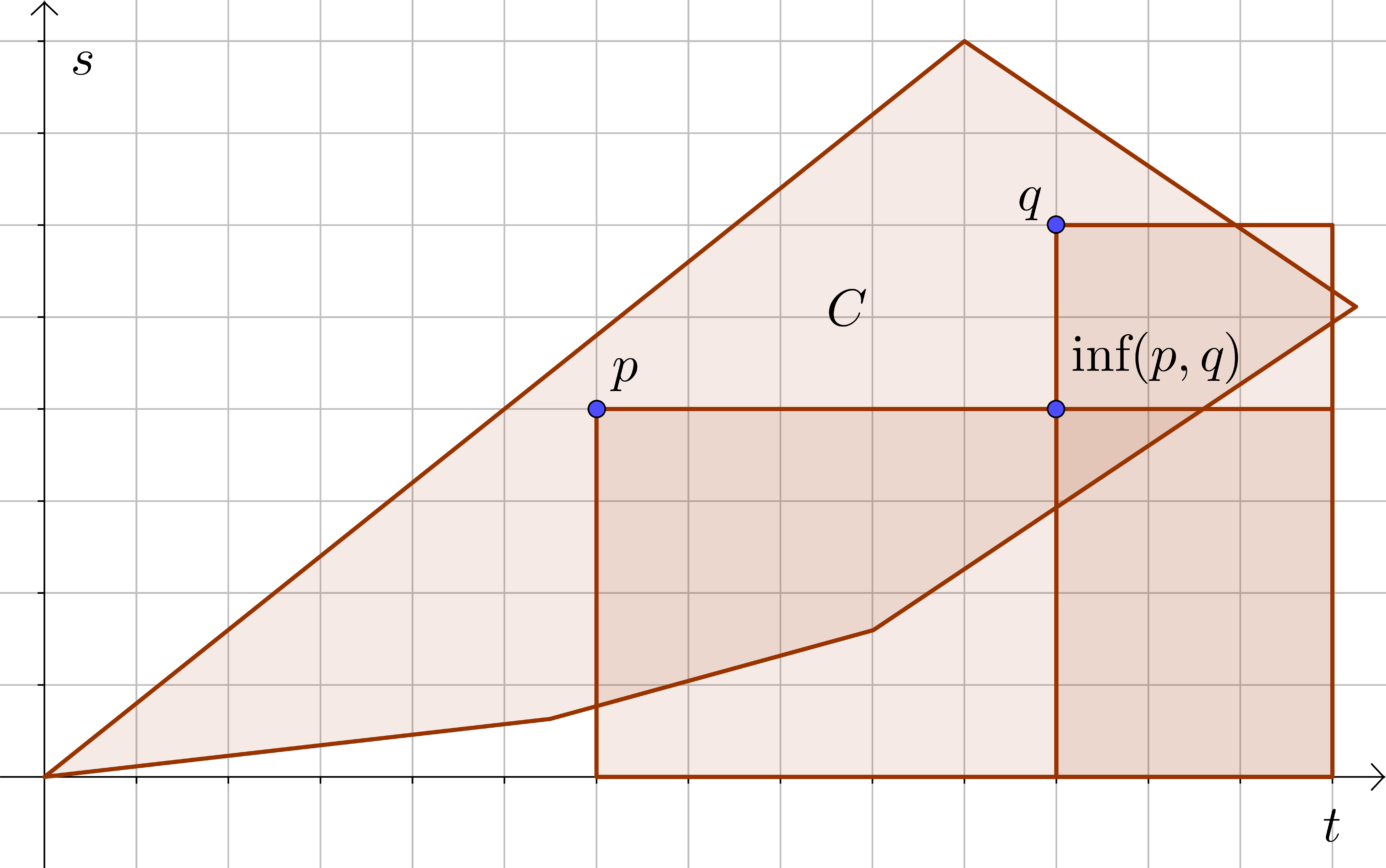}
\end{center}

\begin{definition} For a partial ordered set $B$, denote by
\[ \overline{\D}:=\{ (s,t) \in B^{op} \times B\;|\;\;s \le t\}\]
the closed subdiagonal\index{closed subdiagonal}.
If $E$ and $F$ are Kolmogorov spaces over $B$, we call  
\[\Hom_{\overline{\D}}(E,F)\]
the space of {\em complete morphisms}\index{complete morphism}.
\end{definition}

The restriction map
\[  \Hom_{\overline{\D}}(E,F) \to \Hom_{\delta}(E,F),\]
to the diagonal
\[\delta:=\{(b,b) \in B^{opp}\times B \} \]
plays a special role. An element $u$ of $\Hom_{\delta}(E,F)$ is an 
{\em arbitrary collection} of maps 
\[ u_b:E_b \lra F_b ,\]
whereas the image of the above restriction map consist of those
which commute with the restriction maps in $E$ and $F$:
\[ f_{ab}u_b=u_ae_{ab}\]
which is expressing the fact that the $u_b$ define a morphism in
the category, i.e. an element of $\Hom_B(E,F)$.
We have seen in \ref{P::complete_morphisms} 
\[  \Hom_{\overline{\D}}(E,F) \to \Hom_{B}(E,F)\] 
is an isomorphism of vector spaces, which shows that complete morphism
can be identified with morphisms in the category of Kolmogorov spaces.\\

\section{Basic examples}
The general context of the example used in the introduction is the following.
Let $\p: X \to X$ be a holomorphic mapping of an analytic space $X$ to
itself. If $U$ and $V$ are relative compact open subsets of $X$ with the
property that 
\[ \p(U) \subset V,\]
there is an associated map of Banach spaces
\[ \p^*_{UV}: \Ot^c(V) \to \Ot^c(U); f \mapsto f \circ \p .\]
These maps $\p^*_{UV}$, for various open sets $U$ and $V$, are compatible 
with restriction mappings, so we can say that this construction defines a
horizontal section $\p^*$ of some Kolmogorov space, more precisely, an
element $\p^c:=(\p^*)^c$ of
\[ \Hom_C(\Ot^c,\Ot^c):=\G^h(C,\mathcal{H}om(\Ot^c,\Ot^c)),\]
where we consider $\Ot^c$ as a Kolmogorov space over $\Tt^c$
and 
\[C:=\{(U,V) \in (\Tt^c)^{op} \times \Tt^c\;|\;\p(U) \subset V\} .\]

When we consider  $S$-Kolmogorov spaces like $\Ot^c(D)$, where $D$ is a 
relative compact open set over $]0,S]$, we are dealing with a pull-back of 
$\Ot^c$ via a map $$]0,S] \to \Tt^c,\;\; s \mapsto D_s$$ 
and can do the analogous construction. The map $\p$ induces partial 
morphism $\p^c$ over the set
\[A:=\{ (t,s) \in ]0,S] \times ]0,S]\;|\;\p(D_s) \subset D_t\},\]
i.e:
\[\p^c \in \Hom_A(\Ot^c(D),\Ot^c(D))=\G^h(A,\mathcal{H}om(\Ot^c(D),\Ot^c(D))\]

It is instructive to look at some simple examples.\\

If $\a \in \CM^*$ is a fixed complex number, then the scaling map
\[ \p: \CM \to \CM, z \mapsto \a z\]
induces a a ring homomorphism
$$\p^*:\CM\{ z \} \to \CM\{ z \},\ f(z) \mapsto f(\a\,z) .$$
on the power-series ring. Let us try to see what partial morphism
on the Kolmogorov space $\Ot^c(D)$ is induced by $\p$, 
where $D=(D_t)$ is the unit disc over $B=\RM_{>0}$.
Clearly, $\p(D_s) \subset D_t$ precisely if $|\a|s \leq t$.
The above procedure then produces a partial morphism 
\[ \p^c \in \Hom_{A_\a}(\Ot^c(D),\Ot^c(D)),\;\;\;\]
where the definition set is the cone
\[ A_\a=\{ (s,t) \in \RM^2: |\a|\,s \leq t \}. \]
\begin{center}
 \includegraphics[height=6cm]{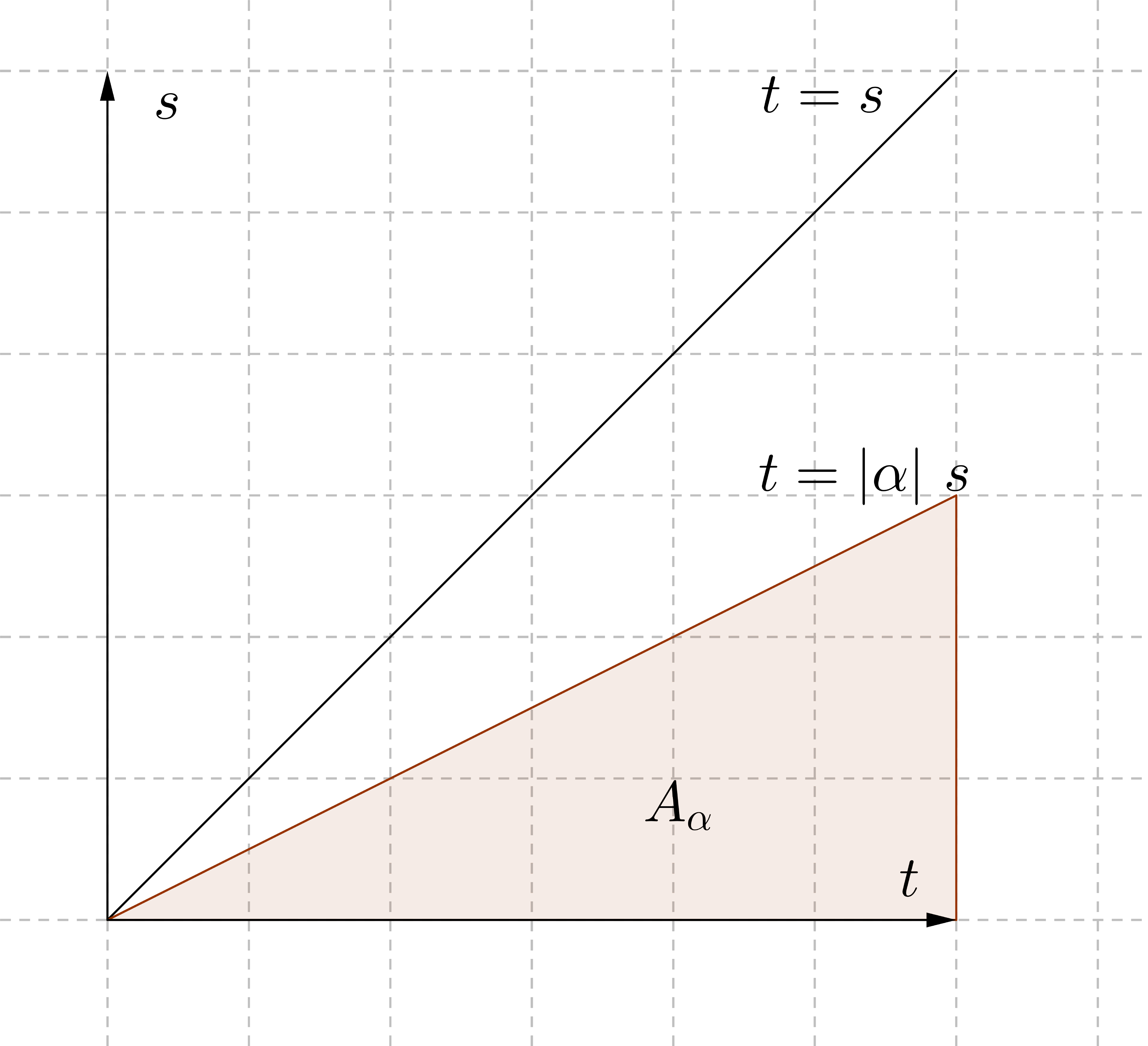}
\end{center}
What happens with maps which are not one-to-one? Say we consider the map
$$\p:\CM \to \CM,\ z \mapsto  z^2 .$$
Composition with $\p$ defines again a ring homomorphism:
$$\p^*:\CM\{ z \} \to \CM\{ z \},\ f(z) \mapsto f(z^2) .$$
If $f$ is holomorphic inside the disk of radius $t$, then the function $f(z^2)$
is defined for all $z$ with $|z^2|\leq t $.
This means that $\p$ induces a partial morphism 
\[ \p^c \in \Hom_{A}(\Ot^c(D),\Ot^c(D)),\;\;\;\]
with definition set
\[ A:=\{ (s,t) \in \RM^2: s^2 \leq t \}. \]
\begin{center}
 \includegraphics[height=6cm]{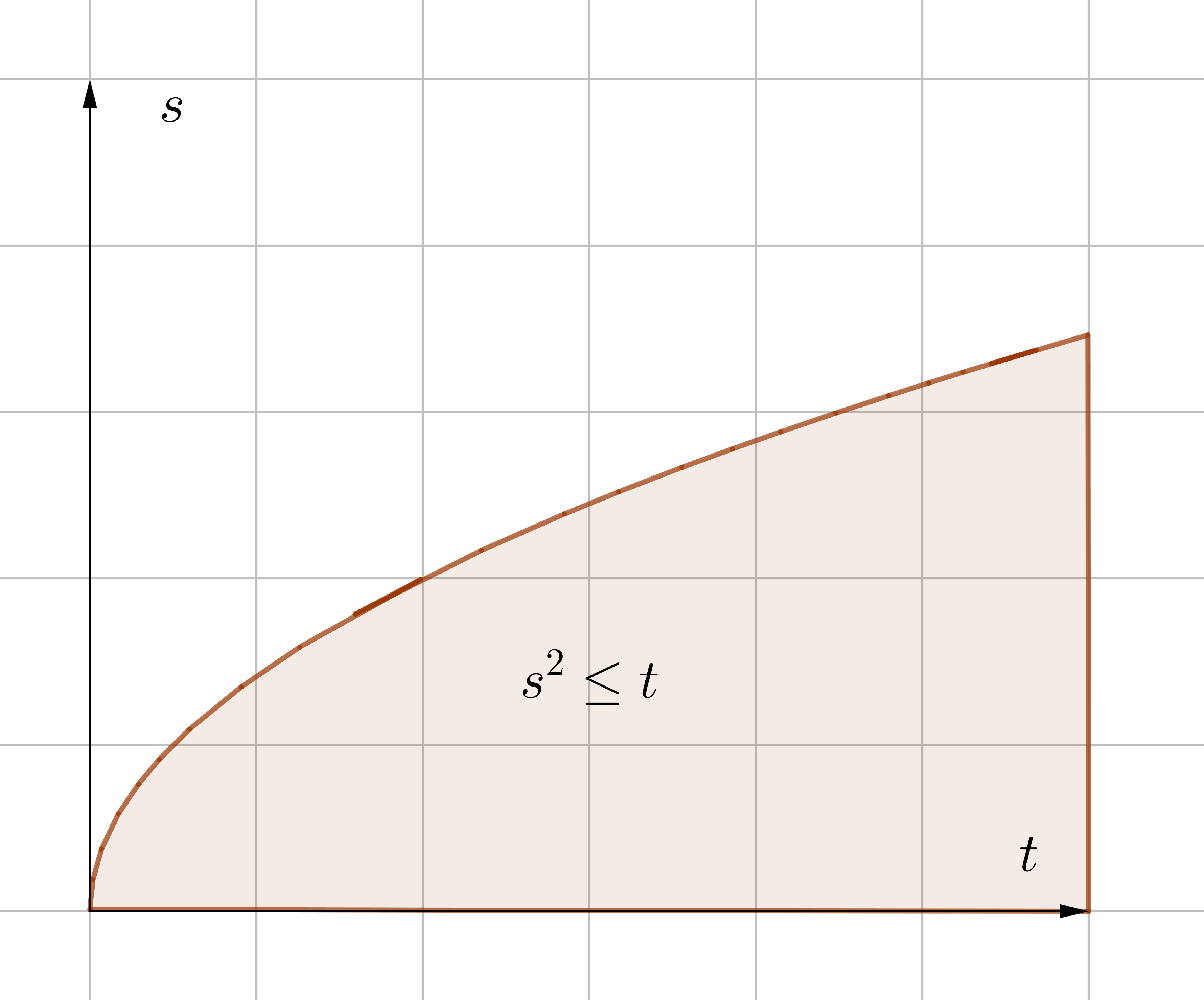}
\end{center}
So from the geometry of the definition set, we may already distinguish maps 
which do not correspond to biholomorphic germs: their boundary have a vertical 
tangent at the origin.\\ 
Let us now investigate what happens for maps which do  not fix the origin. 
Consider for instance the translation by a vector $\lambda \in \CM$:
$$\tau:\CM \to \CM,\ z \mapsto z+\lambda .$$ 
Composition with $\tau$ defines 
a morphism
$$\tau^*:\Ot_{\CM,0} \to \Ot_{\CM,\l},\   f(z) \mapsto f(z+\lambda)$$
which maps a germ a the origin to a germ at $\l$.

It induces a partial morphism
\[\tau^c \in \Hom_A (\Ot^c(D) , \Ot^c(D))\]
over the set 
\[A:=\{(s,t) \in \RM^2 : s <t-|\lambda|\}\]
Geometrically the base $A$  of the vector space $\Hom_A(\Ot^c(D),\Ot^c(D))$
does not contain the origin in its closure and this means precisely that our original map $\tau$ does not fix the origin.\\
\begin{center}
\includegraphics[height=5cm]{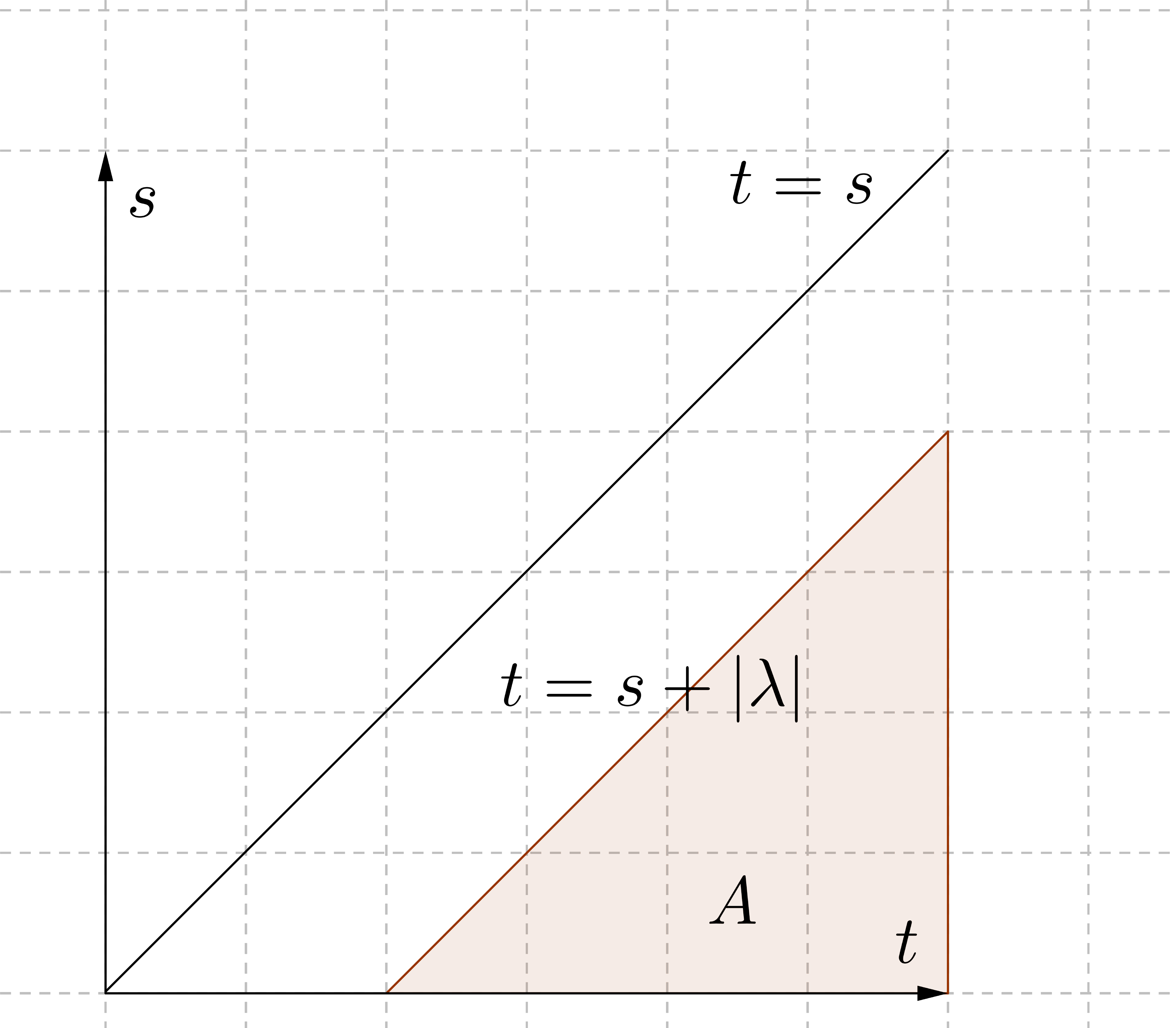}
\end{center}

From these very simple examples, we already can see that definition sets 
will play an important role in functorial analysis. We will see that much
many technical points in KAM theory reduce to simple statements
in the geometry of definition sets.
\section{Composition of partial morphisms}
How to compose such partial morphisms? 
If we are given partial morphisms $u: E \to F$ and $v: F \to G$ we are 
given maps
\[ u_{st}: E_t \to F_s,\;\;(t,s) \in A=\Dt(u) \]
and
\[ v_{rs}: F_s \to G_r,\;\;(s,r) \in B:=\Dt(v) \]
So if we have pairs $(t,s) \in A$ and $(s,r) \in B$, we
can define the composition
\[ v_{rs} \circ u_{st} : E_t \to G_r\]
This composition may depend on the choice of $s$, but if $s' \le s$,
then we have, using the horizontality of $u$ and $v$:
\[v_{rs'} \circ u_{s't}=v_{rs'}\circ f_{s's} \circ u_{st}=v_{rs}\circ u_{st} .\]
So if $s$ and $s'$ are comparable in the order, we obtain the same map
as composition. So if the index sets are {\em totally ordered},
e.g in the case of morphisms between $K1$-spaces, there can not arise
ambiguity.\\ 

This motivates the following two definitions:\\

\begin{definition}\label{D::convolution}\index{convolution of sets}  
The {\em convolution} of $A_1 \subset B_1 \times B_2$ and 
$A_2 \subset B_2 \times B_3$ is the set
\[A_1 \star A_2:=\{(r,t) \in B_1 \times B_3\;|\;\;\exists s \in B_2\;\;\textup{such that} (t,r) \in A_1, (r,s) \in A_2\}\] 
\end{definition}

The above given definition of composition then results in the following

\begin{proposition} Let $E_i \to B_i,\ i=1,\dots,3$ be K1-spaces.
The composition $v \circ u$ of two partial morphisms $u \in \Hom_{A_1}(E_1,E_2)$ and  $v \in \Hom_{A_2}(E_2,E_3)$ 
is a well-defined element of
\[\Hom_{A_1\star A_2}(E_1,E_3),\]
The map
\[\Hom_{A_1}(E_1,E_2) \times \Hom_{A_2}(E_2,E_3) \to \Hom_{A_1 \star A_2}(E_1,E_3)\]
is bilinear in both variables.
\end{proposition}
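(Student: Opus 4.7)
The plan is to define the composition $v\circ u$ pointwise: for each pair $(t,r)\in A_1\star A_2$, pick a witness $s\in B_2$ with $(t,s)\in A_1$ and $(s,r)\in A_2$, and set $(v\circ u)_{rt}:=v_{rs}\circ u_{st}$. Three things then need to be verified: independence of the choice of $s$; horizontality of the resulting family as a section of $\mathcal{H}om(E_1,E_3)$ over $A_1\star A_2$; and bilinearity in $u$ and $v$. The last is immediate from the bilinearity of ordinary composition of continuous linear maps between Banach spaces and will not require separate argument.

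For well-definedness, suppose $s$ and $s'$ are both valid witnesses for the same pair $(t,r)$. Here is the only place where the K1-hypothesis is used: because $B_2\subset\RM$ is \emph{totally ordered}, $s$ and $s'$ are comparable, and we may assume $s'\le s$. Horizontality of $u$ against the restriction $f_{s's}\colon (E_2)_s\to(E_2)_{s'}$ of $E_2$ yields $u_{s't}=f_{s's}\circ u_{st}$; horizontality of $v$ against the same restriction yields $v_{rs}=v_{rs'}\circ f_{s's}$. Composing these two identities gives $v_{rs'}\circ u_{s't}=v_{rs'}\circ f_{s's}\circ u_{st}=v_{rs}\circ u_{st}$, exactly as indicated in the discussion preceding the proposition.

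For horizontality of $v\circ u$ on $A_1\star A_2$, I would take $(t,r),(t',r')\in A_1\star A_2$ comparable in the ordered set $B_1^{op}\times B_3$ and select witnesses $s,s'$ respectively. Invoking the total order of $B_2$ once more, I would replace $s$ and $s'$ by a common witness (for instance $\min(s,s')$, with horizontality of $u$ and $v$ used to check it still witnesses the required memberships), and then the compatibility with the restriction maps in $\mathcal{H}om(E_1,E_3)$ reduces, by a direct diagram chase, to the horizontality of $u$ and $v$ separately, together with the functoriality of the restriction maps in $E_1$ and $E_3$. This gives $v\circ u\in\G^h(A_1\star A_2,\mathcal{H}om(E_1,E_3))=\Hom_{A_1\star A_2}(E_1,E_3)$.

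The main obstacle — and in fact the whole reason the proposition is stated only for K1-spaces — is the well-definedness step. In a general partially ordered intermediate base, two witnesses $s,s'$ need not be comparable, and neither horizontality identity above is available to reconcile the two candidate compositions; one is forced either to restrict to chains of witnesses or to impose additional coherence data. The total order of $B_2$ sidesteps this entirely, and once well-definedness is in place the remaining steps are formal.
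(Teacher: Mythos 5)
Your treatment of well-definedness is exactly the paper's: the paper establishes independence of the intermediate witness $s$ in the discussion immediately preceding the proposition, via the same two identities $u_{s't}=f_{s's}\circ u_{st}$ and $v_{rs}=v_{rs'}\circ f_{s's}$, and then states the proposition without further argument. Bilinearity is indeed immediate. On those two points you and the paper coincide.

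The gap is in your horizontality step. Replacing the witnesses $s$ (for $(t,r)$) and $s'$ (for $(t',r')$) by a common witness such as $\min(s,s')$ requires $(t,\min(s,s'))\in A_1$ and $(\min(s,s'),r)\in A_2$ (and likewise for $(t',r')$). That is a condition on the \emph{subsets} $A_1,A_2$, and horizontality of $u$ and $v$ --- which constrains their values, not their definition sets --- cannot deliver it. The failure is genuine, not merely a missing argument. Take $B_1=B_2=B_3=\{1,2\}$ with all fibres equal to $\CM$ and identity restriction maps; let $A_1$ consist of the two diagonal points carrying $u_{11}=0$ and $u_{22}=1$ (these points are incomparable in $B_1^{op}\times B_2$, so any values give a horizontal $u$); let $A_2$ consist of the two points carrying $v_{21}\colon (E_2)_1\to(E_3)_2$ and $v_{12}\colon (E_2)_2\to(E_3)_1$, both equal to $1$ (these points are comparable, and with identity restrictions horizontality just forces $v_{12}=v_{21}$). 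Each point of $A_1\star A_2$ then has a unique witness, and the composite takes the value $v_{21}u_{11}=0$ on $Hom((E_1)_1,(E_3)_2)$ and $v_{12}u_{22}=1$ on $Hom((E_1)_2,(E_3)_1)$; but these two points of $B_1^{op}\times B_3$ are comparable and the connecting restriction map is the identity, so horizontality of $v\circ u$ would force $0=1$. Hence the horizontality clause is false for arbitrary subsets $A_1,A_2$, and no argument can close the hole without an extra hypothesis on the definition sets --- e.g.\ that a witness for a point remains a witness for every point below it, which does hold for all the sets the paper actually uses (sets of the form $\{(s,t): s<f(t)\}$ with $f$ increasing, upset triangles, the subdiagonals). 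To be fair, the paper's own text proves only well-definedness and is silent on horizontality, so you are not missing an argument the paper supplies; but your proposed repair does not work as written, and the issue it is meant to address is real.
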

Of course, it may very well happen that $A_1 \star A_2$ is the empty set.
\section{Convolution of sets}
The most important case arises when we compose partial morphisms between
$S$-Kolmogorov space. Let us take a closer look at this case. We put 
$$C:=]0,S].$$
The convolution of sets defined in \ref{D::convolution} gives an associative mapping
$$\Pt(C^\text{op} \times C) \times \Pt(C^\text{op} \times C) \to \Pt(C^\text{op} \times C),\
(A,B) \mapsto A \star B $$

It is useful to look at some simple examples of this operation.
We earlier encountered for $\a \in \RM_{>0}$ the set
\[ A_\a=\{ (s,t) \in ]0,S] \times ]0,S]: \a\,s \leq t \}. \]
Obviously,
$$A_{\a} \star A_{\b}=A_{\a\b} .$$
More generally, if $A$ and $B$ are defined via increasing functions $f,g$ with
$$A=\{ (t,s): s < f(t) \},\  B=\{ (t,s): s < g(t) \}$$
then
$$A \star B= \{ (t,s)| s < g \circ f(t) \}.$$
Moreover if
\begin{align*}
f(t)=\a t+o(t)+\dots,\\
g(t)=\b t+o(t) .
\end{align*}
then $A \star B$ is bounded by a curve admitting a tangent at the origin with slope $\a\b$.
\begin{center}
\includegraphics[height=6cm]{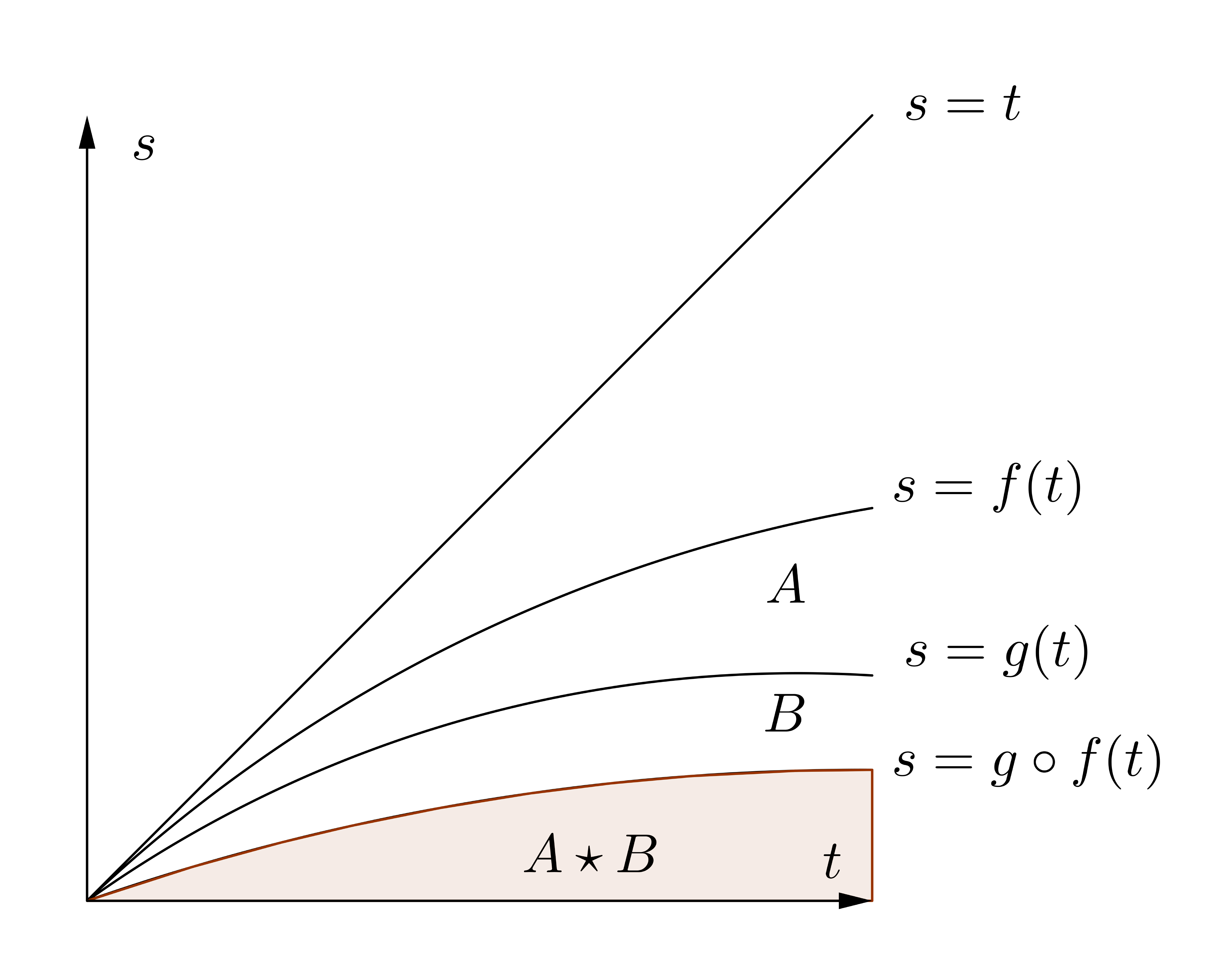}
\end{center}

A set $A \subset C^{op} \times C$ is called a {\em idempotent} if
\[ A= A \star A\]
Suchs sets are of some importance, as the space of sections over $A$ has 
the structure of an {\em algebra} under composition:
\[ \Hom_A(E,E) \times \Hom_A(E,E) \to \Hom_{A\star A}(E,E)=\Hom_A(E,E)\]
The closed subdiagonal
\[ \overline{\D}:=\{ (s,t) \in   C^\text{op} \times C: s \leq t\}\]
is obviously an {idempotent}; the algebra $\Hom_{\overline{\D}}(E,E)$
is the algebra of complete morphisms.

A downset $A$ satisfies the equalities
$$A \star \overline{\D}=\overline{\D} \star A=A  $$
In particular if we form the set
$$\overline{\D} \star A\star \overline{\D}  $$
we get the downset hull of $A$.\\

Clearly, also the {\em open diagonal $\D$}\index{open subdiagonal}
\[ {\D}:=\{ (s,t) \in   C^\text{op} \times C: s < t\}\]
is an idempotent. The algebra 
\[\Hom_{\D}(E,E)=\G^h(\D,\mathcal{H}om(E,E))\]
is called the {\em algebra of almost complete morphisms}\index{almost complete
morphism}.
More generally, for a point $(s,t) \in C^\text{op} \times C$ we can consider the
closed and open {\em upset-trangles} of the point $(s,t)$:
\[\overline{\D}(s,t):=\{(s',t') \in \overline{\D}:  s' \ge s, t' \ge t\}\]
\[\D(s,t):=\{(s',t')\in \D:  s' \ge s, t' \ge t \]
are idempotents. As the order on the first factor is reversed for $C=\RM_{>0}$, we get the following picture:\\

\begin{center}
\includegraphics[height=6cm]{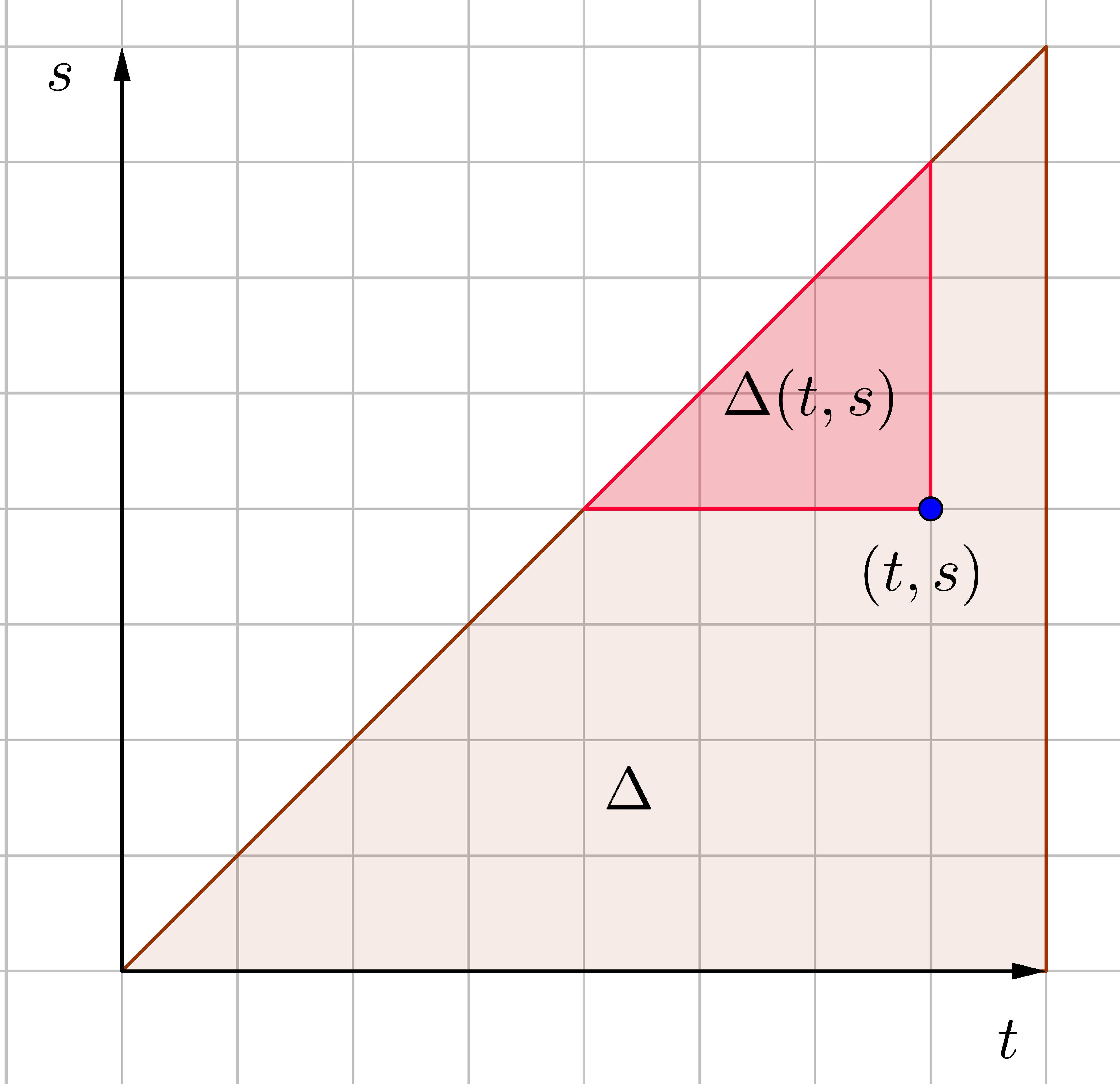}
\end{center}

As any upset in $\D$ (or in $\overline{\D}$) is the union of the 
upset triangles it contains, {\em any} upset $A \subset \D$ is an idempotent:
\\

\begin{center}
 \includegraphics[height=6cm]{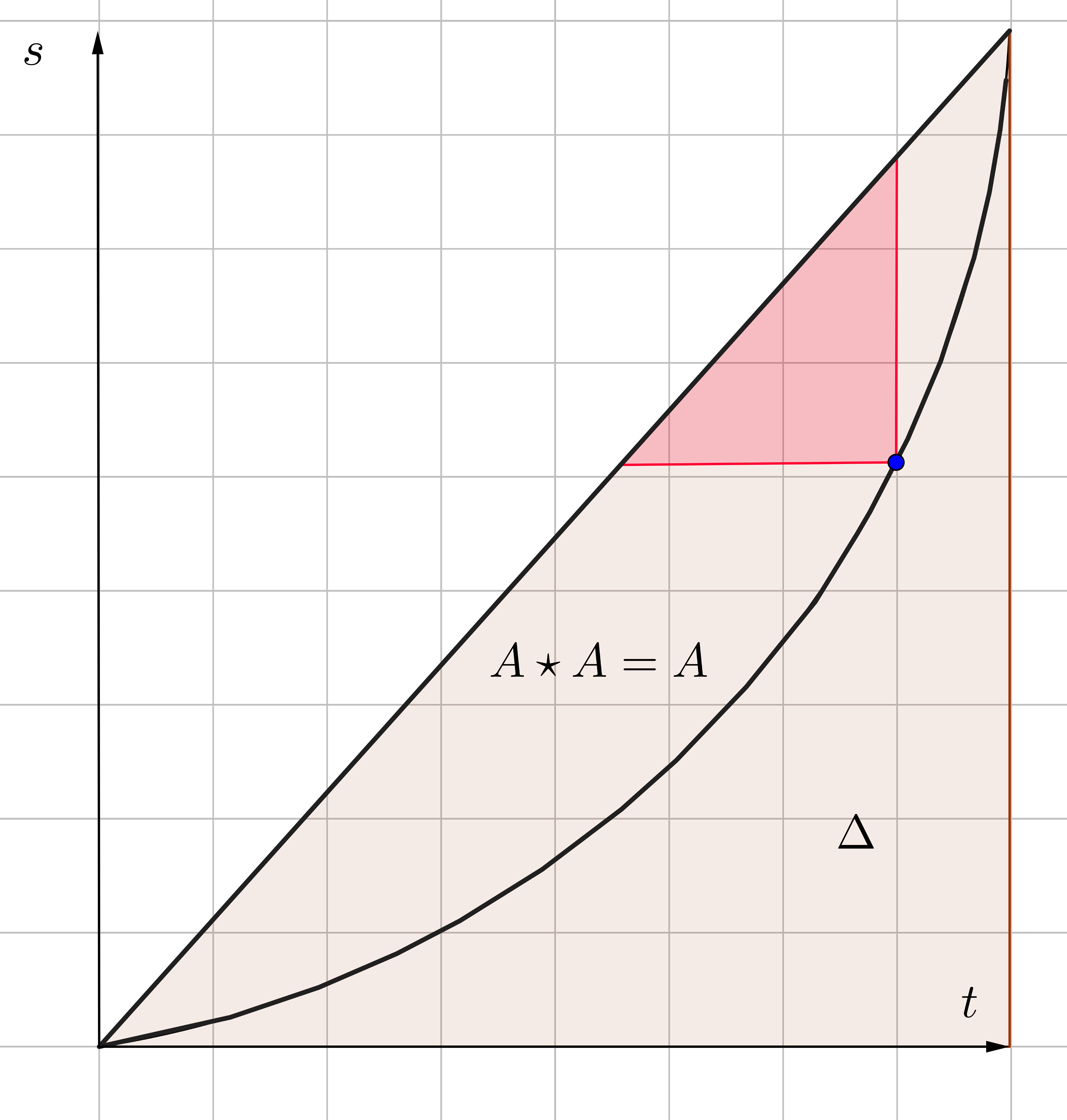}
\end{center}

Certain sub-algebras of $\Hom_{\D(s,t)}(E,E)$ will play a role in the 
next chapter.\\
 
\section{Pseudo-identity and pseudo-inverse} 
Let $E \to C, F \to C$ be Kolmogorov spaces over $C=]0,S]$. Recall that
restriction to the diagonal defines an isomorphism
\[ \Hom_{\overline{\D}}(E,F) \to \Hom_B(E,F) .\]
Using the composition with the restriction maps $e_{st}$ of $E$ and 
$f_{st}$ of $F$ and $u_t: E_t \to F_t$ we obtain for $s \le t$ maps 
\[ u_{st}:=f_{st}\circ t_t=u_s\circ e_{st}: E_t \to F_s ,\]
which are the components of the element in $\Hom_{\Delta}(E,F)$.

We can apply this in particular to the case $E=F$, $u=Id$ and obtain            
a canonical element
\[\iota_{\overline{\D}} \in \Hom_{\overline{\D}}(E,E) \]        
which has the restriction maps 
$$e_{st}:E_t \to E_s,\ t>s $$
as components.

If $A \subset \overline{\D}$ is any subset, there is a restriction map
\[\Hom_{\overline{\D}}(E,E) \to \Hom_A(E,E)\]

\begin{definition} The pseudo-identity\index{pseudo-identity} 
\[\iota_A \in \Hom_A(E,E)\] 
is the restriction of $\iota_{\overline \D} \in \Hom_{\overline \D}(E,E)$. 
\end{definition}



\begin{definition} Let $\p \in \Hom_A(E,F)$ and $\psi \in \Hom_B(F,E)$ be
two partial morphisms such that 
 $$\p \circ \psi= \iota_{A \star B} \in \Hom_{A \star B}(F,F),$$
then we call $\p$ a left pseudo-inverse to $\psi$ and
$\psi$ right pseudo-inverse to~$\p$.
\end{definition}

Of course, this is only interesting if $A \star B$ is non-empty. In general, it is not possible to have
$$A \star B=\overline{\D}.$$
But in situations where there is no loss of regularity this very well can
happen. Let us give a simple example. 
The map
$$\CM \to \CM, z \mapsto \a z, \ a \in \CM^* $$
induces a ring automorphism
$$\p:\CM\{ z \} \to \CM\{z \},\ f(z) \mapsto f(z/\a) $$
whose inverse is
$$\psi:\CM\{ z \} \to \CM\{z \},\ f(z) \mapsto f(\a z). $$
Let us take $E$ to be the Kolmogorov space
$$\Ot^b \to \Tt_0 $$
over neighbourhoods of the origin.

The map $\p$ can be seen as an element of $Hom_A(\Ot^b,\Ot^b)$ where
$$A=\{ (U,V) \in  \Tt_0^{op} \times  \Tt_0:V \subset \a U \} $$
whereas $\psi$ may be regarded as an element of 
$Hom_B(\Ot^b,\Ot^b)$ with
$$B=\{ (U,V) \in  \Tt_0^{op} \times  \Tt_0:V \subset \a^{-1} U \}. $$
One has
$$A \star B=B \star A=\overline{\D} $$
and 
$$\p \circ \psi=\psi \circ \p=\iota_{\overline{\D}} \in  \Hom_{\overline \D}(\Ot^b,\Ot^b).$$
Therefore in the Kolmogorov space setting, the maps are pseudo-inverses to each other and, passing to the direct limit these pseudo-inverses become inverses automorphisms of the ring $\CM\{ z \}$.

%
%
\chapter{Local morphisms}
In functional analysis a major role is played by the operators that
behave like differential operators, and which might be called 
{\em local operators.} This notion is rather vague in general, but 
in the context of Kolmogorov spaces we will define a precise notion 
of locality.
\section{The derivative}
The first important property of differential operators is that they only give rise to 
partial morphisms of Kolmogorov spaces. To explain this point, let us start with simple examples.

If we differentiate a $C^k$ function, we get a $C^{k-1}$ function. So
 we consider 
 $$E:=C^\bullet([0,1],\RM) \to \NM$$ as a Kolmogorov space
 with fibre $E_k:=C^k([0,1],\RM)$. The inclusions 
 $$C^{k+1}([0,1],\RM) \to C^k([0,1],\RM) $$
 give restriction mappings and therefore a Kolmogorov space structure over $(\NM,\geq)$.

The derivative gives a well-defined maps 
$$E_{k+1} \to E_k, f \mapsto f'$$ 
and thus we obtain a partial morphism 
$$D \in Hom_{\D}(E,E)$$ with
\[\D=\{ (k,n) \in \NM^{op} \times \NM\;\;|\;\;\; n < k\}\]
\begin{figure}[htb!]
 \includegraphics[width=9cm]{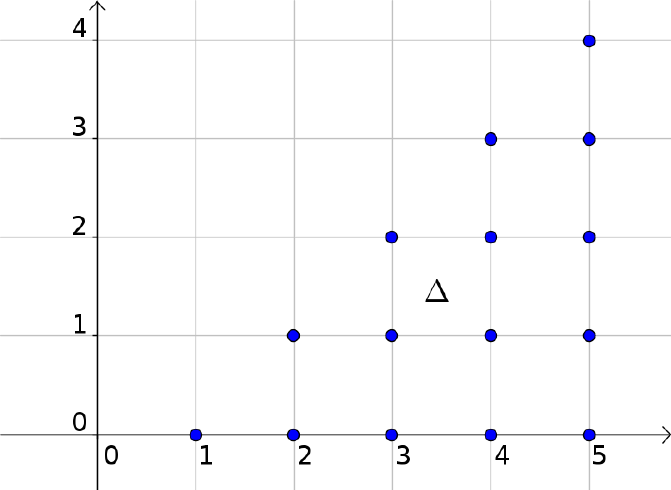}
\end{figure}

The situation becomes more interesting in the case of holomorphic functions.\\

The derivative gives a well-defined map on the algebra of convergent power series in one variable $z$:
 $$\CM\{ z \} \to \CM\{ z \}; \; f \mapsto f' .$$
The space $\CM\{ z \}$  arises as limit of the Kolmogorov space 
$$\Ot^c(D) \to D $$ 
with fibre
\[ \Ot^c(D)_t:=C(\overline{D_t})\cap \Ot(D_t) ,\]
where
$$D \to \RM_{>0} $$
is the unit disk. 

The derivative $ f \mapsto f'$ does not define a map
\[ \Ot^c(D)_t \to  \Ot^c(D)_{t},\ f \mapsto f'\]
and therefore it is not a morphism of Kolmogorov spaces.
However for all $s<t$, there {\em are} well defined maps:
$$\p_{st}: \Ot^c(D)_t \to  \Ot^c(D)_{s},\ f \mapsto f' $$
and these are compatible with the restriction maps. This can be interpreted 
as saying that the derivative is a partial morphism  $\p$ defined over the 
{\em open sub-diagonal}
\[  \D:=\{(s,t) \in ]0,S]\times ]0,S]\;|\; s < t\} .\]

The fact that this section $\p$ has no extension over the 
closed sub-diagonal $\overline{\Delta}$ is an expression of the 
'unbounded nature' of the derivative operator  
$$\p \in \Hom_\D(E,E).$$  
We claim that $\p$ is not bounded,
\[ \p \notin \Hom^b_{\D}(E,E) .\] 
To see this, one can look at the holomorphic function 
$$f(z)=(1-z)\log(1-z) $$ 
on the open disc $D_1$ of radius $1$. It is an element of $\Ot^c(D_1)=E_1$
and by restriction to smaller $t$ be obtain a bounded horizontal section of 
$E$ over $]0,1]$. Its derivative is
$$f'(z)=1+\log(1-z) \xrightarrow[z \mapsto 1]{} -\infty $$
which means that $\p(f)$ is an unbounded section of $E$ over $]0,1[$.

\section{The Cauchy-Nagumo inequality}

An unbounded operator like the derivative is of course more difficult 
to handle, but we will now see it defines a partial morphism {\em with a first order
pole along the diagonal}. The following fundamental fact shows what we need:
 
\begin{proposition}[{\em Cauchy-Nagumo inequalities}]
Let 
$$D \to \RM_{>0} $$
be the unit disk in $\CM$. Then the $k$-th derivative of $ f \in \Ot^c(D_t)$ satisfies the estimate:
 $$| f^{(k)} |_s\leq \frac{k!}{(t-s)^k}| f|_t $$
 for any $s<t$.
\end{proposition}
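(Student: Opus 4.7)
The plan is to derive the inequality as a quantitative Cauchy estimate, obtained by applying the Cauchy integral formula for the $k$-th derivative on a cleverly chosen circular contour. The key observation is geometric: for any $z \in \overline{D_s}$ and any $r < t-s$, the circle $\{w : |w-z|=r\}$ lies inside the open disc $D_t$ where $f$ is holomorphic, since $|w| \le |z| + r \le s + r < t$ by the triangle inequality.

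With this in hand, I would fix $s<t$, $z \in \overline{D_s}$, and $r \in (0, t-s)$, and apply the Cauchy integral formula for the $k$-th derivative:
\[ f^{(k)}(z) = \frac{k!}{2\pi i}\oint_{|w-z|=r} \frac{f(w)}{(w-z)^{k+1}}\,dw. \]
The standard length-times-sup estimate then yields
\[ |f^{(k)}(z)| \le \frac{k!}{2\pi}\cdot\frac{|f|_t}{r^{k+1}}\cdot 2\pi r = \frac{k!\,|f|_t}{r^k}, \]
since the integrand is bounded in modulus by $|f|_t/r^{k+1}$ on the contour (using that $|f(w)|\le|f|_t$ on $\overline{D_t}$) and the contour has length $2\pi r$. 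Letting $r \uparrow t-s$ and then taking the supremum over $z \in \overline{D_s}$ produces the claimed estimate.

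I expect no real obstacle; this is essentially the classical Cauchy estimate on a disc of radius $r$, optimized by pushing $r$ to the maximal admissible value $t-s$. The only minor subtlety is that $f \in \Ot^c(D_t)$ is only holomorphic on the \emph{open} disc $D_t$, which prevents integrating directly on the circle $|w|=t$ at distance $t-s$ from $z$; choosing $r$ strictly less than $t-s$ and passing to the limit cleanly bypasses this. One could alternatively prove the result by induction on $k$ using the case $k=1$, but the direct contour integral yields the sharp constant $k!$ in one stroke, whereas iterating the $k=1$ bound together with an optimization on an intermediate radius would give a constant of order $k^k$.
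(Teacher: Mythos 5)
Your proof is correct and follows essentially the same route as the paper: the Cauchy integral formula on a circle centred at $z\in D_s$ of radius (close to) $t-s$, followed by the standard length-times-sup estimate. The only differences are cosmetic — you treat general $k$ directly via the $k$-th derivative formula (where the paper does $k=1$ and says the rest is similar) and you handle the boundary by taking $r\uparrow t-s$, which is a slightly cleaner way to deal with $f$ being holomorphic only on the open disc.
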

\begin{proof} The statement is an easy corollary of the Cauchy integral formula.
If $z \in D_s$ and $f \in \Ot^c(D_t)$, then one has
 $$f(z)=\frac{1}{2\pi i}\int_{\g_z} \frac{f(\xi)}{z-\xi} d\xi, $$
where $\g_z$ is the circle of radius $t-s$ centred at $z$. Differentiation
under the integral sign gives
\[f'(z)=\frac{1}{2 \pi i}\int_{\g_z} \frac{f(\xi)}{(z-\xi)^2} d\xi.\]
We parametrise $\g_z$ by:
$$\theta \mapsto \xi(\theta):=z+(t-s)e^{2\pi i \theta} $$
and thus
$$d\xi=2\pi i (t-s) e^{2\pi i \theta} d\theta$$
so that
$$ f'(z)=\frac{1}{t-s}\int_{0}^1 \frac{f(\xi(\theta))}{e^{2\pi i \theta}} d\theta.$$
and finally
$$| f'(z)| \leq \frac{1}{ t-s}\int_{0}^1 |f(\xi(\theta))| d\theta\leq \frac{1}{t-s}| f |_t ,\ \forall z \in D_s. $$
In other words:
$$| f'|_s \leq \frac{1}{t-s}| f |_t $$
for any $t>s$. For higher values of $k$ the proof is similar. 
\end{proof}

This result means that as we approach the boundary of the disc $D_t$, the value of $f'$ may increase, but not faster than the inverse of the distance to the boundary. In other words the derivative defines an almost complete morphism with a simple pole along the diagonal.
So if we rescale rescale the K2-space $\mathcal{H}om(E,E)$ with the function
$$\l(s)=(t-s), $$ 
then due to the Cauchy-Nagumo inequalities, 
the derivative morphism $\p$ is a bounded horizontal section
of $\mathcal{H}om(E,E)(\l)$ over $\D$:
\[ \p \in \G^{\infty}(\D,\mathcal{H}om(E,E)(\l)). \]

More generally, if we consider a differential operator of higher order
as an almost complete morphism of a Kolmogorov space, the above Cauchy-Nagumo 
inequality may be used to detect the {\em order} of the  differential operator 
in terms of the internal structure. 
\section{The division problem}
Another type of singularity appears when we consider division mappings. 
The simplest case is the division by $z$ of functions in a single variable.
If $f$ is a germ of a function holomorphic near $0$, we can always write
 $$f(z)=f(0)+zg(z) $$
The map which associates $g$ to $f$ is given by the $\CM$-linear map
$$\p:\CM\{ z \} \to \CM\{ z \}, f \mapsto \frac{f-f(0)}{z}.$$
We would like to understand this map in terms of Kolmogorov spaces.

So, we consider the unit disc $D=(D_s)$ over $B:=\RM_{>0}$ and the associated 
Kolmogorov space $E:=\Ot^c(D)$.
Clearly, for fixed $t$ we can consider the analogous map
\[\p_t^c: E_t \to E_t,\;\;f \mapsto \frac{f-f(0)}{z}\]
So unlike the case of the derivative, it is a complete morphism: the division maps are compatible with restrictions and we obtain a global horizontal section
\[\p^c \in \G^h(\overline{\D},\mathcal{H}om(E,E)))=\Hom_B(E,E)\]
But this map is again unbounded, i.e.
\[ \p^c \notin \G^{\infty}(\overline{\D},\mathcal{H}om(E,E)))=\Hom_B^b(E,E)\]
To see this, consider for instance sequence of linear polynomials
$$1+nz .$$
We consider these as a horizontal section of $E$:
$$\s_n:\RM_{>0} \to E,\ s \mapsto (s,1+nz).$$
Clearly one has
$$ \p^c(\s_n(s))=n$$
and
\[ |\s_n(s)|=\sup_{|z|\le s}|1+nz|=1+ns .\]
Hence, for $s\to 0$, we get
$$|\p^c(\s)|_s \geq \frac{|\p^c(\s(s))|}{|\s(s)|}=\frac{n}{1+ns} \sim n .$$
Thus the section $\p^c$ is unbounded.

Now rescale the K2-space $\mathcal{H}om(E,E)$ with the function
$$\l(s,t)=s. $$ 

Using the maximum principle, we get that
$$\left| \frac{f-f(0)}{z}\right|_t=t^{-1}|f-f(0)|_t$$
and therefore
$$\left| \frac{f-f(0)}{z}\right|_t \leq 2t^{-1}|f|_t .$$

The norm of this section is defined by
$$|u|^{(\l)}:=\sup_{f \in B_t} t |u(f)|_t \leq 2 $$
where $B_t$ denotes the unit ball in $E_t$.
This shows that after rescaling we get a bounded map.

 \section{Local operators}
We have seen the derivative defines a partial morphism with poles along $s=t$, whereas the division operator by $z$
has pole along $s=0$. If we now compose both operation, we get an operator:
$$\p:\CM\{ z \}  \to \CM\{ z \}, f \mapsto z^{-1}(f'(z)-f'(0)) $$
which lifts to a partial morphism with poles along $s=t$ and $s=0$, i.e.
the two sides of the cone $0 \le s \le t$.
 
This means that in the Kolmogorov space 
\[ E=\Ot^c(D) \to \RM_{>0}, \]
the map $\p$  defines an unbounded 
horizontal section 
\[ \p \in \Hom_\D(E,E)=\G^h(\D,\mathcal{H}om(E,E)).\] 
The norm of this section goes to infinity along the sides $s=0$ and $t=s$ of $\D$.  If we now rescale the K2-space 
$$\mathcal{H}om(E,E) \to B^{op} \times B $$ with the scaling function
 $$\l(t,s)=s(t-s), $$
the section $\p$ becomes bounded:
\[\p \in \G^{\infty}(\D,\mathcal{H}om(E,E)(\l)) .\]

Recall that any map 
 $$\p_{t,s}: E_t \to F_s,\ E=F=\Ot^c(D) $$
extends horizontally to the downset of $(t,s)$. So we 
get morphisms
 $$\p_{r,u}: E_u \to E_r,\;\;u \geq t,\;\;s \geq r.$$ 
This was the basis of our considerations on 
the external Hom \ref{D::external Hom-space}.  

The situation here is in some sense reversed, a map 
$$\p_{t,s}: E_t \to F_s$$ that arise from the derivative or division map 
has an {\em opposite behaviour}: it extend to the
part of the {\em upset} of $(t,s)$ below the diagonal. As a result, a single
$\p_{t,s}$ defines a horizontal section over the downset of the upset of $(s,t)$!\\
\vskip0.3cm
\begin{figure}[htb!]
 \includegraphics[width=0.9\linewidth]{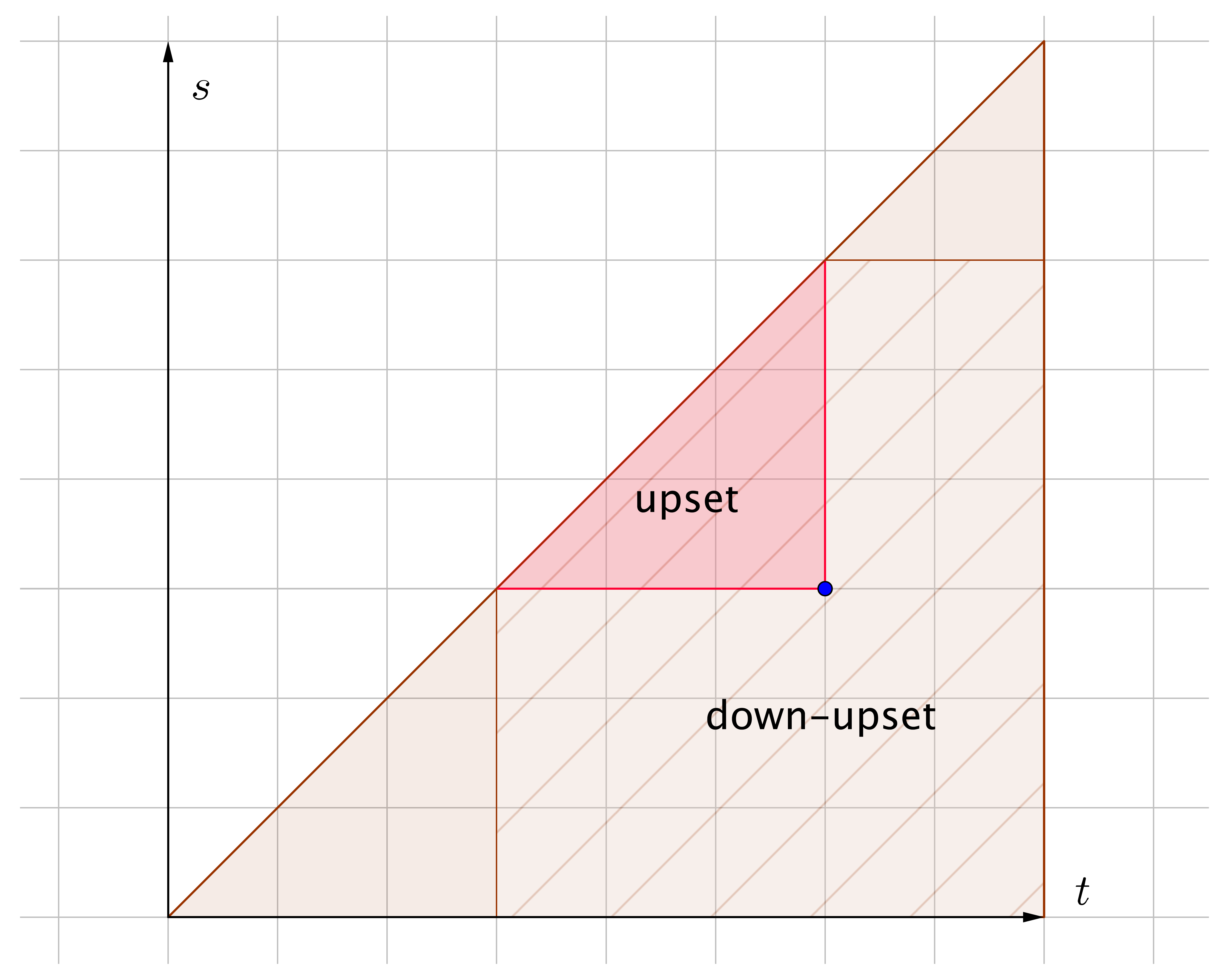}
\end{figure}
\vskip0.3cm

Let us turn this property into a concept. Given two $S$-Kolmogorov spaces
 $$E \to ]0,S],\ F \to ] 0,S], $$
we consider the space $\mathcal{H}om(E,F)(\l)$ with rescaling function
 $$\l(t,s)=s^k(t-s)^l, $$
where $k,l$ are arbitrary positive numbers.
For a point  $(t,s) \in \D$ we consider the Banach-space 
$$\Lt^{k,l}(E,F)_{t,s}:=\G^\infty(\D({t,s}),\mathcal{H}om(E,F)(\l)), $$
where $\D({t,s})$  is the upset of the point $(t,s)$. 

The norm of an element of $\Lt^{k,l}(E,F)_{t,s}$ is given by
\[ |u|:=\sup_{(t',s') \leq (t,s)} ((s')^k(t'-s')^l|u_{s't'}|) .\]

As the elements of $\Lt^{k,l}(E,F)_{t,s}$
are sections over $\D(t,s)$, if $(t',s') \in \D(t,s)$ then $\D(s',t') \subset \D(t,s)$, hence there is an obvious restriction mapping of norm $\le 1$:
$$\Lt^{k,l}(E,F)_{t,s} \to \Lt^{k,l}(E,F)_{t',s'} .$$
Thus we obtain in this way a $K2$-space but with opposite order on the base
$$\Lt^{k,l}(E,F) \to  \D^{op},$$
whose fibre above $(t,s)$ is $\Lt^{k,l}(E,F)_{t,s}$.
   
\begin{figure}[htb!]
 \includegraphics[width=0.8\linewidth]{Fig/Borel.pdf}
\end{figure}

\begin{definition}\index{local operator}
Let $E, F $ be $S$-Kolmogorov spaces. Elements of the $K2$-space
$$\Lt^{k,l}(E,F) \to \D^{op} $$ will be called {\em local operators}.
\end{definition}

Frequently, the sections are non-singular along $s=0$, therefore we also use 
the notation 
$$\Lt^k(E,F):=\Lt^{0,k}(E,F)$$

\begin{example}
The  $k^{th}$ derivative 
\[E:=\Ot^c(D) \to \Ot^c(D),\;\;f \mapsto f^{(k)}\]
is an element of $\Lt^k(E,E)$. More generally, any differential operator $P$ of order $k$,
with holomorphic coefficients, defines an element of $\Lt^k(E,E)$.\\
\end{example}

\begin{proposition}
For $m \ge 0$ there are inclusions  
$$\Lt^{k,l}(E,F) \subset \Lt^{k+m,l+n}(E,F)$$ 
and if $i$ denotes the inclusion map one has
\[ |i(u)| \le S^{m+n}|u| .\]
\end{proposition}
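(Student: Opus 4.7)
The plan is to observe that the underlying vector space of sections is the same for both $\Lt^{k,l}(E,F)_{t,s}$ and $\Lt^{k+m,l+n}(E,F)_{t,s}$, namely $\G^h(\D(t,s),\mathcal{H}om(E,F))$; only the rescaling function — and hence the notion of boundedness and the norm — changes. So the content of the proposition is just a pointwise comparison of the two weight functions on $\D$, which I will then upgrade to a norm estimate by taking suprema.

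First, I spell out the two norms. For $u \in \Lt^{k,l}(E,F)_{t,s}$ the norm is
\[ |u|_{k,l} = \sup_{(t',s') \le (t,s)} (s')^{k}\,(t'-s')^{l}\,|u_{s't'}|,\]
and the analogous formula, with $(k,l)$ replaced by $(k+m,l+n)$, gives $|u|_{k+m,l+n}$. I take the pointwise ratio of the weight functions:
\[ \frac{(s')^{k+m}(t'-s')^{l+n}}{(s')^{k}(t'-s')^{l}} = (s')^{m}\,(t'-s')^{n}.\]
Since $E$ and $F$ are $S$-Kolmogorov spaces, any $(t',s') \in \D$ satisfies $0<s'<t'\le S$, so in particular $s' \le S$ and $t'-s' \le S$. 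As $m,n \ge 0$ this gives the pointwise bound $(s')^m(t'-s')^n \le S^{m+n}$.

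Second, multiplying by $|u_{s't'}|$ and taking the supremum over all $(t',s') \le (t,s)$ in $\D(t,s)$, I obtain
\[ |u|_{k+m,l+n} \le S^{m+n}\,|u|_{k,l}.\]
In particular, if $u \in \Lt^{k,l}(E,F)_{t,s}$ (i.e. $|u|_{k,l}<\infty$), then $|u|_{k+m,l+n}<\infty$, giving the claimed set-theoretic inclusion $\Lt^{k,l}(E,F)_{t,s} \subset \Lt^{k+m,l+n}(E,F)_{t,s}$. Since these inclusions are fibrewise and are clearly compatible with the restriction maps of $\Lt^{\bullet,\bullet}(E,F)$ over $\D^{\mathrm{op}}$ (both sides are induced by the same restriction of horizontal sections), they assemble into a morphism $i$ of $K2$-spaces, whose fibrewise operator norm is bounded by $S^{m+n}$.

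There is no real obstacle here: the only thing to be careful about is that the elements of $\Lt^{k,l}(E,F)_{t,s}$ are defined as bounded horizontal sections over the upset $\D(t,s)$, so the supremum in the comparison is taken over points of $\D(t,s)$, all of which indeed satisfy $0<s'<t'\le S$; the uniform bound $(s')^m(t'-s')^n \le S^{m+n}$ then follows from the definition of an $S$-Kolmogorov space.
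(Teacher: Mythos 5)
Your proof is correct and follows essentially the same route as the paper: both arguments reduce to the pointwise bound $s^m(t-s)^n \le S^{m+n}$ for $0<s<t\le S$, which the paper obtains by writing $\| u_{st}\| \le |u|/(s^k(t-s)^l) = s^m(t-s)^n|u|/(s^{k+m}(t-s)^{l+n}) \le S^{m+n}|u|/(s^{k+m}(t-s)^{l+n})$, and which you obtain by comparing the two weight functions directly before taking the supremum. The two presentations are interchangeable.
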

\begin{proof} 
Fix $s<t$ and denote by $\|\cdot \|$ the operator norm in the Banach space 
$L(E_t,E_s)=Hom(E_t,F_s)$.
We can write 
$$\| u_{st} \| \leq \frac{|u|}{s^k(t-s)^l}=\frac{s^m(t-s)^n |u|}{s^{k+m}(t-s)^{l+n}}\le \frac{S^{m+n}|u|}{s^{k+m}(t-s)^{l+n}}. $$ 
\end{proof}

So the order of the pole along the sides of $\Delta$ gives a natural increasing filtration on the space of local operators:
\[ \Lt^{k,l}(E,F) \subset \Lt^{k+m,l+n}(E,F).\]
\section{Composition properties}
As has been remarked in chapter $8$, the upset triangle is an idempotent:
\[  \D(t,s) \star \D(t,s) =\D(t,s),\]
it is straightforward to compose almost complete morphisms: 
if $E,F,G$ are $S$-Kolmogorov spaces, we have a well 
defined bilinear composition map
\begin{align*}
\circ: \Hom_{\D(t,s)} (E,F)) \times \Hom_{\D(t,s)} (F,G)) &\to \Hom_{\D(t,s)}(E,G))\\
 (u,v)& \mapsto v \circ u \end{align*}
The sub-spaces of $(k, l)$-local operators
\[ \Lt^{k,l}(E,F)_{t,s} \subset Hom_{\D(t,s)} (E,F))\]
behave well under this composition:

\begin{proposition}
Let $E,F,G$ be $S$-Kolmogorov spaces.
If $u \in \Lt^{k_1,l_1}(E,F)$ and $v \in \Lt^{k_2,l_2}(F,G)$ then 
$$v\circ u \in \Lt^{k,l}(R,G)$$ with
$k=k_1+k_2,\ l=l_1+l_2 $. Moreover, one has the norm estimate: 
\[ |v \circ u| \le  \frac{l^l}{l_1^{l_1} l_2^{l_2}} |v||u|.\]
\end{proposition}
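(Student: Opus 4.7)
The plan is to define the composition pointwise via an intermediate index $r$ and then optimize the choice of $r$. Both $u$ and $v$ are bounded horizontal sections of the appropriate $\mathcal{H}om$ $K2$-spaces over an upset triangle $\D(t_0,s_0)$. For an interior point $(s,t)$ with $s<t$ and any $r$ with $s<r<t$, one has well defined maps $u_{r,t}\colon E_t \to F_r$ and $v_{s,r}\colon F_r \to G_s$; I take their composition $v_{s,r}\circ u_{r,t}\colon E_t \to G_s$ as the candidate for $(v\circ u)_{s,t}$. Horizontality ensures that this is independent of $r$: for $s<r'\le r<t$ one has $v_{s,r'}\circ u_{r',t} = v_{s,r'}\circ f_{r'r}\circ u_{r,t} = v_{s,r}\circ u_{r,t}$, where $f$ denotes the restriction in $F$. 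Horizontality of the resulting composed section follows from that of $u$ and $v$ by the same kind of computation.

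For the norm estimate I insert the two pointwise bounds
\[ \|u_{r,t}\| \le \frac{|u|}{r^{k_1}(t-r)^{l_1}}, \qquad \|v_{s,r}\| \le \frac{|v|}{s^{k_2}(r-s)^{l_2}} \]
into the submultiplicativity inequality for the operator norm. Since $r\ge s$ and $k_1\ge 0$, I may replace $r^{k_1}$ by $s^{k_1}$ in the denominator (at no cost other than a larger upper bound), collecting the powers of $s$ into $s^{k_1+k_2}=s^k$. This gives
\[ s^k(t-s)^l\,\|(v\circ u)_{s,t}\| \;\le\; \frac{(t-s)^l\,|u|\,|v|}{(r-s)^{l_2}(t-r)^{l_1}}. \]
The remaining task is to choose $r$ minimizing $1/((r-s)^{l_2}(t-r)^{l_1})$, equivalently to maximize $\xi^{l_2}\eta^{l_1}$ subject to $\xi+\eta=t-s$ with $\xi=r-s$, $\eta=t-r$. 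The standard concave-constrained optimization (Lagrange, or AM--GM) yields the optimizer $\xi=(l_2/l)(t-s)$, $\eta=(l_1/l)(t-s)$ with maximum value $(l_1^{l_1}l_2^{l_2}/l^l)(t-s)^l$. Substituting and taking the supremum over $(s,t)$ produces the announced inequality $|v\circ u|\le (l^l/(l_1^{l_1}l_2^{l_2}))\,|u|\,|v|$.

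The verifications are routine. The degenerate cases ($l_1=0$ or $l_2=0$) are handled by taking $r$ at the corresponding endpoint and using the convention $0^0=1$, together with the fact that when $l_i=0$ the relevant operator extends to the closed subdiagonal. The only non-formal ingredient is the constrained maximization producing the sharp constant $l^l/(l_1^{l_1}l_2^{l_2})$; everything else is bookkeeping on restriction maps and submultiplicativity of operator norms, so I do not foresee any real obstacle.
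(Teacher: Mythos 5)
Your proof is correct and follows essentially the same route as the paper: the same two locality bounds, the same replacement of $r^{k_1}$ by $s^{k_1}$ using $r\ge s$, and the same optimal choice of intermediate point $r-s=(l_2/l)(t-s)$, $t-r=(l_1/l)(t-s)$ yielding the constant $l^l/(l_1^{l_1}l_2^{l_2})$. The added remarks on $r$-independence via horizontality and on the degenerate cases $l_i=0$ are fine but not needed beyond what the paper records.
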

\begin{proof} 
Denote by $\| - \|$ the operator norm. For  $s < s' < t $ we have
\[ \|(v \circ u)_{st}\| \le \|v_{ss'}\|\,\|u_{s't}\|\]
 From the definition of locality we have:
\begin{align*}
\|v_{ss'}\| &\le \frac{|v|}{s^{k_2}(s'-s)^{l_2}},\\
\|u_{s't}\| &\le \frac{|u|}{(s')^{k_1}(s'-t)^{l_1}} < \frac{|u|}{s^{k_1}(s'-t)^{l_1}}.\end{align*}
We take the point $s'$ such that 
$$(s'-s)=\frac{l_2}{l}(t-s),\ (t-s')=\frac{l_1}{l}(t-s)$$
and  find the estimate
$$ \|v_{ss'}\|\,\|u_{s't}\| \leq \frac{l^l}{l_1^{l_1} l_2^{l_2}} \frac{|v| |u|}{s^k(t-s)^l}$$
thus
\[ |v \circ u| \le \frac{l^l}{l_1^{l_1} l_2^{l_2}} |v| |u|\]
\end{proof}
We remark that this estimate is slightly better then the one obtained by taking $s' =(s+t)/2$ the midpoint, 
from which one gets the simpler estimate $ |v \circ u| \le 2^{l} |v| |u| $.\\

The form of the above estimate leads to the idea to {\em recalibrate} the norm\index{calibrated norm} on $\Lt^{k,l}(E,F)$
and introduce
\begin{definition}
\[ ||u||:=\lambda_k |u|\]
where 
\[\lambda_k:=\left(\frac{e}{k}\right)^k\]
\end{definition}

This new norm now becomes {\em sub-multiplicative}:
\begin{corollary}
\[ ||v \circ u|| \le ||v||\, ||u||\]
\end{corollary}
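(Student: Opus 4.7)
The plan is to derive this submultiplicativity as a direct algebraic consequence of the preceding proposition. That result gives
\[ |v \circ u| \le \frac{l^l}{l_1^{l_1} l_2^{l_2}} |v|\,|u|,\]
where $l = l_1 + l_2$, and the calibration factor $\lambda_n := (e/n)^n$ has been introduced precisely to absorb this combinatorial constant. I would simply multiply both sides of the proposition's estimate by the calibration factor attached to $v \circ u$, and then rewrite the right-hand side in terms of $||u||$ and $||v||$.

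The crux is the algebraic identity
\[ \lambda_l \cdot \frac{l^l}{l_1^{l_1}\,l_2^{l_2}} \;=\; \lambda_{l_1}\,\lambda_{l_2}\]
for $l = l_1 + l_2$. Unpacking the definition of $\lambda_n$, this collapses to
\[ \frac{e^{l_1+l_2}}{l_1^{l_1}\,l_2^{l_2}} \;=\; \frac{e^{l_1}}{l_1^{l_1}} \cdot \frac{e^{l_2}}{l_2^{l_2}},\]
which is an immediate consequence of the multiplicative rule $e^{l_1+l_2} = e^{l_1}\,e^{l_2}$ for the exponential. Substituting this back into the proposition's bound gives
\[ ||v \circ u|| \;=\; \lambda_l\,|v \circ u| \;\le\; \lambda_{l_1}\lambda_{l_2}\,|v|\,|u| \;=\; ||v||\,||u||,\]
which is the desired submultiplicativity.

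I do not expect any real obstacle. The normalisation $(e/n)^n$, which has the flavour of a Stirling approximation, has manifestly been chosen so that the troublesome combinatorial constant appearing in the composition estimate collapses cleanly via $e^{a+b} = e^a \cdot e^b$. The only minor point is the degenerate boundary case $l_i = 0$, where one adopts the conventions $0^0 = 1$ and $\lambda_0 = 1$; in that case the corresponding factor is a complete morphism with no diagonal singularity and the inequality is trivially preserved.
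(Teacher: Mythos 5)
Your proof is correct and is exactly the computation the paper intends (the paper states the corollary without proof, as an immediate consequence of the composition estimate and the choice of calibration factor): the identity $\lambda_{l_1+l_2}\cdot\frac{(l_1+l_2)^{l_1+l_2}}{l_1^{l_1}l_2^{l_2}}=\lambda_{l_1}\lambda_{l_2}$ reduces to $e^{l_1+l_2}=e^{l_1}e^{l_2}$, and your remark that any base $\alpha>0$ would do matches the paper's own comment. The handling of the degenerate case $l_i=0$ via $0^0=1$ is a reasonable and harmless addition.
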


For this to work one could replace $e$ in the above definition of $\lambda_k$
by any number $\alpha >0$, but we will see later that $\alpha=e$ is a particular 
convenient choice.  

\begin{corollary}\label{C::powerestimate} If $u_1,u_2,\ldots,u_n \in \Lt^1(E,E)$ then
\[ u_1 \circ u_2\circ \ldots \circ u_n \in \Lt^n(E,E)\] 
and we have the estimate:
\[ ||u_1 \circ u_2\circ \ldots \circ u_n|| \le ||u_1|| ||u_2||\ldots||u_n||, \]
or, in terms of the original norm:
\[ |u_1 \circ u_2 \circ \ldots \circ u_n| \le n^n |u_1| |u_2| \ldots |u_n| .\]
Especially for $u_1=u_2=\ldots=u_n=u$:
\[ ||u^n|| \le ||u||^n,\;\;\; |u^n| \le n^n |u|^n .\]
\end{corollary}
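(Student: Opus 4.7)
The plan is to deduce the corollary directly from the sub-multiplicativity of the calibrated norm established immediately before, using a straightforward induction on $n$, and then translate back to the original norm.

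First I would observe that for $u_i \in \Lt^1(E,E) = \Lt^{0,1}(E,E)$, the preceding proposition (applied iteratively) shows that the composition $u_1 \circ u_2 \circ \ldots \circ u_n$ belongs to $\Lt^{0,n}(E,E) = \Lt^n(E,E)$, since the $k$-indices are all zero and the $l$-indices add up to $n$. So the only content of the corollary is the norm estimate.

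Next, by induction on $n$ using the sub-multiplicativity $\|v \circ u\| \le \|v\|\,\|u\|$ from the immediately preceding corollary, one obtains
\[ \|u_1 \circ u_2 \circ \ldots \circ u_n\| \le \|u_1\|\,\|u_2\|\,\ldots\,\|u_n\|. \]
This is the calibrated-norm form of the estimate. To convert back to the original norm, recall $\|u\| = \lambda_k |u|$ with $\lambda_k = (e/k)^k$. For each factor $u_i \in \Lt^1$ we have $\|u_i\| = e\,|u_i|$, while on the left we have $\|u_1 \circ \ldots \circ u_n\| = (e/n)^n\,|u_1 \circ \ldots \circ u_n|$. Substituting and solving gives
\[ |u_1 \circ u_2 \circ \ldots \circ u_n| \le \left(\frac{n}{e}\right)^n e^n\,|u_1|\cdots|u_n| = n^n\,|u_1|\cdots|u_n|, \]
which is precisely the claimed estimate. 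The specialization $u_1 = \ldots = u_n = u$ is then immediate.

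I do not expect any real obstacle, since all the analytic work has been absorbed into the earlier proposition (the optimal cut-point $s'$ splitting $(t-s)$ in ratio $l_1 : l_2$) and into the choice $\alpha = e$ of calibration constant that turns $l^l / (l_1^{l_1} l_2^{l_2}) \cdot \lambda_l / (\lambda_{l_1}\lambda_{l_2})$ into $1$. The only thing worth double-checking while writing the induction is that the $k$-indices really stay at zero throughout, so that we land in $\Lt^n$ and not in some $\Lt^{k,n}$ with $k>0$; this is clear because the sum of zeros is zero.
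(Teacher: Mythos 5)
Your proposal is correct and follows essentially the same route as the paper: the corollary is obtained by iterating the sub-multiplicativity $\|v\circ u\|\le\|v\|\,\|u\|$ of the calibrated norm and then unwinding the calibration factors $\lambda_1=e$ and $\lambda_n=(e/n)^n$ to recover $|u_1\circ\cdots\circ u_n|\le n^n|u_1|\cdots|u_n|$. (The paper also remarks that one can prove the estimate directly by subdividing $[s,t]$ into $n$ equal parts, but that is only offered as an alternative.)
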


This is of course also easily proven directly by subdividing the interval $[s,t]$  in $n$ equal parts.\\

In reading these formulas one should be aware that for reasons of simplicity
we always write $|-|$ or $||-||$ for the norms, but which norm this in fact is, 
depends on the element to which we apply it. For example, 
if $u$ is 1-local, the  norm in the expression $|u|^n$ refers to the norm 
on $\Lt^1(E,E)$, whereas in $|u^n|$ we are using the norm on  
$\Lt^n(E,E)$~! 
 
\section{Partial differential operators and Huygens sets}
The Cauchy-Nagumo estimate links the norm of functions on discs to the size of these discs. 
In the proof we used the fact that the disc $D_t$ of radius $t$ is the union of the 
disc $D_s$ of radius $s<t$ and discs of radius $t-s$ centered at points $z \in D_s$. 
This is reminiscent of {\em Huygens principle} in geometrical optics and in fact the 
argument carries over to much greater generality.

\begin{definition} An $S$-Huygens open set\index{Huygens open set} $U$ in $\CM^n$ is a collection of 
strictly increasing relatively compact open sets 
$$U_s \subset \CM^n,\ s \in ]0,S]$$ such that for any $t>s$ and any $x \in U_s$ the polydisc 
$x+D_{t-s}$ is contained in $U_t$.
\end{definition}
\begin{figure}[htb!]
 \includegraphics[width=8cm]{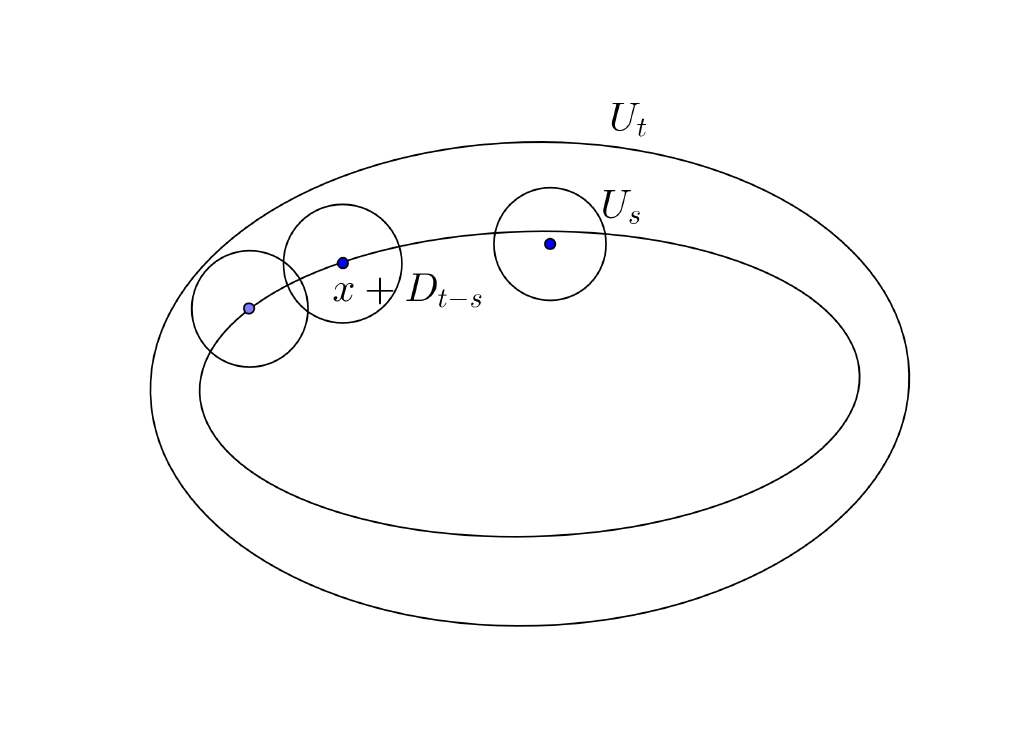}
\end{figure}

It is clear that there exist many $S$-Huygens sets. For example, beside polydiscs $D_s \subset \CM^n$,
the neighbourhoods of the real torus
$$U_s=\{ z \in (\CM^*)^n: 1-s < | z| < 1+s \} \subset (\CM^*)^n$$
form a Huygens sets.

For any $S$-Huygens set $U$ we have a corresponding $S$-Kolmogorov space $ \mathcal{O}^c(U) $. 
The Cauchy-Nagumo inequalities can be stated as follows:

\begin{proposition} If $U$ is an $S$-Huygens set then any differential operator $P$ of order $k$ defines
a $k$-local operator 
\[P \in \Lt^k (\mathcal{O}^c(U),\mathcal{O}^c(U)) .\] 
That is, there exist a constant $C >0$, such that 
\[ |Pf|_s \le \frac{C}{(t-s)^k} |f|_t .\]  
\end{proposition}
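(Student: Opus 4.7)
The plan is to reduce to the case of pure partial derivatives by linearity, and then to apply the multivariable Cauchy integral formula on the polydisc provided by the Huygens condition.

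First, write $P = \sum_{|\alpha| \le k} a_\alpha(z) \partial^\alpha$, where each coefficient $a_\alpha$ is holomorphic on a neighbourhood of $\overline{U_S}$, hence bounded there by some constant $M_\alpha$. By the triangle inequality,
\[ |Pf|_s \le \sum_{|\alpha| \le k} M_\alpha\, |\partial^\alpha f|_s, \]
so it suffices to show that each $\partial^\alpha$ with $|\alpha| \le k$ defines an element of $\Lt^{|\alpha|}(\Ot^c(U),\Ot^c(U))$ with an explicit bound, and then to invoke the inclusion
\[ \Lt^j(\Ot^c(U),\Ot^c(U)) \subset \Lt^k(\Ot^c(U),\Ot^c(U))\quad(j \le k), \]
whose norm is controlled by a power of $S$, to collect all the estimates into a single constant $C$.

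Next, I would exploit the Huygens property in the same way as in the one-variable Cauchy--Nagumo proof. Fix $s < t$ in $]0,S]$ and $x \in U_s$. By the definition of an $S$-Huygens open set, the closed polydisc $x + \overline{D_{t-s}}$ lies in $U_t$, so for $f \in \Ot^c(U_t)$ the iterated Cauchy integral formula applies on the distinguished boundary of $x+D_{t-s}$. Differentiating under the integral sign gives
\[ \partial^\alpha f(x) = \frac{\alpha!}{(2\pi i)^n} \int_{|\zeta_1-x_1|=t-s}\!\!\cdots\!\!\int_{|\zeta_n-x_n|=t-s} \frac{f(\zeta)}{(\zeta-x)^{\alpha+\mathbf{1}}}\,d\zeta_1\cdots d\zeta_n, \]
and the standard estimate of the integral yields
\[ |\partial^\alpha f(x)| \le \frac{\alpha!}{(t-s)^{|\alpha|}}\,|f|_t. \]
Taking the supremum over $x \in U_s$ gives $|\partial^\alpha f|_s \le \alpha!\,(t-s)^{-|\alpha|}|f|_t$, which is precisely the locality estimate of order $|\alpha|$.

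Combining these two steps, $|Pf|_s \le \sum_{|\alpha| \le k} M_\alpha \alpha!\, (t-s)^{-|\alpha|}|f|_t$, and by using $(t-s) \le S$ to absorb the lower-order poles into the worst one, we obtain a single constant $C = C(P,S)$ with $|Pf|_s \le C(t-s)^{-k}|f|_t$. The only real subtlety is to make sure that the coefficients are bounded on a neighbourhood of every $\overline{U_t}$ with $t \le S$ simultaneously; this is guaranteed because all the sets $U_t$ are contained in the relatively compact set $U_S$, on a neighbourhood of whose closure the $a_\alpha$ are assumed holomorphic. No obstacle beyond careful bookkeeping of constants arises.
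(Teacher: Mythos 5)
Your proof is correct and follows the same route the paper intends: the proposition is stated as the multivariable generalisation of the one-variable Cauchy--Nagumo argument, with the Huygens condition supplying the polydisc $x+D_{t-s}\subset U_t$ on which the iterated Cauchy formula is applied, and you carry this out faithfully, including the reduction to pure derivatives and the absorption of lower-order poles via $(t-s)\le S$. The only hair to split is that the Huygens definition places the \emph{open} polydisc $x+D_{t-s}$ in $U_t$, so its closure lies only in $\overline{U_t}$; since $f\in\Ot^c(U_t)$ is continuous on $\overline{U_t}$, the integral over the distinguished boundary is still controlled by $|f|_t$ (or one integrates over radius $r<t-s$ and lets $r\nearrow t-s$), so nothing is lost.
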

\section{The Cartan-Weierstrass theorem}
\label{S::Cartan-Weierstrass}
We have seen that the division operator with $z$ is local. This is a particular case of a general result proved by Cartan.
First let us recall what division means in an arbitrary number of variables.

We decompose the space $\CM^n$ as the product $\CM^{n-1} \times \CM$ with coordinates $x=(x_1,\dots, x_{n-1})$ and $y$.

An analytic series $g \in \CM\{ x,y \}$ of the form
$$g=y^d+a_1(x) y^{d-1}+\ldots+a_d(x) $$
with $a_i(0)=0$
is called a {\em Weierstrass polynomial} of degree $d$ .

The Weierstrass division theorem asserts that for any $f \in \CM\{ x,y \}$,
there exist unique $q,p \in \CM\{ x,y \}$ such that
$$f=q g+p.$$
where $p$ is a Weierstrass polynomial of degree less than $g$.\\
We will call the map
$$\dt_g:f \mapsto q  $$
the {\em division operator} and
$$  \rho_g:f \mapsto r  $$
the {\em remainder operator}. 
For instance, in the one variable case and $g:=y^d $, the division operator is
$$\dt_g:f \mapsto y^{-d}(f-\sum_{k=0}^{d-1}\frac{f^{(k)}(0)}{k!}y^k). $$

Assume now that the Weierstrass polynomial $g$ 
defines a holomorphic function on an open susbet $U \subset \CM^n$.  
As such, the function $g$ is a polynomial in $y$ depending on $x$ as parameters. 
At $x=0$, it reduces to $y^d$ and therefore has a zero of multiplicity $d$ at the origin.

We consider open polydiscs $\D_{\rho} \subset U$ and write the polyradius 
$\rho$ as
\[ \rho=(\rho',r),\;\;\rho'=(\rho_1,\rho_2,\ldots\rho_{n-1})\]
so that
\[ \Delta_{\rho}=\Delta_{\rho'} \times \Delta_r .\]
The linear projection
$$\Delta_{\rho} \to \Delta_{\rho'} $$
fibres the polydisc $\Delta_{\rho}$ by open discs. We say that the polydisc $\Delta_{\rho} \subset U$ is {\em adapted}\index{adapted polydisc} 
to $g$ if the hypersurface $V(g)$ does not intersect the boundary of the fibres  
$$V(g) \cap \left(\Delta_{\rho'} \times \partial \Delta_r \right) =\emptyset$$
 
 \begin{figure}[htb!]
 \includegraphics[width=0.9\linewidth]{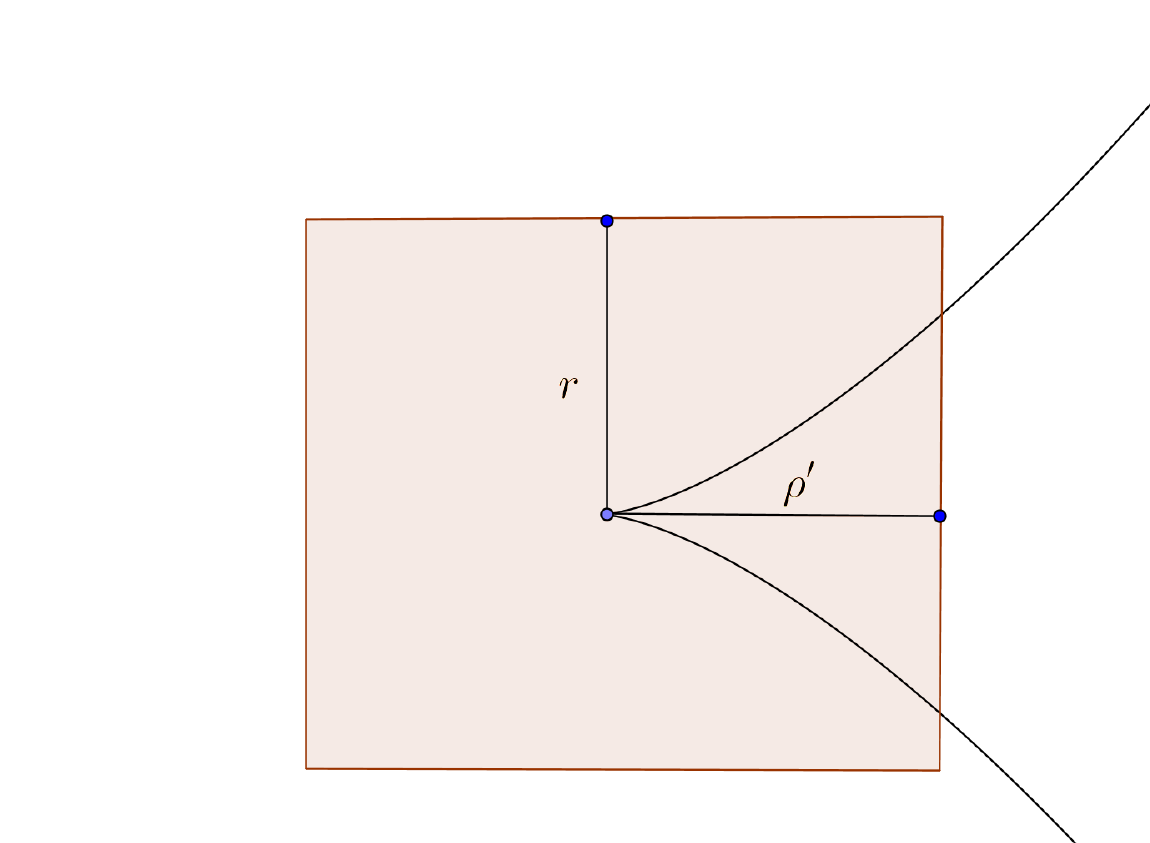}
\end{figure}

The existence of such a fundamental system of $0$-neighbourhood is elementary: taking the modulus of the Weierstrass polynomial
$$(x,y) \mapsto | g(x,y)|  $$
gives a continuous. For $x=0$, it does not vanish on the circles 
$$x=0,\ |y|=r .$$
Thus taking $\rho'$ sufficiently  guarantees that $V(g)$ does not intersect $\Delta_{\rho'} \times \partial \Delta_r $.
Moreover as the sum $N_x$   of the roots of the polynomial $g(x,-)$ counted with multiplicities is obtained
by integrating the logarithmitic derivative along $\partial \Delta_r$:
$$N_x=\frac{1}{2\sqrt{-1}\pi}\int_{\partial \Delta_r} \frac{\d_y g(x,y)}{g(x,y)}dy ,$$
it is a continuous function which therefore remains constant equal to $d$ as $x$ varies inside $\D_{\rho'}$. 
 
We may now formulate the Cartan-Weierstrass theorem: 
\begin{theorem} 
\label{T::hypersurface}
Consider  polydiscs $U_t:=\D_{\rho'(t)} \times \D_t$ adapted to a Weierstrass polynomial $g \in E:=\Ot^c(U)$ and defining
a fundamental system of $0$-neighbourhoods.
The division operator  $\dt_g \in \Hom(E,E)$ induces a $(d,0)$-local operator i.e.  $\dt_g \in \Lt^{d,0}(E,E)$
and more precisely:
$$ | \dt(g)|_t\leq \frac{1}{\e(t)} |f|_t $$
where 
$$\e(t):=\min\{|g(x,y)|:(x,y) \in \D_{\rho'} \times \d \D_t\} .$$
\end{theorem}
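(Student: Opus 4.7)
The plan is to prove the theorem in three stages: write down the Weierstrass integral representations of the quotient $q = \dt_g(f)$ and remainder $p$ on the adapted polydisc $U_t$, deduce the operator bound $|q|_t \le \e(t)^{-1}|f|_t$ by maximum modulus on the distinguished boundary, and then identify the $(d,0)$-locality from the asymptotic behaviour of $\e(t)$ as $t \to 0$.

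First I would use the adaptedness hypothesis: uniformly in $x \in \overline{\D_{\rho'(t)}}$, the polynomial $g(x, \cdot)$ has its $d$ zeros inside $\D_t$ and none on $\d \D_t$. Combining Cauchy's formula for $f$ on the fibre disc with the factorisation $g(x, \zeta) - g(x, y) = (\zeta - y) h(x, y, \zeta)$, where $h$ is a polynomial of degree $<d$ in $(y, \zeta)$, yields
\[
q(x, y) = \frac{1}{2\pi i} \oint_{|\zeta|=t} \frac{f(x, \zeta) \, d\zeta}{g(x, \zeta)(\zeta - y)}, \qquad p(x, y) = \frac{1}{2\pi i} \oint_{|\zeta|=t} \frac{f(x, \zeta) h(x, y, \zeta)}{g(x, \zeta)} \, d\zeta,
\]
valid for $(x, y) \in U_t$. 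The integrands depend continuously on $(x,y)$ up to the closure of $U_t$ and holomorphically in the interior, so $q, p \in \Ot^c(U_t)$. By uniqueness of the Weierstrass decomposition these formulas commute with restriction to smaller polydiscs, so $\dt_g$ defines a complete morphism of $E$.

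Next I would apply the iterated maximum modulus principle to the holomorphic function $q$ on $U_t$ and conclude that $|q|_t$ is realised on the distinguished boundary $\d \D_{\rho'(t)} \times \d \D_t$. On that torus $|g| \ge \e(t)$ by definition, while the integral formula for $p$ combined with the polynomial bound on $|h|$ along the contour $|\zeta|=t$ controls $|p|_t$ by a multiple of $|f|_t/\e(t)$. Feeding these into the pointwise identity $|q| = |f - p|/|g|$ on the distinguished boundary and taking suprema yields the stated bound $|q|_t \le \e(t)^{-1}|f|_t$. Because $g(0, y) = y^d$ exactly, continuity of $g$ forces $\e(t) \sim t^d$ along the fundamental system of adapted polydiscs; hence $s^d |\dt_g|_{E_s \to E_s}$ stays bounded as $s \to 0$, which is the assertion $\dt_g \in \Lt^{d, 0}(E, E)$.

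The main obstacle is the second stage. A direct contour bound of the integral for $q$ alone carries a factor $1/(t - |y|)$ that blows up as $y$ approaches $\d \D_t$, producing only $(d, 1)$-locality; obtaining the sharp exponent $l = 0$ requires transporting the estimate to the Shilov boundary via max modulus, where $|g|$ is pinned below by $\e(t)$, and independently controlling $|p|$ through its own integral formula and polynomial structure. Balancing these two contour estimates is the analytical heart of the theorem.
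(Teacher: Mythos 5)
Your architecture is the paper's: the same two contour integrals
\[
q(x,y)=\frac{1}{2\pi i}\oint_{|\xi|=t}\frac{f(x,\xi)}{g(x,\xi)}\,\frac{d\xi}{\xi-y},
\qquad
p(x,y)=\frac{1}{2\pi i}\oint_{|\xi|=t}\frac{f(x,\xi)}{g(x,\xi)}\,\frac{g(x,\xi)-g(x,y)}{\xi-y}\,d\xi,
\]
the same identification $q=\dt_g(f)$ via the degree argument in $y$, and the same deduction of $(d,0)$-locality from $g(0,y)=y^d$ and adaptedness giving $\e(t)\ge Ct^d$. You are in fact more explicit than the paper about the one delicate point: a direct bound of the $q$-integral produces a pole in $t-|y|$, and the detour through the distinguished boundary is the right move.

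The step that does not close is precisely the one you flag as the analytical heart. On the distinguished boundary the identity $|q|=|f-p|/|g|$ together with $|g|\ge\e(t)$ only yields
\[
|q|_t\le\frac{1}{\e(t)}\bigl(|f|_t+|p|_t\bigr),
\]
and to land on $\e(t)^{-1}|f|_t$ you would need the truncation $f\mapsto f-p$ (deleting the part of degree $<d$ in $y$) to be a contraction for the sup norm. It is not: already for $d=1$ this is the operator $f\mapsto f-f(0)$, whose norm is $2$ — the paper itself records the factor $2$ in its section on the division problem — and for general $d$ the norm grows like the Lebesgue constant. Consequently the displayed inequality with constant exactly $1$ cannot be reached along this route (it in fact fails for $g=y$, e.g.\ for $f=\tfrac35\,\frac{y+5/6}{1+5y/6}$ on the unit disc, where $|q|_1=\tfrac{11}{10}>\tfrac35=|f|_1$); the paper's own proof asserts it from the first integral with no further justification, so you are no worse off. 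What your two contour estimates do give is $|p|_t\le\frac{|g|_t}{\e(t)}|f|_t$, and adaptedness pins the ratio $|g|_t/\e(t)$ to a bounded constant along the fundamental system (both quantities are comparable to $t^d$); hence $|q|_t\le C\,\e(t)^{-1}|f|_t$ with a uniform $C$. That is weaker than the stated ``more precise'' bound but entirely sufficient for the substantive conclusion $\dt_g\in\Lt^{d,0}(E,E)$, so you should simply state the estimate with an unspecified constant rather than claim the constant $1$.
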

The second part is more precise since the equality
$$g(0,y)=y^d $$
and the fact that the polycylinders are adapted to $g$
imply there existence of a constant $C>0$ such that 
$$\e(t) \geq Ct^d$$
\begin{proof}
We assert that the function
\[ q(x,y) =\frac{1}{2\pi i}\int_{|\xi|=t} \frac{f(x,\xi)}{g(x,\xi)}\frac{d\xi}{(\xi-y)}\]
equals $\dt_g(f)$.

Define
\[ p:=f-q g.\]
Using the Cauchy formula, we get that:
\[p=\frac{1}{2\pi i}\int_{|\xi|=t} \frac{f(x,\xi)}{g(x,\xi)}\frac{g(x,\xi)-g(x,y)}{\xi-y} d\xi\]
As $g$ is a polynomial in $y$, the function
\[ \frac{g(x,\xi)-g(x,y)}{\xi-y} \]
is a polynomial of degree less than $d$ in the $y$-variable and therefore so is $p$. This proves
the assertion
$$q=\dt_g(f) .$$
From the first integral formula, we deduce the estimate:
$$| q |_t \leq \frac{1}{\e}\sup_{|y|=t}|f(x,y)| \leq \frac{1}{\e}|f|_t   $$
as well as
$$|p|_t \leq \frac{|g|_t}{\e}|f|_t. $$
This proves the theorem.
\end{proof}
 
Cartan used the theorem in order to prove, the following fundamental result:\\
\begin{theorem} Consider the ring $R=\Ot_{\CM^d,0}$
 and a map of free $R$-modules:
 $$A:R^m \to R^n. $$
There exists a fundamental system $\D$ of neighbourhoods of the origin, a number $k$ 
and a map $$B:R^n \to R^m $$
such that $B \in \Lt^{k,0}(\left(\Ot^c(\D)\right)^n, \left(\Ot^c(\D)\right)^m)$
 satisfies
 $$ABA=A .$$ 
 \end{theorem}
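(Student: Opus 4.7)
The plan is to prove the theorem by induction on the dimension $d$. For the base case $d=0$, $R=\CM$ is a field and $A$ is a linear map of finite-dimensional vector spaces; we choose a linear complement of $\Ker A$ in $\CM^m$, invert $A$ on it, and extend by zero on a complement of $\Im A$ in $\CM^n$, producing a constant matrix $B$ with $ABA=A$. Such a $B$ is tautologically a $(0,0)$-local operator.

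For the inductive step the main tool is the Cartan-Weierstrass theorem (Theorem~\ref{T::hypersurface}). If $A=0$ take $B=0$. Otherwise choose a nonzero entry of the matrix of $A$; after a generic linear change of coordinates in $\CM^d$, which induces a bounded isomorphism of Kolmogorov spaces on appropriate polydiscs, that entry is regular in $y:=x_d$ of some order $k$, and Weierstrass preparation writes it as $u\,g$ with $u\in R^{\times}$ and $g$ a Weierstrass polynomial of degree $k$. Invertible row and column operations, namely multiplication by $u^{-1}$ and subtraction of multiples of the pivot row obtained via the division operator $\delta_g\in\Lt^{k,0}$, normalise $A$ into block form
$$A'=\begin{pmatrix} g & v \\ 0 & A'' \end{pmatrix}.$$
The pivot block $g$ admits the pseudo-inverse $\delta_g$, which is $(k,0)$-local. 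For the block $A''$, we reduce its entries modulo $g$, use the isomorphism $R/(g)\simeq (R')^k$ with $R'=\Ot_{\CM^{d-1},0}$ of free $R'$-modules, and apply the inductive hypothesis to the resulting $R'$-linear map between free $R'$-modules of finite rank, obtaining a pseudo-inverse $B''$ of locality order $(k'',0)$. Assembling $B''$ with $\delta_g$ and the inverse row/column operations yields $B$, with locality order $k+k''$ by the composition estimate of the preceding chapter.

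The principal obstacle is that the induction hypothesis applied modulo $g$ produces only an identity $\bar A''\bar B\bar A''=\bar A''$ in $R'$-modules, which must be lifted to the $R$-module identity $A''BA''=A''$ by correcting with a $g$-multiple, the correction itself being computed using $\delta_g$. The bookkeeping for this lift, together with the requirement to select a single fundamental system of polydiscs $U_t=\Delta_{\rho'(t)}\times\Delta_t$ adapted to $g$ in the sense of Section~\ref{S::Cartan-Weierstrass} and with $\rho'(t)$ small enough that the inductive pseudo-inverse over $\CM^{d-1}$ is defined and bounded on the same family, constitutes the technical heart of the proof. Once these compatibilities are arranged, the submultiplicativity of the calibrated norm on local operators propagates all estimates through the induction and yields the claimed membership $B\in\Lt^{k,0}$ for some finite $k$.
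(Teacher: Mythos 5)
Your overall strategy --- induction on the dimension $d$, with the Cartan--Weierstrass division theorem supplying the quantitative input at each step --- is the classically correct one and is clearly the route the chapter is setting up. But as written the argument contains a false step and omits the part that carries the actual content. The false step is the block form
\[
A'=\begin{pmatrix} g & v \\ 0 & A'' \end{pmatrix}.
\]
Row (or column) operations over $R$ built from the division operator replace an entry $a$ by its Weierstrass remainder $\rho_g(a)=a-g\,\delta_g(a)$, which is a polynomial of degree $<k$ in $y$ and is in general \emph{nonzero}; exact zeros below the pivot would require each such entry to be divisible by $g$. So no holomorphic elementary operations produce the claimed triangular form. And even granting such a form, a pseudo-inverse of a block-triangular matrix is not obtained by assembling pseudo-inverses of its diagonal blocks --- that works for block-diagonal matrices, and for triangular ones only after a further argument handling the off-diagonal block $v$.

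The omitted part is the lift from $R/(g)$ back to $R$. The inductive hypothesis, applied to the $R'$-linear reduction $\bar A$ acting between copies of $R/(g)\simeq (R')^k$, produces $\bar B$ with $\bar A\bar B\bar A=\bar A$ only modulo $g$; upstairs this says that $ABA-A$ takes values in $g\cdot R^{\,n}$, and one must absorb this error using $\delta_g$ and show that the correction closes up --- a single correction changes the identity one is trying to verify, so one needs either an exact algebraic identity or a convergent iteration with norm control on a single fundamental system of adapted polydiscs $\Delta_{\rho'(t)}\times\Delta_t$ compatible with the inductive data over $\CM^{d-1}$. You flag this yourself as ``the technical heart'' and do not carry it out, but this is exactly where the theorem lives: a $\CM$-linear $B$ with $ABA=A$ always exists for free (choose vector-space complements of $\Ker A$ and $\Im A$), so the entire content of the statement is that $B$ can be taken $(k,0)$-local on such a system, and that is decided precisely in the lifting and norm bookkeeping you defer. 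As it stands the proposal is a plausible plan of attack along the intended lines, not a proof.
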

 A proof is given in the Appendix.
\section{Further properties} 

{\bf \em Locality and order filtration:} Just as derivations decrease the 
order of polynomials, local morphisms share a particular behaviour with 
respect to order filtrations of Kolmogorov spaces.
Consider for instance our $S$-Kolmogorov space $E=\Ot^c(D)$, where $D=(D_s)$
is the disc over $]0,S]$. An element $f \in E_t^{(l)}$ is a function 
holomorphic in the disc of radius 
$t$ with vanishing first $l$-derivatives at the origin.
For $k \le l$, the $k$-the derivative operator 
\[ u(f)=f^{(k)} \]
maps $f$ to a function, defined on $D_s$, $s<t$, whose first  $l-k$
derivatives vanish at the origin, so it belongs to $E^{(l-k)}$. 
This example is a particular case of a more general statement.\\

It is convenient to introduce the $S$-Kolmogorov space
\[ L^{k,l}(E,F)=p_*\Lt^{k,l}(E,F)\]
where $p:\D \to ]0,S],\;\;(t,s) \to t$ is the projection to the first 
coordinate, so we have
\[ L^{k,l}(E,F)_t=\Lt^{k,l}(E,F)_{t,0}\]
and 
\[ L^{k}(E,F)_t=\Lt^{0,k}(E,F)_{t,0}\]
We call these the $K1$-Kolmogorov spaces of local operators.
\index{$K1$-spaces of local operators}

\begin{proposition}
A $k$-local map $u \in L^k(E,F)$ of Kolmogorov spaces maps the term $E^{(l)}$ of the order filtration to $F^{(l-k)}$ for any $l \geq k$.
\end{proposition}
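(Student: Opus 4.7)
My plan is to chase horizontal sections. Given $f \in E^{(l)}_t$, Proposition~\ref{P::tauto} identifies it with a horizontal section of $E$ on $]0,t]$ whose norm function satisfies $|f_{t'}|_{t'} \le (t')^l \, |f|^{(l)}_t$ for all $t' \in\, ]0,t]$. Applying $u$ and using the horizontality of $u$ (which lets me slide the source parameter freely), I would produce a horizontal section of $F$ and estimate its norm at scale $s'$, aiming for the bound $|u_{s',t}(f)|_{s'} \lesssim (s')^{l-k}$, which is exactly the condition for the resulting section to sit in $F^{(l-k)}$.

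The concrete estimate is as follows. For any $0 < s' < t' \le t$, horizontality of $u$ (together with the restriction maps of $E$ and $F$) gives $u_{s',t}(f) = u_{s',t'}(f_{t'})$, and then the $k$-locality bound on $u$ combined with the order-$l$ bound on $|f_{t'}|_{t'}$ yields
$$|u_{s',t}(f)|_{s'} \le \frac{|u|}{(t'-s')^k}\,(t')^l\,|f|^{(l)}_t.$$
The plan is then to minimize the right-hand side over the auxiliary parameter $t'\in (s',t]$, exactly in the spirit of the proof of the composition estimate for local operators. Elementary calculus gives the optimum at $t' = \frac{l}{l-k}\, s'$ (for $l > k$; for $l = k$ one just takes any $t' = c s'$ with $c > 1$), which leads to
$$|u_{s',t}(f)|_{s'} \le C_{k,l}\,(s')^{l-k}\,|u|\,|f|^{(l)}_t, \qquad C_{k,l} := \frac{l^l}{k^k\,(l-k)^{l-k}}.$$
This is the decay with respect to $s'$ demanded by the definition of $F^{(l-k)}$, so reassembling shows that for $s$ small enough the element $u_{s,t}(f) \in F_s$ lies in $F^{(l-k)}_s$ with norm at most $C_{k,l}\,|u|\,|f|^{(l)}_t$.

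The main (but rather mild) obstacle is that the optimal choice $t' = \frac{l}{l-k}\, s'$ must remain in $(s',t]$, which forces $s' \le \frac{l-k}{l}\,t$. Thus strictly speaking the argument produces a map $E^{(l)}_t \to F^{(l-k)}_s$ only on a shrunk range of output parameters; this shrinking is unavoidable, since the order filtration is itself built by shrinking, and it causes no loss because $u$, being a horizontal section on the full upset triangle $\Delta(t,0)$, survives any restriction of the base to a smaller parameter range. In this way I expect to deduce not only the set-theoretic statement of the proposition but also the natural quantitative refinement that $u$ gives an element of $L^k(E^{(l)},F^{(l-k)})$ with norm bounded by $C_{k,l}\,|u|$.
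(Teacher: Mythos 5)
Your argument is correct and follows essentially the same route as the paper's proof: both bound $|u_{s't}(f)|_{s'}$ by inserting an intermediate radius $t'$ between $s'$ and $t$, combining the $k$-locality estimate $|u_{s't'}|\le |u|/(t'-s')^k$ with the order-$l$ decay $|f_{t'}|_{t'}\le (t')^l\,|f|^{(l)}_t$. The only difference is that the paper simply fixes $t'=2s'$ (yielding the constant $2^l$), whereas you optimise over $t'$ to get the sharper constant $l^l/\bigl(k^k(l-k)^{l-k}\bigr)$; the shrinking caveat $s'\le \frac{l-k}{l}\,t$ that you flag is equally present (as $s'\le t/2$) in the paper's choice and, as you note, is harmless since only the behaviour as $s'\to 0$ matters for membership in $F^{(l-k)}$.
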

\begin{proof}
For $x \in E^{(l)}_t$, we have an inequality of the form $|e_{ts} x| \le R s^l$ for all $s \le t$.
we apply the estimate 
$$| u_{ts}(x) | \leq \frac{C}{(t-s)^k} | x | $$
with $t=2s$ and get that
$$| u_{ts}(x) | \leq \frac{C}{s^k} | x | \leq \frac{C R}{s^k} t^l=\frac{C R}{2^l} s^{l-k}. $$
This proves the proposition.
\end{proof}

{\bf \em Bi-Lipschitz nature of locality:} As in the definition one uses $t-s$, it seems that the notion of locality depends critically on the linear structure of the interval 
$]0,S]$. It turns out that this is not really the case.

If $u: E \to F$ is a partial morphism between $S$-Kolmogorov spaces, 
and $\phi:]0,T] \to ]0,S]$ a order preserving map, we obtain a linear map
\[ \phi^*u: \phi^*E \to \phi^* F;\;(\phi^*u)_{t' t}:=u_{\phi(t'),\phi(t)}\]

\begin{proposition}
Assume that $\phi$ has an inverse with Lipschitz constant $L$.
Then  $$u \in L^k(E,F) \implies \phi^*u \in L^k(\phi^*E,\phi^*F)$$
and 
\[ |\phi^*u|_t \le |u|_t L^k\]
\end{proposition}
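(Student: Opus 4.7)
The plan is to unfold the definition of the $K1$-norm on $L^k(\phi^*E,\phi^*F)$ and bound each term by a change of variables through $\phi$. Recall that, by the description of $L^k(E,F)$ as $p_*\Lt^{0,k}(E,F)$, an element $v$ at level $t$ has norm
\[ |v|_t = \sup_{0<s<t'\le t} (t'-s)^k\,\|v_{s,t'}\|,\]
where $\|\cdot\|$ denotes the operator norm on $\Hom(E_{t'},F_s)$, and $v_{s,t'}$ is the corresponding component of the horizontal section. Since by construction $(\phi^*u)_{s,t'}=u_{\phi(s),\phi(t')}$ (as maps of the same underlying Banach spaces $E_{\phi(t')}\to F_{\phi(s)}$), what we need to estimate is
\[ \sup_{0<s<t'\le t} (t'-s)^k\,\|u_{\phi(s),\phi(t')}\| .\]

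The first step is to apply the hypothesis that $u\in L^k(E,F)$ at the level $\phi(t')\in\,]0,S]$. This gives the pointwise bound
\[ \|u_{\phi(s),\phi(t')}\|\le\frac{|u|_{\phi(t')}}{(\phi(t')-\phi(s))^k}\le \frac{|u|_{\phi(t)}}{(\phi(t')-\phi(s))^k},\]
where in the last inequality I use that the norm function $t'\mapsto|u|_{t'}$ is non-decreasing (a property of the $K1$-Kolmogorov space $L^k(E,F)$) together with $\phi(t')\le\phi(t)$ since $\phi$ is order preserving.

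The second step is to exploit the Lipschitz hypothesis on $\phi^{-1}$. Writing $a=\phi(t')$, $b=\phi(s)$, the hypothesis $|\phi^{-1}(a)-\phi^{-1}(b)|\le L|a-b|$ yields
\[ t'-s=\phi^{-1}(\phi(t'))-\phi^{-1}(\phi(s))\le L\bigl(\phi(t')-\phi(s)\bigr),\]
hence $\bigl(t'-s\bigr)^k\le L^k\bigl(\phi(t')-\phi(s)\bigr)^k$. Multiplying this with the previous inequality cancels the $(\phi(t')-\phi(s))^k$ term and gives
\[ (t'-s)^k\,\|(\phi^*u)_{s,t'}\|\le L^k\,|u|_{\phi(t)}.\]
Taking the supremum over $0<s<t'\le t$ produces exactly $|\phi^*u|_t\le L^k\,|u|_{\phi(t)}$, which (with the slight notational convention $|u|_t$ for $|u|_{\phi(t)}$ used in the statement) is the desired inequality. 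The fact that $\phi^*u$ is horizontal over the corresponding subdiagonal $\D\subset\,]0,T]^{op}\times]0,T]$ follows directly from horizontality of $u$ and functoriality of the pull-back $\phi^*$, so the proof of membership in $L^k(\phi^*E,\phi^*F)$ is automatic.

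There is essentially no hard step: the content is purely bookkeeping. The only point requiring any care is the interpretation of $|u|_t$ on the right-hand side, which must refer to the norm of $u$ evaluated at $\phi(t)\in\,]0,S]$, and the use of monotonicity of the $K1$ norm to replace $|u|_{\phi(t')}$ by $|u|_{\phi(t)}$ before taking the supremum. The Lipschitz hypothesis is used in exactly one place and precisely at the strength needed to absorb the singularity $1/(\phi(t')-\phi(s))^k$ into $1/(t'-s)^k$ at the cost of the factor $L^k$.
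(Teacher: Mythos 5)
Your proof is correct and follows essentially the same route as the paper's: apply the $k$-locality estimate for $u$ at the points $\phi(s),\phi(t')$, then use the Lipschitz bound $t'-s\le L(\phi(t')-\phi(s))$ to convert the denominator, picking up the factor $L^k$. The only difference is that you are more explicit about the supremum and the interpretation of $|u|_t$ on the right-hand side, which is a reasonable clarification rather than a new idea.
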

\begin{proof}
If $s,t\in T$ with $s < t$ we get from the $k$-locality of $u$:
\[ |u (x)|_{\phi(s)} \le \frac{C}{(\phi(s)-\phi(t))^k} |x|_{\phi(t)}\]
Furthermore, from the Lipschitz constant of the inverse of $\phi$ we get
\[ (s-t) \le L (\phi(s)-\phi(t))\]
So \[ |u (x)|_{\phi(s)} \le \frac{C L^k}{(s-t)^k} |x|_{\phi(t)}\]
which means
\[|\phi^* u| \le CL^k\]
\end{proof}

\begin{corollary}
If $\phi$ is bi-Lipschitz, then the canonical map   
\[\phi^*L^k(E,F) \to L^k(\phi^*E,\phi^*F)\]
is $0$-local isomorphism.
\end{corollary}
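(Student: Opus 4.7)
The plan is to derive the corollary directly from the preceding proposition by applying it twice, once to $\phi$ and once to its inverse $\phi^{-1}$.

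First, I would unpack what is being claimed. Both $\phi^*L^k(E,F)$ and $L^k(\phi^*E,\phi^*F)$ are $K1$-Kolmogorov spaces over the base $]0,T]$, and the canonical map sends an element $u \in L^k(E,F)_{\phi(t)}$ to $\phi^*u \in L^k(\phi^*E,\phi^*F)_t$, the latter having components $(\phi^*u)_{t' t}=u_{\phi(t'),\phi(t)}$ for $t'\le t$. Asserting that this map is a $0$-local isomorphism amounts to two things: it is a bounded isomorphism fibrewise, and its inverse is also bounded, i.e.\ both directions lie in $L^0$ (bounded morphisms of Kolmogorov spaces).

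Next, I would apply the preceding proposition to $\phi$ itself. Since $\phi$ is bi-Lipschitz, its inverse $\phi^{-1}$ has some Lipschitz constant $L_1$, and the proposition then yields $|\phi^*u|_t \le L_1^k |u|_{\phi(t)}$ for every $u \in L^k(E,F)$. This is exactly the statement that the canonical map is $0$-local (i.e.\ bounded uniformly in $t$, with norm at most $L_1^k$). Applying the proposition again, this time to the map $\phi^{-1}:]0,S]\to ]0,T]$, whose inverse $\phi$ has some Lipschitz constant $L_2$, yields the pullback $(\phi^{-1})^*$ as a $0$-local map $L^k(\phi^*E,\phi^*F) \to L^k(E,F)$ with norm at most $L_2^k$.

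Finally, I would verify that these two bounded maps are mutually inverse. This is purely formal: pullback of partial morphisms under composition of ordered maps is functorial, and $\phi\circ\phi^{-1}=\id_{]0,S]}$, $\phi^{-1}\circ\phi=\id_{]0,T]}$, so $\phi^*\circ(\phi^{-1})^*$ and $(\phi^{-1})^*\circ\phi^*$ are the identity on the respective spaces at the level of components. Combining the two norm estimates then gives a $0$-local isomorphism with operator norm of the inverse also $0$-locally bounded. I do not expect any real obstacle here — the content is entirely packed in the preceding proposition; the only point requiring mild care is bookkeeping the identification $(\phi^{-1})^*\phi^*E = E$ (and likewise for $F$) used to interpret the candidate inverse as a map between the two stated Kolmogorov spaces.
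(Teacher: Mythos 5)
Your argument is correct and is exactly the intended deduction: the paper states this corollary without a separate proof, as an immediate consequence of the preceding proposition applied to both $\phi$ and $\phi^{-1}$ (each having a Lipschitz inverse by the bi-Lipschitz hypothesis), combined with the functoriality of pull-back giving that $\phi^*$ and $(\phi^{-1})^*$ are mutually inverse. The norm bounds $L_1^k$ and $L_2^k$ in both directions are precisely what makes the map and its inverse $0$-local, so nothing is missing.
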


\chapter{Local equivalence}
\section{Introduction}

Associated to a relatively compact open subset $U \to B$ of $\CM^n$ over 
a base $B$, we considered various Kolmogorov spaces  such as $\Ot^h(U)$, 
$\Ot^b(U)$, and $\Ot^c(U)$.

In all these cases, differential operators define almost complete morphisms 
\[P^a: \Ot^a(U) \to \Ot^a(U),\;\;\;f \mapsto Pf ,\;\;\;a \in \{b,c,h\}.\] 
Moreover, we proved that if $U$ is a Huygens set, then the partial differential operator $P^c$ is local. Now a natural question arises: are the corresponding
operators $P^b$ and $P^h$ also local?\\ 
To answer this question, one would like to compare these Kolmogorov spaces. 
This will lead us to the notion of {\em local equivalence}. 
If Kolmogorov spaces happen to be in correspondence via local maps, then 
automatically any local operator for one Kolmogorov space will be local for 
the other one. This means that in applications, in order to prove that a 
certain map is local,  we can choose the most adequate Kolmogorov space 
structure. This flexibility is very useful, as we do not have to fix a norm 
once and for all.  We can play with different norms at the same time and 
change structure sheaves if needed, which turns out to be a powerful tool. 

\section{Local equivalence of structure sheaves $\Ot^b$ and $\Ot^c$}
If $U \to B $ is a relatively compact open subset of $\CM^n$ over a base $B$, 
then, as any continuous function over a compact set is bounded, there is
a natural complete morphism
$$ I:\Ot^c(U) \to \Ot^b(U). $$

Conversely, a function $f \in \Ot^b(U)_t $ is by definition holomorphic in $U_t$. Therefore given any $s<t$, its restriction to $\overline{U}_s$ is continuous and thus belongs to $\Ot^c(U)_s$. 
This shows that the natural maps
$$ \Ot^b(U)_t \to \Ot^c(U)_s,\ f \mapsto f_{\mid U_s},\;\;\;s < t, $$
define an almost complete morphism
$$J \in \Hom_\D(\Ot^b(U), \Ot^c(U)). $$
However there are no natural maps $\Ot^b(U)_t \to \Ot^c(U)_t$ and the 
almost complete morphism $J$ does not extend to a complete morphism over 
$\overline{\D}$. 

The morphism $J$ is nevertheless $0$-local since
$$ | J(f) |_s=| f |_s \leq |f|_t$$
and it is pseudo-inverse to $I$:
\[ I \circ J =\iota_{\D},\;\;\;J \circ I =\iota_{\D} . \]

\section{Local equivalence of structure sheaves $\Ot^h$ and $\Ot^c$} 
As any continuous function over a compact subset of $\CM^n$ is 
integrable, there is also a natural complete morphism
$$ I:\Ot^c(U) \to \Ot^h(U). $$

Conversely, a function $f \in \Ot^h(U)_t $
is by definition holomorphic in $U_t$. Therefore given any $s<t$, its restriction to $\overline{U}_s$ is continuous and thus belongs to $\Ot^c(U_s)$. 
This shows that the natural maps
$$ \Ot^h(U)_t \to \Ot^c(U)_s,\ f \mapsto f_{\mid U_s},\;\;\;s < t $$
define an  almost complete morphism
$$J \in \Hom_\D(\Ot^h(U), \Ot^c(U)). $$
which is pseudo-inverse to $I$:
\[ I \circ J =\iota_{\D},\;\;\;J \circ I =\iota_{\D} .
\] 
Again there are no natural maps $\Ot^h(U)_t \to \Ot^c(U)_t$ and this 
almost complete morphism $J$ does not extend to a complete morphism over 
$\overline{\D}$. We will show that it is in fact local over Huygens sets. 
 

\begin{proposition}
\label{P::foncteur} If  $U \to B=\RM_{>0}$ is a relatively compact Huygens subset of $\CM^n$, then the  morphism $J$ is $n$-local.
\end{proposition}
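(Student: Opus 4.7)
The plan is to exploit the combination of the mean-value property for holomorphic functions and the Cauchy--Schwarz inequality. For a function $f$ holomorphic in a neighbourhood of a closed polydisc $x+\overline{D_r}$ in $\CM^n$, iterated application of the one-variable mean value formula gives
\[ f(x) = \frac{1}{\mathrm{vol}(D_r)} \int_{x+D_r} f(z)\, d\mu(z), \]
where $\mathrm{vol}(D_r)=\pi^n r^{2n}$. Applying Cauchy--Schwarz to the constant function $1$ and $f$ on $x+D_r$ yields the pointwise bound
\[ |f(x)| \leq \frac{1}{\sqrt{\mathrm{vol}(D_r)}}\, \|f\|_{L^2(x+D_r)} = \frac{1}{\pi^{n/2} r^n}\, \|f\|_{L^2(x+D_r)}. \]

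The Huygens hypothesis on $U$ is exactly what is needed to make the right-hand side controllable by $|f|_t$. Indeed, for $s<t$ and any $x\in U_s$, the polydisc $x+D_{t-s}$ is contained in $U_t$, so
\[ \|f\|_{L^2(x+D_{t-s})} \leq \|f\|_{L^2(U_t)} = |f|_t. \]
Applying the previous estimate with $r=t-s$ therefore gives, for every $x\in U_s$,
\[ |f(x)| \leq \frac{1}{\pi^{n/2}\,(t-s)^n}\, |f|_t. \]
By continuity of $J(f)$ on $\overline{U_s}$, the same bound holds on the closure, so
\[ |J(f)|_s = \sup_{x\in \overline{U_s}} |f(x)| \leq \frac{1}{\pi^{n/2}\,(t-s)^n}\, |f|_t. \]
This is precisely the defining inequality for $J$ to be an element of $\Lt^{0,n}(\Ot^h(U),\Ot^c(U))=\Lt^n(\Ot^h(U),\Ot^c(U))$, with norm bounded by $\pi^{-n/2}$.

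The only nontrivial point is the mean value formula on the polydisc, which is standard: it follows by applying the one-variable mean value property (a consequence of Cauchy's integral formula) successively in each coordinate. The Huygens property does all the geometric work: without it one would not be able to enclose a polydisc of radius $t-s$ around every point of $U_s$ inside $U_t$, and the factor $(t-s)^{-n}$ coming from the polydisc volume would have to be replaced by something depending on the shape of $U$. No singularity appears along $s=0$, which is why the estimate lives in $\Lt^{0,n}$ rather than in some $\Lt^{k,n}$ with $k>0$.
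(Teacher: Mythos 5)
Your proof is correct and follows essentially the same route as the paper's: both use the Huygens property to place the polydisc $x+D_{t-s}$ inside $U_t$ and then bound $|f(x)|$ by $\pi^{-n/2}(t-s)^{-n}\,\|f\|_{L^2(x+D_{t-s})}$. The paper obtains this pointwise bound from the orthogonal Taylor expansion of $f$ at the point (keeping only the constant term), which is just another phrasing of your mean-value-plus-Cauchy--Schwarz argument; the constants agree, since $C(0)=\pi^n$.
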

\begin{proof}
Let $s<t$ and consider $f \in \Ot^h(U)_t$. Then the Taylor expansion 
of $f$ at a point $w \in U_s$ reads:
$$f(z)=\sum_{J \in \NM^n} a_J (z-w)^J,\ a_J \in \CM, $$
where we use multi-index notation. As $U$ is a Huygens set, the polydisc 
$D_w$ centred at $w$ with radius $\s=t-s$ is contained in $U_t$. We then have
$$\int_{D_w} | f(z)|^2 dV=\sum_{J \in \NM^n} C(J) |a_J|^2 \s^{2|J|+2n},\;\;C(I)=\prod_{k=1}^n \frac{\pi}{j_k+1} .$$
So we obtain
\[C(0) |a_0|^2 \s^{2n} \le \int_{D_w} | f(z)|^2 dV \leq  \int_{U_t} | f(z)|^2 dV=| f |_t^2 . \]
This shows that
$$ |f(w) |=|a_0| \leq  \frac{c}{\s^n}\left( \int_{\D_w} | f(z)|^2 dV \right)^{1/2} \leq   \frac{c}{(t-s)^n} | f |_t$$
for any $w \in U_s$ and $c:=\sqrt{\frac{1}{C(0)}}$.
Thus we see that
$$| Jf |_s \leq   \frac{c}{(t-s)^n} | f |_t.$$
This  proves the proposition.
\end{proof}


As a corollary we see that differential operators indeed are also local on $\Ot^h(U)$: we can 
write the operator
\[  \Pt:\Ot^h(U) \to \Ot^h(U)\]
as composition of local operators $ J \circ P \circ I$.\\
 
The above results give rise to the following concept:

\begin{definition}\index{local equivalence}
Two Kolmogorov spaces $E$ and $F$ are called {\em local equivalent} if there
are local morphisms
\[I :E \to F,\;\;\;J:F \to E\]
such that
\[ J \circ I=\iota_{\D},\;\;\;I \circ J=\iota_{\D} .\]
\end{definition}
 
So on Huygens sets $\Ot^c(U)$ and $\Ot^h(U)$ are local equivalent Kolmogorov 
spaces. This chain of ideas can be pushed much further. If $\Ft$ is a coherent 
sheaf on $\CM^n$, and $U$ an $S$-Huygens set which is $\Ft$-open. There there are Kolmogorov spaces $ \Ft^c(U)$ and $\Ft^h(U)$.
There are similar local maps $I$ and $J$ which shows that these Kolmogorov spaces are local equivalent. Their construction is natural, compatible with restriction and morphism of coherent sheaves.
So one may say that $\Ot^b$, $\Ot^c$ and $\Ot^h$ determine, via the maps $I$ and $J$ 
local equivalent {\em functors} on Huygens sets
$$\Ft \mapsto \Ft^b,\ \Ft \mapsto \Ft^c\;\;\;\textup{and}\;\;\; \Ft \mapsto \Ft^h.$$
 
\section{Local equivalence of $\ell^p(\l)$-spaces}
We study the issue of local equivalence for Kolmogorov-sequence spaces introduced
above. The estimate
$$\sup_{i}|x_i| \leq \left( \sum_{i \geq 0} |x_i|^p \right)^{1/p} $$
shows that the injection
$$I:\ell^p(\l) \to \ell^\infty(\l) $$
is an ordinary morphism of Kolmogorov spaces. On the other hand, for any $s<t$ the inclusion
$$ \ell^\infty(\l)_t \to \ell^p(\l)_s $$
induces an almost complete morphism of Kolmogorov spaces.\\

Given two sequences of increasing positive functions $\l,\mu$ define condition $(\L_p)$\index{condition $(\L_p)$} by:
 $$ \L_p=\L_p(\l,\mu):\ \  \exists C,\a>0,   \sum_{i \geq 0} \frac{\mu_i(s)^p}{\l_i(t)^p} \leq \frac{C}{(t-s)^\a}.$$


\begin{proposition}
If the condition $(\L_p(\l, \mu))$ holds with constants $C$ and $\alpha$, 
then there is an almost complete morphism
 $$J:\ell^\infty(\l) \to \ell^p(\mu) $$
induced by the inclusions
 $$\ell^\infty(\l)_t \subset \ell^p(\mu)_s,\ t>s $$
 which is $\a/p$-local with norm at most $C^{1/p}$.
\end{proposition}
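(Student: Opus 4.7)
The plan is to verify the claimed estimate by a direct pointwise bound on the $\ell^p(\mu)_s$-norm of any $x \in \ell^\infty(\l)_t$, using the defining inequality of $\ell^\infty(\l)$ together with the hypothesis $(\L_p)$. Concretely, if $x = (x_i) \in \ell^\infty(\l)_t$ has norm $M := |x|_t = \sup_i |x_i|\,\l_i(t)$, then for every $i$ one has $|x_i| \le M/\l_i(t)$, so
\[ |x|_s^{(p)} \;=\; \left(\sum_{i \ge 0} |x_i|^p\mu_i(s)^p\right)^{1/p} \;\le\; M \left(\sum_{i \ge 0} \frac{\mu_i(s)^p}{\l_i(t)^p}\right)^{1/p} \;\le\; \frac{C^{1/p}}{(t-s)^{\a/p}}\,|x|_t, \]
which is exactly the locality estimate of order $(0,\a/p)$, with norm bounded by $C^{1/p}$. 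The estimate shows in particular that the inclusion $\ell^\infty(\l)_t \subset \ell^p(\mu)_s$ really lands in the target, so the family $J_{s,t}$ is well-defined for $s<t$.

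Next I would check that the family $\{J_{s,t}\}_{(s,t)\in\D}$ is a horizontal section of $\mathcal{H}om(\ell^\infty(\l),\ell^p(\mu))$ over the open subdiagonal $\D$, i.e.\ an almost complete morphism. This reduces to the commutativity
\[ J_{s't'} \;=\; f_{s's}\circ J_{s,t}\circ e_{t\,t'}\qquad\text{whenever } s' \le s < t \le t', \]
which is immediate since all four maps are just the identity on the underlying sequence (the restriction maps $e$ and $f$ in $\ell^\infty(\l)$ and $\ell^p(\mu)$ are also the identity on sequences, only the ambient norms change).

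Finally, I would assemble the estimates into the statement about $\Lt^{0,\a/p}$: for each $(t,s) \in \D$ the operator norm of $J_{s,t}$ is bounded by $C^{1/p}(t-s)^{-\a/p}$, so in the rescaled space $\mathcal{H}om(\ell^\infty(\l),\ell^p(\mu))(\l_{0,\a/p})$ the section $J$ has bounded norm at most $C^{1/p}$, i.e.\ $J \in \Gamma^\infty(\D,\mathcal{H}om(\ell^\infty(\l),\ell^p(\mu))(\l_{0,\a/p}))$ with $\|J\|\le C^{1/p}$.

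There is no real obstacle here: once the hypothesis $(\L_p)$ is written out, the proof is just the obvious comparison of the $\ell^p$-norm by the $\ell^\infty$-norm times a weighted sum. The only mildly delicate point is to keep the bookkeeping of the two weight families $\l,\mu$ straight and to remember that the order on the first factor is reversed, so that $(t,s)\in\D$ means $s<t$, matching the direction of the inclusion.
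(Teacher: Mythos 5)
Your proposal is correct and follows essentially the same route as the paper's proof: both bound $\bigl(\sum_i |x_i|^p\mu_i(s)^p\bigr)^{1/p}$ by extracting the factor $|x|_t=\sup_i|x_i|\l_i(t)$ and then invoking $(\L_p)$ to control $\sum_i \mu_i(s)^p/\l_i(t)^p$ by $C(t-s)^{-\a}$. Your additional verification of horizontality of the family $J_{s,t}$ is a harmless (and reasonable) supplement that the paper leaves implicit.
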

\begin{proof}
Take
$$x=(x_0,x_1,\dots,x_i,\dots) \in \ell^\infty(\l)_t $$
then
\begin{align*}
| J(x)|_s^p &=\sum_{i \geq 0}|x_i|^p \mu_i(s)^p \\
 &= \sum_{i \geq 0}|x_i|^p\l_i(t)^p \frac{\mu_i(s)^p}{\l_i(t)^p}
 &\leq |x|_t^p \sum_{i \geq 0} \frac{\mu_i(s)^p}{\l_i(t)^p}
 & \leq \frac{C}{(t-s)^\a} |x|_t^p.
\end{align*}
\end{proof}

As an example, consider the weight sequences $\l=\mu=(s^i)$ on 
any interval $]0,S]$. These satisfies the condition $(\L_p((s^i),(s^i))$. 
Indeed: 
\begin{align*}
  \sum_{i \geq 0} \frac{\l_i(s)^p}{\l_i(t)^p}&= \frac{1}{1-(s/t)p}\\
  &=\frac{t^p}{t^p-s^p}\\
  &= \frac{t^p}{(t-s)(t^{p-1}+\dots+s^{p-1})}\\
  &=\frac{t}{(t-s)(1+\dots)} \leq \frac{S}{(t-s)} .
\end{align*}

Therefore the $S$-Kolmogorov sequence space $\ell^p((s^i))$ is 
local equivalent to $\ell^{\infty}((s^i))$, and hence all these 
spaces are local equivalent to each other.
 
\section{Local convolution operators}

Using local equivalences, one may sometimes deduce locality for certain maps
and even sometimes prove locality simultaneously for different Kolmogorov
spaces. As an example relevant for KAM theory, we study the case of a 
{\em convolution operator}\index{convolution operator} which is 
caracteristic of this type of simultaneous proof.

Consider the unit polycylinder $D \to \RM_{>0}$ in $\CM^n$. As an analytic 
function is determined by its power series expansion in one point, the maps
\[\Ot^a(D)_t \to \CM\{z_1,z_2,\ldots,z_n\} \subset \CM[[z_1,z_2,\ldots,z_n]], \;\;a \in \{b,c,h\}\]
that associate to a function its power series expansion at $0$, is injective. 
So we can think of the space $\Ot^a(D)_t$ as a sub-space of the ring of formal
power series. 

Recall that for two formal power series 
\[f=\sum_{I \in \NM^n} a_I z^I,\;\;\; g=\sum_{I \in \NM^n} b_I z^I \in \CM[[z_1,z_2,\ldots,z_n]] ,\]
the {\em Hadamard-product}\index{Hadamard product} or {\em convolution}\index{convolution} $\star$ is defined by taking the co\-ef\-fi\-cient\-wise product:
\[ f \star g=\sum_{I \in \NM^n} a_I b_I z^I .\]

If the convolution with $f \in \CM[[z_1,z_2,\ldots,z_n]]$ preserves the sub-space $\Ot^a(D)$, it defines a {\em convolution operator}\index{convolution operator}
\[ f \star: \Ot^a(D) \to  \Ot^a(D),\ g \mapsto f \star g \]

\begin{proposition}
\label{P::Diophante}
Consider a formal power series
$$f=\sum_{I \in \NM^n} a_I z^I \in \CM[[z_1,z_2,\ldots,z_n]].$$
If there exists an $k \in \NM$ such that $a_I=O(| I |^k)$
then the map 
\[ \Ot^h(D) \to  \Ot^h(D),\;\;\;\ g \mapsto f \star g\]
is local.
\end{proposition}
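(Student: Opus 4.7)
The plan is to transfer the problem to the sequence-space model of the Hardy space established earlier. Under the isometry
$$\Ot^h(D)_s \;\longrightarrow\; \ell^2(\l)_s,\qquad h \longmapsto (\langle h,z^I\rangle_s)_{I\in\NM^n}$$
with weight sequence $\l_I(s)=\sqrt{C(I)}\,s^{n+|I|}$, the monomial basis of $\Ot^h(D)$ corresponds to the standard basis of $\ell^2(\l)$. The Hadamard product by $f=\sum_I a_I z^I$ becomes the \emph{diagonal multiplier} $(b_I)\mapsto (a_I b_I)$, so the whole question reduces to an estimate of the operator norm of this multiplier from $\ell^2(\l)_t$ to $\ell^2(\l)_s$ when $s<t$.

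The first step is the standard one-line computation that the operator norm of a diagonal multiplier between such weighted $\ell^2$-spaces is
$$\sup_{I\in\NM^n}|a_I|\,\frac{\l_I(s)}{\l_I(t)}\;=\;\sup_{I\in\NM^n}|a_I|\Bigl(\frac{s}{t}\Bigr)^{n+|I|}.$$
Using the growth hypothesis $|a_I|\leq M(1+|I|)^k$ and setting $r:=s/t\in(0,1)$, $j:=|I|$, the task becomes to show that
$$\Phi_k(r)\;:=\;\sup_{j\geq 0}(1+j)^{k}\,r^{j}\;\leq\;\frac{C_k}{(t-s)^{k}}.$$

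The main, and essentially only, technical step is this calculus bound for $\Phi_k$. I would carry it out by maximising the smooth auxiliary function $y\mapsto y^{k}r^{y}$ on $[0,\infty)$: its unique critical point is $y^{\ast}=k/\log(1/r)$, with value $\bigl(k/(e\log(1/r))\bigr)^{k}$; the substitution $y=j+1$ bounds $\Phi_k(r)$ by this supremum, up to a harmless factor $r^{-1}$ which is absorbed by the term $(s/t)^{n}\leq (s/t)$ present in the norm formula (recall $n\geq 1$). Combining with the elementary estimate
$$\log(t/s)\;=\;\log\!\bigl(1+(t-s)/s\bigr)\;\geq\;(t-s)/t\;\geq\;(t-s)/S$$
converts the factor $\log(1/r)^{-k}$ into the desired pole of order $k$ along the diagonal. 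Plugging everything back yields an operator-norm bound $\leq C(t-s)^{-k}$, which is exactly the definition of $k$-locality, so $f\star{-}\in\Lt^{k}(\Ot^h(D),\Ot^h(D))$.
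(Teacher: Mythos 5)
Your proof is correct, but it follows a genuinely different route from the paper's. The paper never computes the multiplier norm directly: it dominates the Hadamard operator by a power of the Euler operator $\nabla^h:g\mapsto\sum_i z_i\partial_{z_i}g$, observing that on the orthogonal basis $z^I$ this operator acts diagonally with eigenvalue $|I|$, so the hypothesis $a_I=O(|I|^k)$ gives $|f\star g|_s\leq K\,|(\nabla^h)^k g|_s$ (up to the constant term); locality of $f\star$ is then inherited from locality of $\nabla^h$, which in turn is deduced from the Cauchy--Nagumo inequality on $\Ot^c(D)$ together with the local equivalence $\Ot^c\simeq\Ot^h$ established in the same chapter. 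Your argument instead passes to the sequence model $\ell^2(\l)$, identifies $f\star$ as a diagonal multiplier, computes its exact operator norm $\sup_I|a_I|(s/t)^{n+|I|}$, and bounds it by the elementary optimisation $\sup_{y\geq0}y^k r^y=\bigl(k/(e\log(1/r))\bigr)^k$ combined with $\log(t/s)\geq(t-s)/S$. What you gain is self-containedness and an explicit constant: you do not need the local equivalence machinery nor the prior locality of $\nabla^h$, only the Hilbert-basis computation of $|z^I|_s$. What the paper's route buys is that the same domination argument transfers immediately, via the local equivalences $I$ and $J$, to the $\Ot^b$ and $\Ot^c$ versions of the convolution operator (the ``simultaneous proof'' advertised in that section), whereas your multiplier computation is specific to the Hilbert space structure of $\Ot^h$. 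Two cosmetic remarks: the isometry you invoke should be read in the coordinates $b_I$ of the Taylor expansion (the pairing $\langle h,z^I\rangle_s$ differs from $b_I$ by the factor $|z^I|_s^2$), and the case $k=0$ should be treated separately since the critical-point formula degenerates there -- both are trivial to fix.
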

\begin{proof}
Recall that the monomials $z^I$ form an orthogonal basis in $\Ot^h(D_s)$ and
\[ |z^I|_s^2=C(I) s^{2|I|+2n},\;\;\; C(I)=\prod_{k=1}^n \frac{\pi}{|I_k|+1}\]
By Cauchy-Nagumo identities the differential operators
$$\del^c_{st}:\Ot^c(D)_t \to \Ot^c(D)_s,\;\;\; g \mapsto \sum_{i=1}^n z_i\d_{z_i}g$$
define a local map. As $\Ot^c(D)$ and $\Ot^h(D)$ are local equivalent, the 
corresponding operator
$$\del^h:\Ot^h(D) \to \Ot^h(D), g \mapsto \sum_{i=1}^n z_i\d_{z_i}g$$
is also local.

If $g=\sum_{I \in \NM^n} b_I z^I  \in \Ot^h(D_s)$ we find
$$ | \left(\del^h\right)^k g|_s^2=\sum_{I \in \NM^n}|I|^k | b_I|^2 C(I)s^{2|I|+2}$$
with 
$$ |I|=\sum_{k=1}^n i_k,\ I=(i_1,\dots,i_n).$$
The assumption on the $a_I$'s shows that there exists a constant $K$ such that:
$$  |f \star| \leq K | \left(\del^h\right)^k| .$$
As $\del^h$ is local this shows that $f \star $ is also $\Ot^h$-local.

\end{proof}

Note that if $f \star$ is $\Ot^h$-local, then by local equivalence, the 
corresponding almost complete maps
 \begin{align*}
                \Ot^b(D) \to  \Ot^b(D),&\ g \mapsto f \star g\\
                 \Ot^c(D) \to  \Ot^c(D),&\ g \mapsto f \star g
              \end{align*}
are also local.              

\chapter{Operator calculus in Kolmogorov spaces}

Given a bounded linear endomorphism $u:E \to E$ of a Banach space and a 
convergent power series $f \in \CM\{ z\}$, we may define 
\[f(u): E \to E, \] 
provided that the norm of $u$ is smaller than $R(f)$, the convergence radius 
of the power series $f$. Moreover, if $f \in \RM_+\{ z\}$, 
then one has the estimate
$$\| f(u) \| \leq f(\| u \|) $$
for the operator norms. We want to generalise these fundamental facts to the 
more general context of Kolmogorov spaces.
\section{The problem}
Most of the interesting linear operators are not bounded and the 
theory of  Banach space does not allow to take the exponentials of them. 
As a simple example, let us look again at the derivative: 
\[ \frac{d}{dx} : f \mapsto f'\] 
As the exponential series is
\[ e^{\frac{d}{dx}}=1+\frac{d}{dx}+\frac{1}{2!}\frac{d^2}{dx^2}+\frac{1}{3!}\frac{d^3}{dx^3}+\ldots\]
we see that it can not be defined in a sensible way on the Banach space 
$C^k([-1,1],\RM)$, as we have only $k$ derivatives. It consists of unbounded operators and it is relatively unclear how to define the infinite sum of them.

In the Fr\'echet space $C^\infty([-1,1],\RM)$, the terms in the esponential
series are well-defined, but the situation is in a sense even worse.   
The bounded subsets 
$$B_{\e}=\{f: \sup_{j \le i}  \| f^{(j)}\| < \e_i  \}  $$
indexed by sequences $\e=(\e_0,\dots,\e_n,\dots)$ form a fundamental system of bounded sets. (Here $\| \cdot \|$ denotes the supremum norm over the interval $[-1,1]$). In particular, the derivative just shifts the sequence $\e$ and is therefore a bounded map. But its exponential, which should be equal to the
transformation 
$$f(x) \mapsto f(x+1) $$
is ill-defined: the image of $1/(2x-3)$ would be $1/(2x-1)$ which is of course not $C^\infty$ since it is not even defined at $x=1/2$. So in some sense it is again unbounded, although all terms of the defining series are bounded 
operators. 

In fact in a Fr\'echet space, a simple map like
$$exp:C^0(]0,1],\RM) \to C^0(]0,1],\RM), f \mapsto e^f $$
is again pathological.  The directional derivative at $f$ of  the function $exp$ is the multiplication by $e^f$, it is therefore an isomorphism.
In the Banach space $C^0([0,1],\RM)$, one would conclude, by the
implicit function theorem, that the map is locally surjective. In the Fr\'echet space $C^0(]0,1],\RM)$,
the situation is very different: the image of $exp$ does not contain any open subset!

This is due to fact that the topology on $C^0(]0,1],\RM)$  is defined by the fundamental basis of $0$-neighbourhoods
$$V_{\e,\dt}=\{f: \sup_{x \in [\dt,1]} | f(x) | < \e  \}. $$ 
In such a neighbourhood, we have no control
of the function in the interval $]0,\dt[$. In particular, it contains functions with negative
values inside $]0,\dt[$.
\vskip0.3cm
\begin{figure}[htb!]
 \includegraphics[width=7.5cm]{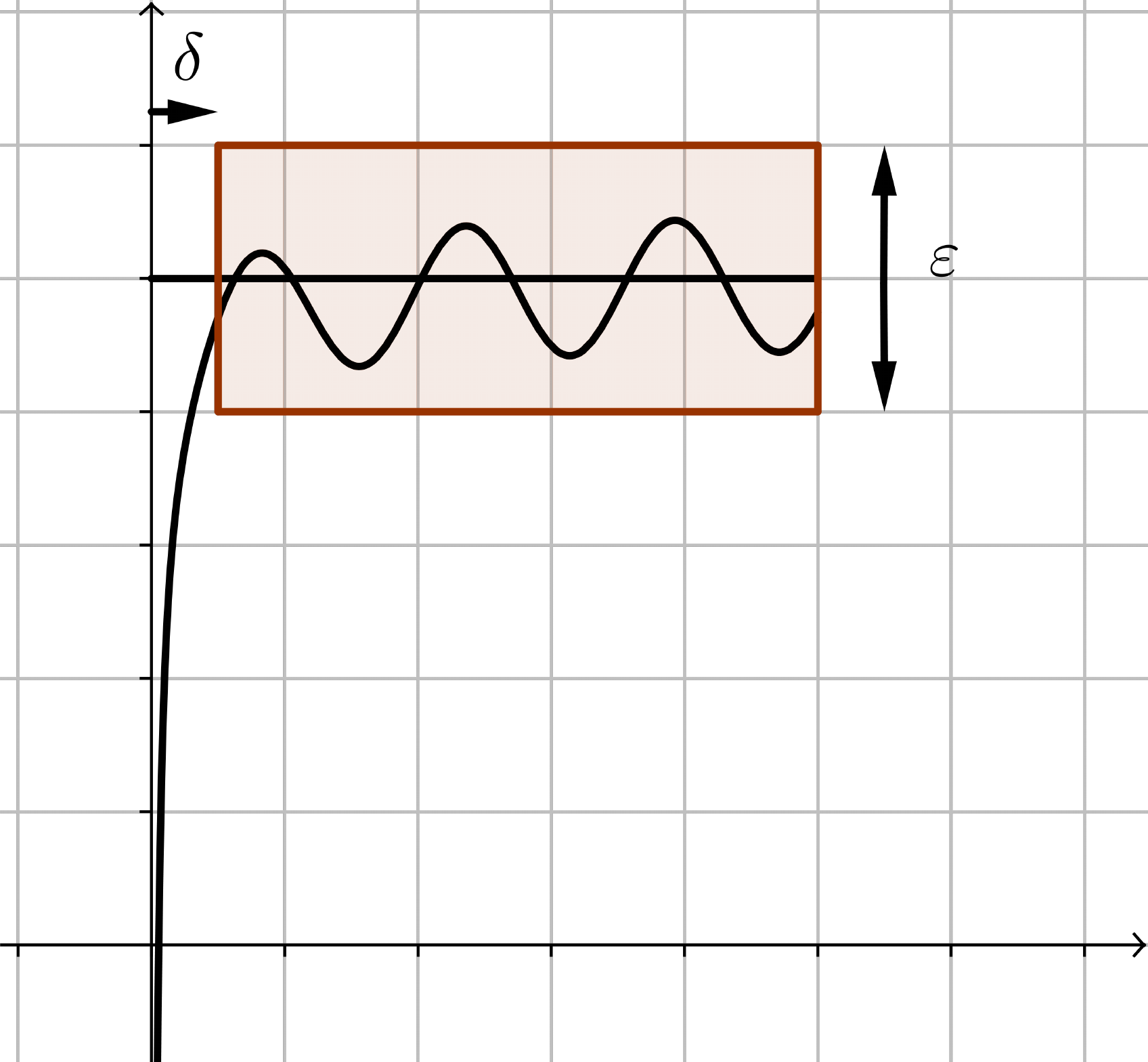}
\end{figure}
\vskip0.3cm

 As the exponential $e^f$ is a positive function,
this means that, although  the derivative is an isomorphism,  the image does not contain any open subset. We will now see that the situation for Kolmogorov spaces is much simpler. For instance, the exponential of a 
1-local map is, in general, well-defined and this provides a direct generalisation of the flow of a vector field.\\  
 
\section{The Borel map}  
As mentioned above, if $f$ is a convergent power series and $r <R(f)$,
the radius of convergence of $f$, then for any Banach space $E$, there 
is a map
$$rB_E \to Hom(E,E),\ u \mapsto f(u) ,$$
where $rB_E \subset E$ denotes the ball of radius $r$ in $E$.\\

For Kolmogorov spaces there is similar map, which however is 
richer from a geometrical viewpoint.\\

Recall that the {\em Borel transform}\index{Borel transform} of a formal power series is defined by
$$B:\CM[[z]] \to \CM[[z]],\ \sum_{n \geq 0} a_n z^n \mapsto \sum_{n \geq 0} \frac{a_n}{n!} z^n. $$
If $f$ is a convergent power series, $f \in \CM\{z\}$, then $Bf$ is not only convergent, but 
defines even an entire function on the complex plane. For example for the series
\[f(z)=\frac{1}{1-z}=1+z+\ldots+z^n+\ldots\]
has
\[Bf(z)=e^z=1+\frac{1}{1}z+\frac{1}{2!}z^2+\ldots+\frac{1}{n!}z^n+\ldots \]
as its Borel transform. As the Borel transform improves convergence of a series drastically,
it is very popular in the theory of resummation of divergent series, but this issue is not of 
importance here.\\

There is a simple result in the theory of Kolmogorov spaces that allows to define for any $1$-local operator $u \in \Lt^1(E,E)$ a morphism $\Bt f(u) \in {\mathcal H}om(E,E)$ as the limit of an infinite sequence. 
Via this map the upset of a point is mapped to a definition domain $A(u)$ and therefore to $u$ is associated also a partial morphism 
$$Bf(u) \in \Hom_{A(u)}(E,E). $$
%

\begin{definition}
\label{D::tetrahedron}
The tetrahedron\footnote{In some example, it is rather a cone but we will call it nevertheless
a tetrahedron.} of radius $R$\index{tetrahedron $\Xt(R)$} in a Kolmogorov space
$\Et \to \D$, denoted $\Xt_\Et(R)$, is defined as
\[ \Xt_\Et(R):=\{(t,s,u) \in \Et\;|\;\; ||u|| < R (t-s) \}\]
\end{definition}
So the set $\Xt_\Et(R)$ is just the preimage of a three-dimensional tetrahedron
$$\{ (t,s,x) \in \D \times \RM_{\ge 0}: x <R(t-s) \} $$
under the norm map 
\[ \nu: \Et \to \D \times \RM_{\ge 0} ,\;\;(t,s,u) \mapsto (t,s,||u||).\]
Note that the tetrahedron $\Xt_\Et(R)$ really is the {\em ball of radius $R$} centred at the origin for the norm rescaled by the factor 
$$\l(t,s)=(t-s)^{-1}. $$
We can consider curvilinear tetrahedra with arbitrary degrees along the sides of the three-dimensional prisma, but these will play no role in the sequel. 
Therefore we restrict our consideration here to this case.

Like for the unit ball in Banach spaces, we omit the subscript when no confusion is possible and moreover,
when $R=1$ we write simply $\Xt$ instead of $\Xt(1)$.
\vskip0.3cm 
\begin{figure}[h!]
\includegraphics[width=1\linewidth]{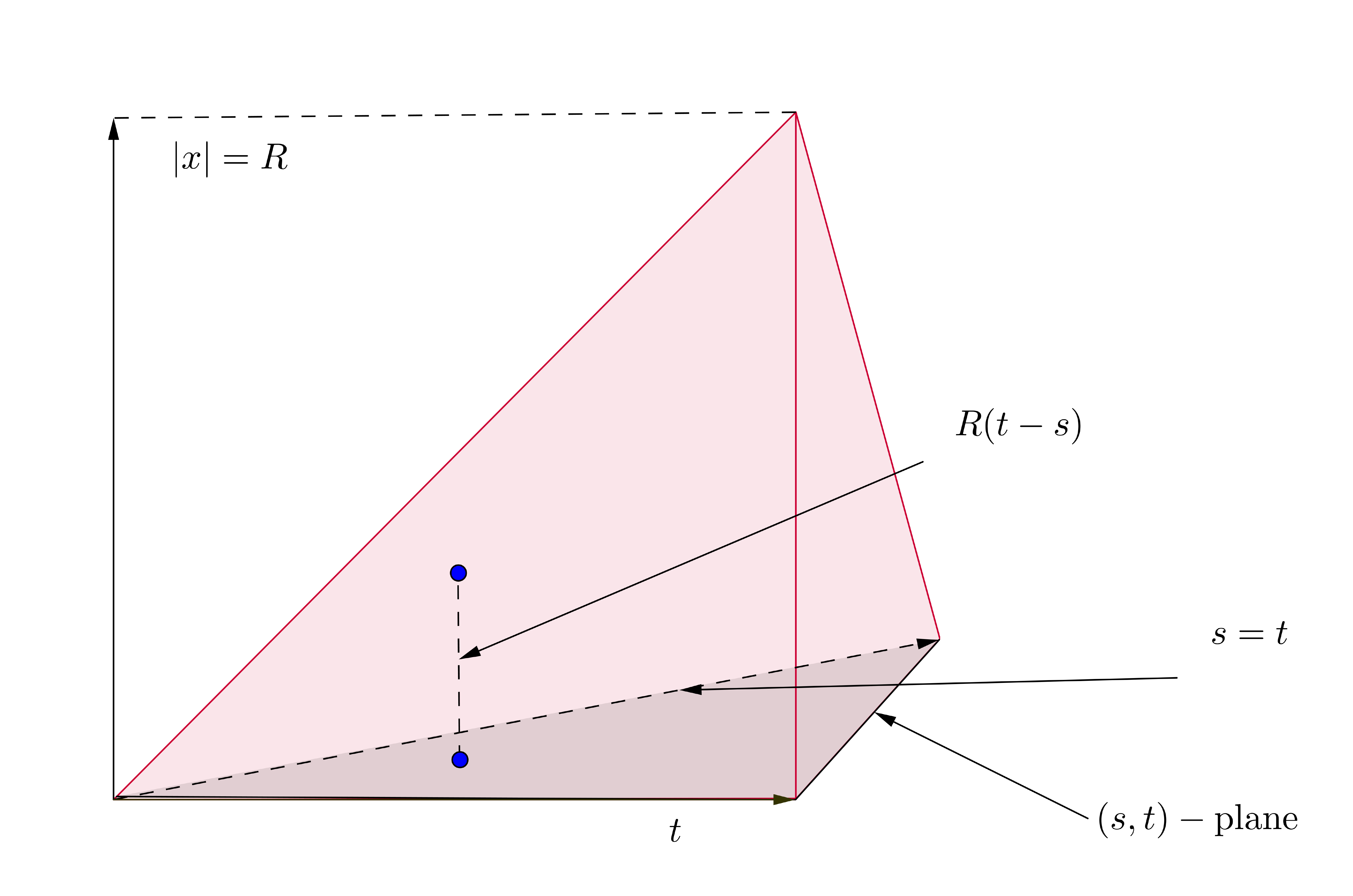}
\end{figure}
\vskip0.3cm
Given a power series
$$ f=\sum_{n \geq 0} a_n z^n$$
we use the notation
$$ |f|=\sum_{n \geq 0} |a_n| z^n.$$
\newpage
\begin{theorem}\label{T::Borel} 
Let $E$ be an $S$-Kolmogorov space and $u=(t,s,v) \in \Lt^1(E,E)$ a $1$-local 
morphism and 
$$f=\sum_{n \geq 0} a_n z^n \in \CM\{z\}$$ a power series with $R$ as radius 
of convergence. Then there is a well-defined map of Banach spaces over $\D \subset ]0,S]^2$, that we call the
{\em Borel map}\index{Borel map}:
$$\Bt f:\Xt(R) \to \mathcal{H}{om}(E,E),\ (t,s,v) \mapsto (t,s,\sum_{n=0} \frac{a_n}{n!} v^n) $$
and one has the estimate
$$\|\Bt f(u)\| \leq |f|\left(\frac{||u||}{t-s}\right) .$$
\end{theorem}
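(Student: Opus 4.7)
The plan is to define the operator $(\Bt f(u))_{st} \in \Hom(E_t,E_s)$ as the norm-limit of the partial sums $\sum_{n=0}^{N}\frac{a_n}{n!}\,v^n_{st}$, and to read off the announced bound directly from a termwise comparison with $|f|(\|u\|/(t-s))$. The whole argument rests on two elementary inputs: Corollary~\ref{C::powerestimate}, which controls compositions of $1$-local operators, and the classical inequality $n^n \le e^n\, n!$, obvious from $e^n = \sum_{k \ge 0} n^k/k!\,\ge\, n^n/n!$.

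First I would observe that, since $u \in \Lt^1(E,E)_{t,s}$ is $1$-local, each power $v^n$ is $n$-local over $\D(t,s)$ by the composition-of-local-operators proposition proved in the previous section. Corollary~\ref{C::powerestimate} then yields $|v^n| \le n^n |u|^n$, and translating this through the very definition of the $n$-local norm at the base point gives the operator-norm estimate
$$\|v^n_{st}\| \;\le\; \frac{|v^n|}{(t-s)^n} \;\le\; \left(\frac{n\,|u|}{t-s}\right)^n.$$

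Combining the above with $n^n/n! \le e^n$ and with the identity $\|u\| = \lambda_1 |u| = e\,|u|$ valid for $1$-local morphisms, I get the clean majoration of the $n$-th term of the series:
$$\left\|\frac{a_n}{n!}\,v^n_{st}\right\| \;\le\; |a_n|\left(\frac{\|u\|}{t-s}\right)^n.$$
Since the hypothesis $(t,s,u) \in \Xt(R)$ forces $\|u\|/(t-s) < R$, the scalar majorant $\sum_n |a_n|\bigl(\|u\|/(t-s)\bigr)^n = |f|\bigl(\|u\|/(t-s)\bigr)$ is finite. The partial sums of $\sum \tfrac{a_n}{n!} v^n_{st}$ therefore form a Cauchy sequence in the Banach space $\Hom(E_t,E_s)$, and I define $(\Bt f(u))_{st}$ as their limit; passing to the limit in the triangle inequality immediately yields the stated estimate.

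There is essentially no obstacle here beyond verifying that $v^n$ genuinely lies in $\Lt^n(E,E)_{t,s}$, which is already the content of the composition proposition; the rest is a direct majoration followed by the standard completeness argument in $\Hom(E_t,E_s)$. It is perhaps worth remarking in the write-up that the calibration constant $\alpha = e$ chosen earlier was pre-adjusted precisely to make the right-hand side of the estimate come out as $|f|(\|u\|/(t-s))$ with no spurious multiplicative constant, which is the true "miraculous" feature of the statement.
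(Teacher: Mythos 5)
Your proposal is correct and follows essentially the same route as the paper's proof: both rest on Corollary~\ref{C::powerestimate} together with the inequality $n^n \le e^n\, n!$, and both conclude by termwise comparison with the scalar majorant $|f|\bigl(\|u\|/(t-s)\bigr)$. The only cosmetic difference is that the paper phrases the key step via the norm of the inclusion $j_n:\Lt^n(E,E)\to \mathcal{H}om(E,E)$ in the recalibrated norms, whereas you work with the uncalibrated norms and convert at the end via $\|u\|=e\,|u|$; you also make the completeness argument in $Hom(E_t,E_s)$ explicit, which the paper leaves implicit.
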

\begin{proof}
For $u \in \Lt^1(E,E)$, we have $u^n \in \Lt^n(E,E)$ and 
there is the power-estimate \ref{C::powerestimate}:
\[ \|u^n\| \leq \|u\|^n \]
There is an obvious map 
$$j_n: \Lt^n(E,E) \to \mathcal{H}{om}(E,E) $$
of Banach-spaces over $\Delta$, for which we have the bound
$$\frac{n^n}{e^n(t-s)^n}$$
on the norm. The choice of the constant $e$ gives a simple estimate 
for this norm:
$$\| j_n\|=\frac{n^n}{e^n(t-s)^n} \leq \frac{n!}{(t-s)^n} .$$
We obtain:
\[  \|\sum_{n=0}^{\infty} \frac{a_n}{n!} j_n(u^n)\| \leq \ \sum_{n=0}^{\infty} |a_n| \left(
\frac{||u||}{t-s}\right)^n= |f|\left(\frac{||u||}{t-s}\right) .\]
This proves the theorem.
\end{proof} 

Note that if $(t,s,u) \in \Lt^1(E,E)$, then $u$ is in fact a horizontal section over
the triangle $\D(t,s)$. So for each $(t',s') \in \D(s,t)$ we also have the restriction 
of $u$ to $\D(s',t')$

Using the above theorem, we get a horizontal section 
$$Bf(u) \in \Hom_{A(u)}(E,E)$$
with components
$$(t',s',\Bt f(u)_{t',s'}) \in  \mathcal{H}{om}(E,E)$$
The section $Bf(u)$ has definition set
\[A(u):=A(\|u\|,R):=\{(t',s') \in \D(t,s)\;|\;\;\|u\| <  R (t'-s') \}\]
 
Note that the data of $u$ over its upset determines the components of its Borel image 
\[\Bt f(u)_{t',s'}  \in \mathcal{H}{om}(E,E)_{t',s'}=Hom(E_{t'},F_{s'}) \]
for any point $(t',s')$ in the upset of $(t,s)$ sufficiently close to $(t,s)$. 
 
However in general the section $Bf(u)$ {\em is not defined in the entire upset} of 
$(t,s)$: as we  approach to the diagonal, the operator norm at
$(t',s')$ becomes larger and larger, and at some point, the series defining 
$Bf(u)$ no longer converges.
\section{The exponential mapping}
We now apply the previous considerations to the exponential series
\[ e^z=1+\frac{z}{1!}+\frac{z^2}{2!}+\frac{z^3}{3!}+\ldots\]
This series is the Borel transform of the geometrical series  
\[\frac{1}{1-z}= 1+z+z^2+z^3+\ldots ,\]
which has radius of convergence $R=1$. Using Theorem~\ref{T::Borel}, we deduce the following
statement:
 
\begin{proposition} \label{P::exponential} Let $E$ be an $S$-Kolmogorov space.
Then for any $(t,s,u) \in \Xt \subset \Lt^1(E,E)$ the exponential series converges to a well-defined section
\[ e^u \in \Hom_{A(u)}(E,E) ,\] 
where 
\[ A(u):=\{(t,s) \in \D \;|\;\;\|u\| < t-s \} .\]
Furthermore, there is an estimate for the norm-function of the section
$e^u$ on $A(u)$:
\[ |e^u| \le \frac{1}{1-\left(\|u\|/(t-s) \right)},\;\; \forall \;(t,s) \in A(u) .\]
\end{proposition}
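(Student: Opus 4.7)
The plan is to recognise Proposition~\ref{P::exponential} as a direct specialisation of the Borel map theorem~\ref{T::Borel}, applied to the geometric series. Indeed, the exponential
\[ e^z = \sum_{n \ge 0} \frac{z^n}{n!} \]
is by definition the Borel transform of
\[ f(z) = \frac{1}{1-z} = \sum_{n \ge 0} z^n, \]
which has radius of convergence $R = 1$. So the first step is simply to invoke $f$ as the input to the Borel construction.

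Next, I would apply Theorem~\ref{T::Borel} with this choice of $f$ and $R = 1$ to the given $1$-local morphism $u \in \Lt^1(E,E)$. The theorem yields a well-defined map
\[ \Bt f(u) = \sum_{n \ge 0} \frac{1}{n!}\, u^n = e^u \]
on the subset where $\|u\| < R(t-s) = t-s$, which is exactly the set $A(u)$ described in the statement. As noted in the discussion following Theorem~\ref{T::Borel}, horizontality of $u$ over the upset $\D(t,s)$ promotes this fibrewise convergence to an honest horizontal section $e^u \in \Hom_{A(u)}(E,E)$, which is precisely the assertion we want.

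For the norm estimate, I would use the bound in Theorem~\ref{T::Borel}:
\[ \|\Bt f(u)\| \le |f|\!\left(\frac{\|u\|}{t-s}\right). \]
Because every coefficient of $f(z) = \sum z^n$ equals $1$, we have $|f| = f$ and therefore
\[ \|e^u\| \le \frac{1}{1 - \|u\|/(t-s)} \]
on $A(u)$, as claimed.

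There is essentially no obstacle here: the only thing to verify is that the identification $Bf = e^z$ is correct and that the definition set of $A(u)$ matches the set produced by the Borel theorem for $R = 1$. Both are immediate. The entire content of the proposition is a reading of Theorem~\ref{T::Borel} in the single most important example, justifying the exponential of a $1$-local operator as a partial morphism rather than a globally defined one.
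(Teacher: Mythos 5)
Your proposal is correct and is exactly the paper's argument: the text introduces Proposition~\ref{P::exponential} by observing that $e^z$ is the Borel transform of the geometric series $1/(1-z)$ with radius of convergence $R=1$ and then simply invoking Theorem~\ref{T::Borel}, which yields both the definition set $A(u)=\{\|u\|<t-s\}$ and the bound $|e^u|\le 1/(1-\|u\|/(t-s))$ since $|f|=f$ here. Nothing further is needed.
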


In this generality the set $A(u)$ may very well be quite small. But there are
natural conditions that produce exponentials defined on large sets. What
is needed is that the norm of $u$, when restricted to $]0,t]$ becomes
small sufficiently fast.

We consider, as in chapter 9, the $K1$-spaces of local operators.
These were defined as the direct image of the
$K2$-space of local operators using the projection $p:(t,s) \mapsto t$:
$$L^k(E,E)=p_*\Lt^k(E,E) \to \RM_{\geq 0} ,$$ 
with fibre at $\tau$:
$$L^k(E,E)_\tau=\Lt^k(E,E)_{\tau,0}$$ 
and the norm is defined by
$$\| u \|=\sup \left(\frac{e}{k} \right)^k (t-s)^k\| u_{ts} \| . $$
where the sup is taken over  $\D(\tau,0)$.
Take for instance a $1$-local map of order $k>0$:
$$(\tau,u) \in  L^{1}(E,E)^{(k)}, $$
then
\[ \|u(t)\|= \a \,t^k+o(t^k)\]
for some $\a \in \RM_{\geq 0}$. In such a case, the definition set  $A(u)$ is 
non-empty and contains the origin in its closure.
 
If $k=1$ then, at the origin, the boundary of the set $A(u)$ is tangent to  the line $s=(1-\a)t$:\\
\begin{center}
\includegraphics[width=5cm]{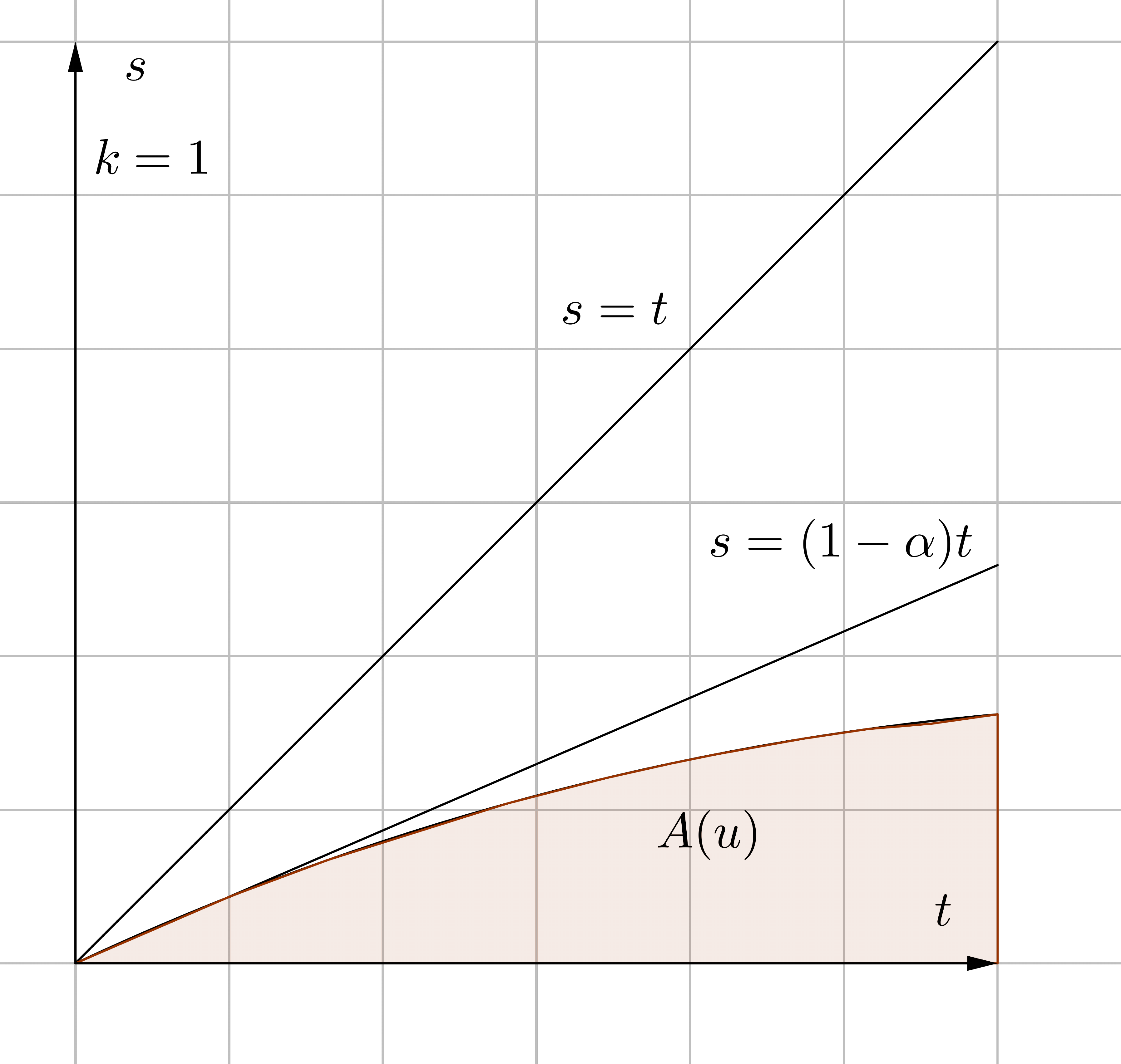}
\hspace{1cm}
\includegraphics[width=5cm]{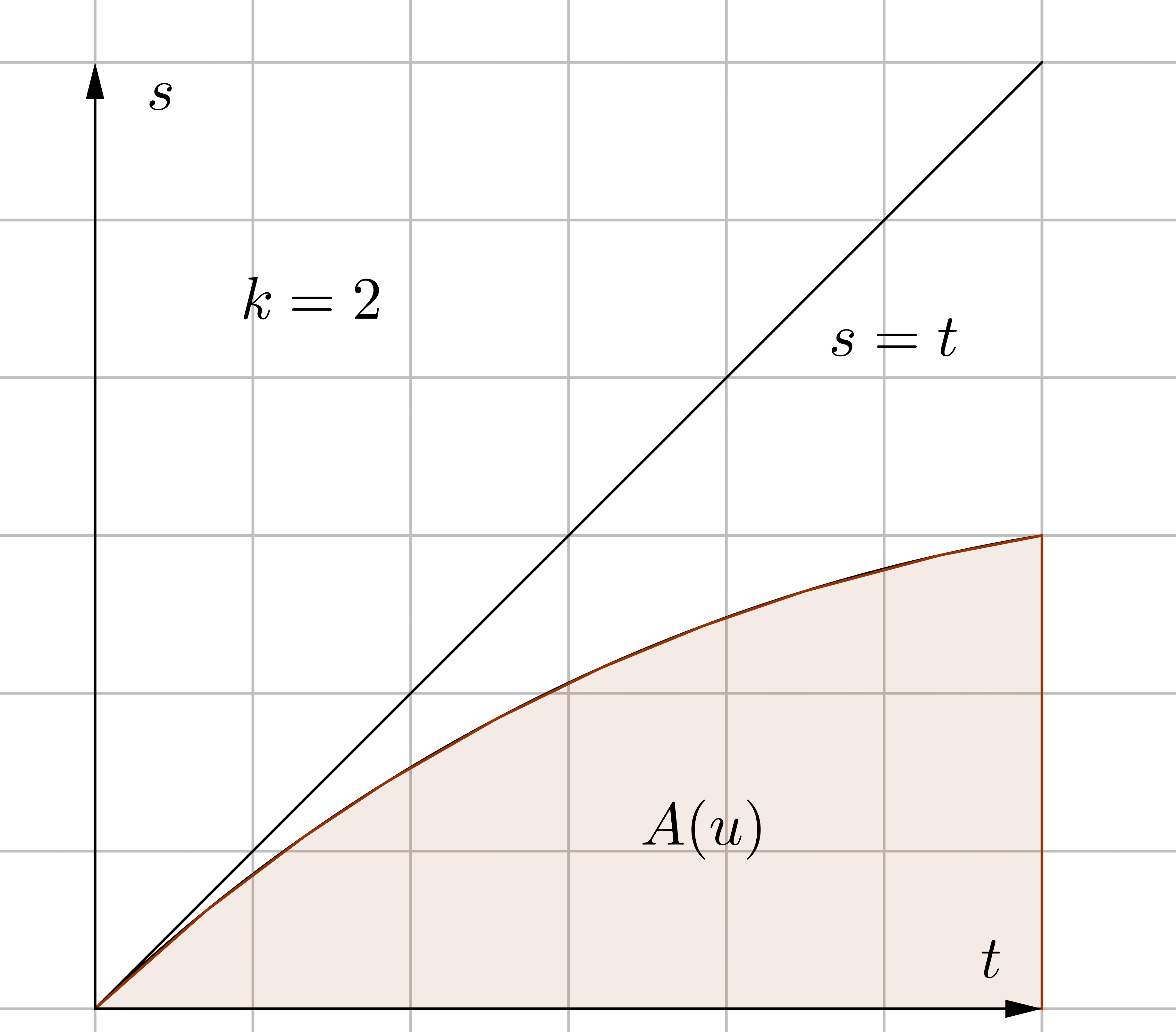}
\end{center}
\vskip0.3cm

If $k > 1$ then, at the origin, the boundary of the set $A(u)$ is tangent to the line $s=t$  at the origin.\\
%
\section{Some basic examples} 

We consider our standard Kolmogorov space $E:=\Ot^c(D)$, 
where $D_t \in \CM$ is the disc of radius $t$. 

We take $(\tau,u) \in L^1(E,E)$ with
\[    u:= \lambda \frac{d}{dz}  \]
From the Cauchy-Nagumo estimate it follows that
\[  \left|\frac{d}{dz}\right|_t=1, \]
so $|u|_\tau=|\lambda|$ and for the rescaled norm $||u||_t =e|\lambda|$. 
The exponential of $u$ is translation by $\lambda$, which is a partial morphism
\[e^u :E \to E, f(z) \mapsto f(z+\lambda)\]
The theorem gives 
\[ A(u):=\{ (t,s)\;|\;\; s \le t -e|\lambda| \}\] 
as definition set. 

In fact the translation can be 
defined on the bigger set, defined by the condition
\[  s \le t-|\lambda| .\]

\begin {center}
 \includegraphics[width=5cm]{Fig/definition_set_3.pdf}
\hspace{1cm}
\includegraphics[width=5cm]{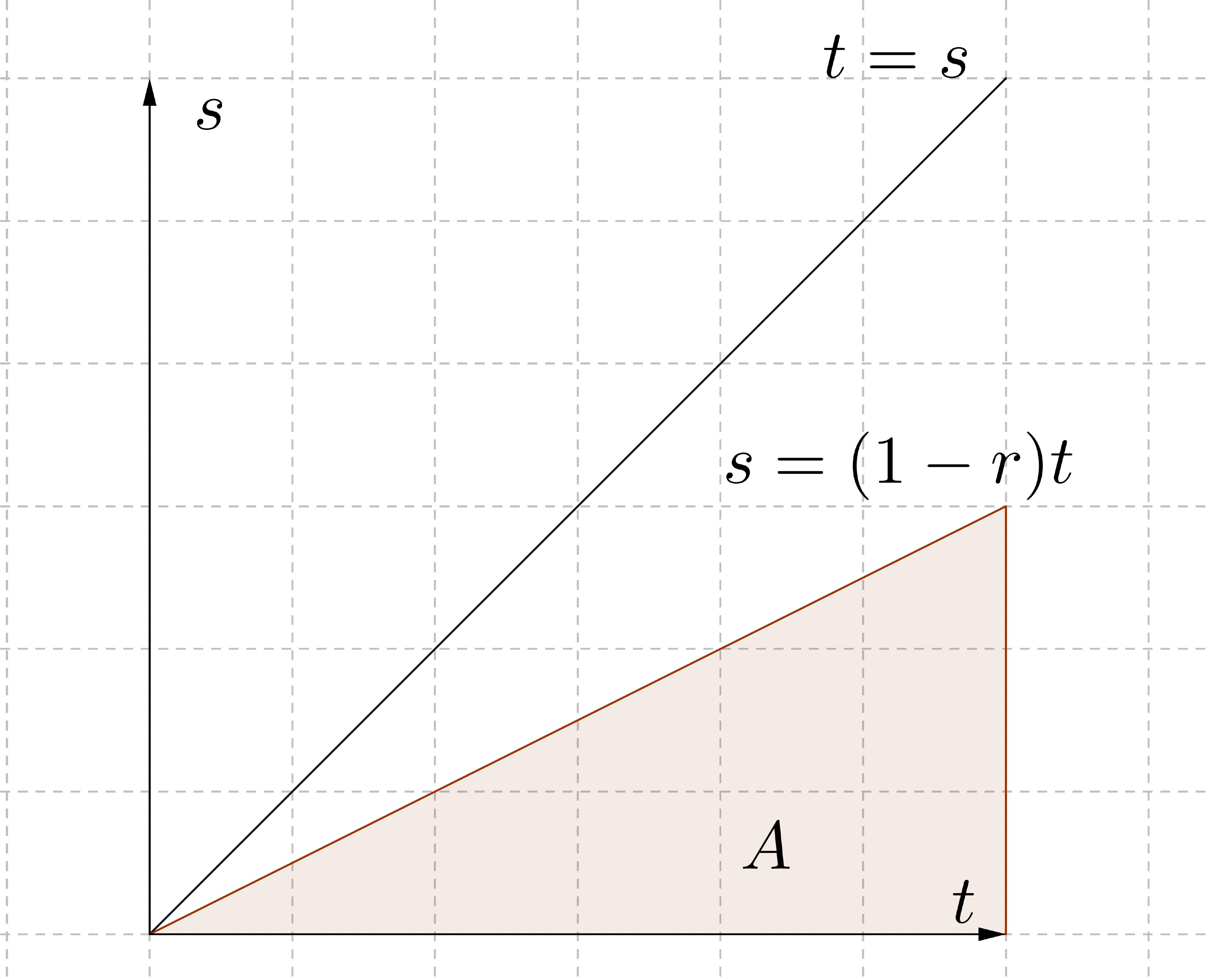}
\end{center}


This shows that the estimate obtained above in our theorem \ref{T::Borel}
is not optimal. This is to be expected,  as we are using only the simplest 
possible estimates.

Let us now take a local map of order one $u \in L^1(E,E)^{(1)}$ , we
have an estimate of the form:
\[ ||u(t)|| \le  \a  t.\] 
So for $\a < 1$ the definition set $A(u)$  contains the triangle
\[  s \le (1- \a)t \leq t-||u(t)||\]
 

Assume now that  
$$u:= \l z\d_z \in L^1(E,E),$$
where $\lambda \in \CM$. One has
\[|u(t)|=|\l| |z\d_z|_t =|\l| t,\;\;||u(t)||=|\l|e\, t \]
Thus the exponential of $u$ is the partial morphism
$$e^{z\d_z}:E \to E, f(z) \mapsto f(e^\l z) $$
and its maximal definition set is
$$\{ (s,t)\;|\;\;\; e^\l s \leq t \}, $$
but as before, our theorem gives a smaller definition set 
\[ A(u)=\{ (s,t)\;|\;\;\; s \le (1-|\l|e)t\}. \]

\section{Composition of exponentials} 
The composition of exponentials is straightforward in terms of
convolutions of their definition sets.

\begin{theorem} 
\label{T::compexponential1}
Let $E$  be an $S$-Kolmogorov space, and 
\[(\tau,u_i) \subset  L^1(E,E)\] 
a sequence of $1$-local morphisms. Assume that we have
estimates
$$|| u_i ||_t < \a_i t,\;\;i=0,1,2,\ldots  $$
and that  $\s:=\sum_{i=0}^{\infty} \a_i<+\infty$.
Then the sequence
$$g_n:=e^{u_n}e^{u_{n-1}}\cdots e^{u_0}  $$ converges to a partial morphism $g \in \Hom_A(E,E)$ where
\[A:=\{(s,t) \in ]0,\tau]^2\;|\;s <\prod_{i \geq 0}(1-\a_i)t \}.\]
Furthermore, one has the estimate for the norm function 
$$| g | < \frac{1}{1-\s\nu}  ,$$
with $\nu:= t/(t-s).$
 \end{theorem}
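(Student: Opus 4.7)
My plan is to combine the pointwise exponential estimate of Proposition~\ref{P::exponential} with a global absolute bound, obtained by expanding the composite of exponentials as a multi-indexed series and invoking the sub-multiplicative bound on compositions of $1$-local operators (Corollary~\ref{C::powerestimate}).

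\textbf{Step 1 (construction of $g_n$).} I fix $(s,t)$ in $A$, so $s<t\prod_{i}(1-\alpha_i)$, and introduce intermediate radii $t_i:=t\prod_{j<i}(1-\alpha_j)$ for $0\le i\le n$, setting $t_{n+1}:=s$. Since $t\prod_{j\le n}(1-\alpha_j)\downarrow t\prod_{j}(1-\alpha_j)>s$, for $n$ large enough every gap satisfies
\[ t_i-t_{i+1}>\alpha_it_i>\|u_i\|_{t_i}, \]
so Proposition~\ref{P::exponential} supplies each $e^{u_i}:E_{t_i}\to E_{t_{i+1}}$ and their composition gives a well-defined $g_n:E_t\to E_s$.

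\textbf{Step 2 (multi-series expansion and absolute bound).} I would expand each factor as $\sum_{k}u_i^{k}/k!$ and reorder the resulting $(n+1)$-fold sum:
\[ g_n=\sum_{(k_0,\ldots,k_n)\in\NM^{n+1}}\frac{u_n^{k_n}\circ\cdots\circ u_0^{k_0}}{k_0!\cdots k_n!}. \]
Each monomial is a composite of $K:=k_0+\cdots+k_n$ one-local operators; by Corollary~\ref{C::powerestimate} it is $K$-local with calibrated norm bounded by $\prod_i \|u_i\|_{L^1,t}^{k_i}<t^K\prod_i\alpha_i^{k_i}$. Converting to the operator norm at $(s,t)$ through the elementary bound $(K/e)^K\le K!$ and applying the multinomial theorem collapses the multi-sum to a geometric series in $\sigma_n\nu$, with $\sigma_n:=\sum_{i\le n}\alpha_i$ and $\nu:=t/(t-s)$:
\[ \|(g_n)_{s,t}\|\;\le\;\sum_{K\ge 0}\nu^K\!\!\sum_{k_0+\cdots+k_n=K}\binom{K}{k_0,\ldots,k_n}\prod_i\alpha_i^{k_i}\;=\;\sum_{K\ge 0}(\sigma_n\nu)^K\;=\;\frac{1}{1-\sigma_n\nu}, \]
provided $\sigma_n\nu<1$.

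\textbf{Step 3 (convergence and limit).} Since $\sigma_n\nearrow\sigma$, the bound shows that $\|g_n\|$ is uniformly bounded by $1/(1-\sigma\nu)$. The increment $g_{n+1}-g_n=(e^{u_{n+1}}-\Id)\circ g_n$ is dominated by the tail of the same series (those multi-indices with $k_{n+1}\ge 1$), which tends to $0$ as $\alpha_{n+1}\to 0$. Hence $(g_n)$ is Cauchy in operator norm and converges to a partial morphism $g\in\Hom_A(E,E)$ satisfying $|g|<1/(1-\sigma\nu)$.

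The principal obstacle will be to justify the rearrangement of the $(n+1)$-fold series and the passage to the limit $n\to\infty$: one must confirm that the sum of operator norms is absolutely convergent so that a Fubini-type reordering is legitimate. This is precisely what the uniform bound furnishes, but it requires careful bookkeeping of the $L^1$-norms of the $u_i$ through the $L^K$-norm of each monomial, and it rests on the condition $\sigma\nu<1$ (equivalently $s<(1-\sigma)t$), which is the regime in which the stated estimate is itself informative.
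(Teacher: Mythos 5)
Your proof is correct, but it takes a genuinely different route from the paper's. The paper bounds each factor separately via the Borel estimate, $|e^{u_i}|\le \frac{1}{1-\a_i\nu}$ on the simplex $A_i=\{s<(1-\a_i)t\}$, and then combines the factors inductively with the elementary inequality $\frac{1}{1-x}\cdot\frac{1}{1-y}<\frac{1}{1-(x+y)}$, obtaining $|g_n|\le \frac{1}{1-(\sum_{i\le n}\a_i)\nu}$ over $A_n\star\cdots\star A_0\supset A$; it then stops, leaving the Cauchy property of $(g_n)$ implicit (it is only spelled out in the companion Theorem~\ref{T::compexponential2}). You instead expand the whole composite into a single multi-indexed series of $K$-local monomials, control each monomial by Corollary~\ref{C::powerestimate} together with the conversion $(K/e)^K\le K!$ from the calibrated to the operator norm, and collapse the sum with the multinomial theorem. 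What your route buys is twofold: the absolute convergence of the dominating series makes the rearrangement legitimate and yields the same bound $\frac{1}{1-\s_n\nu}$ in one stroke, sidestepping the delicate choice of intermediate radii that the paper's pairwise composition implicitly requires; and your Step 3, isolating the multi-indices with $k_{n+1}\ge 1$ to bound $g_{n+1}-g_n$ by $\frac{1}{1-\s_{n+1}\nu}-\frac{1}{1-\s_n\nu}$, supplies the Cauchy argument that the paper's proof of this statement omits. What the paper's route buys is brevity and the explicit identification of the definition set as a convolution $A_n\star\cdots\star A_0$. Note that both arguments only yield the norm estimate (and hence convergence) in the regime $\s\nu<1$, i.e.\ $s<(1-\s)t$, which is a possibly proper subset of $A$ since $\prod_i(1-\a_i)\ge 1-\s$; you correctly flag this, and it is a feature of the statement itself rather than a defect of your argument.
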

 \begin{proof}
Given an estimate of the form
$$\|u_i\|_t \le \a_i\,t $$
the definition set $A(u_i)$ contains the simplex
$$A_i:= \{(s,t) \in ]0,\tau]\times ]0,\tau]\;|\;s<(1-\a_i)t \}$$
and on $A$ one has
$$|e^{u_i}| \leq \frac{1}{1-\a_i \nu} $$
with $\nu=t/(t-s)$. 

Let us now consider the composition of two exponentials $e^{u_i} e^{u_{i+1}}$, assuming that
$$\|u_i\|_t \le \a_i$$
As 
$$\frac{1}{1-x} \times \frac{1}{1-y} < \frac{1}{1-(x+y)} $$
for $x,y \in ]0,1[$, we get that the composition $e^{u_{i+1}} e^{u_i}$ defines a morphism over
$$ A_{i+1} \star A_i =\{(s,t) \in ]0,\tau]\times ]0,\tau]\;|\;s<(1-\a_i)(1-\a_{i+1})t \}$$
and one has the estimate
$$|e^{u_{i+1}} e^{u_i}| \leq \frac{1}{1-(\a_i+\a_{i+1}) \nu} $$
By a straighforward induction, we get the estimate
$$|e^{u_n}e^{u_{n-1}}\cdots e^{u_0}  | \leq \frac{1}{1-(\sum_{i=0}^n\a_i) \nu} $$
over the set
$$ A_{n}\star \cdots \star A_0=\{(s,t) \in ]0,\tau]^2\;|\;s <\prod_{i = 0}^n(1-\a_i)t \} \supset A.$$

\end{proof}
Note that the sequence $e^{-u_0}e^{-u_1}\cdots e^{-u_n}$ converges to a pseudo inverse $h$ of $g$:
$$hg=gh=\iota_E \in  \Hom_{A \star A}(E,E).$$

\section{Formulation with sequences}




We will later use the following variant of Theorem \ref{T::compexponential1} on the composition of exponentials, now considered as a sequence of elements of
$L^1(E,E)$.\\

\begin{theorem} 
\label{T::compexponential2}
Let $E$  be an $S$-Kolmogorov space and $(t:=t_0,t_1,t_2,\ldots) $ a 
decreasing sequence converging to $s>0$. For any sequence
$(t_n,u_n) \in L^1(E,E)$  such that
\begin{enumerate}[{\rm i)}] 
\item $\| u_n \| \leq  t_n-t_{n+1}$
\item $\s:=\sum_{n \geq 0} || u_n ||/(t_n-t_{n+1}) < +\infty$ 
\end{enumerate}
the sequence 
\[g_0=e^{u_0},\;\;g_1=e^{u_1}e^{u_0},\]
$$g_n:= e^{u_n}e^{u_{n-1}}\cdots e^{u_0}  $$ 
converges to an element $ g \in \Hom(E_{t_0},E_s)$. 
Furthermore, we have the estimate:
$$ | g | < \frac{1}{1-\s/(t-s)}  .$$
 \end{theorem}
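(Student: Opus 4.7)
The strategy is to interpret each $e^{u_n}$ as a bounded linear map from the fibre $E_{t_n}$ to the fibre $E_{t_{n+1}}$ via Proposition~\ref{P::exponential}, then compose these to obtain $g_n\colon E_{t_0}\to E_{t_{n+1}}$, and finally push these into $\Hom(E_{t_0},E_s)$ using the Kolmogorov restriction maps and pass to the limit. Set $a_n:=\|u_n\|/(t_n-t_{n+1})$. Since condition (ii) forces $a_n\to 0$, we have $a_n<1$ for all sufficiently large $n$, so Proposition~\ref{P::exponential} applies at the pair $(t_n,t_{n+1})\in A(u_n)$ and yields
\[\|e^{u_n}\colon E_{t_n}\to E_{t_{n+1}}\|\le \frac{1}{1-a_n}.\]

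Second, compose to form $g_n:=e^{u_n}\circ\cdots\circ e^{u_0}\colon E_{t_0}\to E_{t_{n+1}}$. Iterating the elementary inequality $(1-x)^{-1}(1-y)^{-1}\le(1-x-y)^{-1}$ valid for $x,y\ge 0$, $x+y<1$, exactly as in the proof of Theorem~\ref{T::compexponential1}, gives the uniform bound
\[\|g_n\|\le \prod_{k=0}^n\frac{1}{1-a_k}\le \frac{1}{1-\sum_{k=0}^n a_k}\le \frac{1}{1-\sigma}.\]
Using the Kolmogorov restriction $e_{s,t_{n+1}}\colon E_{t_{n+1}}\to E_s$ (of norm $\le 1$), set $\tilde g_n:=e_{s,t_{n+1}}\circ g_n\in \Hom(E_{t_0},E_s)$, which inherits the same norm bound.

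Third, prove that $(\tilde g_n)$ is Cauchy in the Banach space $\Hom(E_{t_0},E_s)$. The key computation is the telescoping identity
\[\tilde g_{n+1}-\tilde g_n = e_{s,t_{n+2}}\circ\bigl(e^{u_{n+1}}-e_{t_{n+2},t_{n+1}}\bigr)\circ g_n,\]
where the subtlety is that the constant term $\mathbf{1}$ of the series $e^{u_{n+1}}=\mathbf{1}+u_{n+1}+u_{n+1}^2/2!+\cdots$, reinterpreted as a map $E_{t_{n+1}}\to E_{t_{n+2}}$, is precisely the restriction $e_{t_{n+2},t_{n+1}}$. Hence $e^{u_{n+1}}-e_{t_{n+2},t_{n+1}}=\sum_{k\ge 1} u_{n+1}^k/k!$, whose norm is bounded by $a_{n+1}/(1-a_{n+1})$ by the same geometric-series argument used in Proposition~\ref{P::exponential}. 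Combining with $\|g_n\|\le (1-\sigma)^{-1}$ yields $\|\tilde g_{n+1}-\tilde g_n\|\le (1-\sigma)^{-1}\cdot a_{n+1}/(1-a_{n+1})$, which is summable in $n$ since $\sum a_n=\sigma<\infty$. The Cauchy sequence converges to the desired element $g\in \Hom(E_{t_0},E_s)$, and the uniform bound on $\|\tilde g_n\|$ passes to the limit.

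The main obstacle is step three: one must correctly identify the constant term of the exponential series with the restriction map of $E$ in order for the telescoping difference to live in a single Banach space where a norm estimate makes sense. Once this identification is made, everything reduces to the product/geometric-series estimates already used in the preceding theorem, and the norm bound stated in the theorem follows from the same bookkeeping, rephrasing $\sum a_k$ in terms of $\sigma/(t-s)$ via the relation $\sum(t_k-t_{k+1})=t-s$.
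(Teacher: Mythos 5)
Your proposal is correct and follows essentially the same route as the paper's proof: per-factor Borel/geometric estimates for each $e^{u_n}\colon E_{t_n}\to E_{t_{n+1}}$, the inequality $(1-x)^{-1}(1-y)^{-1}\le(1-(x+y))^{-1}$ for the uniform bound on $g_n$, and the telescoping bound via $e^{u_{n+1}}-\Id$ to show the sequence is Cauchy in $\Hom(E_{t_0},E_s)$. Your explicit identification of the constant term of the exponential series with the restriction map $e_{t_{n+2},t_{n+1}}$ is precisely what the paper leaves implicit when it writes $|e^{u_{n+1}}-\Id|$.
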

\begin{proof}
We have seen that $e^{u_i}$ exists as a section and thus defines elements 
of the Banach space $Hom(E_{t_i},E_{t_{i+1}})$ as long as $\|u_i\| \le t_i-t_{i+1}$, 
which holds by the first assumption. As a consequence
the compositions
\[e^{u_0},\;\;e^{u_1}e^{u_0},\;\;\ldots,e^{u_n}\ldots e^{u_1} e^{u_0},\ldots\]
are well defined. Furthermore, the Borel estimate gives
$$|e^{u_i}| \leq \frac{1}{1-\nu_i},\ \nu_i:=\| u_i\|/(t_i-t_{i+1}) .$$
As 
$$\frac{1}{1-x} \times \frac{1}{1-y} < \frac{1}{1-(x+y)} $$
for $x,y \in ]0,1[$, we get for the composition $e^{u_{i+1}} e^{u_i}$
$$|e^{u_{i+1}} e^{u_i}| \leq \frac{1}{1-(\nu_i+\nu_{i+1})} .$$
By a straighforward induction (and the fact that restrictions have norm
$\le 1$), we obtain the estimate
$$| g_n  | \leq \frac{1}{1-(\sum_{i=0}^n\nu_i)} .$$
Therefore
$$| g_{n+1}-g_n  | \leq 
\frac{| e^{u_{n+1}}-\Id |}{1-(\sum_{i=0}^n\nu_i)}    $$
Using again the Borel estimate 
$$ |e^{u_{n+1}}-1| \le \frac{\nu_{n+1}}{1-\nu_{n+1}} $$ 
we get 
$$| g_{n+1}-g_n  | \leq 
\frac{ \nu_{n+1}}{1-(\sum_{i=0}^{n+1}\nu_i)}  $$
From this it follows that the sequence $g_n$ converges in the Banach
space $Hom(E_t,E_s)$ with operator norm.
\end{proof}
  
\chapter{The fixed point theorem}
In Part 1, we saw that Kolmogorov's invariant torus theorem is a result on 
normal forms, which can be studied in the more general framework of group 
actions in infinite dimensional spaces. One could try to develop implicit 
function theorems to mimic the finite dimensional case. This approach however 
leads to inextricable difficulties. For this reason we developed the 
analysis of Kolmogorov spaces, which allows us to use a simpler method.
 
We study the Lie-iteration directly in  appropriate Kolmogorov spaces and 
develop a {\em fixed point theory} in Kolmogorov spaces. The resulting 
{\em fixed point theorems} are a little bit like to one for Banach spaces, but slightly more involved.

In the next chapter we will then apply the fixed point theorems directly to the
Lie-iteration, which leads the general normal form theorems. We first focus on fixed 
point theory.

%
\section{The Banach fixed point theorem}
The fixed point theorem for a complete metric space has been popular 
since its formulation by Banach in $1922$. It belongs to the most basic 
results in functional analysis and has many applications.

A {\em contraction with factor $C$} in a complete metric 
space $(X,d)$ is map $f:X \to X$ with the property that 
\[ d(f(x),f(y)) \le C d(x,y) .\]
If $C<1$, then starting from any point $x_0 \in X$, we obtain by iteration 
a sequence
\[ x_1=f(x_0),\;\;x_2=f(x_1),\ldots,x_{n+1}=f(x_n)\]
that by the above inequality is seen to be a Cauchy sequence, whose limit
is the unique fixed point $x_{\infty}$.
Clearly,
\[d(x_{n},x_{\infty}) \le C^n d(x_0,x_{\infty})\]
The simplicity of the proof of this fixed point theorem stems in part from the
fact the a priori infinite dimensional iteration 
$$ x_{n+1}=f(x_n),\;\; x_n \in X$$
is 'governed' by the very simple one-dimensional dynamical system
$$y_{n+1}=C y_n ,\ y_n=\| x_n\| \in \RM_{>0}.$$

We will formulate a similar iteration scheme in the context of 
relative Banach spaces. There are two important differences: first, we 
will need the fixed point to be {\em critical}, in order to overcome 
the 'detoriation of the estimate', which in the applications we have in 
mind are caused by the appearence of small denominators and the shrinking 
of domains of definition and more generally by the presence of singularities. 
Second, the iteration is no longer 'governed' by 
a one-dimensional dynamical system. Rather we have to deal with iterations 
in two or three dimensions, which fortunately are  almost as simple as the 
above one-dimensional system.
\section{Convergence in relative Banach spaces}
Although our main concern are Kolmogorov spaces, the maps we wish to iterate are
usually only mappings of relative Banach space mappings. 
We have seen in \ref{SS::definition} that a Kolmogorov space carries a canonical
topology. For a relative Banach space
$$E \to B $$
there is no obvious sense in which a sequence of elements can be said to converge. 
If the base $B$ has a topological structure, we may nevertheless define convergence 
to zero, by mimicking the definition from Kolmogorov spaces. So from now on and for 
the rest of this chapter, we assume that the base $B$ of our Banach spaces carries 
a topology.

\begin{definition}
A sequence of elements $x_n=(b_n,v_n)$ in a relative Banach space $E \to B$
is said to converge to $0$, if the sequence $b_n$ converges to $b \in B$
and the sequence of norms of the $v_n$ converge to $0$ in $\RM_{ \ge 0}$:
\[ \lim_{n \to \infty} |v_n|_{b_n} = 0 .\]
\end{definition}

\section{Ordered diagrams}
Our first task is to introduce an appropriate language to explain what it 
means to be 'governed' by a finite dimensional dynamical system.

In algebra one deals with {\em equality} of expressions. The equality of
two different compositions of maps leads to the notion of 
{\em commutative diagram}, that pervades a large part of mathematics. 

The core of analysis is to a large part the manipulation of {\em inequalities}.
Therefore, we introduce a corresponding notion of certain non-commutative diagrams that 
we call {\em ordered diagrams}. 

\begin{definition}\index{ordered diagram}
Let $B$ be an ordered set and $X,Y,Z$ arbitrary sets. A diagram
$$\xymatrix{X \ar[r]^-\phi \ar[d]_{p_1}  \ar@{}[rd]|{\geq} &  Y \ar[d]^{p_2}\\
Z \ar[r]_{\p} & B
}$$
is called {\em ordered} if
$$ \p \circ p_1 \geq p_2 \circ \phi .$$
\end{definition}

One can similarly define ordered diagrams for $\leq,\ <,\ >,=$, where the 
last case corresponds to commutativity. Many analytic notions can be 
formulated in terms of such ordered diagrams. 

For example, let $E$ be a Banach space and consider the Banach space $L(E,E)$  
of continuous linear operators of $E$ with the operator norm. Take a fixed 
vector $x \in E$ and consider the evaluation map
$$\phi:L(E,E) \to E,\;\;u \mapsto u(x) .$$
By definition of the operator norm 
$$\|-\|:L(E,E) \to \RM_{\ge 0},$$ 
the diagram
$$\xymatrix{L(E,E) \ar[r]^\phi \ar[d]_{\| \cdot \|} \ar@{}[rd]|{\geq}&  E \ar[d]^{| \cdot |}\\
\RM \ar[r]_{| x | \cdot} & \RM
}$$
is ordered.

The contraction property of a map $f:X \to X$ can also be formulated with the
help of an ordered diagram. Assume for example that  $X \subset E$ is a subset of a Banach space, with $0$ as fixed point. Then we get an ordered diagram
$$\xymatrix{X \ar[r]^f \ar[d]_{\| \cdot\|} \ar@{}[rd]|{\geq}  &  X \ar[d]^{\| \cdot\|}\\
\RM_{\ge 0} \ar[r]_{\l\cdot} & \RM_{\ge 0}
}$$

The proof of convergence to the fixed point can be split in two distinct conceptual steps:

\begin{enumerate}
\item The contraction property (with factor $C \ge 0$) leads to an ordered diagram.
\item For $C <1$, the dynamical system $\RM \to \RM,x \mapsto C x$ has the 
origin as unique fixed point.
\end{enumerate}

We will be dealing with relative Banach space and the iteration of relative 
maps, by which we mean the following.

Let $p: X \to B$ be a set over $B$. We will consider {\em iterations over $B$}\index{Iteration over a base}, by which we mean a commutative diagram of the form
$$\xymatrix{X \ar[r]^-F \ar[d]_{p}  \ar@{}[rd]|{=} &  X \ar[d]^{p}\\
B \ar[r]_{\phi} & B 
} .$$ 

So both $F$ and $\phi$ can be iterated. If $X$ and $B$ are metric spaces, we can
speak about convergence of the sequence of iterates. If the map $p$ is 
continuous and the sequence of iterates 
\[x_0,\;\;x_1=F(x_0),\;\;x_2=F(x_1), \ldots,\;\;x_{n+1}=F(x_n)\] converges in $X$, 
then clearly the sequence of projections $b_n=p(x_n)$, which is the
sequence of iterates 
\[b_0,\;\;b_1=\phi(b_0),\;\;b_2=\phi(b_1), \ldots, \;\;b_{n+1}=\phi(b_n),\] 
converges in $B$, and of course the converse needs not hold in general.
But there is an important situation where some sort of converse holds.
  
Let $X \to B$ is a subset of a relative Banach space $p: E \to B$, containing 
the $0$-section. Assume we have an estimate of the form 
\[ |F(x)| \le g(p(x),|x|) \]
We can ``iterate the inequality'', meaning that
we can look at the iteration of the associated map
\[ f: B \times \RM_{>0} \to B \times \RM_{>0},\;\;(b,x) \mapsto f(b,x):=(\phi(b),g(b,x)) .\]
The norm map 
\[ \nu: X  \to B \times \RM_{\ge 0},\;\;\,x \mapsto (p(x),|x|)\]
restricted to $X$ can be used to control the convergence in $X$.
The occurring maps and the estimate on $F(x)$ can be neatly summarised in an
ordered diagram
$$\xymatrix{X \ar[r]^-F \ar[d]_{\nu}  \ar@{}[rd]|{\geq} &  X \ar[d]^{\nu}\\
B \times \RM_{\ge 0} \ar[r]_{f} & B \times \RM_{\ge 0}
}$$ 
where $|(b,x)|:=|x|$, that is, {\em only the last component counts in the
comparison}. Then the convergence of the $f$-iterates implies the convergence 
in norm of $F$-iterates, more precisely:

\begin{proposition}\label{P::iterates}
Let $E \to B$ be a Banach space over $B$ and $X \to B$ a subset of $E$ 
containing the $0$-section. Assume a maps $F$ defines
an iteration over $B$
\[\xymatrix{X \ar[r]^-F \ar[d]_{p}  \ar@{}[rd]|{=} &  X \ar[d]^{p}\\
B \ar[r]_{\phi} & B }\] 
and assume moreover that we have an ordered diagram
$$\xymatrix{X \ar[r]^-F \ar[d]_{\nu}  \ar@{}[rd]|{\geq} &  X \ar[d]^{\nu}\\
B \times \RM_{\ge 0} \ar[r]_{f} & B \times \RM_{\ge 0}
}$$ 
where $\nu$ denotes the norm map.
Then the sequence of $F$-iterates
$$x_0,\;\;x_1=F(x_0),\;\;x_2=F(x_1),\;\;\ldots,\;\;x_{n+1}=F(x_n),\ x_0 \in X$$
converges to $0 \in E_b$,  if the sequence of $f$-iterates
$$y_{n+1}=f(y_n),\  y_0=\nu(x_0) \in B \times \RM_{\ge 0} $$ 
converges to $(b,0) \in B \times \RM_{\ge 0}$.
\end{proposition}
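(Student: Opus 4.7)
The plan is to bound the two ingredients of convergence in a relative Banach space separately: the base-point convergence $p(x_n)\to b$ and the norm convergence $|x_n|\to 0$. Both are obtained by a direct induction comparing the $F$-iterates $x_n$ with the $f$-iterates $y_n=(b_n,\rho_n)$, starting from $y_0=\nu(x_0)=(p(x_0),|x_0|)$.

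First I would handle the base component. Since the upper square is genuinely commutative, one has $p\circ F=\phi\circ p$, so $p(x_n)=\phi^n(p(x_0))$. Reading the ordered lower square on the first coordinate (where equality in fact holds, by the structural form $f(b,x)=(\phi(b),g(b,x))$ inherent in the setup: the map $f$ projects to $\phi$ on $B$), the first coordinate of $y_n$ is also $\phi^n(p(x_0))$. Hence $p(x_n)=b_n$, and the hypothesis $y_n\to(b,0)$ forces $p(x_n)\to b$ in $B$.

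Next I would treat the norm. I claim, by induction on $n$, that $|x_n|\le\rho_n$. The base case $n=0$ is the definition of $y_0=\nu(x_0)$. For the inductive step, the ordered diagram applied to $x_n$ yields
\[
|x_{n+1}|=|F(x_n)|=\bigl|\nu(F(x_n))\bigr|\le\bigl|f(\nu(x_n))\bigr|=\bigl|f(b_n,|x_n|)\bigr|,
\]
where $|\cdot|$ on $B\times\RM_{\ge 0}$ denotes the projection onto the second coordinate, as spelled out above. Using the inductive hypothesis $|x_n|\le\rho_n$ together with the natural monotonicity of $f$ in the norm variable (which is built into the fact that the bound $g(b,x)$ on $|F(x)|$ must be interpreted as increasing in $x$, otherwise the ordered inequality is unusable), the right-hand side is bounded by the second coordinate of $f(b_n,\rho_n)=y_{n+1}$, which is exactly $\rho_{n+1}$. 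This closes the induction.

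Combining the two steps, the hypothesis $y_n\to(b,0)$ gives both $p(x_n)=b_n\to b$ and $|x_n|_{p(x_n)}\le\rho_n\to 0$, which is precisely the definition of convergence of $(x_n)$ to $0\in E_b$ in the relative Banach space $E\to B$. The main obstacle is the cautious unpacking of what the ordered diagram actually controls: it only compares the norm coordinates, so one must invoke the structural features of $f$ (projection to $\phi$ on $B$, monotonicity in the norm) to turn a single-step inequality into a bound that survives arbitrarily many iterations. Once this is granted, the argument is formally the same as the contraction-mapping iteration, but governed by the finite-dimensional dynamical system $f$ rather than by a single real multiplier.
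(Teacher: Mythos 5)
Your proof is correct and follows essentially the same route as the paper's, which is a one-line version of the same comparison: $|x_{n+1}|=|\nu(F(x_n))|\le |f(\nu(x_n))|$ together with the convergence of the $f$-iterates. You are in fact more careful than the paper: you make explicit the induction $|x_n|\le\rho_n$ and the monotonicity of $f$ in the norm coordinate that this induction requires, both of which the paper's terse proof (which even writes $f\circ\nu(x_n)=z_n$ rather than the correct inequality against $z_{n+1}$) silently assumes; this implicit monotonicity hypothesis is satisfied in all the applications, where $f$ has the form $x\mapsto Cx^2/s^k(t-s)^l$.
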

\begin{proof}
Put $y_n:=(b_n,z_n)$. By assumption the sequence $(b_n)$ converges to a limit $b \in B$. From the diagram, we get that
 $$|x_{n+1}|=\nu \circ F(x_n) \leq f \circ \nu(x_n)=z_n \xrightarrow[n \to +\infty ]{} 0  $$
\end{proof}

\section{Critical fixed points}
For relative Banach spaces there is a version of the contraction theorem
in the case of {\em quadratic critical points}. This was an essential 
discovery of Kolmogorov: the quadratic nature of an iteration compensates 
the presence of small denominators. In order to appreciate this point, 
let us first consider the  prototype of a linear map in a Banach space:
$$\RM \to \RM,\;\;\; x \mapsto C x ,$$
which is a contraction when the multiplicator $C <1$.
In a relative Banach space over a base $B$, the constant $C=C(b)$ 
will depend on $b \in B$, which during the iteration is changing.
So it may happen that for some $b$ the multiplicator $C(b) \ge 1$, 
and we loose control over the convergence.
Typically, we are dealing with local maps over the open sub-diagonal
$\D=\{ t > s > 0\}$ of the form
$$ x \mapsto \frac{C}{s^k(t-s)^l} x ,$$
and during the iteration $s$ or $t-s$ can  become small. So that
the multiplicator will become arbitrary large, complicating the issue
of convergence. It is unclear how to develop fixed point theory for such maps.

However, if we follow Kolmogorov and iterate maps of the form
$$ x \mapsto \frac{C}{s^k(t-s)^l} x^2,$$
then the extreme rapidity of convergence of the iterates of the 
quadratic map $x \mapsto x^2$ can compensate for the polar behaviour of
the coefficient
\[\frac{C}{s^k(t-s)^l} .\]
In the iteration process we are about to describe, the points $t$ and $s$ 
will approach each other according to a geometrical sequence, hence the
polar blow-up is easily compensated by the quadraticity of the map.
Of course, the same works more generally for maps that behave like
$$ x \mapsto \frac{C}{s^k(t-s)^l} x^d $$
with $d>1$.
There is an important subtlety to be observed here: during the iteration, 
the parameters $(s,t)$ converge to a point on the diagonal $s=t$, for 
which the map is not defined. The fixed point lies in the {\em closure} 
of the domain of definition of the map we iterate, so we are not dealing 
with a fixed point in the traditional sense of the word.





Let us define more precisely the non-linear maps we will consider.

\begin{definition}\index{ramification order}
Let $E$ and $F$ be relative Banach spaces over $\D$ and 
$X \to \D$ a subset of $E$.
We say that a map 
$$f:X \to F $$ 
has {\em ramification order} $(k,l,m) \in \RM^3$, if for some constant $C >0$, 
there is an ordered diagram 
$$\xymatrix{X \ar[r]^f \ar[d]_{\nu} \ar@{}[rd]|{\geq}  &  F \ar[d]^{\| \cdot\|}\\
 \D \times \RM_{\geq 0} \ar[r]_{\p} & \RM_{\ge 0}
}$$
with $\p(t,s,x)=C s^{k}(t-s)^{l}x^{m}$.
\end{definition}
So there are three basic invariants of a function $f$ between Banach 
spaces over $\D$, corresponding to the vanishing orders along the edges $t=0$, $s=t$ and $|x|=0$.
In applications $k,l$ are negative numbers and $m=2$. 
Such maps appear naturally, as the following class of examples shows.\\
Start with the usual Kolmogorov space 
$E:=\Ot^c(D)$ where $D$  the unit disc over $\RM_{>0}$.
Consider the K2-spaces
$$\Et=\Lt^1(E,E),\;\;\; \Ft=\mathcal{H}om(E,E) .$$
over the open sub-diagonal $\D$. 
For an analytic series $f$ of convergence radius $R$, we have the Borel map \ref{T::Borel}
$$\Bt f: \Et \supset \Xt(R)  \to \Ft,\;\;\; u \mapsto Bf(u)$$
for which we have the estimate:
$$\| Bf(u) \| \leq |f|\left(\frac{||u||}{t-s}\right) .$$
If $f$ lies in the $d$-th power of the maximal ideal and $0 < r< R$ chosen,
there exists a constant $C >0$, such that
$$|f(z)|\leq |f|(|z|) \leq C|z|^d,\;\;\;\textup{for}\;\; |z| \le r .$$
So if we restrict the map $\Bt f$ to the tetrahedron 
$$\Xt(r)=\{(t,s,u)\;|\;|u| \le r(t-s) \}$$ 
we deduce an estimate of the form
$$\| Bf(u) \| \leq C\left(\frac{||u||^d}{(t-s)^d}\right), \;\;\;\textup{for}\;\;\; u \in \Xt(r) .$$
So $Bf$ can be seen as a non-linear map with $(0,-d,d)$ as ramification orders.

We now need to understand what will replace the notion of contraction. 
This is slightly more involved than the simple condition  ``$C<1$'' of the 
Banach fixed point theorem, but nevertheless remains elementary.
  
\section{A simple iteration}
The iteration scheme we will use in Kolmogorov spaces, is governed by a simple 
three-dimensional dynamical systems that we will now study in some detail. 
For simplicity, we start with the case where the singularity is quadratic
and there is no pole along $s=0$.\\

We consider the {\em prisma}\index{prisma} 
\[ P:=\D \times \RM_{>0}:=\{(t,s,x)\in \RM_{ > 0}^2 \times \RM_{ \ge 0}\; |\; t >s \} \subset \RM^3.\]
as a subset of the K2 space
$$ \mathcal{P}: \D \times \RM \to \D,\ (t,s,x) \mapsto (t,s).$$
We fix numbers $C, k,\l \in \RM_{>0}$ with $\l < 1$ and define a map
\[ f: P \to  P,\;\;\; (t,s,x) \mapsto (s,s-\l (t-s), C\frac{x^2}{(t-s)^k} ) .\]
We are interested in the sequence of iterates
\[x, \;\;f(x),\;\;f(f(x)),\ldots \] 
As the variable $x$ does not appear in the first two coordinates of the map, we are dealing with a relative iteration, i.e. there is a commutative diagram
$$\xymatrix{ P \ar[r]^-{f} \ar[d]_{p} \ar@{}[rd]|{=} &  P \ar[d]^{p}\\
\D \ar[r]^\phi & \D} ,$$
where the map $p: (t,s,x) \mapsto (t,s)$ just forgets the last coordinate and
\[\phi(t,s)=(s,s-\l(t-s)) .\]

The dynamics of the map $\phi$ is very elementary:\\

\begin{center}
\includegraphics[width=0.6\linewidth]{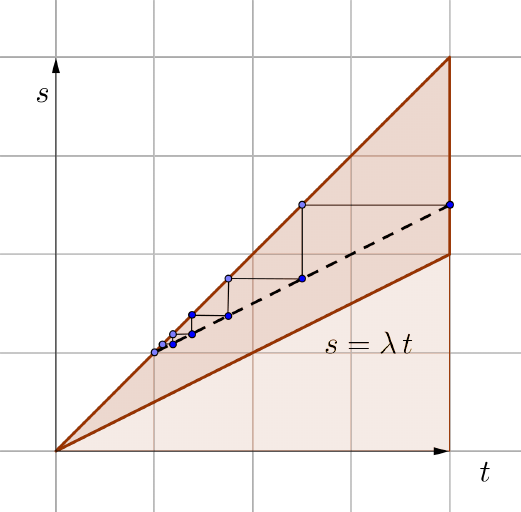}
\end{center}

\begin{proposition}\label{P::baseiteration}
For $(t_0,s_0) \in \D \cap \{ s>\l t\}$, the sequence of iterates
\[(t_0,s_0),\;\;(t_1,s_1):=\phi(t,s),\;\; (t_2,s_2):=\phi(\phi(t,s)),\;\ldots\]
converges to the point $(t_{\infty},t_{\infty}) \in \overline{\D}$,
where 
$$t_{\infty} :=\frac{s_0-\l t_0}{1-\l}$$
Explicietly:
\[ t_{n+1}=t_{\infty}+\l^n(t_0-t_{\infty})=s_n ,\]
\end{proposition}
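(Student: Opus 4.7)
The proof is purely an exercise in a $2\times 2$ linear recursion, so my plan is to diagonalise it by choosing two convenient linear combinations of $t$ and $s$.

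First I introduce the difference $d_n := t_n - s_n$. From the explicit formula $\phi(t,s)=(s,\,s-\lambda(t-s))$, which gives $t_{n+1}=s_n$ and $s_{n+1}=s_n-\lambda(t_n-s_n)$, a direct computation yields
\[ d_{n+1}=t_{n+1}-s_{n+1}=s_n-\bigl(s_n-\lambda(t_n-s_n)\bigr)=\lambda d_n, \]
hence $d_n=\lambda^n d_0$. Since $0<\lambda<1$ and $d_0>0$ (because $(t_0,s_0)\in\Delta$), we have $d_n\searrow 0$ and, in particular, $t_n>s_n$ for every $n$, so the iterate $(t_n,s_n)$ stays in the domain $\Delta$ where $\phi$ is defined.

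Next I observe that the affine form $s-\lambda t$ is a conserved quantity of $\phi$:
\[ s_{n+1}-\lambda t_{n+1}=\bigl(s_n-\lambda(t_n-s_n)\bigr)-\lambda s_n=s_n-\lambda t_n. \]
Thus $s_n-\lambda t_n=s_0-\lambda t_0$ for all $n\ge0$. Combining the two relations $t_n-s_n=\lambda^n d_0$ and $s_n-\lambda t_n=s_0-\lambda t_0$ I can solve explicitly for $t_n$ and $s_n$; letting $n\to\infty$ only the conserved combination survives, giving the common limit
\[ t_\infty=\frac{s_0-\lambda t_0}{1-\lambda}, \]
which is positive exactly under the hypothesis $s_0>\lambda t_0$, so $(t_\infty,t_\infty)\in\overline{\Delta}$.

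For the closed form it is enough to rewrite $t_0-t_\infty=d_0/(1-\lambda)$ and therefore
\[ t_n-t_\infty=\lambda^n(t_0-t_\infty),\qquad s_n-t_\infty=\lambda^{n+1}(t_0-t_\infty), \]
which is the formula claimed (the relation $s_n=t_{n+1}$ corresponds to the shift by one). There is really no obstacle here: the map is affine, both invariants are straightforward, and the geometric decay of $d_n$ drives everything. The only conceptual point worth noting is the role of the hypothesis $s_0>\lambda t_0$, which is precisely what guarantees that the geometric decay of $d_n$ pushes the iterates to a point of $\overline{\Delta}$ and not into the forbidden region $\{t\le 0\}$.
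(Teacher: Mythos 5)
Your proof is correct and complete. The paper itself gives no argument here --- it states ``We omit the easy proof'' and records only the single observation that the difference $t-s$ gets multiplied by $\lambda$ at each step, which is the first of your two invariants. Your diagonalisation via the contracting combination $d_n=t_n-s_n$ (with $d_{n+1}=\lambda d_n$) together with the conserved affine form $s-\lambda t$ is exactly the natural way to fill in the omitted computation, and it cleanly isolates why the hypothesis $s_0>\lambda t_0$ is needed: it is equivalent to $t_\infty>0$, i.e.\ to the limit landing in $\overline{\Delta}$ rather than on or past the boundary $t=0$.

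One point worth flagging: your closed form $t_n-t_\infty=\lambda^n(t_0-t_\infty)$, equivalently $s_n=t_{n+1}=t_\infty+\lambda^{n+1}(t_0-t_\infty)$, is the correct one; the formula as printed in the proposition, $t_{n+1}=t_\infty+\lambda^{n}(t_0-t_\infty)=s_n$, is off by one in the exponent (at $n=0$ it would assert $t_1=t_0$). Your remark about ``the shift by one'' implicitly corrects this, but it would be worth stating explicitly that the exponent in the displayed identity should be $n+1$, not $n$.
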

We omit the easy proof. Note that the difference of the coordinates in each step gets multiplied by $\l$:
\[ s-(s-\l(t-s))=\l(t-s) .\]
The above picture and formulas show how we approach the diagonal during the 
iteration. 

Note that the limit point $(t_{\infty},t_\infty)$ does not belong $\D$, but
lies in its closure $\overline{\D}$; the third component of the map $f$
is not defined at the points of the diagonal.

\begin{definition} \index{tetrahedron $\Xt(R,k,\l)$} Let $E$ be a Banach space
over $\D$ and $X \subset E$ a subset. We define 
$$ \Xt_X(R,k,\l):= \{(t,s,x) \in X :  |x| <R \l^k(t-s)^k,\;\;s > \l t\} .$$ 
\end{definition}
For $k=1, \l=1$, we recover the definition of the tetrahedron given in \ref{D::tetrahedron}.
Quadratic iteration have the following property:
\begin{definition}\index{rapid convergence} 
We say that a sequence of of numbers $r_n \in \RM$ {\em converges rapidly to $0$},
if there exist $0 \le C <0$ and $\rho >0$, such that for all $n \in \NM$ one
has:
\[ |r_n| \le  C^{{\rho}^n}\]
We will say that a sequence $(x_n)$ of a relative Banach space converges {\em rapidly} to $0$
if the sequence $|x_n|$ converges rapidly to $0$.
\end{definition}
\begin{theorem} \label{T::D1} 
Fix  $C, k,\l \in \RM_{>0}$ with $\l  < 1$ and consider
the map
\[ f: P \to  P,\;\;\; (t,s,x) \mapsto (s,s-\l (t-s), C\frac{x^2}{(t-s)^k} ) . \]
Put 
\[ R:=C^{-1} .\]
Then the following holds: 

\begin{enumerate}[{\rm i)}]
\item The set $\Xt_P(R,k,\l)$ is $f$-invariant.
\item For any $(t_0,s_0,x_0) \in \Xt_P(R,k,\l)$, the third component of the sequence of  iterates 
$$(t_n,s_n,x_n)=f^{(n)}(t_0,s_0,x_0)$$ converges to zero rapidly and more precisely:
$$x_n = \left(R \l^k(t_0-s_0)^k \right)^{1-2^n} \l^{k n} x_0^{2^n}. $$
\end{enumerate}
\end{theorem}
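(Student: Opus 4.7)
The plan is to reduce everything to Proposition~\ref{P::baseiteration} applied to the base map $\phi$, together with a single well-chosen rescaling of the third coordinate.

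First I would extract from the definitions $t_{n+1} = s_n$ and $s_{n+1} = s_n - \l(t_n - s_n)$ two invariants of the base iteration:
\[ t_{n+1} - s_{n+1} = \l(t_n - s_n), \qquad s_{n+1} - \l t_{n+1} = s_n - \l t_n. \]
The first identity gives $t_n - s_n = \l^n(t_0 - s_0)$, so the gap decays geometrically; the second shows that the condition $s > \l t$ is exactly preserved under iteration. Combined, they ensure that if $(t_0,s_0) \in \D \cap \{s > \l t\}$ then $(t_n,s_n) \in \D \cap \{s > \l t\}$ for every $n$, and Proposition~\ref{P::baseiteration} delivers the geometric convergence $(t_n,s_n) \to (t_\infty, t_\infty) \in \overline{\D}$.

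Next I would introduce the rescaled third coordinate
\[ u_n := \frac{x_n}{R\,\l^k(t_n - s_n)^k}, \]
designed so that the condition $(t_n,s_n,x_n) \in \Xt_P(R,k,\l)$ becomes simply $u_n < 1$. Direct substitution of the recursion $x_{n+1} = C x_n^2/(t_n-s_n)^k$, together with $C = R^{-1}$ and the geometric identity $t_{n+1}-s_{n+1} = \l(t_n-s_n)$, yields the pure squaring dynamics
\[ u_{n+1} = u_n^2, \]
hence $u_n = u_0^{2^n}$. Since $(t_0,s_0,x_0) \in \Xt_P(R,k,\l)$ means exactly $u_0 < 1$, we get $u_n < 1$ for all $n$, which combined with the first step proves (i). Unwinding the definition of $u_n$ gives
\[ x_n = u_0^{2^n}\, R\,\l^{(n+1)k}(t_0-s_0)^k, \]
and substituting $u_0 = x_0/(R\l^k(t_0-s_0)^k)$ rearranges this into the explicit formula of (ii). Rapid convergence to zero then follows, since $\l < 1$ and $k > 0$ force the crude estimate $|x_n| \le R\l^k(t_0-s_0)^k \cdot u_0^{2^n}$.

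There is no real analytic obstacle here: the entire argument reduces to a single renormalization that turns the quadratic iteration into the prototypical squaring map $u \mapsto u^2$. The only point that requires care is the bookkeeping, in particular noticing that the factor $\l^k$ in the definition of $\Xt_P(R,k,\l)$ is exactly what is needed for the recursion in $u_n$ to be pure squaring (with no stray constants), and that the invariance of $s_n - \l t_n$ is what keeps the iterates bounded away from the boundary $\{s=0\}$ of $P$, where the map $f$ would break down.
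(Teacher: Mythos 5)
Your proof is correct and follows essentially the same route as the paper: the invariance of $s-\l t$ and the geometric decay $t_{n+1}-s_{n+1}=\l(t_n-s_n)$ handle the base iteration and part (i), and the closed formula for $x_n$ is exactly the one the paper verifies by induction. The only difference is presentational: your substitution $u_n = x_n/\bigl(R\l^k(t_n-s_n)^k\bigr)$ turns the recursion into the pure squaring map $u_{n+1}=u_n^2$ and lets you read off the formula directly, which is an equivalent (and arguably cleaner) packaging of the paper's inductive verification.
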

\begin{proof}
Assume $(t,s,x) \in \Xt_P(R,k,\l)$ then, as $R=C^{-1}$, we have:
\begin{align*}
 x^2  &\leq R^2 \l^{2k}(t-s)^{2k},\\
\frac{C x^2}{(t-s)^2} & \leq  R \l^{2k}(t-s)^k .\\
\end{align*} 
The condition $s >\l t$ is obviously preserved by $f$,
thus  $f(t,s,x) \in \Xt_P(R,k,\l)$. This shows i).\\

The second statement ii) is an easy induction. The statement holds for $n=0$.
Define 
$$K=  R\l^k(t_0-s_0)^k .$$
As  $t_n-s_n=\l^n(t_0-s_0)$,  we get that if we assume that
$$x_n  =  K^{1-2^n} \l^{k n} x_0^{2^n},$$
then
\begin{align*}
x_n^2 &=   K^{2-2^{n+1}}\l^{2kn}x_0^{2^{n+1}}, \\
\frac{C x_n^2}{(t_n-s_n)^k} & = \frac{C \l^{-nk}}{(t_0-s_0)^k}K^{2-2^{n+1}}\l^{2kn}x_0^{2^{n+1}}\\
 & = K^{1-2^{n+1}}\l^{k(n+1)}x_0^{2^{n+1}}
 \end{align*}
 which shows that the equality is also right for $n+1$.
\end{proof}
 
The following picture shows a typical iteration in the variables $\s=(t-s)$ and $x$:\\

\begin{center}
\includegraphics[width=0.8\linewidth]{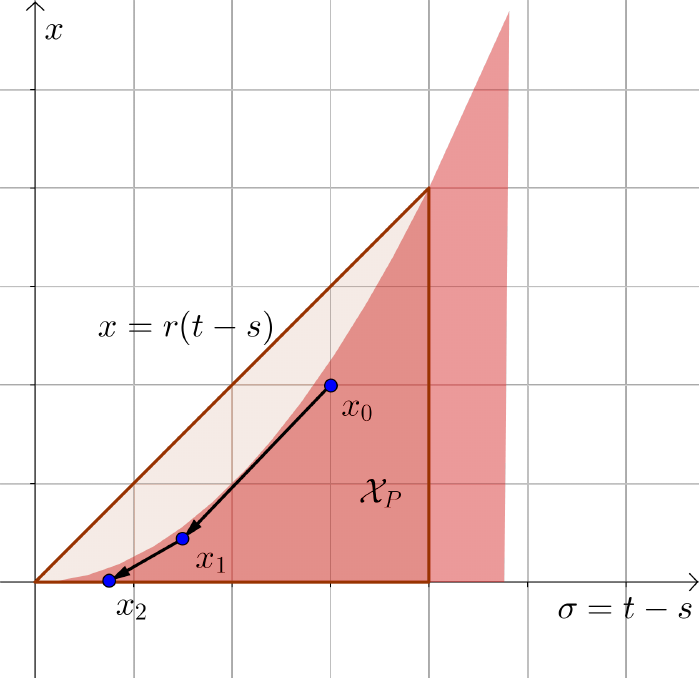}
\end{center}

The invariant set $\Xt_P(R,k,\l)$ is concave and as the components of the $f$-iterates are decreasing,
the intersection of $\Xt_P(R,k,\l)$ with 
$$\Xt_P(r)=\{ (t,s,x) \in P: x \leq r(t-s) \}  $$
is also $f$-invariant.\\
%

Using our simple observation \ref{P::iterates}, we now deduce a 
fixed point theorem for relative Banach spaces over $\D$.

Note however that our concept of convergence is not directly applicable for
Banach spaces $E \to \D$, as the limit point of the sequence on the base
belongs to the diagonal and is not in $\D$.
A more precise way of formulating the result would be to consider 
\[ \overline{\D}:=\{(t,s) \in ]0,S]^2\;|\;s \le t\} \supset \D:=\{(t,s) \in ]0,S]^2\;|\;s < t\} \]
and a Banach space $\overline{\Et} \to \overline{\D}$ that restricts to $\Et \to \D$, etc. As the matter is unimportant, we will suppress it from our notation.

As usual, we denote by
\[\nu: \Et \to \D \times \RM_{\ge 0},\ (t,s, v) \mapsto (t,s,|v|) \]
the norm map.
\begin{theorem}
\label{T::Fixedpoint} 
Let $\Et \to \D$ be a relative Banach space.
 Assume that for a given map
\[ F: \Xt_\Et(R,k,\l) \to \Et,\ (t,s,v) \mapsto (s,s-\l(t-s),g(t,s,v))\]
there is an ordered diagram
$$\xymatrix{\Xt_\Et(R,k,\l)  \ar[r]^-F \ar[d]_{\nu}   \ar@{}[rd]|{\geq} &  \Et \ar[d]^{\nu}\\
\Xt_P(R,k,\l)   \ar[r]_-{f} & P
}$$
with
\[ f: P \to  P, (t,s,x) \mapsto (s,s-\l (t-s), \frac{x^2}{R(t-s)^k} ) . \]
Then the set $\Xt_\Et(R,k,\l)$ is $F$-invariant and for $x_0 \in \Xt_\Et(R,k,\l)$
the sequence
\[x_{n+1}=F(x_{n})\]
converges rapidly to $0$.
\end{theorem}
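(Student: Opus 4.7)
The plan is to combine Proposition \ref{P::iterates} with Theorem \ref{T::D1}: the setup of the statement is tailored precisely so as to feed the latter into the former. Since the first two components of $F(t,s,v)$ are $(s,s-\l(t-s))=\phi(t,s)$, the map $F$ is an iteration over $\D$ in the sense of \ref{P::iterates}, sitting above the base iteration $\phi$ studied in \ref{P::baseiteration}; and the assumed ordered diagram is exactly the hypothesis of \ref{P::iterates}, with the three-dimensional system $f$ on the prisma $P$ playing the role of governing dynamics. Once $\Xt_\Et(R,k,\l)$ is shown to be $F$-invariant, so that the iteration remains well-defined, Proposition \ref{P::iterates} will yield convergence of $(x_n)$ as soon as the $f$-iterates of $\nu(x_0)$ converge to the diagonal in $P$, and Theorem \ref{T::D1}\,(ii) already supplies the explicit formula for that.

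First I would verify the invariance. Fix $x=(t,s,v)\in \Xt_\Et(R,k,\l)$. By the very definition of the norm map, $\nu(x)=(t,s,|v|)\in \Xt_P(R,k,\l)$, so by \ref{T::D1}\,(i) the point $f(\nu(x))$ also lies in $\Xt_P(R,k,\l)$. The first two components of $\nu(F(x))$ and $f(\nu(x))$ coincide (both equal $\phi(t,s)$), while the third component of $\nu(F(x))$, namely $|F(x)|$, is bounded above by that of $f(\nu(x))$ thanks to the ordered diagram. Since the condition $s'>\l t'$ depends only on the first two coordinates and the inequality $|w|<R\l^k(t'-s')^k$ is monotone in the third, this transfers: $\nu(F(x))\in \Xt_P(R,k,\l)$, hence $F(x)\in \Xt_\Et(R,k,\l)$. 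The same monotonicity lets a straightforward induction conclude that for every $n\ge 0$ the triple $\nu(x_n)$ has first two components equal to those of $f^{n}(\nu(x_0))$ and third component no larger.

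It then remains to read off rapid convergence from \ref{T::D1}\,(ii), which gives the third component of $f^n(\nu(x_0))$ as
\[
\bigl(R\l^k(t_0-s_0)^k\bigr)^{1-2^n}\l^{kn}|v_0|^{2^n}
= C^{2^n-1}\l^{kn}|v_0|\cdot C^{-1},
\]
where $C=|v_0|/\bigl(R\l^k(t_0-s_0)^k\bigr)<1$ by hypothesis; this produces double-exponential decay, so by the pointwise domination above $|x_n|$ converges to $0$ rapidly, and Proposition \ref{P::iterates} closes the argument. There is no deep obstacle here; the real work was already paid for in setting up the ordered-diagram formalism and in the explicit analysis of \ref{T::D1}. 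The one subtle bookkeeping point is that the ordered diagram provides only an inequality in the third coordinate, so transferring the invariance of $\Xt_P(R,k,\l)$ back up to $\Xt_\Et(R,k,\l)$ rests squarely on the monotonicity of its defining inequalities in $|v|$.
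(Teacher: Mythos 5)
Your proof is correct and takes exactly the route the paper intends: the theorem is presented as a direct corollary of Proposition \ref{P::iterates} combined with Theorem \ref{T::D1} (the paper supplies no further argument), and your monotonicity check that both the invariance of $\Xt_P(R,k,\l)$ and the coordinate-wise domination transfer from $P$ up to $\Et$ is precisely the omitted content. The only blemish is a stray factor of $C^{-1}$ in your final rewriting of the closed formula from \ref{T::D1}\,(ii) — it should read $C^{2^n-1}\l^{kn}|v_0|$ — which does not affect the rapid-convergence conclusion.
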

\section{Quadratic iterations}
The above formulated fixed point theorem can be generalised to the case of maps with
ramification orders $-k,-l,d>1$, i.e. modelled on the map
\[ x \mapsto \frac{C}{s^k(t-s)^l}x^d .\]
We consider only the case $d=2$, which is the one relevant for our applications.. The case of general $d$ is straightforward and only notationally more difficult.\\

We consider as before the prisma 
\[ P:=\D \times \RM_{\ge 0}:=\{(t,s,x)\;| t >s, |x| \ge 0\}.\]
We fix four numbers $C, k, l \in \RM_{>0}$  and $0 <\l <1$ and 
define a map
\[ f: P \to  P, (t,s,x) \mapsto (s,s-\l (t-s), C\frac{x^2}{s^k(t-s)^l} ) .\]
At each step $s$ gets  multiplied by the factor
$$\rho(t,s):=\frac{s-\l(t-s)}{s}=1+\l-\l \frac{t}{s}$$
which monotonously increases and converges to $1$. 
This explains that ramification along $s=0$ is in fact harmless, although it
affects the estimates. We will need the following set:

\begin{definition}\index{set $\Xt(R,k,l,\l)$}  Let  $E$ be a Banach
space over $\D$ and $X \subset E$ a subset. We define 
\[ \Xt_X(R,k,l,\l):=\{(t,s,x) \in X\;|\;  |x| <R\rho^k(t,s)s^k\l^l(t-s)^l, s >\l t\}.\]
\end{definition}
Note that
$$ \Xt_X(R,0,l,\l)= \Xt_X(R,l,\l).$$
The following is a direct generalisation of $\ref{T::D1}$:

\begin{theorem}\label{T::D2} Fix  $R, k, l \in \RM_{>0}$ and $0 <\l  < 1$ and consider the map
\[ f: P \to  P, (t,s,x) \mapsto (s,s-\l (t-s), \frac{x^2}{R s^k(t-s)^l} ) . \]
Then the following holds : 

\begin{enumerate}[{\rm i)}]
\item The set  $\Xt_P(R,k,l,\l)$ is $f$-invariant.
\item For any $(t_0,s_0,x_0) \in \Xt_P(R,k,l,\l)$ the third component of the sequence of iterates 
$$(t_n,s_n,x_n)=f^{(n)}(t_0,s_0,x_0)$$ converges to zero rapidly and more precisely:
$$x_n = \left(  Rp_n^k s_0^k \l^l(t_0-s_0)^l \right)^{1-2^n} \l^{l n} x_0^{2^n}. $$
with $p_n=\prod_{i=0}^{n-1} \rho(t_i,s_i)$, $p_0=1$.
\end{enumerate}
\end{theorem}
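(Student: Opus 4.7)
The plan is to follow the same two-step pattern as the proof of Theorem~\ref{T::D1}, with the additional algebraic work needed to handle the new pole along $s=0$.

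For part (i), I would check directly that the two defining inequalities of $\Xt_P(R,k,l,\lambda)$ are preserved by $f$. The condition $s > \lambda t$ passes through exactly as in \ref{T::D1}, since it becomes $s - \lambda(t-s) > \lambda s \Leftrightarrow s > \lambda t$. For the main inequality, if $|x| < R\rho^k(t,s)s^k\lambda^l(t-s)^l$, squaring and dividing by $Rs^k(t-s)^l$ gives
\[ |x'| < R\rho^{2k}(t,s)s^k\lambda^{2l}(t-s)^l. \]
Using $s' := s - \lambda(t-s) = s\rho(t,s)$ and $s - s' = \lambda(t-s)$, the target bound expands as $R\rho^k(s,s')\rho^k(t,s)s^k\lambda^{2l}(t-s)^l$, so invariance reduces to the scalar inequality $\rho(t,s) \leq \rho(s,s')$. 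Substituting the identity $\rho(s,s') = 1+\lambda-\lambda/\rho(t,s)$, this becomes $(\rho - 1)(\rho - \lambda) \leq 0$ with $\rho := \rho(t,s)$, which holds because $\lambda < \rho(t,s) < 1$ whenever $(t,s) \in \D$ and $s > \lambda t$.

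For part (ii), Proposition~\ref{P::baseiteration} applied to the first two coordinates gives $t_n - s_n = \lambda^n(t_0 - s_0)$ and $s_n = p_n s_0$, where the second identity just restates the definition of $p_n$ via $s_{i+1} = \rho(t_i,s_i)\, s_i$. The closed form for $x_n$ then follows by straightforward induction on $n$ from the recursion $x_{n+1} = x_n^2 / (R s_n^k (t_n - s_n)^l)$, parallel to the computation in \ref{T::D1}. Rapid convergence of $x_n$ to zero follows from the doubly exponential exponent $2^n$ on $x_0$, combined with the uniform bounds on $s_n$ (bounded below by $\lambda t_\infty > 0$) and the base-dynamics limit $t_\infty = (s_0 - \lambda t_0)/(1-\lambda)$, which together ensure that the prefactor in the closed form does not degrade faster than a geometric factor.

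The main obstacle is the invariance check in (i): although the reduction to $\rho(t,s) \leq \rho(s, s-\lambda(t-s))$ is clean, this is the new ingredient compared with \ref{T::D1}. Geometrically, it expresses the fact that $s/t$ is non-decreasing along the base iteration, which is precisely what prevents the new pole $s^{-k}$ from ruining the quadratic contraction once we pass from the tetrahedron of \ref{T::D1} to the region $\Xt_P(R,k,l,\lambda)$ carved out by the $\rho^k$-factor.
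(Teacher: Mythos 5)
Your proposal is correct and follows essentially the same route as the paper: part (i) is the direct verification that $f$ preserves $\Xt_P(R,k,l,\l)$, reduced to the scalar inequality $\rho(t,s)\le\rho(t',s')$ after using $s'=\rho(t,s)s$ and $t'-s'=\l(t-s)$, and part (ii) is the same induction on the recursion $x_{n+1}=x_n^2/(Rs_n^k(t_n-s_n)^l)$ using $s_n=p_ns_0$ and $t_n-s_n=\l^n(t_0-s_0)$. The one point where you go beyond the paper is that you actually prove the monotonicity $\rho(t,s)\le\rho(s,s')$, via the factorisation $(\rho-1)(\rho-\l)\le 0$ valid on $\{t>s>\l t\}$, whereas the paper only asserts it; this is a useful addition rather than a divergence.
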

Note that 
$$\prod_{i=0}^{n-1} \rho(t_i,s_i) < \rho(t_0,s_0)^n:=\rho_0^n $$
and therefore
$$x_n < \left(  R\rho_0^{k} s_0^k \l^l(t_0-s_0)^l \right)^{1-2^n} \rho_0^{k n} \l^{l n} x_0^{2^n}. $$
This explains that our equality implies rapid convergence to zero.
\begin{proof}
Assume $(t,s,x) \in \Xt_P(R,k,l,\l)$ then we have:
\begin{align*} 
x^2   &\leq R \rho^{k}(t,s)s^{k}\l^{l}(t-s)^{l},\\
  \left( \frac{x^2}{R s^{ k}(t-s)^{ l}} \right) &\leq  R\rho^{2k}(t,s)s^{ k}\l^{2 l}(t-s)^{l}
\end{align*}
 
With the notation $(t',s',x')=f(t,s,x)$, the previous inequality can be written as:
$$  x'  \leq  R\rho^{2 k }(t,s)s^{ k}\l^{2l}(t-s)^l.$$ 
Using $\rho(t,s) < \rho(t',s')$, $\rho(s,t)=s'$ and $t'-s'=\l(t-s)$ we 
obtain:
$$  x'  \leq  R\rho^k (t',s')(s')^k\l^l(t'-s')^l. $$
So we see that  $f(t,s,x) \in \Xt_P(R,k,l,\l)$. This shows i).\\

The second statement ii) is an easy induction. The statement holds for 
$n=0$. Define 
$$K_n=   Rp_n^k s_0^k \l^l(t_0-s_0)^l .$$
As $s_n=p_n s_0$ and $(t_n-s_n)=\l^n(t_0-s_0)$,  we obtain:
$$\frac{ \l^{2ln}}{R s_n^k(t_n-s_n)^l}  = \frac{ \l^{2ln}}{R\l^{nl} p_n^ks_0^k(t_0-s_0)^l} = K_n^{-1} \l^{ l(n+1)} $$
So if we assume that 
\[ x_n  =  K_n^{1-2^n} \l^{l n} x_0^{2^n},\]
then
\begin{align*}
x_n^2 &=   K_n^{2-2^{n+1}}\l^{2ln}x_0^{2^{n+1}} \\
\frac{ x_n^2}{R s_n^k(t_n-s_n)^l} & = K_n^{1-2^{n+1}}\l^{ l(n+1)}x_0^{2^{n+1}} .
 \end{align*}
which shows that the equality is also right for $n+1$.

\end{proof}

%
\section{Parametric version}
In order to prove the normal form theorem in the inhomogenous case, 
we need to study the case of a map depending on parameters. 
This means that we are now given two relative Banach spaces
$$\Et \to \D,\;\;\; \Ft \to \D ,$$
where the space $\Ft$ will serve as {\em parameter space}.
We consider a function on a subset of their direct sum 
$$X \to \D,\ X \subset \Et \oplus  \Ft  ,$$
If we assume uniform estimates in the parameter in the unit ball $B_\Ft$
of $\Ft$, we get a straightforward variant of the fixed point theorem. 
For simplicity we only consider the case $d=2$.
Our finite dimensional model is as follows. 
 
\begin{definition}\index{set $\Xt(R,k,l,\l,r)$}  Let  $E$ be a Banach
space over $\D$ and $X \subset E$ a subset. Omitting the dependence on $k,l,\l$, we define  
$$K(t,s):=  R\rho(t,s)^{k} s^k \l^l(t-s)^l  $$
and
\[ \Xt_X(R,k,l,\l,r):=\{(t,s,x) \in X\;|\;  |x| <K(t,s), \l t<s,\ \sum_{n \geq 0}  K(t,s)^{1-2^n}\rho(t,s)^{kn}\l^{ln}x^{2^n} \leq r\}.\]
\end{definition}
 
 \begin{theorem}\label{T::D2} Fix  $R, k, l \in \RM_{>0}$ and $0 <\l  < 1$ and consider the map
\[ f: P \times \RM_{\geq 0} \to  P \times \RM_{\geq 0}, (t,s,x,\a) \mapsto (s,s-\l (t-s), \frac{x^2}{R s^k(t-s)^l},x+\a) . \]
Then the following holds : 
\begin{enumerate}[{\rm i)}]
\item The iterates of the set  $\Xt_P(R,k,l,\l,r) \times \{ 0 \}$ are mapped inside $\Xt_P(R,k,l,\l,r) \times [ 0,r] $.
\item For any $(t_0,s_0,x_0) \in \Xt_P(R,k,l,\l)$ the third component of the sequence of iterates 
$$(t_n,s_n,x_n,\a_n)=f^{(n)}(t_0,s_0,x_0,\a_0)$$ converges to zero rapidly and more precisely:
$$x_n = \left(  Rp_n^k s_0^k\l^l(t_0-s_0)^l \right)^{1-2^n} \l^{l n} x_0^{2^n}. $$
with $p_n=\prod_{i=0}^{n-1} \rho(t_i,s_i)$, $p_0=1$.
\end{enumerate}
\end{theorem}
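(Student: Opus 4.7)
The parametric map $f$ acts on its first three coordinates exactly as the map in the preceding (non-parametric) theorem on quadratic iterations. The plan is therefore to bootstrap from that theorem: the new content lies in (a) controlling the additional fourth coordinate $\a_n$, and (b) showing that the extra summation condition distinguishing $\Xt_P(R,k,l,\l,r)$ from $\Xt_P(R,k,l,\l)$ propagates under iteration.

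Part (ii) is immediate: the third coordinate evolves independently of $\a$, so the explicit formula for $x_n$ and its rapid convergence to zero are inherited from the non-parametric version applied to the triple $(t_n,s_n,x_n)$.

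For part (i), the inclusion $\Xt_P(R,k,l,\l,r)\subset\Xt_P(R,k,l,\l)$ combined with the non-parametric invariance shows that the first three coordinates of any iterate remain in $\Xt_P(R,k,l,\l)$. To control $\a_n$, note that $\a_0 = 0$ and $\a_{n+1} = x_n + \a_n$ give $\a_n = \sum_{i=0}^{n-1} x_i$. The key step is to bound each $x_i$ above by the $i$-th summand of the series $\sum_{i\geq 0} K(t_0,s_0)^{1-2^i}\rho(t_0,s_0)^{ki}\l^{li}x_0^{2^i}$, using the closed form $x_i=(R p_i^k s_0^k \l^l (t_0-s_0)^l)^{1-2^i}\l^{li}x_0^{2^i}$ and the comparison between $p_i$ and $\rho_0^i$, with $\rho_0 := \rho(t_0,s_0)$. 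By the defining hypothesis this series is bounded by $r$, so $\a_n \le \sum_{i\geq 0} x_i \le r$. Preservation of the summation condition at an iterate $(t_m,s_m,x_m)$ then follows by an analogous reindexing, exhibiting the new series as a tail of the original one, which is itself bounded by $r$.

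The main obstacle is the exponent bookkeeping at the key step, namely the comparison of $p_i^{k(1-2^i)}$ with the factor $\rho_0^{k(i+1-2^i)}$ appearing in the defining sum. The estimate is elementary, being essentially a consequence of $\rho_i \nearrow 1$ as the iteration approaches the diagonal, but it is delicate and lies at the heart of why the quadratic nature of the iteration tolerates the polar behaviour along $s=t$: it is precisely this inequality that converts the summability hypothesis on the initial data into a uniform bound on the parameter coordinate $\a_n$.
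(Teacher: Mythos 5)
Your overall strategy is correct and is evidently the one the paper intends: the paper itself gives no argument for this parametric statement (it only says the homogeneous proof applies \emph{mutatis mutandis}), and your reading of the extra summation condition in the definition of $\Xt_P(R,k,l,\l,r)$ as being designed precisely so that $\a_n=\sum_{i=0}^{n-1}x_i\le r$ is exactly right. Part (ii) does reduce to the homogeneous theorem since $(t,s,x)$ evolves independently of $\a$, and the propagation of the summation condition does follow from term-by-term domination of the series at $(t_m,s_m,x_m)$ by the tail of the series at $(t_0,s_0,x_0)$.

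There is, however, one concrete trap in the step you single out as the key one. If you take the closed form $x_i=\bigl(Rp_i^k s_0^k\l^l(t_0-s_0)^l\bigr)^{1-2^i}\l^{li}x_0^{2^i}$ at face value and try to prove $x_i\le K(t_0,s_0)^{1-2^i}\rho_0^{ki}\l^{li}x_0^{2^i}$ by comparing $p_i^{k(1-2^i)}$ with $\rho_0^{k(1+i-2^i)}$, the inequality goes the \emph{wrong} way: already for $i=1$ one has $p_1^{k(1-2)}=\rho_0^{-k}>1=\rho_0^{k(1+1-2)}$ (note also that since $\rho$ increases along the orbit one has $p_i\ge\rho_0^i$, not $p_i<\rho_0^i$). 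The reason is that the displayed closed form is itself only an upper bound for the true iterate, off by positive powers of the $\rho_j$'s, so it cannot be squeezed below the $i$-th summand. The clean way to get the bound you need is to bypass the closed form and induct directly on the recursion: assuming $x_n\le \s_n:=K(t_0,s_0)^{1-2^n}\rho_0^{kn}\l^{ln}x_0^{2^n}$, use $s_n=p_ns_0\ge\rho_0^ns_0$ and $t_n-s_n=\l^n(t_0-s_0)$ to get
\[ x_{n+1}=\frac{x_n^2}{Rs_n^k(t_n-s_n)^l}\le\frac{\s_n^2}{K(t_0,s_0)\,\rho_0^{k(n-1)}\l^{l(n-1)}}=\s_{n+1},\]
which is a one-line computation. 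With this bound in hand your argument for $\a_n\le r$ and for the propagation of the summation condition (the $j$-th summand at step $m$ is bounded by the $(m+j)$-th summand at step $0$, by the same monotonicity facts $p_m\ge\rho_0^m$ and $\rho_m\ge\rho_0$) goes through as you describe.
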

The proof given in the homogeneous applies mutatis mutandis, as well as its corollary
\begin{theorem}
\label{T::Fixedpoint_parametric} 
Let $\Et \to \D$, $\Ft \to \D$ be relative Banach spaces over the open sub-diagonal
$\D=\{ (t,s) \in ]0,S]^2: s<t \}$.
Assume that for a given map
\[ F:  \Xt_\Et(R,k,l,m,\l,r) \times rB_{\Ft} \to \Et \oplus \Ft,\ (t,s,v,\a) \mapsto (s,s-\l(t-s),g(t,s,v,\a))\]
there is an ordered diagram
$$\xymatrix{ \Xt_\Et(R,k,l,m,\l,r)  \times rB_{\Ft} \ar[r]^-F \ar[d]_{\nu_\Et}   \ar@{}[rd]|{\geq} &  \Et \ar[d]^{\nu_\Et}\\
\Xt_P(R,k,l,m,\l,r)   \times \RM_{\geq 0}  \ar[r]_-{f} & P \times \RM_{\geq 0} 
}$$
with
\[ f: \Xt_P(R,k,l,m,\l) \to  P, (t,s,x) \mapsto (s,s-\l (t-s),\frac{x^m}{Rs^k(t-s)^l} ) . \]
Then  $\Xt_\Et (R,k,l,m,\l)  \times \{ 0 \}$ is mapped inside $\Xt_\Et(R,k,l,\l,r) \times [ 0,r] $
and the iterates  
\[(x_{n},\a_{n})=F^{(n)}(x_{0},0)\]
converges rapidly to $0$.
\end{theorem}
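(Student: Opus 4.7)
The plan is to mirror the proof of the non-parametric fixed point theorem~\ref{T::Fixedpoint}, with the single new ingredient being control of the parameter component $\a_n\in\Ft$. The three-dimensional model $f$ in the $(t,s,x)$-coordinates is already handled by the invariance and rapid-convergence result~\ref{T::D2}; what remains is to show that the fourth coordinate $\a$, which obeys $\a_{n+1}=\a_n+x_n$ under the extended model, never leaves $rB_\Ft$, and then to transfer everything back to the Banach space via Proposition~\ref{P::iterates}.

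First I would restrict attention to the $(t,s,x)$-coordinates and apply Theorem~\ref{T::D2}: the set $\Xt_P(R,k,l,\l)$ is $f$-invariant and the third component admits the closed-form bound
\[ x_n = (R\,p_n^{k}\,s_0^{k}\,\l^{l}\,(t_0-s_0)^{l})^{1-2^n}\,\l^{ln}\,x_0^{2^n}, \qquad p_n=\prod_{i=0}^{n-1}\rho(t_i,s_i), \]
giving rapid convergence $x_n\to 0$. Since the extended model iterates $\a$ additively by $x_n$, starting from $\a_0=0$ we have $\a_n=\sum_{i=0}^{n-1}x_i$. The definition of $\Xt_P(R,k,l,\l,r)$ is tailored precisely so that the series $\sum_{n\ge 0}K(t_0,s_0)^{1-2^n}\rho(t_0,s_0)^{kn}\l^{ln}x_0^{2^n}$ is bounded by $r$; comparing this series term by term with the formula for $x_n$ above will then yield $\a_n\le r$ for all $n$, so the $\Ft$-iterate remains in $rB_\Ft$.

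With the model iterates confined to $\Xt_P(R,k,l,\l,r)\times[0,r]$, the ordered diagram hypothesis combined with Proposition~\ref{P::iterates} (applied jointly to the $\Et$- and $\Ft$-components) guarantees that the $F$-orbit of $(x_0,0)$ stays inside $\Xt_\Et(R,k,l,\l,r)\times rB_\Ft$, so that $F$ is never evaluated outside its domain, and that $|v_n|\to 0$ rapidly in $\Et$, which is the asserted rapid convergence.

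The main obstacle I foresee is not in the relative-Banach half of the argument, where Proposition~\ref{P::iterates} does all the work, but in the finite-dimensional majorisation: one must verify that the explicit formula for $x_n$ from Theorem~\ref{T::D2}, which involves the running product $p_n$ of the $\rho(t_i,s_i)$, is genuinely dominated by the corresponding term $K(t_0,s_0)^{1-2^n}\rho(t_0,s_0)^{kn}\l^{ln}x_0^{2^n}$ appearing in the defining series for $\Xt_P(R,k,l,\l,r)$. This reduces to a careful monotonicity analysis of the scalar dynamical system from Proposition~\ref{P::baseiteration} together with a sign analysis of the exponents $1-2^n$; once this comparison is in place, the remainder of the argument is a routine repetition of the non-parametric estimates.
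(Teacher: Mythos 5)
Your architecture is the right one and is exactly what the paper intends (its own ``proof'' is only the remark that the homogeneous argument applies mutatis mutandis): run the non-parametric dynamics in the $(t,s,x)$-coordinates, observe that the parameter coordinate obeys $\a_n=\sum_{i<n}x_i$, use the extra summability clause in the definition of $\Xt_P(R,k,l,\l,r)$ to keep $\a_n\le r$, and transfer to $\Et\oplus\Ft$ through the ordered diagram and Proposition~\ref{P::iterates}. However, the step you defer --- the term-by-term domination $x_n\le T_n:=K(t_0,s_0)^{1-2^n}\rho(t_0,s_0)^{kn}\l^{ln}x_0^{2^n}$ --- is the whole content of the parametric case, and the two tools you propose to use for it will both mislead you. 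The closed formula for $x_n$ printed in Theorem~\ref{T::D2} does not actually satisfy the recursion (the induction step produces $K_n^{1-2^{n+1}}$ where the formula requires $K_{n+1}^{1-2^{n+1}}$), and the remark following it, asserting $p_n=\prod_{i<n}\rho(t_i,s_i)<\rho(t_0,s_0)^n$, has the inequality reversed: since $\rho(t_i,s_i)=s_{i+1}/s_i$ and $(t_i-s_i)/s_i$ decreases along the orbit (this uses $s_i>\l t_i$), the $\rho_i$ increase towards $1$, so in fact $p_n\ge\rho_0^n$. If you combine the printed formula with the printed remark, the comparison you need appears to fail already at $n=1$.

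The comparison is nevertheless true, and the way to see it is to bypass the closed formula and work with the ratios $y_n:=x_n/T_n$ directly from the recursion $x_{n+1}=x_n^2/\bigl(Rs_n^k(t_n-s_n)^l\bigr)$. Using $s_n=p_ns_0$ and $t_n-s_n=\l^n(t_0-s_0)$ one finds
\[ y_{n+1}=y_n^2\left(\frac{\rho_0^{\,n}}{p_n}\right)^{\!k}\le y_n^2,\qquad y_0=1,\]
so $y_n\le1$ for all $n$, whence $x_n\le T_n$ and $\a_n=\sum_{i<n}x_i\le\sum_{i\ge0}T_i\le r$ by the defining clause of $\Xt_P(R,k,l,\l,r)$. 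There is a second point you pass over: for $F$ to be applicable at every step, the orbit must remain not merely in the $f$-invariant set $\Xt_P(R,k,l,\l)$ of the non-parametric theorem but in $\Xt_P(R,k,l,\l,r)$ itself, i.e.\ the summability clause must be preserved. This follows from the same monotonicity: the $m$-th term of the series based at $(t_1,s_1,x_1)$ is bounded by $(\rho_1/\rho_0)^{k(1+m-2^m)}T_{m+1}\le T_{m+1}$ since $\rho_1\ge\rho_0$ and $1+m-2^m\le0$, so the new series is dominated by the tail of the old one. With these two verifications written out, the rest of your argument (ordered diagram plus Proposition~\ref{P::iterates}, as in Theorem~\ref{T::Fixedpoint}) goes through and coincides with the intended proof.
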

 \chapter{The normal form theorem}
In this chapter we will give a prove of a general normal form theorem. It 
is based on the Lie-iteration of chapter 5, but we now let the iteration
take place in a Kolmogorov space. Using the results of the previous chapters,
we will see that a single simple condition ensures convergence of the process.

\section{Anatomy of the Morse lemma} 
Before we formulate the general statements, we look in some  detail at the 
complex one-variable {\em Morse lemma} in the KAM context. It states
$$f_0(z)=\frac{1}{2}z^2+o(z^2) $$
there exists an biholomorphic germ 
$$\p:(\CM,0) \to (\CM,0) $$
such that 
$$f_0 \circ \p(z)=\frac{1}{2} z^2. $$
The result is of course straightforward, since
$$f_0(z)=\frac{1}{2}\left(z\sqrt{1+o(1)}\right)^2 .$$ 
It is  nevertheless instructive to see how this basic result 
fits inside the general theory of normal forms. 
We take the particular example
$$f_0(z)=\frac{1}{2} z^2+z^3 .$$
and want to use the Lie-iteration to bring this function back to 
its normal form
$$a(z):=\frac{1}{2} z^2 .$$

\noindent \subsection*{Lie iteration on power series}
We begin by studying the Lie-iteration on the space $R=\CM[[z]]$ of formal 
power series and define the ingredients for the Lie-iteration.
We let
\[ \alg:=Der(R,R) = R \d_z\]
the $R$-module of derivations. 

There is an infinitesimal action of $\alg$ on $R$, given by 
\[ Der(R,R) \times R \to R,\;\; (v,f) \mapsto v(f) ,\]
and the action on the normal form $a$ is simply multiplication by $z$:
$$\rho: Der(R,R) \to R,\ v(z)\d_z \mapsto v(z)\d_z a(z)=z v(z) .$$

As the map $\rho$ is not surjective, there does not exist a right inverse
to it, but the map  
$$j: R \to \Der(R,R) ,\ b(z) \mapsto \frac{1}{z}(b(z)-b(0))\d_z $$
is a right inverse to $\rho$, when restricted to the maximal ideal 
$$\Mt =\left\{ \sum_{n \geq 0} b_n z^n: b_0=0 \right\} .$$
This means that 
\[ \rho \circ j_{|\Mt} = Id_{| \Mt} .\]
As we will always apply the map $j$ to series with $b(0)=0$, the last term
in the formula could be omitted.

The Lie iteration produces a sequence of power series
\[ f_0:=f=a+b_0,\;\;f_1=a+b_1,\;\;f_2=a+b_2,\ldots\]
where the remainders $b_n$ determine elements $v_n \in Der(R,R)$ via the
formula
\[ v_n:=j(b_n),\]
which in turn determine the next term of the sequence
\[ f_1=e^{-v_0}(f_0),\;\;f_2=e^{-v_1}(f_1),\;\;\;\ldots,f_{n+1}=e^{-v_n}(f_n), \ldots\]

Written in terms of the remainders, we get:
\[ b_{n+1}=e^{-v_n}(a+b_n)-a,\ a =\frac{z^2}{2}\]
and in terms of the derivations:
$$v_{n+1}=j(e^{-v_n}(a+v_n(a))-a). $$
So the Lie iteration produces a sequence
\[v_0,\;\;v_1,\;\;v_2,\ldots, v_n,\ldots \]
of elements in $Der(R,R)$ by the recursion:
\vskip0.3cm
$$ \begin{array}{| c|}\hline \\ \qquad v_{n+1}=j(e^{-v_n}(\Id+v_n)(z^2/2)-z^2/2) \qquad \\ \ \\ \hline \end{array}$$
\vskip0.3cm
\newpage
At the first step, we get  .
\[f_0=\frac{z^2}{2}+z^3,\;\;b_0=z^3,\;\;v_0=j(z^3)=z^2\d_z .\]

The first terms of the sequence are seen to be:
\begin{align*}
 v_0&=z^2 \d_z\\
 v_1&=\frac{1}{2}(-3z^3+8z^4-15z^5+24z^6-35z^7+48z^8-63z^9+o(z^9))\d_z\\
 v_2&=\frac{1}{4}(-18z^5+108z^6-493z^7+2100z^8-8579z^9+o(z^9))\d_z\\
 v_3&=\frac{1}{4}(-243z^9+o(z^9))\d_z\\
 v_4&=\frac{-295245}{16}z^{17}+o(z^{17}))\d_z
\end{align*}
The corresponding functions are:
\begin{align*}
 f_0&=\frac{1}{2} z^2+z^3\\
 f_1&=\frac{1}{2}(z^2-3z^4+8z^5-15z^6+24z^7-35z^8+48z^9-63z^{10})+o(z^{10})\\
 f_2&=\frac{1}{4}(2z^2-18z^6+108z^7-493z^8+2100z^9-8579z^{10})+o(z^{10})\\
 f_3&=\frac{1}{2}z^2 - \frac{243}{4}z^{10}+o(z^{10})\\
 f_4&=\frac{1}{2} z^2- \frac{295245}{16} z^{18}+o(z^{18})
\end{align*}

From this example one can observe that the order of the vector fields $v_i$ is equal to $2^{i}+1$, and  the order of remainders $b_i=f_i-a$ is $2^i+2$. We see that even at the level of formal series, the iteration scheme is 
converging rapidly in the $\Mt$-adic topology of $R$!

The composition
\[ \ldots e^{-v_2}e^{-v_1}e^{-v_0}\]
applied to series $z=0+1.z+0.z^2+\ldots \in R$ is computed to be the power series
\[ \psi(z):=z-z^2+\frac{5}{2}z^3-8z^4+\frac{231}{8}z^5-\ldots\]
so it is the automorphism 
\[ z \mapsto \psi(z)\]
that maps $f_0$ to the normal form $a$. Indeed, 
$\psi$ is seen to be the {\em inverse power series} to
\[ z \mapsto \phi(z)=z\sqrt{1+2z}=z+z^2-\frac{1}{2}z^3+\frac{1}{2}z^4-\ldots\]
for which
\[a(\phi(z))=f_0(z),\;\;\; \frac{1}{2}(z\sqrt{1+2z})^2=\frac{1}{2}z^2+z^3\]

\noindent \subsection*{The Lie iteration and the fixed point theorem}
The above procedure involves infinitely many coordinate transformations and  it is not clear that 
this infinite composition is an analytic coordinate transformation. Therefore we lift the  above calculation to the level of an appropriate Kolmogorov space and then deduce the result
 in $\CM\{z\}$

As usual we denote by 
$$D \to \RM_{>0}$$ 
the relative unit disc in $\CM$ with fibre
\[ D_s:=\{ z \in \CM\;|\;|z| < s\} , \]
and consider the Kolmogorov space  $E:=\Ot^c(D)$. 
The normal form  $a(z):=\frac{1}{2}z^2$ defines a section $a$ of $E$.
 We set 
\[\alg:= Der(E,E):=\Ot^c(D)\d_z \subset L^1(E,E) .\]
The infinitesimal action $\rho$ is defined as before
\[\rho: Der(E,E) \to E,\;\;\; v(z)\d_z \to v(z)\d_za(z)=z v(z)\]
and the map $j$ can be defined similarly:
$$j: E \to  Der(E,E),\ a(z) \mapsto \frac{1}{z}(b(z)-b(0))\d_z.$$
So the formulas remain the same, but the interpretation of the symbols is
slightly different. As $j$ involves a division, the map  $j$ is a local 
morphism  with simple pole along $s=0$:
$$j \in \Hom^{1,0}_{\D}(E,Der(E,E)).$$

To define the Lie iteration in this setting, we need an additional ingredient:
we have to choose a falling converging sequence 
$$  t_0 > t_1 > t_2 >\ldots >t_{\infty} >0$$
in such a way that that $u_n$ is holomorphic on the open disc of radius
$t_n$, continuous on the boundary, i.e.: 
\[ v_n \in \Ot(D)_{t_n}\]
We put $s_n:=t_{n+1}$ and generate the sequence $(t_n,s_n)$ by iterating the
map
\[ \phi: (t,s) \mapsto (s,t-\l (t-s)) \]
with $(t_0,s_0)$ as starting point. The Lie iteration produces a sequence
\[z^2\d_z=:v_0,\;\;v_1,\;\;v_2,\;\;\ldots, v_n,\ldots \]
of elements in $Der(E,E)$ by the recursion
$$ v_{n+1}=j(e^{-v_n}(\Id+v_n)(z^2/2)-z^2/2).$$
So we wish to iterate the following non-linear map of relative Banach spaces:
\[ F: Der(E,E) \to Der(E,E),\;\;\;(t,s,v) \mapsto (s,s-\l(t-s),j(g(v)a))\]
with
$$g(z)=e^{-z}(1+z)-1 \in \CM\{z\} .$$
The series $g$ is easily seen to be the Borel transform of 
\[f(z):=- \frac{z^2}{(1+z)^2} \in z^2 \CM\{z\}, \]
which has radius of convergence equal to $1$.
This holomorphic function admits a quadratic bound in any disc of 
radius $r<1$. For instance, if we take $r=1/2$ we have:
\[|f(z)| \leq |f|(|z|) \leq 4 |z|^2, \;\;\textup{for}\;\;|z| \leq \frac{1}{2} .\]
In order to apply the Borel bound, we have also to consider
the K2-space
\[ \Dt er(E,E):=p^*\alg \subset p^*L^1(E,E)=\Lt^1(E,E), \;\;p:(t,s) \mapsto t .\]
By \ref{T::Borel}, the map
$$Bf=g: v \mapsto e^{-v}(\Id+v)-\Id $$
defined on
$$\Xt\left(1/2\right):=\{u= (t,s,v) \in \Dt er(E,E) |\;\;  \|u \| < \frac{1}{2}(t-s)\} $$
satisfies
$$|g(u)| \leq |f|(\frac{\| u \|}{(t-s)}) \le 4 \frac{\| u \|^2}{(t-s)^2}.$$
In particular
$$|g(u)(a)| \leq 4 \frac{\| u \|^2}{(t-s)^2}|a(z)|_t=2 t^2 \frac{\| u \|^2}{(t-s)^2}$$
and composition with $j$ gives
$$|j g(u)(a)| \leq  2 t^2 \frac{\| u \|^2}{s(t-s)^2} .$$
It is basically this map that we have to iterate.

To apply the fixed point theorem,\ref{T::D2} and we have to choose 
parameters and make sure all conditions are satisfied. More precisely,
we have to find constants $R >0,0 <\l <1$ and initial data
\[u_0=(t_0,s_0,v_0) \in \Xt\left(1/2\right) \cap \Xt(R,1,2,\l)\]
where
\[\Xt(R,1,2,\l)=\{u=(t,s,v) \in \Dt er(E,E) :  \| u \| <R\rho(t,s)s\l^2(t-s)^2,\ s>\l t \}\]
and
$$\rho(t,s):=\frac{s-\l(t-s)}{s}=1+\l-\l \frac{t}{s}.$$

We pull-back the space $E$ via the inclusion
$$\phi:]0,t_0] \to \RM_{>0} .$$
Then choosing $C = 2t_0^2$ as constant we get that
$$|j g(u)(a)| \leq  C \frac{\| u \|^2}{s(t-s)^2} .$$
which means that the map induced by $j \circ g$ on $E[t_0]=\phi^*E$ is exactly of the type considered in the previous chapter, with ramification indices
$(-1,-2,2)$.

Let us now investigate in details how we can ascertain the condition that $u_0 \in \Xt(1/2)$.
As before we have
\[u_0=j(z^3)=z^2\d_z\]
and from the Cauchy-Nagumo inequalities, we find the estimate
$$| u_0(f) |_{s_0} \leq |z^2|_{s_0} \times |\d_z f|_{s_0} \leq \frac{s_0^2}{t_0-s_0}|f|_{t_0}, $$
thus for the renormalised norm of $u_0 \in \Lt^1(E,E)$ we have the inequality:
$$ \| u_0 \| \leq e\,s_0^2 .$$
So if we ensure that:
\begin{enumerate}[{\rm i)}]
\item $e s_0^2 \le \frac{1}{2}(t_0-s_0)$
\end{enumerate}
then $\frac{\| u_0\|}{t_0-s_0} \le \frac{1}{2}$, so that indeed $u_0\in \Xt(1/2)$.

Furthermore, we have to require our starting point $(t_0,s_0,v_0)$ 
to be in the set $\Xt(R,1,2,\l)$, $R=C^{-1}$. This can be achieved by 
choosing $t_0$ small enough but let us spell out the two additional
conditions. 

\begin{enumerate} 
\item[{\rm ii)}] $ C \| u_0 \| < \rho(t_0,s_0)s_0\l^2(t_0-s_0)^2$
\item[{\rm iii)}]$ s_0 >\l t_0.$ 
\end{enumerate}
which can be achieved for $t_0$ small enough.

Condition iii) is trivial to fulfill: we choose any number $\m \in (\l,1)$
and set $s_0=\m t_0$. 

Using $\|u_0\| \leq e s_0^2$, $C=2t_0^2$, we see that $(ii)$ is satisfied 
if
\[ 2\, e\, t_0^2s_0^2 < \rho(t_0,s_0) s_0 \l^2(t_0-s_0)^2, \]
which holds for $t_0$ small enough.  

Let us now estimate precisely how small $t_0$ should be. Using $\m$ with $0 < \l <\m <1$ and $s_0=\m t_0$, we can rewrite 
$i)$ and $ii)$ as

$$
\begin{array}{lrcl}
i)&e\,t_0 &\leq &\frac{1-\m}{2\m^2}\\[2mm]
ii)& e\,t_0 &<& \frac{(1+\l-\l/\m) \l^2 (1-\m)^2}{2 \m}\\[2mm]
\end{array}
$$
If we take $\m=1/2, \l=1/4$ the first two inequalities 
evaluate to
$$
\begin{array}{lrcl}
i) & e\,t_0 & \leq& 1\\ 
ii)& e\,t_0 & < & 3/256=0.01171875000...\\
\end{array}
$$
so we find that it is sufficient to take $t_0 < T_0$, where
$$ T_0 := \frac{3}{256 e}=0.00431108720123...$$
Applying the fixed point theorem, we get that the sequence 
$$u_n=(t_n,s_n,v_n) $$
converges rapidly to zero. 

The theorem on the composition of exponentials 
(Theorem~\ref{T::compexponential2}) shows that the morphisms
$$g_n:=e^{-u_n} \dots e^{-u_1}  e^{-u_0},\ t_\infty=\lim t_n $$
converge to an element 
$$g_{\infty} \in \Hom(E,E)_{t_\infty\,t_0}=\Hom(E_{t_0},E_{t_{\infty}}).$$
which conjugates $f_0$ to the normal form.  One has
\[ t_{\infty}=\frac{s_0-\l t_0}{1-\l}=\frac{\m-\l}{1-\l} t_0, \]
which with the given values give
\[ \frac{1}{3}T_0=1/256\, e = 0.001437029067\ldots > 0\]
as an estimate for the radius of convergence of the conjugating 
transformation. 

We conclude that there exists a holomorphic coordinate 
transformation that transforms $f_0= \frac{1}{2}z^2+z^3$ back to 
$a=\frac{1}{2}z^2$.

As one can see from this example, the arguments we used are of a very 
general nature and are not restricted to the particular case of the Morse 
lemma.
\newpage
\section{Homogeneous normal forms}

The computations and argument we used for the Morse lemma can be done in 
a more abstract framework. We start by formulating what we mean by an 
infinitesimal action  on an affine subspace in an $S$-Kolmogorov space $E$.

\begin{definition} A {\em singularity class} $(a,M)$ in an $S$-Kolmogorov 
space $E$ consists of\\
\begin{enumerate}[{\rm i)}]
\item  a horizontal section $a$ of $E$, $\;\;a \in \Gamma^{h}(]0,S],E)$.
\item an $S$-Kolmogorov subspace $M \subset E$.
\end{enumerate}
\end{definition}
Recall that such a horizontal section $a$ is determined by
a single vector $a_S \in E_S$ by restriction to $E_t, t\le S$. 
A singularity class $(a,M)$ defines a relative {\em affine subspace} 
$$a+M \subset E \to ]0,S]$$ 
with fibre
\[a_t +M_t \subset E_t\]

Furthermore, we consider a given S-Kolmogorov subspace
\[ \alg \subset L^1(E,E) \]
that will play the role of Lie-algebra. 

\begin{definition} A {\em Kolmogorov action} of $\alg$ on $a+M  \subset E$
is given by\\ 
\[ \rho: \alg \to M;\;\;u \mapsto u(a),\;\;\rho \in \Hom_{\D}^{0,1}(\alg,M)\]
such that for each $u \in \Xt_\alg$, its exponential maps $a_t+M_t$ to $a_s+M_s$.
\end{definition}
 
Our first general normal form theorem is the following:

\begin{theorem}\label{T::groupes} 
Let $E$ be an $S$-Kolmogorov space and let $\alg \subset L^1(E,E)$ define
a Kolmogorov-action on an affine subspace $a+M \subset E$. 
Assume that the map 
$$\rho:\alg \to M, u \mapsto u(a)$$ 
admits a local right quasi-inverse
$$j \in \Hom^{k,l}_{\D}(M,\alg). $$
Choose $0 < \l < \m <1$ and $0 <r <1$ and let 
\[ R:=2^{-k-2} \frac{(1-r)^2}{\|j\||a_S|}.\] 
Then for any section
\[ b \in \Gamma^{h}(]0,S],M)\]
with the property that 
$$j(b) \in  \Xt(r) \cap  \Xt(R,k,l,\l) $$
there exists an element 
\[ g \in Hom(E_S,E_t)\]
with the property that
\[ g(a_S+b_S) =a_{t} .\]
\end{theorem}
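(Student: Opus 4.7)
The plan is to cast the Lie iteration in an appropriate Kolmogorov space, invoke the quadratic fixed point theorem (Theorem~\ref{T::Fixedpoint}), and then pass to the limit using the composition of exponentials (Theorem~\ref{T::compexponential2}). Starting from $b_0 := b$, set $v_0 := j(b)$ and define inductively $v_{n+1} := j(b_{n+1})$ with $b_{n+1} := e^{-v_n}(a + b_n) - a$. Since $\rho \circ j = \Id$ on $M$, one has $b_n = v_n(a)$ at each step, whence
\[ b_{n+1} = (e^{-v_n}(\Id + v_n) - \Id)(a) = \psi(v_n)(a), \]
where $\psi$ is the Borel transform of the holomorphic function $f(z) := -z^2/(1+z)^2$, which lies in $z^2 \CM\{z\}$ and converges on the open unit disc. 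The map to iterate over the base $\D$ is therefore
\[ F : (t, s, v) \mapsto \bigl(s,\; s - \l(t-s),\; j(\psi(v)(a))\bigr). \]

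The key estimate is obtained in three composed bounds. First, by Theorem~\ref{T::Borel} applied to $\psi$ on the tetrahedron $\Xt(r)$, together with $|f|(x) = x^2/(1-x)^2 \le x^2/(1-r)^2$ for $x \le r$, one has
\[ \|\psi(v)\| \le \frac{\|v\|^2}{(1-r)^2 (t-s)^2}. \]
Second, applying this operator to the horizontal section $a$, whose norm is bounded on $]0,S]$ by $|a_S|$ (by the Kolmogorov property), gives $|\psi(v)(a)| \le \|v\|^2 |a_S| / ((1-r)^2(t-s)^2)$. Third, composing with the $(k,l)$-local operator $j$ of norm $\|j\|$, using the midpoint trick of the locality composition lemma to redistribute pole orders between $s$ and $t-s$, yields an estimate of the form
\[ \|F(v)\| \le \frac{\|v\|^2}{R\, s^k (t-s)^l}, \qquad R = \frac{2^{-k-2}(1-r)^2}{\|j\|\, |a_S|}, \]
which is exactly the shape required by the quadratic fixed point theorem. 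The hypothesis $j(b) \in \Xt(r) \cap \Xt(R, k, l, \l)$ thus provides a valid starting point inside the invariant tetrahedron.

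Theorem~\ref{T::Fixedpoint} now guarantees that $\Xt_\alg(R, k, l, \l)$ is $F$-invariant and that $(v_n)$ converges rapidly to zero. In particular the conditions $\|v_n\| < t_n - s_n$ and $\sum \|v_n\|/(t_n - s_n) < \infty$ hold automatically, so Theorem~\ref{T::compexponential2} applies: the partial compositions $g_n := e^{-v_n} \circ \cdots \circ e^{-v_0}$ converge in operator norm to an element $g \in \Hom(E_S, E_{t_\infty})$, where $t_\infty = (s_0 - \l t_0)/(1-\l) > 0$ is the limit of the base iteration (Proposition~\ref{P::baseiteration}). By construction $g_n(a_S + b_S) = a_{t_{n+1}} + b_{n+1}$ and $|b_{n+1}| = |v_{n+1}(a)| \to 0$ rapidly, so letting $n \to \infty$ yields $g(a_S + b_S) = a_{t_\infty}$, which is the desired conjugation with $t := t_\infty$.

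The hard part is the third bound above: tracking the pole orders of $\psi$ (quadratic along $s=t$) and of $j$ (of type $(k,l)$) through the composition carefully enough to obtain the precise constant $2^{-k-2}(1-r)^2/(\|j\||a_S|)$ matching $R$. A secondary technicality is ensuring that each exponential $e^{-v_n}$ preserves the affine subspace $a + M$, so that $b_{n+1} \in M$ and the iteration can continue indefinitely; this is exactly what the Kolmogorov-action assumption on $\alg$ provides.
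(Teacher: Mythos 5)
Your proposal is correct and follows essentially the same route as the paper: the same Lie iteration $b_{n+1}=e^{-v_n}(a+b_n)-a$, $v_{n+1}=j(b_{n+1})$, the same identification of $e^{-z}(1+z)-1$ as the Borel transform of $-z^2/(1+z)^2$, the same midpoint splitting $m=(s+t)/2$ to combine the Borel estimate with the $(k,l)$-locality of $j$ into a quadratic bound matching $R$, and the same appeal to Theorems~\ref{T::Fixedpoint} and~\ref{T::compexponential2} to conclude. (The only cosmetic discrepancy is the exponent of $(t-s)$ in your composed bound, which should be $l+2$ rather than $l$ because of the quadratic pole of the Borel map; the paper's own statement of the ramification indices has the same slip.)
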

\begin{proof}
The map $g$ will be constructed using the Lie iteration scheme:
\begin{enumerate}[{\rm 1)}]
\item $b_{n+1}:=e^{-u_n}(a+b_n)-a$ ;
\item  $u_{n+1}:=j(b_{n+1}),$
\end{enumerate}
The sequence $(u_n)$ is obtained by iteration from the formula:
$$u_{n+1}=j(e^{-u_n}(\Id+u_n)(a)-a):=F(u_n). $$
We apply this in the context of Kolmogorov spaces. 
As before in \ref{P::baseiteration}, by iterating
\[ \phi: (t,s) \mapsto (s, s-\l(t-s)) \]
on an initial point
\[ (t_0,s_0)=(t_0,\m t_0) \]
we create a sequence $(t_0,s_0), (t_1,s_1), \ldots$
converging to $(t_{\infty},t_{\infty})$.

The basic step of the iteration is the composition of two 
maps. First, given $u_n \in \alg_t$ we have to form the
exponential expression 
$$e^{-u_n}(\Id+u_n)-\Id.$$ 
This produces a map from $M_t$ to some $M_m$, for some $m <t$. 
Then we apply $j$, which is a $(k,l)$-local map, so we can take the
component $j_{sm}$, which maps $M_m$ to $\alg_s$ for some $s < m$. 
Finally we will need appropriate estimates, and ascertain the procedure 
can be iterated.\\
The power-series
$$e^{-z}(1+z)-1 \in \CM\{z\}$$
is the Borel transform of 
\[f(z):=- \frac{z^2}{(1+z)^2} \in z^2 \CM\{z\} .\]
which has radius of convergence equal to $1$. 

We pull-back $\alg$ via the map
$$p:(t,s) \mapsto t $$
and get a K2-space
$$p^*\alg \subset \Lt^1(E,E) $$
of $1$-local operators.
 
For any $r <1$ and $u \in \Xt(r)$, we have the Borel estimate \ref{T::Borel}:
\[ |Bf(u)_{mt}| \le C \frac{||u||^2}{(m-t)^2},\;\;C:=\frac{1}{(1-r)^2}.\]
From the assumed locality of $j$ there is an estimate
\[ |j_{sm}| \le \frac{\|j\|}{s^k(m-s)^l}\]
If we take the $m=(s+t)/2$ we obtain the estimate
\[ |j(Bf(u)(a))_{st}| \le  \frac{||u||^2}{R s^k(t-s)^{l+2}}\]
where
\[ R:=  2^{-k-2} \frac{(1-r)^2}{\|j\| |a_S|}  . \]
(The choice $m=\frac{2}{k+2}t+\frac{k}{k+2}$ would lead to a slightly better
constant, but this is of no importance here.)
We obtain a map with ramification indices $(-k,-l,2)$:
\[F: \Xt(r)  \cap \Xt(R,k,l,2,\l)  \to p^*\alg,\ (t,s,u_{st}) \mapsto (s,s-\l(t-s),j(Bf(u)(a))_{st})\] 
Using this map, we have a way of coming from an element
$u_n \in p^*\alg_{t,s}$ to an element $u_{n+1} \in p^*\alg_{s,r}$. 

The fixed point theorem (Theorem~\ref{T::Fixedpoint}) shows that the map can 
be iterated and that the sequence of $(u_i)$ converges rapidly to $0$. The elements $u_0,u_1,u_2,\ldots$ then have exponentials
\[ e^{-u_i}: E_{t_i} \to E_{t_{i+1}}.\]
The theorem on the composition of exponentials (Theorem~\ref{T::compexponential2}) shows that the elements
$$g_n:=e^{-u_n} \dots e^{-u_1}  e^{-u_0} $$
converge to an element 
$$g_{\infty} \in \Hom(E,E)_{t_\infty\,t_0}=\Hom(E_{t_0},E_{t_{\infty}})$$
with $t_\infty=\lim t_n$. As $(u_n)$ converges to zero, we get that
\begin{align*}
g_n (a_{t_0})&=a_{t_n}+u_n(a_{t_n}) \\
\lim_{n \to +\infty} g_n (a_{t_0})&=a_{t_\infty}+\lim_{n \to +\infty} u_{n}(a_{t_n})\\
  &  = a_{t_\infty} 
 \end{align*}
This concludes the proof of the theorem.
\end{proof}
The theorem we proved is in fact more precise: 
 \begin{theorem}\label{T::groupes} 
Under the assumptions of the previous theorem, the Lie-iteration produces a sequence of elements 
$$u_0,u_1, u_2, \ldots,\;\;\;\; u_i=(t_i,v_i) \in \alg,$$
with exponentials
\[e^{u_i}: E_{t_i} \to E_{t_{i+1}}\]
such that the sequence of products 
\[ g_n:=e^{-u_{n}}\ldots e^{-u_1} e^{-u_0}:E_{t_0} \to E_{t_{\infty}}\]
converge to a well-defined element 
\[ g_{\infty} \in Hom(E_{t_0},E_{t_{\infty}})\]
with the property that
\[ g_{\infty}(a_S+b_S) =a_{t_{\infty}} .\]
\end{theorem}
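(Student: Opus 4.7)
The statement is essentially a re-packaging of the proof already given for Theorem~\ref{T::groupes}, made explicit in terms of the intermediate objects produced by the Lie iteration. The plan is to spell out the inductive construction of the $u_n$, invoke the fixed-point theorem for their rapid decay, produce the individual exponentials via the Borel estimate, and finally invoke the composition-of-exponentials theorem to obtain $g_\infty$.

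First I would set up the iteration precisely. Choose $0 < \lambda < \mu < 1$ and set $(t_0, s_0) := (t_0, \mu t_0)$, then generate the decreasing sequence $t_0 > t_1 > t_2 > \ldots > t_\infty$ by iterating $\phi(t,s) = (s, s - \lambda(t-s))$, which converges to a point $(t_\infty, t_\infty)$ by Proposition~\ref{P::baseiteration}. Put $b_0 := b_S$, $u_0 := j(b_0)$, and inductively
\[ b_{n+1} := e^{-u_n}(a + b_n) - a, \qquad u_{n+1} := j(b_{n+1}), \]
so that $v_n \in \alg_{t_n}$ is the fibrewise datum of $u_n$. The map governing this recursion is the non-linear map $F(u) := j(e^{-u}(\Id + u)(a) - a)$; by the Borel estimate (Theorem~\ref{T::Borel}) applied to $f(z) = e^{-z}(1+z) - 1$, which lies in $z^2 \CM\{z\}$, together with the $(k,l)$-locality of $j$, the map $F$ has ramification orders $(-k, -l, 2)$. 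The hypothesis $j(b) \in \Xt(r) \cap \Xt(R, k, l, \lambda)$ is exactly what is required to place $u_0$ in the invariant set of the fixed-point theorem~\ref{T::D2}, whose $f$-invariance statement guarantees $u_n \in \Xt(r) \cap \Xt(R, k, l, \lambda)$ for all $n$, and whose explicit estimate gives rapid decay of $\|u_n\|$.

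Next, the individual exponentials $e^{-u_n}\colon E_{t_n} \to E_{t_{n+1}}$ exist by Proposition~\ref{P::exponential}, since $u_n \in \Xt(R, k, l, \lambda)$ implies in particular $\|u_n\| \leq t_n - t_{n+1}$ (after shrinking $R$ if needed). The rapid decay of $\|u_n\|$ forces $\sigma := \sum_{n \geq 0} \|u_n\|/(t_n - t_{n+1}) < +\infty$, so the two hypotheses of Theorem~\ref{T::compexponential2} are met. That theorem therefore produces the limit
\[ g_\infty = \lim_{n \to \infty} e^{-u_n} \cdots e^{-u_1} e^{-u_0} \in \Hom(E_{t_0}, E_{t_\infty}), \]
with norm at most $1/(1 - \sigma/(t_0 - t_\infty))$.

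Finally, to identify the image of $a_S + b_S$, observe that by construction $g_n(a_{t_0} + b_0) = a_{t_{n+1}} + b_{n+1}$. Since $u_n = j(b_n)$ and $\rho \circ j = \Id$ on $M$, we have $b_n = \rho(u_n) = u_n(a)$, and the $1$-locality of $u_n$ gives $|b_n|_{t_{n+1}} \leq \|u_n\|/(t_n - t_{n+1}) \cdot |a_S|$; the rapid decay of $\|u_n\|$ dominates the merely geometric decay of $t_n - t_{n+1} = \lambda^n (t_0 - t_\infty)$, so $b_n \to 0$ and passing to the limit yields $g_\infty(a_{t_0} + b_0) = a_{t_\infty}$. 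The main obstacle throughout is the bookkeeping to ensure the iterates stay inside the invariant tetrahedron; once this is secured by Theorem~\ref{T::D2} the rest is an assembly of Theorem~\ref{T::Borel}, Proposition~\ref{P::exponential}, and Theorem~\ref{T::compexponential2}.
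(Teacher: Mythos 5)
Your proposal is correct and follows essentially the same route as the paper: the paper offers no separate proof for this statement, presenting it as a more precise restatement of the preceding normal form theorem, whose proof is exactly the argument you give (Lie iteration governed by $F(u)=j(e^{-u}(\Id+u)(a)-a)$, Borel estimate plus $(k,l)$-locality of $j$ to get ramification $(-k,-l,2)$, the fixed point theorem for rapid decay of $\|u_n\|$, and Theorem~\ref{T::compexponential2} for convergence of the products to $g_\infty$ with $g_n(a+b_0)=a+b_{n+1}\to a_{t_\infty}$). You even supply two details the paper leaves implicit, namely the verification that rapid decay of $\|u_n\|$ dominates the geometric decay of $t_n-t_{n+1}$ so that hypothesis (ii) of the composition theorem holds, and the estimate showing $b_n=u_n(a)\to 0$ in the final limit identification.
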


\section{The normal form theorem (general case)}
In the parametric case, we consider families of local maps
$$j_\a  \in \Hom^{k,l}_{\D}(M,\alg) $$
depending on a parameter $\a$ which belongs to the unit ball  of some auxiliary Kolmogorov space $F$.

We say that a family is uniformly bounded if the norms
$\| j_\a \| $ are bounded for  $\a \in  B_F$. We then define
$$\| j \|:=\sup_{\a \in B_F} \| j_\a \| $$
\begin{theorem}\label{T::groupes} 
Let $E$ be an $S$-Kolmogorov space and $F \subset E$ a Kolmogorov subspace. Let $\alg \subset L^1(E,E)$ define
a Kolmogorov-action on an affine subspace $a+M \subset E$. 
Assume that the map 
$$\rho_\a:\alg \to M, u \mapsto u(a+\a)$$ 
admit uniformly bounded local right quasi-inverses
$$j_\a \in \Hom^{k,l}_{\D}(M,\alg),\ \a \in  B_F $$
Choose $0 < \l < \a <1$ and $0 <r <1$ and let 
\[ R:=2^{-k-3} \frac{(1-r)^2}{\|j\||a_S|}.\] 
Then for any subsection
\[ b \in \Gamma^{h}(]0,S],M)\]
with the property that 
$$j_\a(b) \in  \Xt(r) \cap  \Xt(R,k,l,\l,r,1) $$
there exists an element 
\[ g \in Hom(E_S,E_t)\]
with the property that
\[ g(a_S+b_S) -a_{t} \in F_t .\]
\end{theorem}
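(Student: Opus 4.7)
The plan is to adapt the Lie iteration from the homogeneous normal form theorem (Theorem~\ref{T::groupes}), but now carrying along an evolving parameter $\alpha_n \in F$ which accumulates the $F$-component of the transformed section, and using the parameter-dependent right quasi-inverse $j_{\alpha_n}$ at each step. The fixed-point analysis is then routed through Theorem~\ref{T::Fixedpoint_parametric} rather than its non-parametric counterpart, $\Xt(R,k,l,\l,r,1)$ playing exactly the role of $\Xt(R,k,l,m,\l,r)$ for $m=2$.

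First I would set up the iteration. Starting from $(b_0,\alpha_0)=(b,0)$, I put $u_n:=j_{\alpha_n}(b_n)\in\alg$. Since $j_{\alpha_n}$ is a right quasi-inverse of $\rho_{\alpha_n}:u\mapsto u(a+\alpha_n)$, one has $u_n(a+\alpha_n)=b_n$, which is exactly what is required for the first-order cancellation of $b_n$. A direct expansion gives
$$e^{-u_n}(a+b_n+\alpha_n)-a \;=\; \alpha_n \;+\; g(u_n)(a+b_n+\alpha_n),$$
where $g(z)=e^{-z}(1+z)-1$ lies in $z^2\CM\{z\}$ and is the Borel transform of $-z^2/(1+z)^2$, a function with radius of convergence $1$. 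One then decomposes the right-hand side into its $M$- and $F$-components, taking $b_{n+1}$ to be the $M$-component and $\alpha_{n+1}$ to be $\alpha_n$ plus the $F$-component. Both $b_{n+1}$ and $\alpha_{n+1}-\alpha_n$ are controlled in norm by $|g(u_n)(a+b_n+\alpha_n)|$.

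Next I would derive the Borel estimate exactly as in the homogeneous proof: for $u_n\in\Xt(r)$, Theorem~\ref{T::Borel} gives $\|Bg(u_n)_{mt}\|\le \|u_n\|^2/((1-r)^2(t-s)^2)$; postcomposition with the $(k,l)$-local map $j_{\alpha_n}$, which is uniformly bounded in the parameter by hypothesis, and splitting the interval at $m=(s+t)/2$, produces a map with ramification indices $(-k,-l-2,2)$. The constant $R$ in the statement is precisely the inverse of the resulting prefactor, up to the factor $2^{-k-3}$ instead of $2^{-k-2}$ reflecting the presence of the extra summability term needed to control the parameter drift. This assembles into a map to which Theorem~\ref{T::Fixedpoint_parametric} applies with $\Et=\alg$ and $\Ft=F$, and with initial datum $(u_0,\alpha_0)=(j_0(b),0)$ sitting in $\Xt_\alg(R,k,l,\l,r) \cap \Xt(r)$ by assumption. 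The conclusion of that theorem is that the sequence $(u_n)$ converges rapidly to $0$ while the drift parameter $\alpha_n$ stays inside $rB_F$.

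The final step is the construction of $g_\infty$ from the sequence of exponentials via Theorem~\ref{T::compexponential2}, exactly as in the homogeneous case: the condition $\|u_n\|\le t_n-t_{n+1}$ together with summability of $\|u_n\|/(t_n-t_{n+1})$ follows from rapid convergence, yielding a limit $g_\infty\in\Hom(E_{t_0},E_{t_\infty})$ with
$$g_\infty(a_S+b_S)\;=\;\lim_{n\to\infty}\bigl(a_{t_n}+\alpha_n\bigr)\;=\;a_{t_\infty}+\alpha_\infty,\qquad \alpha_\infty\in F_{t_\infty},$$
which is the desired conclusion.

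The main obstacle, and the reason one needs the stricter set $\Xt(R,k,l,\l,r)$ rather than $\Xt(R,k,l,\l)$, is keeping the accumulated parameter $\alpha_n=\sum_{i<n}(\alpha_{i+1}-\alpha_i)$ inside $rB_F$: the bound on $j_{\alpha_n}$ is only uniform there. This is exactly the summability encoded in the definition of $\Xt(R,k,l,\l,r)$, so the parametric fixed point theorem is tailored to discharge this obligation automatically. Beyond this, the proof is essentially a bookkeeping extension of the homogeneous argument.
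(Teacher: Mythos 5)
Your proposal follows essentially the same route as the paper: the Lie iteration with an accumulating counterterm $\a_n \in F$, the quadratic Borel estimate combined with the $(k,l)$-locality of $j_\a$ and the midpoint split at $m=(s+t)/2$, the parametric fixed point theorem, and finally the composition-of-exponentials theorem to produce $g_\infty$ with $g_\infty(a_S+b_S)=a_{t_\infty}+\sum_i \a_{t_i}\in a_{t_\infty}+F_{t_\infty}$. Two small corrections: the expansion identity should read $e^{-u_n}(a+b_n+\a_n)-a=\a_n+g(u_n)(a+\a_n)$, with $g(u_n)$ applied to $a+\a_n$ rather than to $a+b_n+\a_n$ (since $b_n=u_n(a+\a_n)$, so $a+b_n+\a_n=(\Id+u_n)(a+\a_n)$), and the passage from $2^{-k-2}$ to $2^{-k-3}$ in $R$ is due not to a summability term but to the cross term: the paper bounds the quadratic remainder by $\|u\|^2+\|u\|\,\|\a\| \le 2\,\|(u,\a)\|^2$, the second summand coming from the Borel transform of $e^{-z}-1$ acting on the parameter $\a$.
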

\begin{proof}
The map $g$ will be constructed using the Lie iteration:
\begin{enumerate}[1)]
\item $a_{n+1}:=a_n+\a_n$ ;
\item $b_{n+1}:=e^{-u_n}(a_n+b_n)-a_{n+1}$ ;
\item  $(\a_{n+1},u_{n+1}):=j(a_{n+1},b_{n+1}).$
\end{enumerate}
with $ b_0=b,\ u_0=j(b) $ and 
and by iterating
\[ \phi: (t,s) \mapsto (s, s-\l(t-s)) \]
from an initial point
\[ (t_0,s_0)=(t_0,\a t_0). \]
This iteration 
\[(a_{n+1},(\a_{n+1},u_{n+1}))=(a_n+\a_n,j(a_n+\a_n,Bf(u_n,a_n)+Bg(u_n,\a_n)))\]
is defined via the functions
$$Bf(u)= (e^{-u}(\Id+u)-\Id) (a),\ Bg(u,\a)=(e^{-u}-\Id) (\a)\;. $$
which are both a quadratic at $u=0,\ \a=0$. We put $v_n:=(\a_n,u_n)$.

We have seen that for any $r <1$ and $u \in \Xt(r)$, we have the Borel estimate \ref{T::Borel}:
\[ |Bf(u)_{mt}| \le C \frac{||u||^2}{(m-t)^2},\;\;C:=\frac{1}{(1-r)^2}.\]
The series
$$Bg(z)= e^{-z}-1 $$
is the  Borel transform of
$$g(z)=\frac{-z}{1+z} . $$
Therefore the Borel estimate gives:
\[ |Bg(u,\a)_{mt}| \le C' \frac{\|u\| \, \| \a \|}{(m-t)^2},\;\;C':=\frac{r}{(1-r)}<C.\]
As $j_\a$ is local, there is an estimate of the form
\[ |j_{\a,sm}| \le \frac{\|j\|}{s^k(m-s)^l}.\]
If we take the $m=(s+t)/2$ we obtain the estimate
\[ |j_\a(B(f+g)(u,\a)(a))_{st}| \le  \frac{||u||^2+\|u\| \, \| \a \|}{R s^k(t-s)^{l+2}}\]
where
\[ R:=  2^{-k-2} \frac{(1-r)^2}{\|j\| |a_S|}  . \]
Moreover as
$$||(u,\a)||:=\max(||(u)||,||(\a)||) $$
on the product space, we have
$$||u||^2+\|u\| \, \| \a \| \leq 2||(u,\a)||^2 $$
 The parametric fixed point theorem (Theorem~\ref{T::Fixedpoint_parametric}) shows that the map can 
be iterated and that the sequence of $(u,\a)$'s converges rapidly to $0$. The elements $u_0,u_1,u_2,\ldots$ then have exponentials
\[ e^{-u_i}: E_{t_i} \to E_{t_{i+1}}.\]
The theorem on the composition of exponentials (Theorem~\ref{T::compexponential2}) shows that the elements
$$g_n:=e^{-u_n} \dots e^{-u_1}  e^{-u_0} $$
converge to an element $g_{\infty}$
As $(u_n)$ converges to zero, we get that
\begin{align*}
g_n (a_{t_0})&=a_{t_n}+\sum_{i=0}^n\a_{t_i}+u_n(a_{t_n}) \\
\lim_{n \to +\infty} g_n (a_{t_0})&=a_{t_\infty}+\sum_{i \geq 0}\a_{t_i}+\lim_{n \to +\infty} u_{n}(a_{t_n})\\
  &  = a_{t_\infty} +\sum_{i \geq 0}\a_{t_i}
 \end{align*}
This concludes the proof of the theorem.
\end{proof}
%

\section{A heuristic for optimising the parameter choice}
The iteration scheme we used depends on the parameters
$r,\l,\a$ and the estimate for the final domain of 
convergence depends crucially on these choices. 

Let us return to the calculations on the Morse lemma. We saw that
with the choice
$$C=2t_0^2,\;\;r=\frac{1}{2},\;\;\l =\frac{1}{4},\;\;\;\m=\frac{1}{2},$$
we found convergence for the starting value
\[ t_0 < \frac{3}{256\,e} =0.004311087202\ldots\]
and thus we obtain $t_{\infty}=t_0/3 <0.001437029067\ldots$ as radius of 
convergence for the coordinate transformation. 
It is this value that  should be compared to the 'true' radius of 
convergence of the series $\psi(z)$, inverse to $z\sqrt{1+2z}$, which is 
computed to be
\[ \frac{\sqrt{3}}{9}=0.1924500898\ldots, \]
which is about $134$ times larger.

If we keep our choices for $C$ and $r$, the best choices for $\l$ and
$\m$ are determined by finding the maximum of the rational function
\[ F(\m,\l):=\frac{(1+\l-\l/\m)\l^2(1-\m)^2}{2\m}\cdot \frac{\m-\l}{(1-\l)}\]
that computes $e\, t_{\infty}$.
The level curves of $F$ suggest there is a single maximum in the region
$0 <\l <\m < 1$:
\begin{center}
\includegraphics[height=8cm]{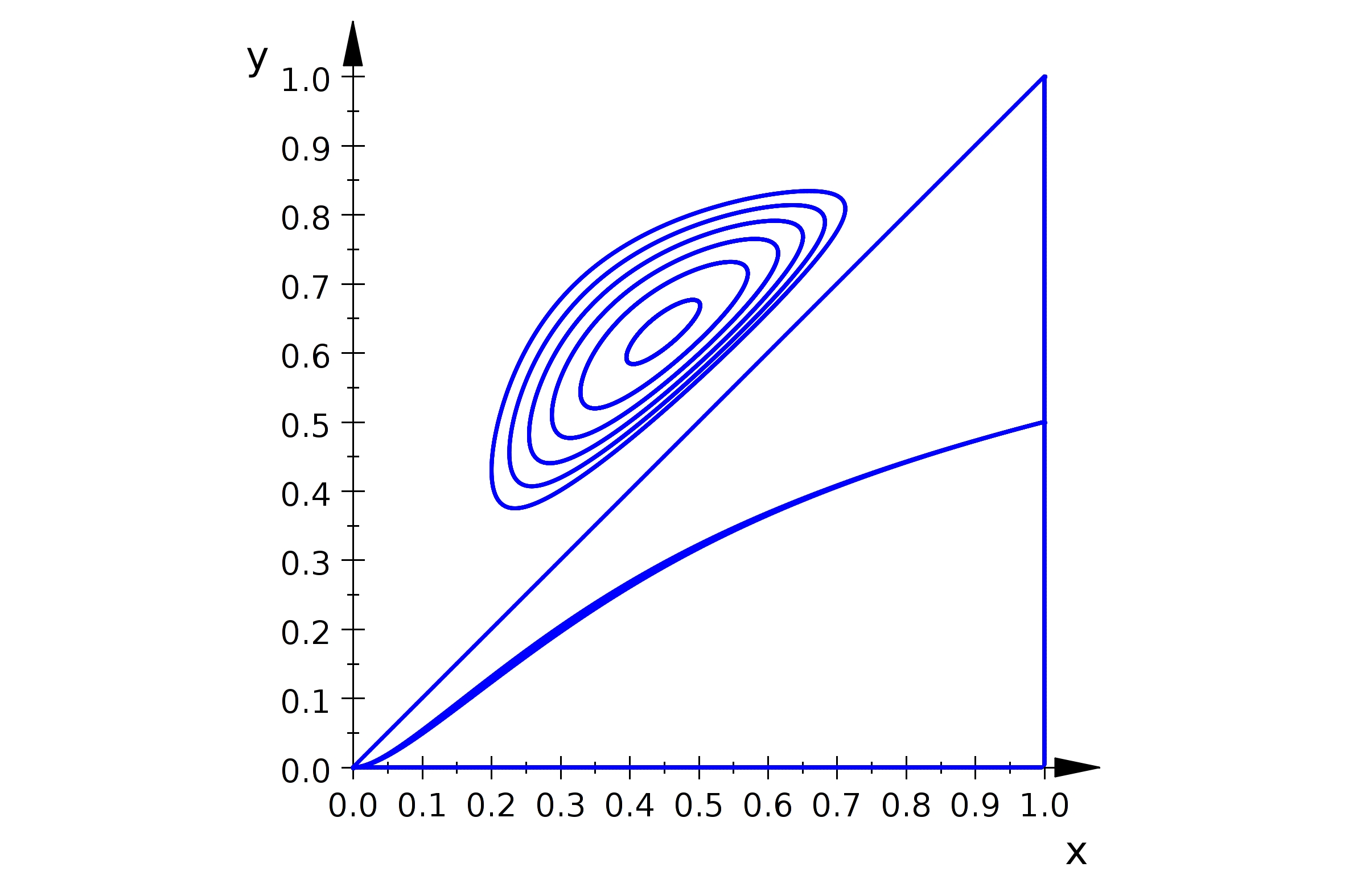}
($x=\l$, $y=\m$.)
\end{center}
A direct calculation shows that a
maximum is obtained for 
\[ \l=8\m^2+2\m-4=0.448612476...\]
where
\[ \m=0.6311094891...\]
is a root of the cubic equation $8\m^3-4\m^2-7\m+4=0$, which
leads to a slightly larger value
\[t_{\infty}=0.001949102953\ldots\]
for the radius of convergence.\\ 

However, we can also play with the parameter $r$. In general we have for
$f(z)=-z^2/(1+z)^2$ the inequality
\[ |f(z)|\le \frac{1}{(1-r)^2} |z|^2\;\;\;\textup{for}\;\;|z| \le r ,\]
which leads to
\[|j(g(u)(a))| \le \frac{t^2}{2}\frac{1}{(1-r)^2}\frac{\|u\|^2}{(t-s)^2}\]
as long as $\|u\| \le  r(t-s)$. The
inequalities now become
\[  e\, s_0^2 \le r(t_0-s_0)\]
\[ \frac{t_0^2}{2(1-r)^2}  e\, s_0^2 \le \rho(t_0,s_0) s_0 \l^2(t_0-s_0)^2\]
which using $\m$ can be written as:
\begin{align*}
e\,t_0 \le& r(1-\m)/\m^2\\
e\, t_0 \le& 2(1-r)^2 \rho(t_0,s_0)\l^2(1-\m)^2/\m .
\end{align*}
With the choice $r=1/2$ we have seen that the second inequalities is much
stronger then the first.\\

{\bf Heuristic.} {\em As we try to maximise the minimum of the two above
quantities, the maximum is realised when both inequalities coincide.}

This heuristic is supported by numerical calculations. 

Equating the right hand sides leads to a condition for $r$, depending
on $\l$ and $\m$ gives:
\[ \frac{r}{(1-r)^2} = 2 \rho \l^2 (1-\m)\m=:\nu(\l,\m).\]
We solve $r$ as function of $\m$ and $\l$ and find:
\[ r(\l,\m)=\frac{1+2\mu-\sqrt{1+4\mu}}{2\mu}\]
The level sets of the function
\[ r(\l,\m)\frac{(1-\m)}{\m^2}\frac{(\m-\l)}{(1-\l)} (=et_{\infty}) \]
look very much like what we had before.

\begin{center}
\includegraphics[height=8cm]{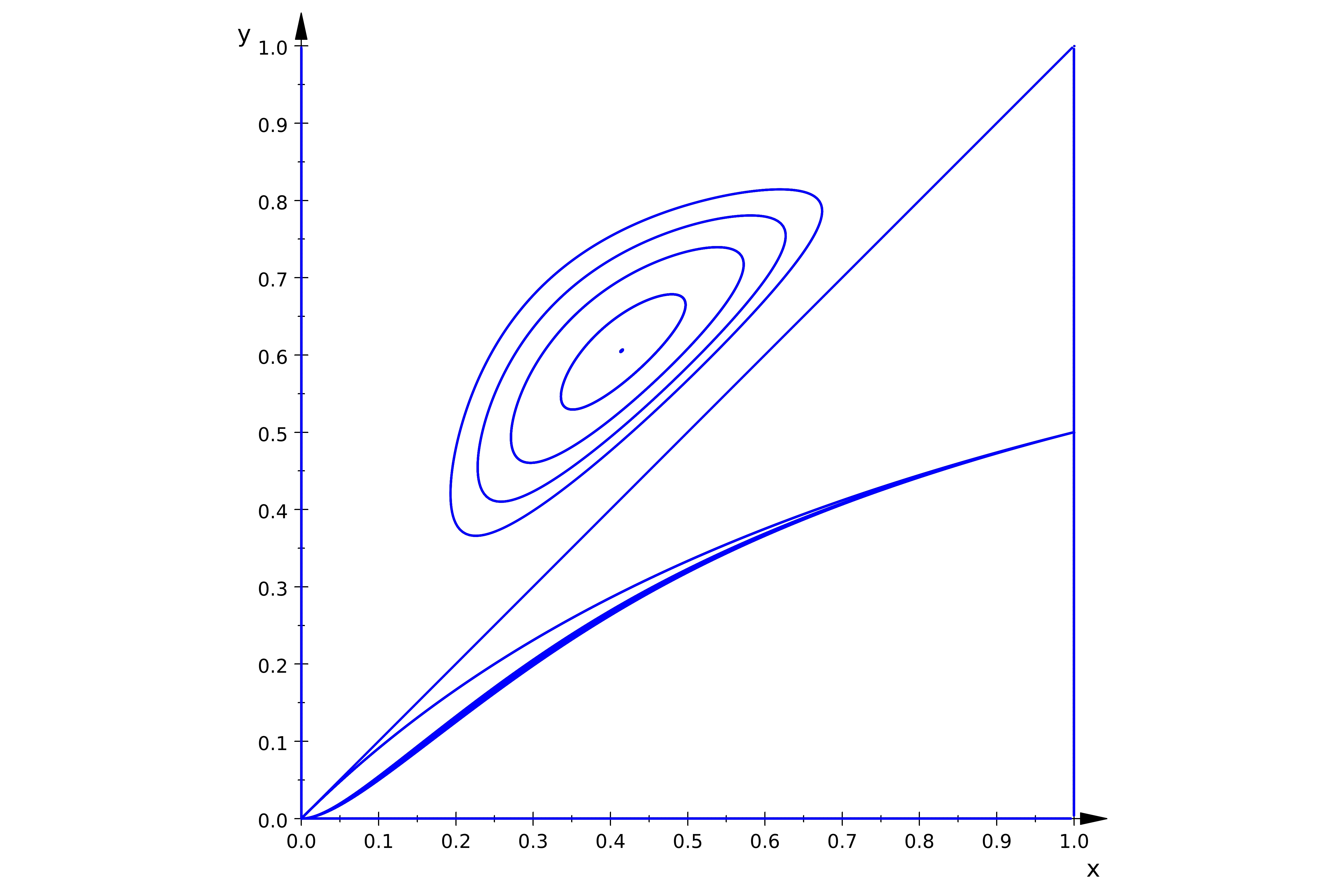}

($x=\l$, $y=\m$.)
\end{center}

It attains it maximum value
\[ e\,t_{\infty}=0.01883436563\ldots,\;\;\;t_{\infty}=0.006928775903\ldots\]
for the values 
\[\l=0.4145716992\ldots,\;\;\;\;\m=0.6054472202\dots\]
which turn out to be rather complicated algebraic numbers of degree~$14$.
We remark that the true radius of convergence is bigger by a factor
\[ Q:=0.00692877/0.192450=27.7754... \]

Finally, we can apply the same method to the function 
\[ \frac{1}{2}z^2+\beta z^n,\;\;\;\beta >0, n \ge 3 .\]
In this case we find 
$$\|u_0\| \le e \beta s_0^{n-1}$$
and the two inequalities reduce to
$$
\begin{array}{lrcl}
i) &e \beta t^{n-2} &\le& \frac{r(1-\mu)}{\mu^{n-1}}\\
ii)&e \beta t^{n-2} &< &2(1-r)^2\frac{\rho \l^2 (1-\m)^2}{\mu^{n-2}}\\
\end{array}
$$

If we equate these bound, we obtain exactly the same value for $r$:
\[r(\l,\m) =\frac{1+2\nu-\sqrt{1+4\nu}}{2\nu},\;\;\nu:=2\rho \l^2\mu(1-\mu),\]
and hence the following expression for the critical value for $t_{\infty}$:
\[t_{\infty}:=\Big(\frac{r(\l,\m)(1-\m)}{e\beta \mu}\Big)^{1/(n-2)}\frac{\m-\l}{\m (1-\l)}\]
This we have to compare with the radius of convergence of the coordinate 
transformation {\em inverse} to 
\[ z \mapsto z\sqrt{1+2\beta z^{n-2}},\]
which is computed to be
\[R(n,\beta):=\Big(\frac{1}{n \beta}\Big)^{1/(n-2)} \sqrt{1-\frac{2}{n}}.\]
The ratio between these quantities is
\[Q:=\frac{R(n,\beta)}{T_{\infty}(n,\l,\m)}=\frac{\m-\l}{\m(1-\l)}\frac{1}{\sqrt{1-2/n}} \Big(\frac{r(1-\m) n}{e \m}\Big)^{1/(n-2)}\]
For each values of $n$ one has the possibility of choosing appropriate
values $\l,\m$, to make this $Q$ as small as possible.\\

Below we summarise
the result of some numerical computations:\\
$$
\begin{array}{|c||c|c|c|c|c|c|c|c|c|c|}
\hline
n &3&4&5&6&7&8&9&10&20&50\\
\hline
\l & 0.414& 0.367& 0.338& 0.319& 0.301& 0.287&0.279& 0.259 &0.211 &0.154\\
\m & 0.605& 0.635& 0.655& 0.676& 0.687& 0.699&0.713& 0.719 &0.769 &0.847\\
Q  &27.775& 5.439& 3.153& 2.397& 2.033& 1.820&1.682& 1.584 &1.249 &1.099\\
\hline
\end{array}
$$
The optimal choices of $\l$, $\mu$ and $Q$ appear to depend 
monotonically on~$n$. As one sees from the data and the explicit formula, 
for $n \to \infty$ the quantity $Q$ tends to $1$.
So our method is asymptotically optimal, despite the fact that all estimates 
we used were rather crude. However it is probably,
that more refined methods lead to slightly better estimates for the radius 
of convergence.

\chapter*{Bibliographical notes on part II} 
The program of general normal form theory and group actions was considered in:\\

{\sc J. Moser}, {\em A rapidly convergent iteration method and non-linear partial differential equations II}, Ann. Scuola Norm Sup. Pisa, 20:3, 1966, 499-535.
   
The ideas of Moser led to the important works on implicit function theorems:\\

{\sc R.S Hamilton}, {\em The inverse function theorem of Nash and Moser}, Bull. Amer. Math. Soc., 7:1,1982, 65--222.\\  

{\sc F. Sergeraert}, {\em Un th\'eor\`eme de fonctions implicites sur certains espaces de Fr\'echet et quelques applications}, {Ann. Sci. \'Ecole Norm. Sup.}, 1972, 5:4,  p. 599--660.\\

{\sc E. Zehnder}, {\em Generalized implicit function theorems with applications to some small divisor problems I}, {Communications Pure Applied Mathematics}, 28, 1975, p. 91--140.\\
  
{J. F\'ejoz} discovered an elementary version of {Zehnder}s first implicit function theorem. The result was published later in:\\

{\sc J. F\'ejoz}, {\em Introduction to KAM theory, with a view to celestial mechanics in Variational methods in imaging and geometric control}, Radon Series on Comput. and Applied Math. 18, de Gruyter, 2016.\\
 
However implicit function theorems require a global linear inverse (or quasi-inverse) which
are seldom available, for instance in the presence of resonances.\\

An abstract theorem on infinite group dimensional group actions avoiding implicit 
function theorems was given in:\\

{\sc J. F\'ejoz et M. Garay}, {\em Un th\'eor\`eme sur les actions de groupes de dimension 
infinie}, {Comptes Rendus \`a l'Acad\'emie des Sciences} {\bf 348}, {7-8}, {427 - 430}, 2010.\\

Besides the problem of global inverse, it is clear now that the theorems could not be 
pushed further, because they presuppose locality for operators which are in general not local 
(see for instance Assumption H1 in Zehnder's paper). Therefore, applications of implicit 
function theorems require to parametrise the infinite dimensional groups in a non-standard 
way such as $\Id+v$ for diffeomorphisms, generating functions for symplectomorphisms, etc.
 
This explains that direct methods using power series turned out to be more powerful 
and sometimes simpler compared to the abstract approach, see for instance:\\

 {\sc R\"ussmann, H.},	 {\em Invariant tori in non-degenerate nearly integrable Hamiltonian systems}, {Regular and chaotic dynamics}, {2001}, {6:2}, p. 119-204.\\
  
Abstract KAM theory, including the theory of Kolmogorov spaces, was founded --although not in definitive form-- in:\\

{\sc M. Garay}, {\em An Abstract KAM Theorem}, Moscow Math. Journal, 14:4  p.745-772, 2014.\\

{\sc M. Garay}, {\em Degenerations of invariant Lagrangian manifolds}, {Journal of Singularities}, 8, p.{50-67}, {2014}.\\

It is only recently that the authors of this book discovered the general aspects of functorial analysis, came across the general definition of Kolmogorov spaces and discovered finite dimensional reduction using the norm map. 


\chapter*{Photo Credits}
{\em J. Moser} by Konrad Jacobs, Erlangen (1969).\\

 \end{document}